\documentstyle[11pt,amssymb,amsmath,amscd,amsbsy,amsfonts,theorem,hyperref]{article}

\input xy
\xyoption{all}

\pagestyle{plain}
\textwidth=17.5cm
\oddsidemargin=-1cm
\evensidemargin=-1cm
\topmargin=-1cm
\textheight=23cm

\newcommand{\zz}{{\Bbb Z}}

\newcommand{\cc}{\Bbb C}
\newcommand{\qq}{{\Bbb Q}}
\newcommand{\pp}{{\Bbb P}}
\newcommand{\aaa}{{\Bbb A}}
\newcommand{\ff}{{\Bbb F}}

\newcommand{\iis}{{\mathbf{i}}}
\newcommand{\ddim}{\operatorname{dim}}
\newcommand{\ddeg}{\operatorname{deg}}
\newcommand{\kker}{\operatorname{Ker}}
\newcommand{\coker}{\operatorname{Coker}}
\newcommand{\spec}{\operatorname{Spec}}

\newcommand{\op}[1]{\operatorname{#1}}

\newcommand{\ffi}{\varphi}

\newcommand{\eps}{\varepsilon}

   % @!=1/10*\!   negative space
   % @,=1/10*\,   positive space
\newcommand{\row}{\rightarrow}
\newcommand{\llow}{\longleftarrow}
\newcommand{\low}{\leftarrow}
\newcommand{\lrow}{\longrightarrow}
\renewcommand{\leq}{\leqslant}
\renewcommand{\geq}{\geqslant}

\newcommand{\nichego}[1]{}

\newcommand{\ov}[1]{\overline{#1}}

\newcommand{\wt}[1]{\widetilde{#1}}

\newcommand{\smk}{{\mathbf{Sm}}_k}
\newcommand{\schk}{{\mathbf{Sch}}_k}
\newcommand{\ri}{{\mathbf{R}}^*}
\newcommand{\nhk}{\mathbf{SmOp}}
\newcommand{\ab}{{\mathbf{Ab}}}

\newcommand{\slnT}{\op{S}_{LN}^{Tot}}
\newcommand{\stT}{\op{St}^{Tot}}
\newcommand{\barbi}[1]{\ov{\op{#1}}}
\newcommand{\fra}{\frak{a}}
\newcommand{\fraO}{{\frak{a}}'}
\newcommand{\frb}{\frak{b}}

\newcommand{\frc}{\frak{c}}
\newcommand{\Da}{a}
\newcommand{\DaO}{a'}
\newcommand{\Db}{b}

\newcommand{\Dc}{c}
\newcommand{\G}{{\mathbf{G}}}
\newcommand{\gl}[1]{G_{#1}}
\newcommand{\fl}[1]{F_{#1}}
\newcommand{\wgl}[1]{\wt{G}_{#1}}

\newcommand{\laz}{{\Bbb L}}
\newcommand{\co}{{\cal O}}

\newcommand{\ci}{{\cal I}}

\newcommand{\cm}{{\cal M}}

\newcommand{\cv}{{\cal V}}
\newcommand{\cw}{{\cal W}}

\newcommand{\ch}{{\cal H}}

\newcommand{\smu}{{\cal S}}
\newcommand{\rc}{{\cal RC}}
\newcommand{\cmor}{Mor}

\newcommand{\komp}[1]{\widehat{#1}}
\newcommand{\sstar}{\hat{\star}}

\newcommand{\Qed}{\hfill$\square$\smallskip}

\newenvironment{proof}{\noindent{\it Proof}:}{\vskip 5mm}

\newtheorem{proposition}{Proposition}[section]{\bf}{\it}
\newtheorem{theorem}[proposition]{Theorem}{\bf}{\it}
\newtheorem{lemma}[proposition]{Lemma}{\bf}{\it}
{\bf}{\it}
\newtheorem{definition}[proposition]{Definition}{\bf}{\it}
{\bf}{\it}
{\bf}{\it}
\newtheorem{example}[proposition]{Example}{\bf}{\it}
\newtheorem{remark}[proposition]{Remark}{\bf}{\rm}

{\bf}{\it}
\newtheorem{corollary}[proposition]{Corollary}{\bf}{\it}
{\bf}{\it}

\begin{document}

\title{Stable and Unstable Operations in Algebraic Cobordism}
\author{Alexander Vishik\footnote{School of Mathematical Sciences, University
of Nottingham}}
\date{}

\maketitle

\begin{abstract}
We describe additive (unstable) operations from a theory $A^*$
obtained from the Levine-Morel algebraic cobordism
by change of coefficients to any oriented cohomology theory $B^*$ (over a field of characteristic zero).
We prove that there is 1-to-1 correspondence between operations $A^n\row B^m$ and
families of homomorphisms
$A^n((\pp^{\infty})^{\times r})\row B^m((\pp^{\infty})^{\times r})$
satisfying certain simple properties.
This provides an effective tool of constructing such operations.
As an application, we prove that (unstable) additive operations in algebraic cobordism
are in 1-to-1 correspondence with the $\laz\otimes_{\zz}\qq$-linear combinations of
Landweber-Novikov operations which take integral values on the products of projective
spaces. Furthermore, the stable operations are precisely the $\laz$-linear
combinations of the Landweber-Novikov operations. We also show that multiplicative operations
$A^*\row B^*$ are in 1-to-1 correspondence with the morphisms of the respective
formal group laws. We construct integral Adams operations in algebraic cobordism,
and all theories obtained from it by change of coefficients, extending the classical Adams
operations in algebraic K-theory. We also construct symmetric operations
and Steenrod operations (\`a la T. tom Dieck) in algebraic cobordism for all primes.
(Only symmetric operations for the prime $2$ were previously known to exist.)
Finally, we prove the Riemann-Roch Theorem for additive operations which extends
the multiplicative case done in \cite{P2}.

\end{abstract}

\tableofcontents

\section{Introduction}
\label{Intro}
In the current article we study operations between oriented cohomology theories (over a field of characteristic zero).
In the algebro-geometric context operations were studied by Voevodsky \cite{VoOP}, Brosnan \cite{Br}, Panin-Smirnov \cite{PS},\cite{P},\cite{P2},\cite{Sm1},\cite{Sm2}, and Levine-Morel \cite{LM}. By the work of Levine-Morel \cite{LM},
one has a universal oriented cohomology theory, called algebraic cobordism, and denoted by $\Omega^*$. The universality of $\Omega^*$ combined with the reorientation procedure of Panin-Smirnov (following Quillen \cite{Qu71}, see also \cite[pages 99-105]{LM})
permitted to produce the multiplicative operations $\Omega^*\row B^*$ easily and to classify them (in the "invertible" case). In particular, one gets that all such operations are specializations of the Total Landweber-Novikov operation $\Omega^*\row\Omega^*[b_1,b_2,\ldots]$.  Previously, the only example of unstable operations (in the algebro-geometric context),
the, so-called, Symmetric operations (mod 2) were introduced in \cite{so1} and \cite{so2}.
Originally constructed with the aim of producing maps between Chow groups of different
quadratic Grassmannians (of the same quadratic form), these operations in algebraic
cobordism were successfully applied to the question of rationality of algebraic
cycles (\cite{GPQCG},\cite{RIC}), where they provide the only known method to deal with $2$-torsion.
These operations can be combined into a Total one which is a "formal half" of the "negative part"
of the Total Steenrod operation (mod 2) in Algebraic Cobordism - see \ref{SymSt}. The topological counterpart of it was
used by Quillen in \cite{Qu71}.
Symmetric operations (mod 2) are
more subtle than the Landweber-Novikov ones. They lack the 2-primary divisibilities of the latter, and so,
in some
sense, "plug the gap" between $\laz$ and $H_*(MU)$ left by the Hurewicz map, plug 2-adically.
To have an integral variant of such statements one would need Symmetric operations for
all primes. Unfortunately, the case $p=2$ was produced by an explicit geometric construction
(using $Hilb_2$),
and it is unclear how to extend it for other primes. The desire to construct these operations
was the main motivation behind the current article. In the end, it appeared that to produce
Symmetric operation for $p>2$ is about as "simple" as to produce all (unstable) additive operations
in algebraic cobordism. But to do it, one has to develop some new tools.
One needs to understand the internal structure of algebraic cobordism and, more precisely, the way $\Omega^*(X)$ can be described
in terms of the restriction of $\Omega^*$ to varieties of dimension lower than the dimension of $X$.
This leads to the notion of a {\it theory of rational type}.
Such theories appear to be the same as the {\it free theories} of Levine-Morel.
In particular, all the "standard" theories, like, $CH$, $K_0$, $BP$, higher Morava's K-theories
$K(n)$ are of this sort.
At this stage I should recall that there are two types of cohomology
theories in Algebraic Geometry: "large" ones $A^{j,i}$ - represented by some spectrum in $\aaa^1$-homotopy theory, numbered by two indices, and "small" ones $A^i$, typically,
represented by the $(2*,*)$-part
of large theories. The Levine-Morel algebraic cobordism $\Omega^*$ belongs
to the second type
and, by the result of Levine (\cite{Lcomp}, see also \cite{Ho}), is the $(2*,*)$-part of Voevodsky's $MGL$. In this article, we work with "small" theories.
The fact that $\Omega^*$ is a {\it theory of rational type} is non-trivial. Our proof uses
the mentioned comparison result of Levine (\cite{Lcomp}).
Any theory $A^*$ of rational type on a variety $X$ is described by the values of $A^*$ on varieties of lower dimension.
We provide three alternative descriptions here: two in terms of push-forwards,
and one in terms of pull-backs - see Subsections \ref{a},\ref{b},\ref{c}.
After that it becomes possible to construct operations inductively on dimension.

This enables us to show that an operation can be reconstructed from it's action on
$(\pp^{\infty})^{\times r}$, for all $r$. This is our main result (see Theorem \ref{MAIN}):

\begin{theorem}
\label{I-MAIN}
Let $A^*$ be a theory, obtained from $\Omega^*$ by change of coefficients, and
$B^*$ be any theory in the sense of Definition \ref{goct}.
Fix $n,m\in\zz$. Then there is a one-to-one correspondence between additive operations
$A^n\stackrel{G}{\row}B^m$ and families of homomorphisms
$$
A^n((\pp^{\infty})^{\times l})\stackrel{G}{\row}B^m((\pp^{\infty})^{\times l}),\,\,\text{for}\,\,
l\in\zz_{\geq 0}
$$
commuting with pull-backs for:
\begin{itemize}
\item[$(i)$ ] the action of ${\frak{S}}_l$;
\item[$(ii)$ ] the partial diagonals;
\item[$(iii)$ ] the partial Segre embeddings;
\item[$(iv)$ ] $(\op{Spec}(k)\hookrightarrow\pp^{\infty})\times
(\pp^{\infty})^{\times r},\,\,\forall r$;
\item[$(v)$ ] the partial projections.
\end{itemize}
\end{theorem}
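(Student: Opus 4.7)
The forward direction is routine: an additive operation $G\colon A^n\to B^m$ pulls back under any morphism, so restricting $G$ to the varieties $(\pp^\infty)^{\times l}$ produces transformations automatically compatible with (i)--(v). All the content is in the reverse direction: recovering a global operation from its values on these universal spaces. My plan is to exploit the inductive presentation of $A^*$ as a theory of rational type, as developed in Subsections \ref{a}, \ref{b}, \ref{c}: every element $\alpha\in A^*(X)$ can be written in terms of pushforwards from generators supported on lower-dimensional or explicitly understood pieces, modulo a list of controllable relations. An operation is then built up dimension by dimension, and the claim becomes that the data on $(\pp^\infty)^{\times l}$ plus the five compatibilities determine the operation on all generators and are consistent with all relations.

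The bridge from $(\pp^\infty)^{\times l}$ to arbitrary smooth $X$ will go through the splitting principle and the projective bundle formula: the generators that appear inductively can be reduced to Chern-class data of direct sums of line bundles, which are classified by maps into $(\pp^\infty)^{\times l}$. The job of the five compatibility conditions is then to match the combinatorial/geometric ambiguities in writing such Chern-class data. Condition (i) eliminates the ordering of the line bundles, (ii) handles coincidences of line bundles (collapsing factors via partial diagonals), (iii) encodes the behaviour of the first Chern class under tensor product through the Segre embedding -- which is where the formal group law enters -- (iv) absorbs a trivial line bundle factor (needed to stabilise ranks), and (v) allows adding/removing trivial factors from products. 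Together, these are exactly the elementary moves relating two presentations of the same class on $X$ via different classifying maps into products of $\pp^\infty$'s.

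The main obstacle will be the consistency step: proving that the inductively extended $G$ is well defined on $A^n(X)$, i.e. respects the full list of relations in the theory-of-rational-type presentation, not merely the relations visible on products of projective spaces. This is where theory of rational type is essential. One reduces relations on a general $X$ to relations arising from blow-ups with smooth centres and from projective bundles over previously-treated varieties; by the projective bundle formula and splitting, each such relation is determined by its image in an ambient $A^*$ of a product of $\pp^N$'s, and the required consistency follows once (i)--(v) are imposed. Additivity of the resulting operation is inherited from additivity of the transformations on $(\pp^\infty)^{\times l}$, since pushforward and pullback formulas expressing the inductive construction are all additive. The hard technical work, as in the introduction, lies in setting up the rational-type description carefully enough that this verification becomes a finite, checkable list.
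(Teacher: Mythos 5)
Your overall plan coincides with the paper's strategy in outline: induction on $\ddim(X)$ using the inductive presentation of a theory of rational type, with conditions $(i)$--$(v)$ translated into algebraic conditions on the power series $\gl{l}(\alpha)(z_1,\ldots,z_l)$ (your reading of the roles of $(i)$--$(v)$ matches the conditions $(a_i)$--$(a_{iii})$ of Definition \ref{a123}, with $(ii)$ and $(v)$ reducing everything to the classes $\alpha\cdot\prod_i z_i$). However, the step you yourself single out as the main obstacle is resolved incorrectly. It is not true that the relations in the inductive presentation of $A^*(X)$ ``are determined by their image in an ambient $A^*$ of a product of $\pp^N$'s'': there is no such comparison map, and the relations to be respected are not exhausted by projective-bundle and blow-up relations. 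In the presentation actually used (the bi-complex$^*$ $\frc$ of Subsection \ref{c}, Theorem \ref{Ac}), a class in $\ov{A}^*(X)$ is represented by a pair $((Z\row X\low\wt{X}),\gamma)$ with $\gamma$ supported on a strict normal crossing divisor and, crucially, lying in $im(\rho^{!})$, and the relation $d_{0,1}$ coming from blow-ups of $X\times\pp^1$ encodes a rational-equivalence-type identification that cannot be detected on products of projective spaces. Verifying that the inductively defined $\fl{l}$ annihilates these relations is the bulk of the proof: it requires the multiple-points excess intersection formula (Proposition \ref{MPEIF}), the compatibility statement of Proposition \ref{flmp}, the fact that $\fl{l}(\gamma|V)\in im(\rho^*)$ (Proposition \ref{flpistar}, which is exactly where the $im(\rho^{!})$ requirement is used), and the boundary comparison of Proposition \ref{d01osn} with its supporting Lemmas \ref{lemd01}--\ref{lem3d01} and \ref{movingd01}. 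Note also that one must work with the pull-back-based presentation $\frc$ rather than the push-forward presentations $\fra$, $\frb$, precisely because an operation commutes with pull-backs but not with push-forwards; your sketch stays with the push-forward description throughout, and the passage from $\fra$ to $\frb$ to $\frc$ (Proposition \ref{c'aH}, Theorem \ref{Ac}) is itself a substantial part of the argument that your plan presupposes.

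Two further points. First, the theorem asserts a bijection, so you need uniqueness of the extension, i.e.\ that an additive operation vanishing on all $(\pp^{\infty})^{\times r}$ vanishes identically; this is Proposition \ref{vazhnoe} and is not a formal consequence of additivity but an induction on dimension using $(CONST)$, $(EXCI)$, resolution of singularities and the deformation to the normal cone. Second, the mechanism by which the $(\pp^{\infty})^{\times l}$-data reaches a general $X$ is not via classifying maps or the splitting principle applied to inductive generators: that data only supplies, at the base of the induction, the series $\fl{l}(\alpha)$ for $\alpha\in A^{n-l}(\spec(k))$; for higher-dimensional $X$ one substitutes the $B$-Chern classes $c_1^B(\co(D_i))$ of the components of a strict normal crossing divisor into the series already constructed on the $D_i$ (formula $(*)$), and what one propagates up the induction are the properties $(b_i)$ and the Riemann--Roch-type property $(b_{ii})$ (cf.\ Remark \ref{RR}), together with the specialization Lemma \ref{specializ}. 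As it stands, your proposal names the right framework but leaves the genuinely hard consistency verification unproved, and the shortcut you propose for it would not work.
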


In Topology an analogous result was obtained by T.Kashiwabara
in \cite[Theorem 4.2]{Kash}.
The "multiplicative" variant of our result
(Proposition \ref{mainMULT})
says that multiplicative operations correspond to families of homomorphisms as above commuting also with
the external products of projective spaces. These results permit to describe and construct operations
effectively, as one only needs to define them on $(\pp^{\infty})^{\times r}$, which is a cellular
space. As a first application, we describe all additive (unstable) operations in the Levine-Morel algebraic cobordism.
These appears to be exactly those
$\laz\otimes_{\zz}\qq$-linear combinations (infinite, in general) of the Landweber-Novikov operations
which take "integral" values on $\Omega^*((\pp^{\infty})^{\times r})$, for all $r$.
This is done in Theorem \ref{UOACpoln}:

\begin{theorem}
\label{I-UOACpoln}
Let $\psi\in\op{Hom}_{\laz}(\laz[\barbi{b}],\laz\otimes_{\zz}\qq)_{(m-n)}$ be a homomorphism of $\laz$-modules.
Denote by $S_{\psi}:\Omega^n\row\Omega^m\otimes_{\zz}\qq$ the respective $\laz\otimes_{\zz}\qq$-linear combination
of the Landweber-Novikov operations, i.e., the composition of
$$
\Omega^*\stackrel{\slnT}{\lrow}\Omega^*[\barbi{b}]\cong\Omega^*\otimes_{\laz}\laz[\barbi{b}]\stackrel{\otimes\psi}{\lrow}
\Omega^{*-n+m}\otimes_{\zz}\qq
$$
in degree $n$. Assume that $S_{\psi}$ satisfies the following integrality condition:
$S_{\psi}(\Omega^n((\pp^{\infty})^{\times r}))\subset \Omega^m((\pp^{\infty})^{\times r})$,
for all $r\geq 0$. Then there exists a unique additive operation $G_{\psi}:\Omega^n\row\Omega^m$ such that
$S_{\psi}=G_{\psi}\otimes\qq$. Moreover, every additive operation arises in this way, for a unique $\psi$.
Thus, $\psi\leftrightarrow G_{\psi}$ is
a 1-to-1 correspondence between linear combinations of Landweber-Novikov operations satisfying integrality conditions
and integral additive operations.
\end{theorem}

With the above notation, the stable operations are precisely the $G_{\psi}$ for
$\psi\in\op{Hom}_{\laz}(\laz[\barbi{b}],\laz)$, i.e., they are the $\laz$-linear combinations of the Landweber-Novikov operations.
(See Theorem \ref{stabOsln} whose proof is much simpler than the above theorem.)

Next, we get a complete description of multiplicative operations from a free theory (in the sense of Levine-Morel)
to any other theory in terms of formal group laws. It is given by Theorem \ref{multFGL}:

\begin{theorem}
\label{I-multFGL}
Let $A^*$ be a free theory, and $B^*$ be any oriented cohomology theory.
The map sending the multiplicative operation $A^*\row B^*$ to the induced homomorphism of
formal group laws $(A^*(k),F_A)\row (B^*(k),F_B)$ is a bijection.
\end{theorem}

Using this, we are able to extend a result of Panin-Smirnov and Levine-Morel on multiplicative operations
$\Omega^*\row B^*$ (see Theorem \ref{PSLM}). This is done in Theorem \ref{neobrB0}:

\begin{theorem}
\label{neobrB0}
Let $B^*$ be an oriented cohomology theory. Let $\gamma=b_0x+b_1x^2+b_2x^3+\ldots\in B^*(k)[[x]]$ be a power series
such that $b_0\in B^*(k)$ is a non zero-divisor. Then there exists a multiplicative operation $G:\Omega^*\row B^*$
with $\gamma_G=\gamma$ if and only if the twisted formal group law $F_B^{\gamma}\in B^*(k)[b_0^{-1}][[x,y]]$
has coefficients in $B^*(k)$.
In this case, such an operation is unique.
\end{theorem}

As an immediate application of this we construct integral Adams operations $\Psi_k$ in algebraic cobordism and all other free theories.
This is Theorem \ref{Adams}:

\begin{theorem}
\label{I-Adams}
For any free theory $A^*$, there are multiplicative $A^*(k)$-linear operations
$\Psi_k:A^*\row A^*$, $k\in\zz$, such that $\gamma_{{\Psi_k}}=[k]\cdot_{A}x$.
These operations do not depend on the choice of orientation of $A^*$.
In the case of $K_0$ these are the usual Adams operations.
\end{theorem}

As these unstable multiplicative operations are $A^*(k)$-linear, they are all obtained from the ones in algebraic cobordism by change of coefficients. Previously, in the case of algebraic cobordism, such operations were known only with rational coefficients
(in which case they can be expressed through Landweber-Novikov operations).

Similar considerations permit to construct the Steenrod operations in algebraic cobordism \`{a} la tom Dieck (see Theorem \ref{TTD}). Finally, using the Main Theorem \ref{MAIN} itself we
construct Symmetric operations for all primes $p$ - see Theorem \ref{SOp}. The last two results
form a separate paper \cite{SPso}, not to overburden the present text.
Aside from the mentioned major results we present various smaller ones - see Section
\ref{applications}. In particular, we show that
all operations in Chow groups mod $p$ are essentially stable (each extends to a unique stable operation),
and consist of Steenrod operations only (Theorem \ref{unstCH}), and we describe additive operations in $K_0$ (see Theorem \ref{unstK}).

Also, as a byproduct of the proof of our main theorem we obtain the Riemann-Roch Theorem for unstable
additive operations - see Theorem \ref{adRR}. It generalizes the multiplicative version obtained earlier - see \cite{P2}.

In Section \ref{basictools} we introduce some tools used in the main part of the article. In particular, various blowup results.
We also discuss combinatorial pull-backs. This version of the refined pull-backs (see \cite[Subsection 6.6]{LM}) for divisors with strict
normal crossings is given by an explicit formula. It is one of our main tools.

{\bf Acknowledgements:}
First of all, I would like to thank A.Smirnov and I.Panin for many stimulating discussions since our 2004-2005 common stay at IAS. These really influenced my way of thinking about the subject.
I want to thank O.Haution, with whom we tried to produce the geometric construction of Symmetric operations for $p=3$, and discussed various other related topics.
Also, I'm very grateful to P.Brosnan,
S.Gille, A.Kuznetsov, A.Lazarev, M.Levine, F.Morel, M.Rost, B.Totaro, V.Voevodsky, N.Yagita, S.Yagunov, and other people for many useful conversations.
Special thanks to A.Lazarev and F.Morel for pointing me in the direction of the Quillen's paper \cite{Qu71}. Finally, I would like to express my gratitude to W.Wilson, whose book \cite{Wi} gave me
the inspiration for the current article, and to T.Kashiwabara, who explained me the topological side of the picture (in particular, that many of my results were known in Topology) and drew my attention to such works as \cite{Kash,BJW,BT}.
And I'm really indebted to the Referees for numerous useful suggestions which substantially improved the exposition and simplified the
arguments, in places.
The support
of EPSRC Responsive Mode grant EP/G032556/1 is gratefully acknowledged.

\section{Algebraic Cobordism and other oriented cohomology theories}
\label{ACiGOC}

\subsection{Main definitions}
\label{MainDef}

Throughout the article $k$ will denote the base field of characteristic $0$.
$\smk$ will denote the category of smooth quasi-projective varieties over $k$,
and $\schk$ - the category of separated schemes of finite type over $k$.
Let $\ri$ be the category of graded commutative rings.

Following D.Quillen (\cite{Qu71}), I.Panin-A.Smirnov (\cite[Definition 3.1.1]{PS}), and M.Levine-F.Morel (\cite[Definition 1.1.2]{LM})
we introduce the notion of an oriented cohomology theory on $\smk$.
The only difference in comparison with \cite[Definition 1.1.2]{LM} is that
we impose the localization axiom $(EXCI)$. All the "standard" theories, like $\Omega^*$, $\op{CH}^*$
and $K_0$ do satisfy this axiom, but not their {\it completed versions} $MGL^{*,*'}$, $\op{H}_{\cm}^{*,*'}$
and $K_*$. Thus, the new axiom $(EXCI)$ is rather restrictive.
And the techniques that we develop in this article rely crucially on it.

\begin{definition}
\label{goct}
{\rm (cf. \cite[Definition 1.1.2]{LM})}
An oriented cohomology theory on $\smk$ is given by:
\begin{itemize}
\item[$(D1)$ ]  An additive (pull-back) functor $A^*: \smk^{op}\row\ri$.
\item[$(D2)$ ]  A push forward structure: for each projective morphism
$f:Y\row X$ of virtual relative codimension $d$, a homomorphism of graded $A^*(X)$-modules:
$$
f_*:A^*(Y)\row A^{*+d}(X).
$$
These data satisfy:
\item[$(A1)$] Functoriality of push-forwards: $(Id_X)_*=Id_{A^*(X)}$, and
%\item[\phantom{$(A1)$}] 
for projective morphisms $f:Y\row X$, $g:Z\row Y$
of virtual relative codimensions $d$ and $e$,
$$
(f\circ g)_*=f_*\circ g_*: A^*(Z)\row A^{*+d+e}(X).
$$
\item[$(A2)$] For pair of transversal morphisms (see \cite[5.3]{so2}) $f:X\row Z$, $g:Y\row Z$ fitting
into a cartesian square
$$
\xymatrix @-0.7pc{
W \ar @{->}[r]^(0.5){g'} \ar @{->}[d]_(0.5){f'}&
X \ar @{->}[d]^(0.5){f}\\
Y \ar @{->}[r]_(0.5){g} & Z,
}
$$
with $f$ projective of relative dimension $d$,
$$
g^*f_*=f'_*{g'}^*.
$$
\item[$(PB)$] For a rank $n$ vector bundle $E\row X$ with canonical quotient line bundle
$O(1)\row\pp(E)$, zero section $s:\pp(E)\row O(1)$, and $\xi\in A^1(\pp(E))$ defined
by
$$
\xi:=s^*s_*(1),
$$
one has: $A^*(\pp(E))$ is a free $A^*(X)$-module with basis
$$
(1,\xi,\xi^2,\ldots,\xi^{n-1}).
$$
\item[$(EH)$] For a vector bundle $E\row X$ and an $E$-torsor $p:V\row X$,
$p^*:A^*(X)\row A^*(V)$ is an isomorphism.
\item[$(EXCI)$] For a smooth quasi-projective variety $X$ with closed subscheme
$Z\stackrel{i}{\row} X$
and open complement $U\stackrel{j}{\row}X$, one has an exact sequence:
$$
A_*(Z)\stackrel{i_*}{\lrow}A_*(X)\stackrel{j^*}{\lrow}A_*(U)\row 0,
$$
where, for a smooth quasi-projective equidimensional variety $Y$, $A_*(Y)=A^{\ddim(Y)-*}(Y)$ and,
for a quasi-projective variety $Y$ which is not assumed to be smooth, $A_*(Y)=\op{colim}_{V\row Y}A_*(V)$
where $V\row Y$ are projective morphisms from a smooth quasi-projective variety $V$ and where the transition maps in
the colimit are push-forward maps.
\end{itemize}
\end{definition}

\begin{remark}
Notice, that $(D2)$ contains the projection formula.

\end{remark}

Whenever we refer to an oriented cohomology theory,
we will mean a theory satisfying the above set of axioms.

Quite often (especially, in our main results) we will need to impose an additional condition
demanding our theory to be constant along field extensions.
To formulate this condition, we set, for a finitely generated extension $L/k$,
$$
A^*(L)=\op{colim}_{U\subset X}A^*(U)
$$
where $X$ is a connected smooth quasi-projective variety such that $k(X)=L$ and $U\subset X$ runs over all
(non-empty) open subschemes of $X$. (See \cite[Subsection 4.4.1]{LM}.)
Then we have the notion of a {\it generically constant} theory
of Levine-Morel - see \cite[Definition 4.4.1]{LM}.

\begin{itemize}
\item[$(CONST)$] {\it The theory is called "generically constant" if the natural map $A^*(k)\row A^*(L)$ is
an isomorphism, for each finitely generated field extension $L/k$.
}
\end{itemize}

All standard theories are generically constant but it is easy to construct theories which are not.

\begin{example}
\label{nonconst}
Let $A^*$ be any theory (say, a generically constant one), and $Y$ be a smooth quasi-projective variety over $k$.
Then we can define a new theory: $A^*_{Y/k}(X):=A^*(Y\times_{\spec(k)}X)$. For example, we can take
$Y=\spec(L)$, where $L/k$ is a finite field extension. This theory will not be generically constant.
For example, if $L/k$ is Galois of degree $n$, then
$A^*_{L/k}(\spec(L))=\oplus_{i=1}^nA^*(\spec(L))$, while $A^*_{L/k}(\spec(k))=A^*(\spec(L))$.
\end{example}

M.Levine and F.Morel constructed the universal oriented cohomology theory
$\Omega^*$ called algebraic cobordism (see \cite[Theorem 1.2.6]{LM}). It has a unique
map to any other theory $A^*$. This theory satisfies $(CONST)$.
It is an algebraic analogue of complex cobordism in topology. Fixing a complex embedding
$k\hookrightarrow\cc$, there is a topological realization morphism $\Omega^*(X)\row MU^{2*}(X(\cc))$ which is an isomorphism
for $X=\spec(k)$.

\subsection{An associated Borel-Moore theory}

Each oriented cohomology theory on $\smk$ can be extended to a Borel-Moore
functor on $\schk$ in the sense of \cite[Definition 2.1.2]{LM} - see \cite[Remark 2.1.4]{LM}.
We will not need most of the features of such a functor, only the push-forward maps
which are completely straightforward, so will not list it's axioms here. Later, in Subsection \ref{c},
in the case of {\it theories of rational type} we will need the {\it refined pull-backs}, but those will be
deduced from the refined pull-backs in algebraic cobordism constructed by Levine-Morel \cite[Theorem 6.6.6]{LM}.

\begin{definition}
\label{non-smDef}
For a quasi-projective scheme $Z$, define $A_*(Z)=\op{colim}_{V\row Z}A_*(V)$
where $V\row Z$ are projective morphisms from a smooth quasi-projective variety $V$ and where the transition maps in
the colimit are push-forward maps.
\end{definition}

Clearly, $A_*(Z)=A_*(Z_{red})$, and if $Z=\cup_{i=1}^mZ_i$ is the decomposition into irreducible
components, then we have an exact sequence:
\begin{equation}
\label{comp-seq}
0\llow A_*(Z)\llow\bigoplus_{i=1}^mA_*(Z_i)\llow\bigoplus_{i,j=1}^mA_*(Z_i\cap Z_j).
\end{equation}
More generally, for a closed embedding $S\subset Z$ with the open compliment $U$ we have
an excision sequence:
\begin{equation}
\label{excsing-seq}
0\llow A_*(U)\llow A_*(Z)\llow A_*(S).
\end{equation}
Here (\ref{comp-seq}) follows immediately from (\ref{excsing-seq}), while the latter one can be easily reduced to the
case of a projective $Z$ which, in turn, is a simple consequence of the resolution of singularities combined
with the usual (smooth) $(EXCI)$ axiom. I leave the details of this exercise to the reader.

A priori, $A_*(Z)$ for a singular scheme $Z$ is expressed in terms of $A_*$ of infinitely many smooth
schemes. But Proposition \ref{AbmZ} shows that one has a finite presentation related
to the resolution of singularities.

\subsection{Formal group law}
\label{FGL}

Any theory in the sense of Definition \ref{goct} (even without $(EXCI)$)
has Chern classes. Namely, if $E$ is a vector bundle of dimension $d$ on $X$,
then $\xi\in A^1(\pp_X(E^{\vee}))$ (as in the axiom $(PB)$) satisfies the unique equation:
$$
\sum_{i=0}^d(-1)^ic^A_i(E)\cdot\xi^{d-i}=0,
$$
where $c^A_0(E)=1$, and $c^A_i(E)\in A^i(X)$ are some elements.
These satisfy the usual Cartan formula, and in the case of a line bundle $L$,
$c^A_1(L)=s^*s_*(1)$, where $s:X\row L$ is the zero section.

Consider the variety $\pi:Flag_X(E)\row X$ of complete flags of $E$. By construction, $\pi^*(E)$ has a natural filtration with graded
pieces of rank $1$. Then the Cartan formula implies that $\sum_{i=0}^d\pi^*c^A_i(E)=\prod_{i=1}^d(t+\lambda_i)$,
for some elements $\lambda_i\in A^1(Flag_X(E)), i=1,\ldots,d$. These are called {\it $A$-roots} of $E$.
By the $(PB)$ axiom, the map $\pi^*:A^*(X)\row A^*(Flag_X(E))$ is split injective, which permits one to make computations with $\pi^*(\alpha)$ instead of $\alpha$ and so, to use the $A$-roots.

By \cite[Theorem 2.3.13]{LM}, any theory $A^*$ as above satisfies the axiom:
\begin{itemize}
\item[$(DIM)$ ] For any line bundles $L_1,\ldots,L_n$ on a smooth $X$ of dimension $<n$,
one has:  $c^A_1(L_1)\cdot\ldots\cdot c^A_1(L_n)=0\in A_*(X)$.
\end{itemize}
Thus, any power series with coefficients in $A^*(X)$ can be evaluated on Chern classes.

To any theory $A^*$ as above one can associate
the formal group law (FGL, for short) $(A^*(k),F_A)$, and
$$
F_A(x,y)=Segre^*(t)\in A^*(k)[[x,y]]=A^*(\pp^{\infty}\times\pp^{\infty}),
$$
where
$\pp^{\infty}\times\pp^{\infty}\stackrel{Segre}{\lrow}\pp^{\infty}$ is the Segre embedding,
and $x,y,t$ are the $1$-st Chern classes of $O(1)$ of the respective copies of $\pp^{\infty}$.
We will denote the coefficients of $F_A$ by $a^A_{i,j}$. Thus,
$$
F_A(x,y)=\sum_{i,j}a^A_{i,j}\cdot x^i\cdot y^j
$$
is a homogeneous power series of degree $1$ (i.e., $a^A_{i,j}\in A^{1-i-j}(k)$), where $a^A_{0,0}=0$ and $a^A_{1,0}=a^A_{0,1}=1$.
The formal group law describes how to compute the $1$-st Chern class of a tensor product of two
line bundles in terms of the $1$-st Chern classes of the factors:
$$
c^A_1(L\otimes M)=F_A(c^A_1(L),c^A_1(M)).
$$
For general facts about formal group laws see \cite{Laz}.
The universal formal group law $(\laz,F_U)$ has a unique morphism to any formal group law,
in particular, to $(A^*(k),F_A)$. M.Levine and F.Morel have shown that, in the case of algebraic cobordism,
the respective map is an isomorphism - see \cite[Theorem 1.2.7]{LM}. In particular,
$\Omega^*(k)=\laz^*$, for any field $k$.

A theory $A^*$ is called {\it additive}, if its formal group law is additive, i.e., $F_A(x,y)=x+y$. By a result of Levine-Morel
(see \cite[Theorem 1.2.2]{LM}), $\op{CH}^*$ is the universal additive theory.

\section{Operations}
\label{op}

\subsection{The category $\nhk$}
\label{NHk}

As in topology, an operation from a theory $A^*$ to a theory $B^*$ is a natural transformation from $A^*$ to $B^*$ considered as
a contravariant functors from $\smk$, i.e., operations commute with pull-backs (but not necessarily with push-forwards).
The most commonly studied operations are the stable ones with the exception of Adams operations in K-theory. (See also \cite{so2}
where Symmetric operations (mod $2$) in algebraic cobordism are introduced.)
The aim of the current article is to develop an effective method of producing unstable operations. And, although, in the end, stable operations is not what we are after (there are more or less no questions left about them), they provide an important "coordinate system" in which one can describe unstable ones.
To be able to talk about "stability" we need to introduce some notion of {\it suspension}.
Following V.Voevodsky and I.Panin-A.Smirnov, we introduce:
\begin{definition}
\label{KatNHk}
The category $\nhk$ has objects $(X,U)$, where $X$ is a smooth quasi-projective variety over $k$, and
$U\stackrel{i}{\hookrightarrow} X$ is an open subvariety. Morphisms from $(X,U)$ to $(Y,V)$ are maps $X\stackrel{f}{\row}Y$
which map $U$ to $V$. We have a natural functor:
$$
N:\smk\lrow\nhk,
$$
sending $X$ to $(X,\emptyset)$.
\end{definition}

In $\nhk$ we have cartesian product given by:
$$
(X,U)\times (Y,V):=(X\times Y,U\times V),
$$
and we can define smash product by the formula:
$$
(X,U)\wedge(Y,V):=(X\times Y, X\times V\cup U\times Y),
$$
which permits to introduce the suspension:
\begin{definition}
\label{susp}
$$
\Sigma_T(X,U):=(X,U)\wedge(\pp^1,\pp^1\backslash 0).
$$
\end{definition}

Any theory $A^*$ in the sense of Definition \ref{goct} can be extended to
a contravariant functor $A^*:\nhk\row\ab$ as follows:
$$
A^*((X,U)):=\kker(A^*(X)\stackrel{i^*}{\row}A^*(U)),
$$
with the pull-backs naturally induced by those from $\smk$.
We have an external product:
$$
A^*((X,U))\otimes A^*((Y,V))\stackrel{\wedge}{\lrow} A^*((X,U)\wedge(Y,V)),
$$
and a canonical element $\eps^A=c^A_1(O(1))\in A^1((\pp^1,\pp^1\backslash 0))$ -
the class of a rational point. We get the natural isomorphism:
\begin{equation*}
\begin{split}
\sigma_T: A^n((X,U))&\stackrel{=}{\lrow} A^{n+1}(\Sigma_T(X,U))\\
x&\mapsto x\wedge\eps^A.
\end{split}
\end{equation*}

\begin{definition}
\label{operation}
Let $A^*$ and $B^*$ be theories in the sense of Definition \ref{goct}. An operation
$G:A^n\row B^m$ is a natural transformation between $A^n$ and $B^m$ considered as contravariant
functors from $\smk$ to the category of pointed sets. In other words, it is a family of maps
$G_X:A^n(X)\row B^m(X)$, for $X\in\smk$, commuting with pull-backs and sending zero to zero.
An operation is called additive, if the maps $G_X$ are homomorphisms of abelian groups.
\end{definition}

Note, that such an operation extends uniquely to a morphism of contravariant functors on $\nhk$.
Moreover, the condition $0\mapsto 0$ is equivalent to the existence of such an extension (since
$A^*((X,X))=0$, and there exists a morphism $(X,U)\row (X,X)$).

\begin{definition}
\label{stableoperation}
A stable operation $G:A^*\row B^{*+l}$ is a set of operations $\{G^n:A^n\row B^{n+l},\,n\in\zz\}$, which
commute with the isomorphisms $\sigma_T$.
\end{definition}

As one would expect,
\begin{proposition}
\label{stable-additive}
Any stable operation is additive.
\end{proposition}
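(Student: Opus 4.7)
The plan is to combine three ingredients: the unique extension of $G$ to a natural transformation on $\nhk$, stability ($G\circ\Sigma_T=\Sigma_T\circ G$), and a description of the addition on $A^{*}(\Sigma_T X)$ as a combination of pull-backs in $\nhk$.

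First, as remarked after Definition \ref{operation}, $G$ extends uniquely to a natural transformation of pointed functors on $\nhk$. Stability says $G$ commutes with $\Sigma_T=(-)\wedge T$, where $T:=(\pp^1,\pp^1\setminus 0)$. Since $\Sigma_T:A^n(Y)\row A^{n+1}(\Sigma_T Y)$ is a group isomorphism, the identity $G(x+y)=G(x)+G(y)$ on $A^n(X)$ is equivalent, after one suspension, to $G(\tilde x+\tilde y)=G(\tilde x)+G(\tilde y)$ for $\tilde x,\tilde y\in A^{n+1}(\Sigma_T X)$. So it is enough to show that $G$ preserves the group structure on every $A^{n+1}(\Sigma_T X)$.

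Next, I would exhibit this addition as a combination of pull-backs in $\nhk$. Fix the two rational points $0,\infty\in\pp^1$ and set $W_X:=(X\times\pp^1,\,X\times(\pp^1\setminus\{0,\infty\}))$. Consider two $\nhk$-morphisms $s_0,s_\infty:W_X\row\Sigma_T X$: the first is the identity on $X\times\pp^1$ (which is legal because $\pp^1\setminus\{0,\infty\}\subset\pp^1\setminus 0$), and the second is $s_0$ pre-composed with the involution $\iota:[u:v]\mapsto[v:u]$ of $\pp^1$ swapping $0$ and $\infty$. Excision for the two disjoint supports $X\times\{0\}$ and $X\times\{\infty\}$ yields a canonical splitting $A^{n+1}(W_X)\cong A^{n+1}(\Sigma_T X)\oplus A^{n+1}(\Sigma_T X)$ with projections $s_0^*$ and $(\iota^*)^{-1}\circ s_\infty^*$. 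The ``forgetful'' $\nhk$-morphism $(X\times\pp^1,\emptyset)\row W_X$ then pulls the class corresponding to $(\tilde x,\iota^*\tilde y)$ back to $\tilde x+\iota^*\tilde y$ inside $A^{n+1}(X\times\pp^1)$; since $\eps^A=c_1^A(\co(1))$ is $\iota$-invariant one has $\iota^*\tilde y=\tilde y$, and so addition on $A^{n+1}(\Sigma_T X)$ is realised purely by $\nhk$-pull-backs together with the excision splitting. Naturality of $G$ (combined with its compatibility with the splitting, which is itself induced by $\nhk$-pull-backs) then forces $G(\tilde x+\tilde y)=G(\tilde x)+G(\tilde y)$.

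The hard part will be verifying rigorously that this combination of excision and the forgetful pull-back recovers the \emph{ordinary} addition on $A^{n+1}(\Sigma_T X)$ --- in particular that no higher-order corrections from the formal group law $F_A$ intrude. This hinges on $\eps^A$ being the degree-one generator of $A^1(T)$ as an $A$-module, and on the supports $X\times\{0\}$ and $X\times\{\infty\}$ being disjoint, so that excision separates the ``contribution of $\tilde x$ near $0$'' from the ``contribution of $\tilde y$ near $\infty$'' without any cross-terms.
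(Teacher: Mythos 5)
Your first step (reduce to additivity on suspended classes via the stability isomorphism) matches the paper's, and the overall strategy of exhibiting addition via $\nhk$-pull-backs is also the right idea. But the specific construction breaks at exactly the point you flagged as "the hard part."

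The claimed splitting
\begin{equation*}
A^{n+1}(W_X)\;\cong\;A^{n+1}(\Sigma_T X)\oplus A^{n+1}(\Sigma_T X),
\qquad W_X=(X\times\pp^1,\,X\times(\pp^1\setminus\{0,\infty\})),
\end{equation*}
does not exist for these ``small'' theories. Indeed $A^*(X\times(\pp^1\setminus\{0,\infty\}))\cong A^*(X)$ (there is no degree-one class supported by $\mathbb{G}_m$ in a theory of the type considered here), so
\begin{equation*}
A^*(W_X)=\kker\bigl(A^*(X\times\pp^1)\to A^*(X\times\mathbb{G}_m)\bigr)
= A^*(X)\cdot\xi = A^*(\Sigma_T X)
\end{equation*}
as a subgroup of $A^*(X\times\pp^1)=A^*(X)\{1,\xi\}$: it has rank $1$ over $A^*(X)$, not rank $2$. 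The geometric reason is that $\{0\}$ and $\{\infty\}$ are linearly equivalent divisors on $\pp^1$, so $(i_0)_*(1)=(i_\infty)_*(1)=\xi$ in $A^1(\pp^1)$; the localization sequence does not give injectivity of $A_*(X\times\{0\}\sqcup X\times\{\infty\})\to A_*(X\times\pp^1)$, and the two ``contributions near $0$ and near $\infty$'' are the same class. Consequently your two maps $s_0^*$ and $(\iota^*)^{-1}\circ s_\infty^*$ agree as maps $A^*(\Sigma_T X)\to A^*(W_X)$, and there is nothing to project.

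The paper sidesteps this by using the unsmashed product $(\pp^1\times\pp^1,(\pp^1\setminus 0)^{\times 2})$, whose complement $\aaa^1\times\aaa^1$ \emph{is} $A^*$-contractible; the relative group is then $A^*(X)\{\xi_1,\xi_2,\xi_1\xi_2\}$ of rank $3$, large enough to carry the datum $(x,y,0)$. The linear identity $\gamma^*=\alpha^*+\beta^*$ (diagonal equals sum of the two face inclusions $id\times\infty$ and $\infty\times id$) on that rank-$3$ group is what plays the role of your nonexistent splitting. I'd recommend replacing the $W_X$ construction by this one; the rest of your argument then goes through essentially as you outlined.
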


\begin{proof}
Let $\alpha,\beta,\gamma:(\pp^1,\pp^1\backslash 0)\row
(\pp^1,(\pp^1\backslash 0))^{\times 2}$ be defined as follows:
$\alpha=id\times\infty$, $\beta=\infty\times id$, $\gamma=\Delta$.
The $A^*(k)$-module $A^*((\pp^1,\pp^1\backslash 0)^{\times 2})$ is freely generated by
$\eps_1^A=c^A_1(O(1,0))$, $\eps_2^A=c^A_1(O(0,1))$ and $\eps_1^A\cdot\eps_2^A$ where
$O(1,0)$ and $O(0,1)$ are the obvious two line bundles on $\pp^1\times\pp^1$.
Consequently, for $R\in \nhk$,
$$
A^*((\pp^1,\pp^1\backslash 0)^{\times 2}\wedge R)=\eps_1^A\wedge A^*(R)\oplus\eps_2^A\wedge A^*(R)\oplus
(\eps_1^A\cdot\eps_2^A)\wedge A^*(R).
$$
Using these coordinates, it is easy to see that $\gamma_R^*=\alpha_R^*+\beta_R^*$.
Let $x$ and $y$ be elements of $A^*(R)$. Then
\begin{equation*}
\begin{split}
G(\eps^A\wedge(x+y))=G(\eps^A\wedge x+\eps^A\wedge y)=G(\gamma_R^*(\eps_1^A\wedge x+\eps_2^A\wedge y))=
\gamma_R^*G(\eps_1^A\wedge x+\eps_2^A\wedge y)=\\
\alpha_R^*G(\eps_1^A\wedge x+\eps_2^A\wedge y)+\beta_R^*G(\eps_1^A\wedge x+\eps_2^A\wedge y)=
G(\eps^A\wedge x)+G(\eps^A\wedge y).
\end{split}
\end{equation*}
Since $G$ is stable, we obtain that $G(x+y)=G(x)+G(y)$.
\Qed
\end{proof}

\begin{definition}
\label{multiplicativeoperation}
A multiplicative operation $G:A^*\row B^*$ is a natural transformation between $A^*$ and $B^*$ considered as contravariant functors
from $\smk$ to the category of rings. In other words, $G_X:A^*(X)\row B^*(X)$ is a ring homomorphism for all $X\in\smk$.
\end{definition}

Let us stress that we do not require multiplicative operation to respect grading.
To a multiplicative operation $G:A^*\row B^*$, one can associate a power series
$
\gamma_G=b_0x+b_1x^2+\ldots\in B^*(k)[[x]],
$
called the {\it inverse Todd genus}.
It is uniquely determined by the following condition.
If
$x^A=c^A_1(O_{\pp^{\infty}}(1))\in A^*(\pp^{\infty})$, and similarly for $x^B$, then
$G(x^A)=\gamma_G(x^B)\in B^*(k)[[x^B]]=B^*(\pp^{\infty})$. Moreover, if $\ffi_G=G_k:A^*(k)\row B^*(k)$ is the ring homomorphism induced
by $G$ on the coefficients, then the pair $(\ffi_G,\gamma_G): (A^*(k),F_A)\row (B^*(k),F_B)$ is a morphism of formal group laws.
In other words,
$$
\ffi_G(F_A)(\gamma_G(u),\gamma_G(v))=\gamma_G(F_B(u,v)).
$$
Of course, the composition of multiplicative operations corresponds to the
composition of morphisms of formal group laws:
$$
(\ffi_{H\circ G}\,,\,\gamma_{H\circ G}(x))=(\ffi_H\circ\ffi_G\,,\,\ffi_H(\gamma_G)(\gamma_H(x))).
$$
In the case of $A^*=\Omega^*$, and $b_0$ invertible in $B^*(k)$, the homomorphism
$\ffi_G$ is completely determined by $\gamma_G$.
Namely, $\laz$ is generated as a ring by the coefficients $a^{\Omega}_{i,j}$ of the universal formal group law,
and $\ffi_G(a^{\Omega}_{i,j})$ is the respective coefficient of the formal group law
$F^{\gamma_G}_B(x,y)=\gamma_G(F_B(\gamma_G^{-1}(x),\gamma_G^{-1}(y)))$.
Moreover, we have the following result:
\begin{theorem} {\rm (Panin-Smirnov+Levine-Morel)}
\label{PSLM}
Let $\gamma=b_0x+b_1x^2+b_2x^3+\ldots\in B^*(k)[[x]]$. Assume that $b_0$ is invertible
in $B^*(k)$. Then there exists a unique multiplicative operation $G:\Omega^*\row B^*$ with $\gamma_G=\gamma$.
\end{theorem}

\begin{proof}
Having a power series $\gamma$ as above with invertible $b_0$, one can construct the new theory $\widetilde{B}$ by changing the
orientation (push-forward structure) of the theory $B^*$ using the recipe
from \cite[Theorem 5.1.4]{PS} (take $\ffi=id$, loc. cit.), so that $c^{\widetilde{B}}_1=\gamma(c^B_1)$.
The details can be found in \cite[Theorem 2.3.2]{P2}.
The same method (which is essentially due to Quillen \cite{Qu71}) is employed in \cite[99-102]{LM} and \cite{Lst} (where the definition of an oriented theory is closer to our's).
Since the pull-back structure on $B^*$ and $\wt{B}^*$ is the same, the operations $G:\Omega^*\row B^*$ with $\gamma_G=\gamma$
correspond exactly to morphisms of theories $\Omega^*\row\widetilde{B}^*$ (mapping $c_1^{\Omega}$ to $c_1^{\widetilde{B}}$).
So, the existence and uniqueness of such an operation follows from the universality of algebraic cobordism of Levine-Morel
\cite[Theorem 1.2.6]{LM}.
\Qed
\end{proof}

Below we will be able to generalize this result substantially - see Theorems \ref{neobrB0}
and \ref{OBmult}.

The following statement describes the relation between stable and multiplicative operations.

\begin{proposition}
\label{multstable}
Let $G:A^*\row B^*$ be a multiplicative operation with $\gamma_G=b_0x+b_1x^2+\ldots$.
Then $G$ is stable if and only if $b_0=1$.
\end{proposition}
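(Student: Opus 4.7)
The plan is to reduce the question to computing the value of $G$ on the single element $\eps^A\in A^1((\pp^1,\pp^1\setminus 0))$ and using the multiplicativity to propagate this information to all suspensions.

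First I would compute $G(\eps^A)$ explicitly. The element $\eps^A$ is the image in $A^1((\pp^1,\pp^1\setminus 0))$ of $c^A_1(\co(1))\in A^1(\pp^1)$, which equals $x^A|_{\pp^1}$ for the generator $x^A\in A^1(\pp^\infty)$. By the definition of $\gamma_G$, we have $G(x^A)=\gamma_G(x^B)=b_0x^B+b_1(x^B)^2+b_2(x^B)^3+\ldots$ in $A^*(\pp^\infty)$. Restricting to $\pp^1$ and applying axiom $(DIM)$ (since $\ddim\pp^1=1$, every product of two or more first Chern classes vanishes), all but the first term die, so $G(\eps^A)=b_0\cdot\eps^B$ in $B^1((\pp^1,\pp^1\setminus 0))$.

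For the ``only if'' direction, assume $G$ is stable. Apply the stability relation $G^{n+1}\circ\Sigma_T=\Sigma_T\circ G^n$ to $1\in A^0(\spec(k))$: since $\Sigma_T(1)=\eps^A$ and a multiplicative operation sends $1$ to $1$, this yields $G(\eps^A)=\eps^B$. Combined with the computation above, $b_0\cdot\eps^B=\eps^B$ in $B^1((\pp^1,\pp^1\setminus 0))$. Now $B^1((\pp^1,\pp^1\setminus 0))$ is a free $B$-module of rank one with generator $\eps^B$ (this follows from the $(PB)$ axiom applied to $\pp^1$, which gives $B^*(\pp^1)=B\oplus B\cdot\eps^B$, and the fact that $\eps^B$ restricts to zero on $\aaa^1$ by homotopy invariance), so $(b_0-1)\eps^B=0$ forces $b_0=1$.

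For the ``if'' direction, assume $b_0=1$, so $G(\eps^A)=\eps^B$. The point to verify is that multiplicativity implies that $G$ commutes with the external product of pointed pairs: for $y\in A^n((X,U))$ one has $G(y\wedge\eps^A)=G(y)\wedge G(\eps^A)$. This follows because the external product $\wedge:A^*((X,U))\otimes A^*((Y,V))\row A^*((X,U)\wedge(Y,V))$ is by construction the composition of the two pull-backs $\pi_X^*,\pi_Y^*$ followed by cup product on $X\times Y$, and $G$ commutes with pull-backs and (being multiplicative) with cup products. Hence $G(\Sigma_Ty)=G(y)\wedge\eps^B=\Sigma_T(G(y))$, which is stability.

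The only mildly delicate step is checking that $G$ commutes with external products of pointed pairs --- but this is essentially built into the definitions, as the external product is the composite of two pull-backs and a cup product. Everything else is an immediate application of $(DIM)$ on $\pp^1$ and the definition of $\gamma_G$.
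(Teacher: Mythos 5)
Your proof is correct and follows essentially the same route as the paper: multiplicativity of $G$ on $\smk$ gives multiplicativity with respect to $\wedge$ on $\nhk$, so $G(\Sigma_T x)=G(x)\wedge G(\eps^A)=b_0\,\Sigma_T(G(x))$, and stability is equivalent to $b_0=1$. Your only additions are making explicit the computation $G(\eps^A)=b_0\eps^B$ via $(DIM)$ on $\pp^1$ and the evaluation at $x=1$ together with the rank-one freeness of $B^1((\pp^1,\pp^1\setminus 0))$, which the paper leaves implicit.
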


\begin{proof}
Let $R\in\nhk$. Since $G$ is multiplicative, we have for $x\in A^*(R)$:
$$
G(x\wedge\eps^A)=G(x)\wedge G(\eps^A)=G(x)\wedge (b_0\cdot\eps^B).
$$
This shows that $G(\sigma_T(x))=b_0\cdot\sigma_T(G(x))$. Thus, $G$ is stable iff $b_0=1$.
\Qed
\end{proof}

\begin{example}
\label{LN}
Let $\slnT:\Omega^*\lrow\Omega^*[b_1,b_2,\ldots]=\Omega^*[\barbi{b}]$ be the
total Landweber-Novikov operation.
It is the multiplicative operation corresponding to the power series
$x+b_1x^2+b_2x^3+\ldots$, where $b_i$ are independent variables
(see {\rm \cite[Example 4.1.25]{LM}} and {\rm\cite{Qu71}}).
By Proposition $\ref{multstable}$ this operation is stable.
\end{example}

Any stable multiplicative operation $G:\Omega^*\row B^*$ is a specialization of $\slnT$.
Namely, for each such $G$ there exists unique morphism of theories
$\theta_G:\Omega^*[b_1,b_2,\ldots]\row B^*$ such that $G=\theta_G\circ\slnT$.
This $\theta_G$ is the canonical morphism of theories on $\Omega^*$, and sends $b_i$'s
to the coefficients of $\gamma_G$.

\subsection{Stable operations in algebraic cobordism}

We already have seen an example of a stable operation, namely the total Landweber-Novikov operation
$\slnT:\Omega^*\row\Omega^*[\barbi{b}]$. In fact, all stable operations in algebraic cobordism are deduced from
the total Landweber-Novikov operation by change of coefficients, exactly as in the topological setting. More
precisely, we have the following result.

\begin{theorem}
\label{stabOsln}
The map sending a graded $\laz$-linear morphism $\psi:\laz[\barbi{b}]\row\laz$ of degree $l$
to the composition
$$
G_{\psi}:\Omega^*\stackrel{\slnT}{\lrow}\Omega^*[\barbi{b}]=\Omega^*\otimes_{\laz}\laz[\barbi{b}]\stackrel{\otimes\psi}{\lrow}\Omega^{*+l}
$$
is a bijection from the set $\op{Hom}_{\laz}(\laz[\barbi{b}],\laz)^{\ddeg=l}$ to the set of stable operations $\Omega^*\row\Omega^{*+l}$.
\end{theorem}

\begin{proof}
Since $\slnT$ and $\otimes\psi$ are stable operations, so is their composition.
Now, let $G:\Omega^*\row\Omega^{*+l}$ be a stable operation.
Then $G$ is additive. In particular, $G_k$ is an
additive graded homomorphism $\laz^*\row\laz^{*+l}$.
Consider the commutative diagram:
$$
\xymatrix @-0.2pc{
\laz \ar @{->}[r]^(0.5){\slnT} \ar @{^{(}->}[d]_(0.5){}&
\laz[\barbi{b}] \ar @{^{(}->}[d]^(0.5){}\\
\zz[\barbi{d}] \ar @{->}[r]_(0.4){S} & \zz[\barbi{d}][\barbi{b}],
}
$$
where the vertical maps are induced by the natural embedding of rings
$\laz\hookrightarrow\zz[d_1,d_2,\ldots]$ corresponding to the twist of the
additive formal group law by the change of parameter: $\delta(y)=y+d_1y^2+d_2y^3+\ldots$ (see \cite{Ad}, \cite{Qu71}),
and $S$ maps $d_i$ to the $i$-th coefficient $e_i$ of the power series
$\rho(y)=\beta(\delta(y))$ with $\beta(x)=x+b_1x^2+b_2x^3+\ldots$.
In particular, the vertical maps are isomorphisms after tensoring with $\qq$,
\begin{equation*}
\begin{split}
&\zz[\barbi{d}][\barbi{b}]=\zz[\barbi{d}][\barbi{e}]\hspace{5mm}\text{and}\hspace{5mm}
\laz[\barbi{b}]\otimes_{\zz}\qq=\laz[\barbi{e}]\otimes_{\zz}\qq.
\end{split}
\end{equation*}
We now claim that there exists a unique graded $\laz$-linear map $\psi_G:\laz[\barbi{b}]\row\laz\otimes_{\zz}\qq$ of degree $l$
such that the composition
$$
\laz\stackrel{\slnT}{\lrow}\laz[\barbi{b}]\stackrel{\psi_G}{\lrow}\laz\otimes_{\zz}\qq
$$
factors through the additive homomorphism $G_k:\laz\row\laz$. Indeed, by the preceding discussion, $G_k$ induces an additive
homomorphism $G_k\otimes\qq:\qq[\barbi{d}]\row\qq[\barbi{d}]$ and we need to show that there exists a $\qq[\barbi{d}]$-linear map
$\psi'_G:\qq[\barbi{d}][\barbi{b}]\row\qq[\barbi{d}]$ such that $G_k\otimes\qq=\psi'_G\circ(S\otimes\qq)$.
But this is now clear since $\qq[\barbi{d}][\barbi{b}]=\qq[\barbi{d}][\barbi{e}]$ and the map $S$ sends $d_i$ to $e_i$.
Consider the operation:
$$
H=G-\psi_G\circ\slnT:\Omega^*\lrow\Omega^{*+l}\otimes_{\zz}\qq.
$$
Let us show that $H=0$.

\begin{lemma}
\label{lemstab1}
Let $H:A^*\row B^{*+l}$ be a stable operation such that $H_X=0$. Then $H_{X\times\pp^1}=0$.
\end{lemma}

\begin{proof}
The maps $\xymatrix @-0.2pc{\pp^1 \ar @/^0.2pc/ @{->}[r] &\spec(k) \ar @/^0.2pc/ @{->}[l]}$
define the decomposition:
$C^*(X\times\pp^1)=C^*(X)\oplus C^*(\Sigma_T X)$ respected by additive operations.
Moreover, since $H$ is stable, and $H_X$ is zero, so is $H_{\Sigma_T X}$. Hence,
$H_{X\times\pp^1}=0$.
\Qed
\end{proof}

\begin{lemma}
\label{lemstab2}
Let $H:A^*\row B^{*+l}$ be a stable operation such that $H_k:A^*(k)\row B^{*+l}(k)$ is zero.
Assume that $B^*(k)$ has no torsion.
Then $H_{(\pp^{\infty})^{\times r}}:A^*((\pp^{\infty})^{\times r})\row B^{*+l}((\pp^{\infty})^{\times r})$
is zero for all $r$.
\end{lemma}

\begin{proof}
We need to show that $H_{(\pp^N)^{\times r}}=0$, for all $N$ and $r$. Consider the map
$p: ((\pp^1)^{\times N})^{\times r}\row (\pp^N)^{\times r}$ which is the $r$-th power of a linear projection
$q$ from $(\pp^1)^{\times N}\subset\pp^M$
to $\pp^N$. Let $x=c^B_1(O(1))$ be the 1-st Chern class of the canonical line bundle on $\pp^N$, and $x_1,\ldots,x_N$ be the 1-st Chern 
classes of the canonical line bundles on $\pp^1$-factors. Then
\begin{equation*}
\begin{split}
&q^*(x)=x_1+_Bx_2+_B\ldots+_Bx_N\equiv x_1+\ldots+x_N\,\,(\,mod\, \ddeg>1),\hspace{5mm}\text{and so}\\
&q^*(x^k)\equiv \sum_{\substack{I\subset \{1,\ldots,N\}\\ \#(I)=k}}\binom{N}{k}\prod_{i\in I}x_i\,\,(\,mod\,\ddeg>k).
\end{split}
\end{equation*}
Since $B^*(k)$ has no torsion, this proves that
$p^*:B^*((\pp^N)^{\times r})\row B^*(((\pp^1)^{\times N})^{\times r})$ is injective.
By Lemma \ref{lemstab1},
$H_{(\pp^{\infty})^{\times r}}=0$.
\Qed
\end{proof}

\begin{remark}
The condition that $B$ has no torsion is essential. Take, for example $A^*=B^*=\op{CH}^*/2$, and $H=G_1-G_2$, where $G_1=id$ with $\gamma_{G_1}=x$ and $G_2=\stT$ with $\gamma_{G_2}=x+x^2$ - the Total Steenrod operation.
Then $\ffi_{G_1}=\ffi_{G_2}$ since there exists only one homomorphism of rings $\zz/2\row\zz/2$, and
so $H_k=0$. At the same time, for $x=c^A_1(O(1))$, we have $H_{\pp^{\infty}}(x)=\gamma_{G_1}(x)-\gamma_{G_2}(x)=x^2\neq 0$.
\end{remark}

\begin{proposition}
\label{vazhnoe}
Let $A^*$ and $B^*$ be two theories in the sense of Definition \ref{goct}, and assume that $A^*$ satisfies $(CONST)$.
Let $H:A^n\row B^m$ be an additive operation (not necessarily stable!) such that
$H_{(\pp^{\infty})^{\times r}}=0$, for any $r$. Then $H=0$.
\end{proposition}

\begin{proof}
Let us prove by induction on the dimension of $X$ that $H_{X\times(\pp^{\infty})^{\times r}}=0$,
for all $r$.
The base ($\ddim(X)=0$) follows from our conditions.
Suppose $\ddim(X)=d$, and the statement is known for varieties of smaller dimension.
We know that $A^*(X\times(\pp^N)^{\times r})$ is a free module over $A^*(X)$ with basis consisting
of monomials $\ov{\xi}^{\ov{m}}=\prod_{i=1}^r\xi_i^{m_i}$ with $0\leq m_i\leq N$, where $\xi_i=c^A_1(O(1)_i)$ and
$O(1)_i$ is the line bundle on $(\pp^N)^{\times r}$ obtained by pulling back $O(1)$ along the projection to the $i$-th factor. Thus,
it is sufficient to prove that $H(x\cdot\ov{\xi}^{\ov{m}})=0$, for any $x\in A^{n-\sum_im_i}(X)$, for any $\ov{m}$.
Because $A^*$ satisfies $(CONST)$, we have: $H(x|_{\spec(k(X))}\cdot\ov{\xi}^{\ov{m}})=0$, and
by additivity of $H$ we can assume that $x|_{\spec(k(X))}=0$, that is, $x$ is supported on some
closed subvariety $Y\subset X$ (here we use $(EXCI)$). By Hironaka's resolution of singularities (see Theorem \ref{Hi}),
there exists a permitted blow up (see Definition \ref{perm-bu})
$\pi:\wt{X}\row X$ with centers over $Y$ and of dimension $<\ddim(Y)$,
such that the strict transform $\wt{Y}$ of $Y$ is smooth. Since $\pi^*:B^*(X)\row B^*(\wt{X})$ is
injective, it is sufficient to show that $H(\pi^*(x)\cdot\ov{\xi}^{\ov{m}})=0$. We have:
\begin{equation*}
\begin{split}
&\wt{X}=X_n\stackrel{\pi_n}{\row}X_{n-1}\stackrel{\pi_{n-1}}{\row}\ldots\stackrel{\pi_2}{\row}
X_1\stackrel{\pi_1}{\row}X_0=X\\
&\wt{Y}=Y_n\stackrel{\pi'_n}{\row}Y_{n-1}\stackrel{\pi'_{n-1}}{\row}\ldots\stackrel{\pi'_2}{\row}
Y_1\stackrel{\pi'_1}{\row}Y_0=Y,
\end{split}
\end{equation*}
where $X_{i+1}=Bl_{Z_i}X_i$, $Z_i\subset X_i$ is smooth of dimension $<\ddim(Y)$, and $Y_{i+1}$ is the strict transform of $Y_i$.
Let $y_i\in A^*(X_i)$ be some element with support on $Y_i$. Then it follows from (\ref{comp-seq}) (after the Definition \ref{non-smDef}) 
that $\pi_{i+1}^*(y_i)=y_{i+1}+u_{i+1}$,
where $y_{i+1}$ has support in $Y_{i+1}$ and $u_{i+1}$ has support in the special divisor
$\pp_{Z_i}(N_{Z_i\row X_i})$.

\begin{lemma}
\label{lem1stabO}
Let $H:A^n\row B^m$ be an additive operation. Let $X$ be a smooth quasi-projective variety, and let $Z\subset X$ be a smooth closed
subvariety of $X$ of codimension $l$. Consider the regular closed immersions $f:Z\hookrightarrow X$ and
$g:Z\hookrightarrow\pp_Z(N_f\oplus O)$.
Then, for every $u\in A^{n-l}(Z)$, the following implication holds: $H(g_*(u))=0 \hspace{2mm}\Rightarrow\hspace{2mm} H(f_*(u))=0$.
\end{lemma}

\begin{proof}
We use the deformation to the normal cone construction.
We have varieties $\wt{W}=Bl_{Z\times\{0\}}(X\times\aaa^1)$, $\wt{Z}=Z\times\aaa^1$,
$W_0=\pp_Z(N_f\oplus\co)$, $W_1=X\times\{1\}$, fitting into the diagram:
$$
\xymatrix @-0.2pc{
W_0 \ar @{->}[r]^(0.5){i_0} &
\wt{W}  & W_1 \ar @{->}[l]_(0.5){i_1}\\
Z \ar @{->}[r]_(0.5){j_0} \ar @{->}[u]^(0.5){g}& \wt{Z} \ar @{->}[u]^(0.5){h}&
Z \ar @{->}[l]^(0.5){j_1} \ar @{->}[u]_(0.5){f}
}
$$
with both squares transversal cartesian.
Let $\wt{Z}\stackrel{p}{\lrow}Z$ be the natural projection.
Since $B^*$ satisfies $(EXCI)$, $H(h_*p^*(u))$ has support in $\wt{Z}$. That is,
$H(h_*p^*(u))=h_*(v)$, for some $v\in B^*(\wt{Z})$. Then $i_0^*H(h_*p^*(u))=
H(i_0^*h_*p^*(u))=H(g_*j_0^*p^*(u))=H(g_*(u))=0$ should be equal to $i_0^*h_*(v)=g_*j_0^*(v)$.
But $j_0^*$ is an isomorphism, and $g_*$ is an injection. Hence, $v=0$, and so $H(h_*p^*(u))=0$.
This implies that: $0=i_1^*H(h_*p^*(u))=H(i_1^*h_*p^*(u))=H(f_*j_1^*p^*(u))=H(f_*(u))$.
\Qed
\end{proof}

\begin{lemma}
\label{lem2stabO}
Let $V$ be a vector bundle on $Z$, and $P=\pp_Z(V)$.
Let $H:A^n\row B^m$ be an additive operation s.t. $H_{Z\times(\pp^{\infty})^{\times r}}=0$,
$\forall r$. Then $H_{P\times(\pp^{\infty})^{\times r}}=0$, $\forall r$.
\end{lemma}

\begin{proof}
$A^*(P)$ as an $A^*(Z)$-module is generated by powers of $c^A_1(O_P(1))$. There are
very ample line bundles $L_1,L_2$ on $P$ such that $O_P(1)=L_1\otimes L_2^{-1}$. Hence, any element
in $A^*(P)$ can be written as an $A^*(Z)$-linear combination of
$c^A_1(L_1)^{m_1}\cdot c^A_1(L_2)^{m_2}$. And each such element is a pull-back of a certain
element from $A^*(Z\times(\pp^{\infty})^{\times 2})$. Thus, any element from
$A^*(P\times(\pp^N)^{\times r})$ is a sum of elements pulled back from $A^*(Z\times (\pp^M)^{\times r+2})$,
and so $H$ must be trivial on it.
\Qed
\end{proof}

\begin{lemma}
\label{lem3stabO}
Let $f:Z\hookrightarrow X$ be a closed immersion between smooth varieties.
Assume that $H_{Z\times(\pp^{\infty})^{\times r}}=0$ for all $r$.
Then $H_{X\times(\pp^{\infty})^{\times r}}$ is zero on the image of
$(f\times id)_*:A^*(Z\times(\pp^{\infty})^{\times r})\row A^*(X\times(\pp^{\infty})^{\times r})$ for all $r$.
\end{lemma}

\begin{proof}
Follows immediately from Lemmas \ref{lem1stabO} and \ref{lem2stabO}.
\Qed
\end{proof}

We now return to the proof of Proposition \ref{vazhnoe}.
Take now $y_0=x$, and construct the elements $y_i,u_i$ as above. 
Since $u_{i+1}$ has support on a smooth subvariety $\pp_{Z_i}(N_{Z_i\subset X_i})$,
it follows from the inductive assumption and Lemma \ref{lem3stabO} that
$H(u_{i+1}\cdot\ov{\xi}^{\ov{m}})=0$ and, thus, $H(u_{i+1}|_{\wt{X}}\cdot\ov{\xi}^{\ov{m}})=0$.
Then $\wt{y}=y_n$ has support in $\wt{Y}$,
and by the above, $H(\wt{y}\cdot\ov{\xi}^{\ov{m}})=H(\pi^*(x)\cdot\ov{\xi}^{\ov{m}})$. Thus, we can reduce
to the case where $x$ has support on a smooth subvariety $Y\subset X$, where it follows
from the inductive assumption and Lemma \ref{lem3stabO}.
Induction step is done, and Proposition \ref{vazhnoe} is proven.
\Qed
\end{proof}

We now return to the proof of Theorem \ref{stabOsln}. By Lemma \ref{lemstab2}  and Proposition \ref{vazhnoe},
the composition
$\Omega^*\stackrel{G}{\lrow}\Omega^*\hookrightarrow\Omega^*\otimes_{\zz}\qq$ coincides with the
composition
$\Omega^*\stackrel{\slnT}{\lrow}\Omega^*[\barbi{b}]\stackrel{\psi_G}{\lrow}\Omega^*\otimes_{\zz}\qq$,
that is $G$ is a linear combination (infinite, in general) of the Landweber-Novikov operations.
It remains to show that $\psi_G:\laz[\barbi{b}]\row\laz\otimes\qq$ takes values in $\laz$. As $\psi_G$ is
$\laz$-linear, it is enough to show that $\psi_G(\ov{b}^{\ov{r}})\in\laz$. We argue by induction on the degree
of the monomial $\ov{b}^{\ov{r}}$. (When the degree is zero, the result follows from the fact that
$\psi_G(1)=\psi_G\circ\slnT(1)=G_k(1)$ which belongs to $\laz$.)
Consider $X=\times_i(\pp^{i+1})^{\times r_i}$, and
$x=\times_i(h_i)^{\times r_i}$, where $h_i=c^{\Omega}_1(O_{\pp^{i+1}}(1))$.
First, we compute $\slnT(x)$. As $\slnT$ is multiplicative, we have
$$
\slnT(x)=\times_i\slnT(h_i)^{\times r_i}.
$$
Moreover, by the very definition of $\slnT$ (see Example \ref{LN}),
$$
\slnT(h_i)=h_i+h_i^2\cdot b_1+h_i^3\cdot b_2+\ldots+h_i^{i+1}\cdot b_i
$$
and $h_i^{i+1}=[pt]$ is the class of a point in $\pp^{i+1}$. It follows that
$$
\slnT(x)=[pt]\cdot\ov{b}^{\ov{r}}+\sum_{\ddeg(\ov{s})<\ddeg(\ov{r})}\mu_{\ov{s}}\cdot\ov{b}^{\ov{s}}
$$
for some coefficients $\mu_{\ov{s}}\in\Omega^*(X)$. By our induction hypothesis $\psi_G(\ov{b}^{\ov{s}})\in\laz$.
Therefore, to show that $\psi_G(\ov{b}^{\ov{r}})\in\laz$, it is enough to show that $\psi_G(\slnT(x))\in\Omega^*(X)$.
But, we already know that $\psi_G(\slnT(x))=G(x)$. This proves that $\psi_G$ takes values in $\laz$.
Using \ref{lemstab2} and Proposition \ref{vazhnoe} again, we obtain that $G=\psi_G\circ\slnT$ integrally.
This finishes the proof of Theorem \ref{stabOsln}.
\Qed
\end{proof}

\subsection{Unstable operations in algebraic cobordism (uniqueness)}

Unstable operations can be described in terms of stable ones.
In analogy with topology we have:

\begin{theorem}
\label{unstUNIQ}
Let $G:\Omega^n\row\Omega^m$ be an additive operation. Then there exists unique
$\psi_G\in\op{Hom}_{\laz}(\laz[\barbi{b}],\laz\otimes_{\zz}\qq)_{(m-n)}$ such that
$G\otimes\qq:\Omega^n\row\Omega^m\otimes\qq$ coincides with the composition
$\Omega^*\stackrel{\slnT}{\lrow}\Omega^*[\barbi{b}]\stackrel{\otimes\psi_G}{\lrow}\Omega^{*+m-n}\otimes_{\zz}\qq$
in degree $n$. This way, the set of additive operations $G:\Omega^n\row\Omega^m$ is identified with a subset
of $\op{Hom}_{\laz}(\laz[\barbi{b}],\laz\otimes_{\zz}\qq)_{(m-n)}$.
\end{theorem}

\begin{proof}
By Proposition \ref{vazhnoe} we know that any additive operation $G:\Omega^n\row\Omega^m$
is completely determined by it's action on $\Omega^n((\pp^{\infty})^{\times r})$, for all $r$. Thus, it
is sufficient to show that there exists a unique $\laz\otimes_{\zz}\qq$-linear combination of the
Landweber-Novikov operations which coincides with $G\otimes\qq$ on $\Omega^n((\pp^{\infty})^{\times r})$,
for all $r$.
We have mutually inverse operations:
$$
\xymatrix @-0.2pc{
\Omega^*\otimes_{\zz}\qq \ar @/^0.5pc/  @{->}[r]^(0.45){\alpha} &
\op{CH}^*\otimes_{\zz}\qq[\barbi{d}]\ar @/^0.5pc/ @{->}[l]^(0.55){\beta},
}
$$
where $\gamma_{\alpha}^{-1}=x+d_1x^2+d_2x^3+\ldots=\ffi_{\alpha}(log_{\Omega})$, and
$\gamma_{\beta}=log_{\Omega}$.
Thus, we obtain a commutative diagram:
$$
\xymatrix @-0.7pc{
\Omega^n\otimes_{\zz}\qq \ar @{->}[r]^(0.5){G\otimes\qq} &
\Omega^m\otimes_{\zz}\qq \ar @{->}[d]^(0.5){\alpha}\\
(\op{CH}^*\otimes_{\zz}\qq[\barbi{d}])_{(n)} \ar @{->}[r]_(0.5){H} \ar @{->}[u]^(0.5){\beta} & (\op{CH}^*\otimes_{\zz}\qq[\barbi{d}])_{(m)},
}
$$
where $H$ is an additive operation between additive theories.

Let $A^*$ and $B^*$ be two theories in the sense of Definition \ref{goct}.
Let $x_i=c^A_1(O_{\pp^{\infty}}(1)_i)$ and $y_i=c^B_1(O_{\pp^{\infty}}(1)_i)$, where $O_{\pp^{\infty}}(1)_i$ is the pull-back of the
canonical line bundle $O_{\pp^{\infty}}(1)$
along the projection to the $i$-th component $\pi_i:(\pp^{\infty})^{\times r}\row\pp^{\infty}$.
Then $A^*((\pp^{\infty})^{\times r})$ is a free $A^*(k)$-module with the monomial basis $\ov{x}^{\ov s}=x_1^{s_1}\cdots x_r^{s_r}$,
and $B^*((\pp^{\infty})^{\times r})$ is a free $B^*(k)$-module with the basis $\ov{y}^{\ov s}$.

\begin{lemma}
\label{lemUNST}
Let $H:A^n\row B^m$ be an additive operation of additive theories. Suppose $B^*(k)$ has no torsion.
Then there exists unique homomorphism of abelian groups
$A^*(k)\stackrel{\wt{H}}{\row}B^{*+m-n}(k)$ such that\\
$H(u\cdot\ov{x}^{\ov{s}})=\wt{H}(u)\cdot\ov{y}^{\ov{s}}$, for all $\ov{s}$ and all $u\in A^{n-\ddeg(\ov{s})}(k)$.
\end{lemma}

\begin{proof}
Because of the partial diagonals, it is sufficient to treat the case
$\ov{x}^{\ov{s}}=x_1\cdot x_2\cdot\ldots\cdot x_r$.
We need to show that $H(u\cdot\ov{x}^{\ov{s}})$ is a multiple of $\ov{y}^{\ov{s}}$.
In other words, that it is poly-linear in $y_i$'s.
Let $P_i=\prod_{\substack{1\leq j\leq r\\ j\neq i}}\pp^{\infty}=(\pp^{\infty})^{\times (r-1)}$.
Consider the theories
$(A')^*:=A^*_{P_i/k}$ and $(B')^*:=B^*_{P_i/k}$ (Example \ref{nonconst}).
Identifying $A^*((\pp^{\infty})^{\times r})$ with $(A')^*(\pp^{\infty})$ and
$B^*((\pp^{\infty})^{\times r})$ with $(B')^*(\pp^{\infty})$ we reduce the problem to the case $r=1$.
In the case of one variable, consider the Segre embedding
$\pp^{\infty}\times\pp^{\infty}\stackrel{f}{\hookrightarrow}\pp^{\infty}$.
Then $f^*(u\cdot x)=u\cdot x_1+u\cdot x_2$ (recall that the theory $A^*$ is additive).
Let $H(u\cdot x)=\gamma(y)=\gamma_0+\gamma_1y+\gamma_2y^2+\ldots\in B^*[[y]]$.
Restricting along $\spec(k)\hookrightarrow\pp^{\infty}$, we see that $\gamma_0=0$.
Write $\gamma(y)=\gamma_1\cdot y+\gamma_s\cdot y^s+\ldots$ with $\gamma_s\neq 0$. Then from the equality:
$f^*(H(u\cdot x))=H(f^*(u\cdot x))$, we get:
$$
\gamma(y_1+y_2)=\gamma(y_1)+\gamma(y_2).
$$
Comparing coefficients at $y_1\cdot y_2^{s-1}$, we obtain: $s\cdot\gamma_s=0$.
Since $B$ has no torsion, we get that $\gamma(y)=\gamma_1\cdot y$ is linear.
Thus, we have shown that $H(u\cdot(x_1\cdot\ldots\cdot x_r))=v\cdot(y_1\cdot\ldots\cdot y_r)$,
and the correspondense $u\mapsto v$ defines an additive map
$A^{n-r}(k)\stackrel{\wt{H}}{\row}B^{m-r}(k)$. The uniqueness of $\wt{H}$ is obvious.
\Qed
\end{proof}

The map $\ffi_G:\laz\row\laz\otimes_{\zz}\qq\xrightarrow{\beta\circ\wt{H}\circ\alpha|_{\spec(k)}}
\laz\otimes_{\zz}\qq$ is additive.
As we saw in the proof of Theorem \ref{stabOsln}, this map can be presented as the composition:
$S_k:\laz\stackrel{\slnT}{\lrow}\laz[\barbi{b}]\stackrel{\otimes\psi}{\lrow}\laz\otimes_{\zz}\qq$,
for a unique $\psi\in\op{Hom}_{\laz}(\laz[\barbi{b}],\laz\otimes_{\zz}\qq)_{(m-n)}$.
Note that, for the respective $\laz\otimes_{\zz}\qq$-linear combination of the Landweber-Novikov operations
$S:\Omega^*\stackrel{\slnT}{\lrow}\Omega^*[\barbi{b}]\stackrel{\otimes\psi}{\lrow}\Omega^{*-n+m}\otimes_{\zz}\qq$ (in degree $n$),
the analogous map $\ffi_S:\laz\row\laz\otimes_{\zz}\qq$ coincides with $S_k$
(since the operations $\slnT$, $\alpha$ and $\beta$ are multiplicative and stable), and so, with $\ffi_G$.
Then Lemma \ref{lemUNST} shows that on $(\pp^{\infty})^{\times r}$, for all $r$, $G$ coincides with $S$.
\Qed
\end{proof}

The natural question arises: which rational combinations of Landweber-Novikov operations
correspond to (unstable) operations $\Omega^n\row\Omega^m$?
This question will be effectively answered later in this paper; see Theorem \ref{UOACpoln} which is one of our main results.

\section{Theories of rational type}
\label{TRT}
Our method of constructing unstable operations relies on a description of the source theory which is inductive on dimensions.
Not all theories admit such a description; those who do will be called of {\it rational type}.
Later we will see that these are exactly the {\it free} theories
of M.Levine-F.Morel. The needed description of the theory will be obtained
in stages. The one which is actually used is provided by the short bi-complex $\frc$,
but to get there we will need to introduce short bi-complexes $\fra$ and $\frb$,
and to show that the Levine-Morel algebraic cobordism is a theory of
{\it rational type}.

\subsection{The short bi-complex $\fra$}
\label{a}
Everywhere in this and the next Subsection we will assume that
$A^*$ is a theory in the sense of Definition \ref{goct} satisfying $(CONST)$.
Some statements are valid without the latter assumption, which will be indicated.
Let $X$ be a smooth irreducible variety over $k$.
Consider the category $\smu(X)$ whose objects are maps
$V\stackrel{v}{\row}X$, where $V$ is smooth, $v$ is projective, and $\ddim(V)<\ddim(X)$,
and morphisms are projective maps $V_2\stackrel{f}{\row}V_1$ such that $v_2=v_1\circ f$.

Similarly, we have a category $\smu^1(X)$ whose objects are
maps $W\stackrel{w}{\row}X\times\pp^1$, where $W$ is smooth, $w$ is projective,
$\ddim(W)\leq\ddim(X)$, and
$W_0=w^{-1}(X\times\{0\})\stackrel{i_0}{\hookrightarrow} W$,
$W_{\infty}=w^{-1}(X\times\{\infty\})\stackrel{i_{\infty}}{\hookrightarrow} W$ are divisors
with strict normal crossings. The morphisms are projective maps $W_2\stackrel{g}{\row}W_2$ such that
$w_2=w_1\circ g$.

We have natural maps $\partial_0,\partial_{\infty}:\smu^1(X)\row\smu(X)$ defined by:
$$
\partial_l(W\stackrel{w}{\row}X\times\pp^1)=\komp{W}_l\stackrel{\komp{w}_l}{\row}X
$$
where, for a divisor with strict normal crossings $D$ with irreducible components
$D_1,\ldots,D_r$, $\komp{D}=\coprod_{\emptyset\neq J\subset \{1,\ldots,r\}}D_J$ with $D_J=\cap_{j\in J}D_j$ (see Definition \ref{ncd}).

Below we will use the term {\it short bi-complex} for a bi-complex which is
zero except possibly in homological degrees $(0,0)$, $(1,0)$ and $(0,1)$.
Consider the following short bi-complex ${\fra}={\fra}(A^*)$:
$$
\begin{CD}
\Da_{1,0}& @>{d_{1,0}}>>& \Da_{0,0}\\
@.& & @AA{d_{0,1}}A\\
@.& & \Da_{0,1}
\end{CD},
$$
where
\begin{itemize}
\item[$\cdot$ ]
$\Da_{0,0}=\bigoplus\limits_{V\in Ob(\smu(X))} A_*(V)$; \hspace{10mm}
$\Da_{1,0}=\bigoplus\limits_{V_2\row V_1\in\cmor(\smu(X))}A_*(V_2)$; \hspace{10mm}
$\Da_{0,1}=\bigoplus\limits_{W\in Ob(\smu^{1}(X))}A_{*+1}(W)$.
\end{itemize}
and the differentials are defined as follows:
\begin{itemize}
\item[$\cdot$ ] $d_{1,0}(V_2\stackrel{f}{\row}V_1,y)=
(V_2,y)-(V_1,f_*(y))$;
\item[$\cdot$ ] $d_{0,1}(W,z))=
(\partial_0W,i_0^{\sstar}(z))-(\partial_{\infty}W,i_{\infty}^{\sstar}(z))$ - see
Definition \ref{starpullback}, where we use the standard choice for the coefficients
$F^{l_1,\ldots,l_r}_J$ (as soon as we pass to $\coker(d_{1,0}^{\fra})$ the latter becomes
irrelevant).
\end{itemize}

We denote by $H(\fra)$ the $0$-th homology of the total complex $Tot(\fra)$ of $\fra$.
In other words,
$$
H({\fra})=\coker(\Da_{1,0}\oplus \Da_{0,1}\xrightarrow{d_{1,0}\oplus d_{0,1}}\Da_{0,0}).
$$

Assume that $X$ is connected and that $A^*$ satisfies $(CONST)$. Then the restriction to the
generic point $A^*(X)\row A^*(k(X))$ is surjective and has a canonical section given by
$A^*(k)\row A^*(X)$. Setting $\ov{A}^*(X)=\kker(A^*(X)\row A^*(k(X)))$, one gets a canonical decomposition
$A^*(X)=\ov{A}^*(X)\oplus A^*(k)$.

The push-forwards define a natural map $\Da_{0,0}\row \ov{A}_*(X)$, and it follows from
Proposition \ref{MPcor} that it descends to a map $\theta_{\fra}: H({\fra})\row\ov{A}_*(X)$.
By $(EXCI)$ and resolution of singularities (Theorem \ref{Hi}), this map is surjective.

\begin{definition}
\label{DRT}
Let $A^*$ be an oriented cohomology theory in the sense of Definition \ref{goct}
satisfying $(CONST)$. We say that $A^*$ is "of rational type" if the map
$\theta_{\fra}: H({\fra})\row\ov{A}_*(X)$ is an isomorphism.
\end{definition}

\begin{remark} Not all constant theories are of rational type. For example,
$\op{CH}_{alg}$ - the Chow groups modulo algebraic equivalence is not such.
Indeed, in this case, for a curve $C$, the map $\theta_{\fra}$ can be identified
with the natural map $\ov{CH}_*(C)\row\ov{CH}_{alg,*}(C)$, which has a nontrivial kernel
when the genus of $C$ is nonzero.
\end{remark}

Below, we will see that rational theories are precisely the free theories in the sense of Levine-Morel
(see Proposition \ref{KlassRT}). In fact, this is an easy consequence of the following result.

\begin{proposition}
\label{CobRT}
The Levine-Morel
algebraic cobordism is a theory of rational type.
\end{proposition}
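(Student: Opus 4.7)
The surjectivity of $\theta_{\fra}$ has already been established before the proposition via $(EXCI)$ and Hironaka's resolution of singularities, so the real content is injectivity: every relation among classes $[V\xrightarrow{v} X]$ with $\ddim V<\ddim X$ in $\ov{\Omega}_*(X)$ must be generated by the two types of relations encoded by $d_{1,0}$ and $d_{0,1}$.

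I would attack this by matching $\fra$ against a concrete generators-and-relations presentation of $\Omega_*$. The most convenient candidate is the Levine-Pandharipande presentation \cite{LP}, which writes $\Omega_*(X)$ as the free abelian group on classes $[V\xrightarrow{v} X]$ (with $V$ smooth and $v$ projective) modulo double-point degeneration relations. A double-point degeneration arises from a smooth projective $W\to X\times\pp^1$ with $W_0$ smooth and $W_1=A\cup B$ a transverse union of two smooth divisors meeting in $D$, and imposes $[W_0\to X]=[A\to X]+[B\to X]-[\pp(\cN_{D/A}\oplus\cO)\to X]$ in $\Omega_*(X)$. Push-forward relations of type $d_{1,0}$ are automatic. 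The double-point relations should appear as $d_{0,1}$ applied to the unit class $1_W$ once Definition \ref{starpullback} is unwrapped: the ``standard'' coefficients $F_J^{l_1,\ldots,l_r}$ are designed precisely so that the pull-back of the unit class along an SNC divisor expands via the formal group law into the expected alternating sum over the faces, with the projective-bundle term attached to the intersection locus $D$. Alternatively, and in line with the paper's own remark that the proof relies on M.Levine's comparison theorem (\cite{Lcomp}), I would invoke $\Omega^*(X)\cong MGL^{2*,*}(X)$ to import the cellular bordism-theoretic presentation of $MU$-cycles into the algebraic setting, where the analogue of $\fra$ is manifestly exact.

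The main obstacle is the bookkeeping in the second step: verifying that $d_{0,1}$, equipped with the standard FGL-twisted pull-back $i_l^{\star}$ from Definition \ref{starpullback}, produces precisely the double-point degeneration expression on $1_W$, and more generally produces the correct FGL-twisted SNC relations on arbitrary $z\in\Omega_*(W)$. This amounts to identifying the formal-group-law coefficients appearing in the definition of $i_l^{\star}$ with the geometric data of the degeneration. Once this identification is in place, every relation in the Levine-Pandharipande (or Levine-Morel) presentation lies visibly in the image of $d_{1,0}\oplus d_{0,1}$, so the kernel of $\fra_{0,0}\twoheadrightarrow\ov{\Omega}_*(X)$ is exactly what $\fra$ quotients out, yielding injectivity of $\theta_\fra$ and completing the proof.
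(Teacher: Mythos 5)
There is a genuine gap, and it sits exactly where the difficulty of the proposition lies. Your plan is to match $\fra$ against the Levine--Pandharipande presentation and then claim that ``every relation in the Levine--Pandharipande (or Levine--Morel) presentation lies visibly in the image of $d_{1,0}\oplus d_{0,1}$''. But the LP presentation describes $\Omega_*(X)$ with generators $[V\stackrel{v}{\row}X]$ of \emph{arbitrary} dimension (including dominant and birational $V\row X$, whose differences with $[\op{Id}_X]$ do land in $\ov{\Omega}_*(X)$) and with double-point relations coming from $W\row X\times\pp^1$ of arbitrary dimension, whereas the bi-complex$^*$ $\fra$ only admits $\ddim(V)<\ddim(X)$ in $\Da_{0,0}$ and $\ddim(W)\leq\ddim(X)$ in $\Da_{0,1}$. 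So the relations are not ``visibly'' in the image: you must first show that every class in $\ov{\Omega}_*(X)$ can be rewritten in terms of strictly smaller-dimensional generators (that is only the surjectivity, which is easy), and, much harder, that every relation among such small-dimensional presentations is a consequence of the small-dimensional relations allowed in $\fra$. Nothing in the proposal performs this dimension reduction, and it is precisely the non-trivial content of the statement; the FGL-bookkeeping identifying $i_l^{\star}(1_W)$ with the double-point expression is indeed routine, but it is not the obstacle.

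The paper's proof gets the reduction not from LP but from Levine's comparison theorem with $MGL$: the exact sequence $\zz[k(X)^{\times}]\otimes\laz\stackrel{\op{div}^{\Omega}}{\row}\Omega^{(1)}_*(X)\row\Omega_*(X)\row\laz\row 0$, which says that, on the part supported on proper closed subvarieties, all relations are generated just by divisors of rational functions. It then introduces the auxiliary complex $\fraO$ (sources of arbitrary dimension but images of dimension $<\ddim(X)$, resp.\ $\leq\ddim(X)$), identifies $\coker(d_{1,0}^{\fraO})$ with $\Omega^{(1)}_*(X)$ (Lemma \ref{bO}), proves the delicate isomorphism $\coker(d_{1,0}^{\fra})\cong\coker(d_{1,0}^{\fraO})$ by a filtration-plus-resolution argument (Lemma \ref{ab}), and finally observes that $\op{div}^{\Omega}$ factors through $\Da_{0,1}$. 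Your second suggestion (``import the cellular bordism-theoretic presentation via $MGL^{2*,*}$, where the analogue of $\fra$ is manifestly exact'') gestures at this input but supplies neither the specific localization sequence nor any counterpart of Lemmas \ref{bO} and \ref{ab}, so as written it does not constitute a proof either.
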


\begin{proof}
We will use a result of Levine \cite{Lcomp} which is a by-product of his proof that $\Omega^*$ is the $(2*,*)$-part of
Voevodsky's $MGL$. Let $X$ be an irreducible smooth quasi-projective variety of dimension $d$. Set
$\Omega^{(1)}_*(X)=\op{colim}_{W\subset X}\Omega_*(X)$, where the colimit is over closed subvarieties $W\subset X$
different from $X$. In \cite[pages 3315-3316]{Lcomp} Levine constructs a map
$$
\op{div}^{\Omega}:\zz[k(X)^{\times}]\otimes\laz_{*-d+1}\row\Omega^{(1)}_*(X)
$$
and it follows from the commutative diagram of \cite[page 3315]{Lcomp} and \cite[Theorem 3.1 and Lemma 4.3]{Lcomp}
that we have an exact sequence
$$
\zz[k(X)^{\times}]\otimes\laz_{*-d+1}\xrightarrow{\op{div}^{\Omega}}\Omega^{(1)}_*(X)\row\Omega_*(X)
\row\laz_{*-d}\row 0.
$$
Recall that $\op{div}^{\Omega}$ is $\laz$-linear and its value on a rational function $f\in k(X)^{\times}$
is described as follows. By Hironaka's resolution of singularities (Theorem \ref{Hif}), we may find a blowup
$\pi:\wt{X}\row X$ such that $f$ extends to a morphism $\wt{f}:\wt{X}\row\pp^1$ such that $X_0=\wt{f}^{-1}(0)$
and $X_{\infty}=\wt{f}^{-1}(\infty)$ are divisors with strict normal crossings.
Then $\op{div}^{\Omega}(f)=\pi_*([\wt{X}_0]-[\wt{X}_{\infty}])$ where $[\wt{X}_0]$, $[\wt{X}_{\infty}]$ are as in
Definition \ref{divclass}.

Consider the categories $\smu'(X)$ and ${\smu'}^1X)$ defined similarly to $\smu(X)$ and $\smu^1(X)$,
but with different dimension conditions: $\ddim(image(v))<\ddim(X)$, $\ddim(image(w))\leq\ddim(X)$.

For any theory $A^*$ we can define the following short bi-complex $\fraO=\fraO(A^*)$:
\begin{itemize}
\item[$\cdot$ ]
$\Da'_{0,0}=\bigoplus\limits_{V\in Ob(\smu'(X))} A_*(V)$; \hspace{10mm}
$\Da'_{1,0}=\bigoplus\limits_{V_2\row V_1\in\cmor(\smu'(X))}A_*(V_2)$; \hspace{10mm}
$\Da'_{0,1}=\bigoplus\limits_{W\in Ob({\smu'}^{1}(X))}A_{*+1}(W)$,
\end{itemize}
where the differentials and $H(\fraO)$ are defined as for ${\fra}$.

Now assume that $A^*=\Omega^*$.
The push-forwards provide a natural map $\Da'_{0,0}\row\Omega^{(1)}_*$, which clearly descends to
the map $\alpha':\coker(d_{1,0}^{\fraO})\row\Omega^{(1)}_*(X)$.

\begin{lemma}
\label{bO}
The map $\alpha':\coker(d_{1,0}^{\fraO})\row\Omega^{(1)}_*(X)$ is an isomorphism.
\end{lemma}

\begin{proof}
We have
$$
\Omega^{(1)}_*(X)=\op{colim}_{Z\subsetneq X}\Omega_*(Z)=\op{colim}_{Z\subsetneq X}\op{colim}_{V\row Z}A_*(V)
$$
where $V\row Z$ runs over projective maps from smooth varieties. This shows that
$$
\Omega^{(1)}_*(X)=\op{colim}_{V\row X\in\smu'(X)}A_*(X)
$$
which is computed by $\coker(d_{1,0}^{\fraO})$ as needed.
\Qed
\end{proof}

In the same way, for any theory in the sense of Definition \ref{goct}, we have:
$$
\operatornamewithlimits{colim}\limits_{\stackrel{\row}{Z\subsetneq X}}A_*(Z)=
\operatornamewithlimits{colim}\limits_{\stackrel{\row}{\smu'(X)}}A_*(V)=
\coker(d_{1,0}^{\fraO}).
$$

From here it is easy to see that $H(\fraO)\row\ov{\Omega}_*(X)$ is an isomorphism,
but we will compare $\fraO$ and ${\fra}$ first.

We have a natural map of bi-complexes ${\fra}\row\fraO$ which gives us the map
$\alpha:\coker(d_{1,0}^{\fra})\row\coker(d_{1,0}^{\fraO})$, and
$\hat{\alpha}: H({\fra})\row H({\fraO})$.

\begin{lemma}
\label{ab}
For any theory $A^*$ in the sense of Definition \ref{goct},
the map
$$
\alpha:\coker(d_{1,0}^{\fra})\row\coker(d_{1,0}^{\fraO})
$$
is an isomorphism.
\end{lemma}

\begin{proof}
({\bf surjectivity})
Consider some $v:V\row X$ in $\smu'(X)$, and $x\in A_*(V)$. We want to show that the class of $x$ in the $\coker(d_{1,0}^{\fraO})$
belongs to the image of $\alpha$.
Let $Z\subset X$ be the image of $V$. Using Hironaka's resolution of singularities (Theorems \ref{Hi} and \ref{Hif})
we can find a commutative square
\begin{equation}
\label{(1)}
\begin{CD}
\widetilde{V}&@>{\pi_v}>>&V\\
@V{\widetilde{p}}VV&&@VV{p}V\\
\widetilde{Z}&@>>{\pi_z}>&Z
\end{CD}
\end{equation}
where $\wt{V}$ and $\wt{Z}$ are smooth and $\pi_v$, $\pi_z$ are blowups. Denote by $\wt{z}:\wt{Z}\row X$ the obvious map.
Since $(\pi_v)_*:A_*(\wt{V})\row A_*(V)$ is surjective (see Proposition \ref{pi1invert}), we can find 
$\wt{x}\in A_*(\wt{V})$ such that $x=(\pi_v)_*(\wt{x})$.
But then, $(v,x)$, $(v\circ\pi_v,\wt{x})$ and $(\wt{z},\wt{p}_*(\wt{x}))$ have the same class in $\coker(d_{1,0}^{\fraO})$.
As $\ddim(\wt{Z})<\ddim(X)$, we are done.

\noindent({\bf injectivity})
It is enough to construct a map
$$
s:\coker(d_{1,0}^{\fraO})\row\coker(d_{1,0}^{\fra})
$$
which is a section to $\alpha$, i.e., such that $s\circ\alpha=id$.

Given $v:V\row X$ in $\smu'(X)$, we choose $\pi_v:\wt{V}\row V$ and $\pi_z:\wt{Z}\row Z$ as in (\ref{(1)}).
Given $x\in A_*(V)$ we choose $\wt{x}\in A_*(\wt{V})$ such that $(\pi_v)_*(\wt{x})=x$. We then set
$$
s((v,x))=[(\wt{z},\wt{x})]
$$
where the class is taken in $\coker(d_{1,0}^{\fra})$.

We claim that $s:\Da'_{0,0}\row\coker(d_{1,0}^{\fra})$ is well defined, i.e., $s((v,x))$ is independent of the choices we made.
First, we treat the independence of the choice of the lift $\wt{x}$. This is a consequence of the following lemma.

\begin{lemma}
\label{153}
Consider a commutative square
$$
\begin{CD}
W & @>{p}>> & V \\
@V{q}VV && @VV{v}V \\
T & @>>{t}> & X
\end{CD}
$$
where $t,v,p$ and $q$ are projective, $p$ is dominant, $T$, $V$ and $W$ are smooth, and $\ddim(T)<\ddim(X)$.
Let $y\in A_*(W)$ and assume that $p_*(y)=0$. Then, the class of $(t,q_*(y))$ in $\coker(d_{1,0}^{\fra})$ is zero.
\end{lemma}

\begin{proof}
We argue by induction on $\ddim(W)$. (The case of empty $W$ (negative dimension) is clear.)
We start by some reductions.
\begin{itemize}
\item[$\bullet$] If $W'$ is a blowup with $W'$ smooth, we may replace $W$ by $W'$ since $A_*(W')\row A_*(W)$ is surjective.
Doing this, we may assume that $q$ factors as $W\row S\row T$ where $S$ is smooth of dimension $<\ddim(X)$, $S\row T$ projective
and $W\row S$ dominant. Replacing $T$ by $S$, we may assume that $W\row T$ is dominant.
\item[$\bullet$] Let $Z$ be a resolution of the image of $W$ (and also $T$) in $X$. Replacing $W$ and $T$ by blowups, we may assume
that we have a chain of projective morphisms $W\row T\row Z\row X$. At this stage, we may replace $T$ by $Z$ and assume that
$T\row X$ is generically a locally closed immersion to $X$.
\item[$\bullet$] Since $W\row V$ is dominant, we may find a blowup $V'\row V$ such that $V'\row X$ factors (uniquely) through $T$.
Replacing $W$ by a blowup, we may assume that $W\row V$ factors as $W\row V'\row V$. At this stage, we may replace $W$ by $V'$
and assume that $p:W\row V$ is a blowup, a more precisely, a permitted sequence of blowups at smooth centers
(see Definition \ref{perm-bu}).
\end{itemize}
We may now proceed to the actual proof of the Lemma. Write $p:W\row V$ as a sequence of blowups
$$
W=V_n\row V_{n-1}\row\ldots\row V_0=V
$$
where $V_i\row V_{i-1}$ is a blowup of smooth subvariety $R_i\subset V_{i-1}$. We denote $E_i\subset W$ the strict transform of
the exceptional divisor of the blowup $V_i\row V_{i-1}$. Thus, we have a map $E_i\row R_i$ and, by Proposition \ref{vvter}(1),
we have an exact sequence
$$
0\low A_*(V)\llow A_*(W)\llow
\oplus_i\kker(A_*(E_i)\row A_*(R_i)).
$$
Thus, we may assume that $y=(e_i)_*(z)$ for $z\in\kker(A_*(E_i)\row A_*(R_i))$ for some $1\leq i\leq n$, where $e_i:E_i\row W$ is
the obvious inclusion.
We may now use the induction hypothesis in the case of the square
$$
\begin{CD}
E_i & @>>> & R_i\\
@VVV && @VVV \\
T & @>>> & X
\end{CD}
$$
to conclude.
\Qed
\end{proof}

It is now easy to prove independence of the choices of $\wt{V}$ and $\wt{Z}$. Indeed, let $\pi_v':\wt{V}'\row V$ and
$\pi_z':\wt{Z}'\row Z$ be two other choices. We may assume that these are finer resolutions, i.e., that we have commutative diagrams
$$
\xymatrix{\wt{V}' \ar@{->}[r]^{f} \ar@{->}[rd] & \wt{V} \ar@{->}[d] \\
& V}\hspace{8mm}
\xymatrix{\wt{Z}' \ar@{->}[r]^{g} \ar@{->}[rd] & \wt{Z} \ar@{->}[d] \\
& Z}\hspace{8mm}
\xymatrix{\wt{V}' \ar@{->}[r]^{f} \ar@{->}[d]_{\wt{p}'} & \wt{V} \ar@{->}[d]^{\wt{p}} \\
\wt{Z}' \ar@{->}[r]^{g} & \wt{Z}}.
$$
As we know independence of lifts, we may assume that $\wt{x}=f_*(\wt{x}')$. The equality of the classes of $(\wt{z},\wt{p}_*(\wt{x}))$
and $(\wt{z}',\wt{p}'_*(\wt{x}'))$ follows from the equalities $g_*\wt{p}'_*(\wt{x}')=\wt{p}_*f_*(\wt{x}')=\wt{p}_*(\wt{x})$.

Next we claim that $s:\Da'_{0,0}\row\coker(d_{1,0}^{\fra})$ descends to $s:\coker(d_{1,0}^{\fraO})\row\coker(d_{1,0}^{\fra})$ giving
the needed section.
This is now quite easy. Indeed, given a morphism $f:V_2\row V_1$ in $\smu'(X)$, we may find commutative diagrams
$$
\xymatrix{
\wt{V}_2 \ar@{->}[r]^{\wt{f}} \ar@{->}[d]_{\pi_2} & \wt{V}_1 \ar@{->}[d]^{\pi_1} \\
V_2 \ar@{->}[r]_{f} & V_1
}\hspace{1.0cm}
\xymatrix{
\wt{V}_2 \ar@{->}[r]^{\wt{f}} \ar@{->}[d]^{q_2} \ar @/_1.5pc/ @{->}[dd]_{p_2} & \wt{V}_1 \ar@{->}[d]^{p_1} \\
T \ar@{->}[r]^{h} \ar@{->}[d]^{l} & \wt{Z}_1 \\
\wt{Z}_2 &
}
$$
where $\pi_1$, $\pi_2$ are blowups, all the varieties are smooth, $\wt{Z}_1$ and $\wt{Z}_2$ are resolutions od singularities
of the images of $V_1$ and $V_2$ in $X$. $T$ is the irreducible component of
the inverse image of $Z_2$ in $\wt{Z}_1$ containing the image of $\wt{V}_2$.

This is said, the equality $s((v_1, f_*(x_2)))=s((v_2,x_2))$ follows from
$$
[(\wt{z}_1,(p_1)_*\wt{f}_*(\wt{x}_2))]=[(\wt{z}_1\circ h,(q_2)_*(\wt{x}_2))]=[(\wt{z}_2,(l\circ q_2)_*(\wt{x}_2))]=
[(\wt{z}_2,(p_2)_*(\wt{x}_2))].
$$
Lemma \ref{ab} is proven.
\Qed
\end{proof}

We return now to the case $A^*=\Omega^*$ and complete the proof of Proposition \ref{CobRT}.
We have the commutative diagram with exact columns:
\begin{equation}
\label{BD}
\xymatrix @-0.7pc{
\Da_{0,1} \ar @{->}[r] \ar @{->}[d]_(0.5){\hat{d}_{0,1}^{\fra}} &
\DaO_{0,1} \ar @{->}[d]_(0.5){\hat{d}_{0,1}^{\fraO}} & \zz[k(X)^{\times}]\otimes\laz\ar @{->}[d]^(0.5){\op{div}}\\
\coker(d_{1,0}^{\fra}) \ar @{->}[r]_(0.5){\alpha} \ar @{->}[d]&
\coker(d_{1,0}^{\fraO}) \ar @{->}[r]_(0.5){\alpha'} \ar @{->}[d]& \Omega^{(1)}_*(X) \ar @{->}[d]\\
H({\fra}) \ar @{->}[r]_(0.5){\hat{\alpha}} \ar @{->}[d]&
H(\fraO) \ar @{->}[r]_(0.5){\hat{\alpha'}} \ar @{->}[d]& \ov{\Omega}_*(X) \ar @{->}[d]\\
0&0&0
}
\end{equation}
where $\alpha$ and $\alpha'$ are isomorphisms.
It remains to observe that the map $\op{div}$ can be factored through $\Da_{0,1}$ by the very definition.
This shows that the maps
$$
H({\fra})\stackrel{\hat{\alpha}}{\lrow}H(\fraO)\stackrel{\hat{\alpha}'}{\lrow}\ov{\Omega}_*(X)
$$
are isomorphisms.
\Qed
\end{proof}

Using Proposition \ref{CobRT} we can describe all theories of
rational type as follows.

\begin{proposition}
\label{KlassRT}
Let $A^*$ be a theory in the sense of Definition \ref{goct} satisfying $(CONST)$. Then
$A^*$ is of rational type if and only if $A^*$ is free in the sense of Levine-Morel, i.e.,
if the natural map $\Omega^*\otimes_{\laz}A^*(k)\row A^*$ is an isomorphism.
\end{proposition}

\begin{proof}
Since the tensor product functor is right exact, any theory
of the form $\Omega^*\otimes_{\laz}A$ will be of rational type,
since $\Omega^*$ is (Proposition \ref{CobRT}).

Conversely, assume that $A^*$ is of rational type. By the universality of $\Omega^*$ (see \cite[Theorem 1.2.6]{LM}),
we have a canonical morphism $\Omega^*\otimes_{\laz}A^*(k)\row A^*$ which is an isomorphism when $X$ has dimension zero.
Therefore, it is enough to show that a morphism of theories of rational type $A'^*\row A^*$, inducing an isomorphism
for varieties of dimension zero, is necessarily an isomorphism. We argue by induction on the dimension of $X$.
It is enough to show that $\ov{A}'^*(X)\row\ov{A}^*(X)$ is an isomorphism. Consider the commutative diagram with exact rows:
\begin{equation}
\label{(3)}
\xymatrix @-0.7pc{
(\Da_{1,0}\oplus \Da_{0,1})(A'^*)\ar @{->}[r] \ar @{->}[d]^(0.5){(3)}&
\Da_{0,0}(A'^*) \ar @{->}[r] \ar @{->}[d]^(0.5){(2)}& \ov{A}'^*(X) \ar @{->}[r]
\ar @{->}[d]^(0.5){(1)} & 0\\
(\Da_{1,0}\oplus \Da_{0,1})(A^*)\ar @{->}[r] &
\Da_{0,0}(A^*) \ar @{->}[r] & \ov{A}^*(X) \ar @{->}[r] & 0
}
\end{equation}
Using the induction hypothesis, we see that $(2)$ is an isomorphism. This implies that $(1)$ is surjective.
In particular, we know that $A'^*(Y)\row A^*(Y)$ is surjective for all varieties $Y$ with $\ddim(Y)\leq\ddim(X)$.
This implies in turn that $(3)$ is surjective. Thus, $(1)$ is in fact an isomorphism as wanted.
\Qed
\end{proof}

To any theory $A^*$ one can assign a theory of rational type $(A^{(0)})^*$ defined as
$\Omega^*\otimes_{\laz}A^*(k)$ together with the canonical map of theories $g:(A^{(0)})^*\lrow A^*$.
In the case of a generically constant theory, we get:

\begin{proposition}
\label{A0}
For a theory $A^*$ satisfying $(CONST)$, the map $g:(A^{(0)})^*\twoheadrightarrow A^*$ is surjective.
\end{proposition}

\begin{proof}
Since $\Da_{0,0}(A^*)\row\ov{A}^*(X)$ is surjective for any theory satisfying $(EXCI)$, the surjectivity of
$g$ follows by induction on the dimension of $X$.
\Qed
\end{proof}

Finally, the following result shows that the set of theories of rational type is closed with respect to
reparametrization and multiplicative projectors (recall, that a {\it multiplicative projector} is a multiplicative
operation $\rho:A^*\row A^*$ such that $\rho\circ\rho=\rho$).

\begin{proposition}
\label{repmultproj}
Let $A^*$ be a theory of rational type. Then:
\begin{itemize}
\item[$1)$ ] For any $\gamma(x)=a_0x+a_1x^2+\ldots\in A[[x]]$ with invertible $a_0$, the
reparametrization $(A^*)^{\gamma}$ is a theory of rational type.
\item[$2)$ ] For any multiplicative projector $\rho:A^*\row A^*$, the theory $\rho A^*$ (with the
quotient structure) is a theory of rational type. The formal group law of this theory is
$(\ffi_{\rho}(A^*(k)),\ffi_{\rho}(F_A(x,y)))$.
\end{itemize}
\end{proposition}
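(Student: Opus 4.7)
The plan is to use the classification Proposition~\ref{KlassRT} for part (2) and to compare bi-complexes$^*$ directly via the Panin--Smirnov reparametrization formula for part (1).

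For (1), I would exploit that $A^*$ and $(A^*)^\gamma$ have the same underlying contravariant functor and the same ring structure on each $A^*(V)$; only the push-forwards differ. By the Panin--Smirnov reparametrization formula, the push-forwards are related by an invertible Todd-type multiplicative class $\tau_\gamma$: for each projective $f:Y\row X$, $f_*^{(A^*)^\gamma}(x) = f_*^{A^*}(\tau_\gamma(T_f^{\mathrm{vir}})\cdot x)$, with $\tau_\gamma(T_f^{\mathrm{vir}})\in A^*(Y)^\times$ since $a_0$ is invertible. Multiplication by $\tau_\gamma(-T_V^{\mathrm{vir}})$ on each summand $A_*(V)$ of $\fra$ would then give an isomorphism of bi-complexes$^*$ $\chi:\fra(A^*)\xrightarrow{\sim}\fra((A^*)^\gamma)$. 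Compatibility with $d_{1,0}$ follows from $\tau_\gamma(T_{V_2}^{\mathrm{vir}}-f^*T_{V_1}^{\mathrm{vir}})=\tau_\gamma(T_{V_2}^{\mathrm{vir}})/f^*\tau_\gamma(T_{V_1}^{\mathrm{vir}})$ combined with the projection formula; compatibility with $d_{0,1}$ uses that refined pull-backs respect the Chern-class calculus. Moreover $\theta_\fra^{(A^*)^\gamma}\circ\chi$ agrees with $\theta_\fra^{A^*}$ up to the invertible rescaling by $\tau_\gamma(-T_X^{\mathrm{vir}})$ on $\ov{A}_*(X)$, so from $\theta_\fra^{A^*}$ being an isomorphism one concludes the same for $\theta_\fra^{(A^*)^\gamma}$.

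For (2), I would invoke Proposition~\ref{KlassRT} to write $A^* = \Omega^*\otimes_\laz A$, and argue that the multiplicative projector $\rho$ is automatically $\laz$-linear. By universality of $\Omega^*$, the canonical morphism $\eta:\Omega^*\row A^*$ is the unique multiplicative operation; since $\rho\circ\eta$ is also multiplicative, $\rho\circ\eta=\eta$, so the structure map $\laz\row A$ lands in the subring $\rho A$. For $\omega\in\laz$ and $x\in A^*(X)$ one then has $\rho(\eta(\omega)\cdot x)=\rho(\eta(\omega))\cdot\rho(x)=\eta(\omega)\cdot\rho(x)$, confirming $\laz$-linearity. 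Hence the splitting $A^*(X)=\rho A^*(X)\oplus(1-\rho)A^*(X)$ is a decomposition of $\laz$-modules, matching under $A^*(X)=\Omega^*(X)\otimes_\laz A$ to the $\laz$-module splitting $A=\rho A\oplus(1-\rho)A$. This identifies $\rho A^* = \Omega^*\otimes_\laz \rho A$, which is of rational type by Proposition~\ref{KlassRT} (right-exactness of tensor product applied to the rational-type $\Omega^*$).

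The main technical obstacle in (1) will be the compatibility of $\chi$ with $d_{0,1}$, because that differential involves refined pull-backs along divisors with strict normal crossing rather than push-forwards; one must check that Todd multiplication commutes with the refined pull-backs $i_l^\star$, which should follow from the multiplicativity of $\tau_\gamma$ under the virtual normal bundles of the face embeddings together with the standard functoriality of refined pull-backs. In (2), the essential point that could fail for a merely additive idempotent endomorphism is the $\laz$-linearity of $\rho$; this is rescued by multiplicativity through universality of $\Omega^*$, which is the one nontrivial ingredient of the argument.
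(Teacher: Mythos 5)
Your part (1) is essentially the paper's own argument: the paper constructs an isomorphism of bi-complexes$^*$ ${\fra}((A^*)^{\gamma})\cong{\fra}(A^*)$ by multiplying each summand $A_*(V)$ by the invertible Todd-type class $To(\cn_g)=\prod_i(\gamma(x)/x)(\lambda_i)$ of the virtual normal bundle of the structure map, exactly as you propose (it also records an alternative via Theorem \ref{PSLM} and the reparametrized theory $\Omega^*\otimes_{\laz}A^{\gamma}$), so this half is fine in outline.

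Part (2) has a genuine gap. You deduce $\rho\circ\eta=\eta$ from ``universality of $\Omega^*$'', but universality gives uniqueness of the \emph{morphism of theories} $\Omega^*\row A^*$ (commuting with push-forwards), not uniqueness of multiplicative \emph{operations}: by Theorem \ref{PSLM} there are many multiplicative operations out of $\Omega^*$, one for each invertible $\gamma$. A multiplicative projector $\rho$ is only an operation (it commutes with pull-backs), and in general $\gamma_{\rho}\neq x$; the relation $\rho\circ\rho=\rho$ only forces $\ffi_{\rho}(\gamma_{\rho})(x)=x$. The standard example is the Quillen-type idempotent cutting out $BP$ from $\Omega^*_{(p)}$, whose $\gamma_{\rho}$ is nontrivial. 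So the equality $\rho\circ\eta=\eta$, and with it the $\laz$-linearity of $\rho$ and the identification $\rho A^*=\Omega^*\otimes_{\laz}\rho A$, is unjustified; even for an $\laz$-linear projector the image of $\rho$ on $\Omega^*(X)\otimes_{\laz}A$ need not be of the form $\Omega^*(X)\otimes_{\laz}\rho(A)$ unless $\rho$ acts as $id\otimes(\text{projector on }A)$, which you have not shown.

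The paper's route avoids this: writing $(\ffi_{\rho},\gamma_{\rho})$ for the FGL morphism of $\rho$, it factors $\rho=G_{\gamma_{\rho}}\circ\pi$, where $G_{\gamma_{\rho}}:(A^*)^{\gamma_{\rho}}\row A^*$ is the invertible reorientation operation and $\pi:A^*\row(A^*)^{\gamma_{\rho}}$ has $FGL(\pi)=(\ffi_{\rho},x)$; since $\pi$ and $\mu:=\pi\circ G_{\gamma_{\rho}}$ preserve the Chern-class structure they are genuine morphisms of theories (commute with push-forwards), $\mu$ is a projector on $(A^*)^{\gamma_{\rho}}$ with image $\rho A^*$, and then part (1) plus Definition \ref{DRT} (a retract of the isomorphism $\theta_{\fra}$ is an isomorphism) gives the conclusion. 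If you want to repair your argument, you need some version of this reduction to a projector which is a morphism of theories; the special case your proof actually covers is $\gamma_{\rho}=x$.
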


\begin{proof}
1) Using Theorem \ref{PSLM}, we get a multiplicative operation
$\Omega^*\row \Omega^*\otimes_{\laz}A^*(k)=A^*$ corresponding to $\gamma$. Restricted to
$\op{Spec}(k)$ it gives the
formal group law $\laz\row A^{\gamma}$, where $A^{\gamma}=A^*(k)$ and $F_{A^{\gamma}}(x,y)=\gamma(F_A(\gamma^{-1}(x),\gamma^{-1}(y)))$.
Hence, our operation extends to a multiplicative operation
$$
G_{\gamma}:\Omega^*\otimes_{\laz}A^{\gamma}\row\Omega^*\otimes_{\laz}A^*(k).
$$
It is an isomorphism of pull-back structures since there is a multiplicative inverse $G_{\gamma^{-1}}$. Thus, $\Omega^*\otimes_{\laz}A^{\gamma}$ is just a reparametrization
of $\Omega^*\otimes_{\laz}A^*(k)$. By construction, the morphism of formal group laws corresponding to
$G_{\gamma}$ is $(id,\gamma)$. Hence, $\Omega^*\otimes_{\laz}A^{\gamma}=(A^*)^{\gamma}$
- see \cite{P2}, and the latter theory is of rational type.

2) Let $(\ffi_{\rho},\gamma_{\rho})$ be the respective morphism of formal group laws. From the
condition $\rho\circ\rho=\rho$ (and the fact that $\gamma$ is invertible) we get:
$\ffi_{\rho}(\gamma_{\rho})(x)=x$. We have an invertible multiplicative operation
$G_{\gamma_{\rho}}:(A^*)^{\gamma_{\rho}}\row A^*$ and a morphism
$\pi=id\otimes\ffi_{\rho}:A^*\row (A^*)^{\gamma}$ of free theories of Levine-Morel.
The respective morphisms of formal group laws are $FGL(\pi)=(\ffi_{\rho},x)$ and
$FGL(G_{\gamma_{\rho}})=(id,\gamma_{\rho})$. Since $FGL(G_{\gamma_{\rho}}\circ\pi)=FGL(\rho)$, from Theorem \ref{PSLM}
we obtain that $\rho=G_{\gamma_{\rho}}\circ\pi$. On the other hand, $FGL(\pi\circ G_{\gamma_{\rho}})=(\ffi_{\rho},x)$.
Hence, $\mu:=\pi\circ G_{\gamma_{\rho}}:(A^*)^{\gamma}\row (A^*)^{\gamma}$ preserves the structure of Chern classes, and by
\cite{PS} is a morphism of theories (commutes with pull-backs and push-forwards).
Moreover, since $G_{\gamma_{\rho}}$ is invertible, $\mu$ is a projector. Hence, $\rho A^*$ is realized
as a quotient of $A^*$ and as a direct summand of $(A^*)^{\gamma}$ under a projector endomorphism $\mu$. By 1), $(A^*)^{\gamma}$ is a theory of rational type, hence so is $\rho A^*$ by
Definition \ref{DRT}. Clearly,
$FGL(\rho A^*)=(\ffi_{\rho}(A^*(k)),\ffi_{\rho}(F_{A}(x,y)))$.
\Qed
\end{proof}

By the results of Levine-Morel (\cite[Theorems 1.2.18, 1.2.19]{LM}), Chow groups $CH^*$
and $K_0$ are {\it{free}} theories, and hence, theories of rational type.
It follows from Proposition \ref{repmultproj} that
other "standard small theories" such as $BP^*$ and higher Morava K-theories $K(n)$ are of rational type as well.\\

\smallskip

\subsection{The short bi-complex ${\frb}$.}
\label{b}

The short bi-complex ${\fra}(A^*)$ describes $A^*(X)$ in terms of $A^*$ of
smaller dimensional varieties and push-forward maps. But, to construct cohomological
operations we will need to find a presentation in terms of pull-backs. This will be done
in two steps.
First we modify ${\fra}$ by, roughly speaking, replacing the objects $v:V\row X$ of $\smu(X)$
by divisors with strict normal crossings contained in a blowup of $X$.
This is done in the present subsection where we introduce the short bi-complex ${\frb}$
and show that $H({\fra})=H({\frb})$. The second step is relatively easy and will be achieved in
Subsection \ref{c}.

We define a category $\rc(X)$ as follows.
Objects of $\rc(X)$ are diagrams $\cv=(Z\stackrel{z}{\row}X\stackrel{\rho}{\low}\wt{X})$, where
$z$ is an embedding of a closed proper subscheme, and $\rho$ is a projective birational morphism,
which is an isomorphism outside $Z$ and such that $V=\rho^{-1}(Z)$ is a divisor with strict
normal crossings.

Morphisms $(i,\pi)$ from $\cv_2$ to $\cv_1$ are commutative diagrams:
\begin{equation}
\label{cmor}
\xymatrix @-1.2pc{
Z_2 \ar @{->}[r]^(0.5){z_2} \ar @{->}[d]_(0.5){i} &X \ar @{=}[d] & \wt{X}_2 \ar @{->}[l]_(0.5){\rho_2}
\ar @{->}[d]^(0.5){\pi}\\
Z_1 \ar @{->}[r]_(0.5){z_1} & X & \wt{X}_1 \ar @{->}[l]^(0.5){\rho_1}.
}
\end{equation}
Among these, we distinguish two types of morphisms:
\begin{itemize}
\item[type I:]
 \ \ $i=id$, $\pi$ is a blowup, permitted with respect to $V_1=\rho_1^{-1}(Z_1)$ of a smooth closed subvariety
of $\wt{X}_1$ contained in $V_1$;
\item[type II:]
 \ \ $\pi=id$.
\end{itemize}
We denote by $\cmor_I$ and $\cmor_{II}$ the set of morphisms of type I and II respectively.
Note, that for morphisms of type I, $\pi^{-1}(V_1)=V_2$.

We also consider $\rc^1(X)$, the subcategory of $\rc(X\times\pp^1)$ whose objects are the diagrams
$$
\cw=(Z\stackrel{z}{\row}X\times\pp^1\stackrel{\rho}{\low}\wt{X\times\pp^1})
$$
satisfying the following additional condition: the inverse images $\wt{X}_0=\rho^{-1}(X\times 0)$ and
$\wt{X}_{\infty}=\rho^{-1}(X\times\infty)$ are smooth divisors on $\wt{X\times\pp^1}$ and, setting $W=\rho^{-1}(Z)$,
the scheme-theoretic intersection of $W\cap\wt{X}_0$ is a (non-necessarily reduced) divisor with
strict normal crossings in $\wt{X}_0$, and similarly for $\wt{X}_{\infty}$. Note that the last condition
is weaker than asking that the divisor $W\cup\wt{X}_0$ has strict normal crossings. At the same time, it implies that
for every component $W_s$ of $W$ the intersection $W_{s,0}=W_s\cap\wt{X}_0$ is a divisor with strict normal crossings in $W_s$,
and the same holds for $W_{s,\infty}$.

We have maps $\partial_0,\partial_{\infty}: Ob(\rc^1(X))\row Ob(\rc(X))$ defined by:
$$
\partial_l(Z\stackrel{z}{\row}X\times\pp^1\stackrel{\rho}{\low}\wt{X\times\pp^1})=
(Z_l\stackrel{z_l}{\row}X\stackrel{\rho}{\low}\wt{X}_l),
$$
where $Z_l=(X\times l)\cap Z$.

Consider the short bi-complex ${\frb}={\frb}(A^*)$:
\begin{itemize}
\item[$\cdot$ ]
$\Db_{0,0}:=\bigoplus\limits_{\cv\in Ob(\rc(X))} A_*(V)$; \hspace{7mm}
$\Db_{1,0}:=\bigoplus\limits_{\cv_2\row\cv_1\in\cmor_I\cup\cmor_{II}}A_*(V_2)$; \hspace{7mm}
$\Db_{0,1}:=\bigoplus\limits_{\cw\in Ob(\rc^1(X))}A_{*+1}(W)$
\end{itemize}
(we will also use notations $\Db_{1,0}^I$ and $\Db_{1,0}^{II}$ for the direct summands of $\Db_{1,0}$ corresponding to
$\cmor_I$ and $\cmor_{II}$)
and the differentials are defined as follows:
\begin{itemize}
\item[$\cdot$ ] $d_{1,0}((\pi,i):\cv_2\row\cv_1,y)=(\cv_1,(\pi_V)_*(y))-(\cv_2,y)$ where $\pi_V:V_2\row V_1$ is the map induced by $\pi$.
\item[$\cdot$ ]
$
d_{0,1}(\cw,\sum_s(h_s)_*(y_s))=
(\partial_0\cw,\sum_{s} (h_{s,0})_*i_{s,0}^{\star}(y_s))-
(\partial_{\infty}\cw,\sum_{s} (h_{s,\infty})_*i_{s,\infty}^{\star}(y_s))
$
where $h_s:W_s\row W$ are the inclusions of the irreducible components of $W$, $y_s\in A_{*+1}(W_s)$, and $i_{s,0}$
and $i_{s,\infty}$ are inclusions of the divisors $W_{s,0}$ and $W_{s,\infty}$ in $W_s$.
(The maps $i_{s,0}^{\star}$ and $i_{s,\infty}^{\star}$ are as in Definition \ref{starpullback}.)
\end{itemize}
Note that $d^{\frb}_{0,1}$ is well-defined. Indeed, we need to show that for any two components $W_s$ and $W_t$ of $W$, and any
element $x\in A_{*+1}(W_{\{s,t\}})$, we have: $d^{\frb}_{0,1}((h_s)_*(j_{s/t})_*(x))=d^{\frb}_{0,1}((h_t)_*(j_{t/s})_*(x))$
where $j_{s/t}:W_{\{s,t\}}\hookrightarrow W_s$ and $j_{t/s}:W_{\{s,t\}}\hookrightarrow W_t$ are the inclusions of the intersection
$W_{\{s,t\}}=W_s\cap W_t$. By Proposition \ref{DEFi} we know that the intersection $W_{\{s,t\},l}=
W_{\{s,t\}}\cap\wt{X}_l$ is a divisor with strict normal crossings in $W_{\{s,t\}}$. And the same is true about $W_s$ and $W_t$.
We get cartesian squares:
\begin{equation*}
\xymatrix @-0.7pc{
W_{\{s,t\},l} \ar @{->}[r]^(0.5){i_{\{s,t\},l}} \ar @{->}[d]_(0.5){j_{s/t,l}}&
W_{\{s,t\}} \ar @{->}[d]^(0.5){j_{s/t}}\\
W_{s,l} \ar @{->}[r]_(0.5){i_{s,l}} & W_s
}\hspace{1cm}
\xymatrix @-0.7pc{
W_{\{s,t\},l} \ar @{->}[r]^(0.5){i_{\{s,t\},l}} \ar @{->}[d]_(0.5){j_{t/s,l}}&
W_{\{s,t\}} \ar @{->}[d]^(0.5){j_{t/s}}\\
W_{t,l} \ar @{->}[r]_(0.5){i_{t,l}} & W_t
}
\end{equation*}
where the horizontal maps are inclusions of divisors with strict normal crossings.
Now it follows from Proposition \ref{MPEIF} that
$i_{s,l}^{\star}(j_{s/t})_*(x)=(j_{s/t,l})_*i_{\{s,t\},l}^{\star}(x)$ and
$i_{t,l}^{\star}(j_{t/s})_*(x)=(j_{t/s,l})_*i_{\{s,t\},l}^{\star}(x)$. And so, the respective elements of $\Db_{0,0}$
coincide. Hence, $d^{\frb}_{0,1}((h_s)_*(j_{s/t})_*(x))$ and $d^{\frb}_{0,1}((h_t)_*(j_{t/s})_*(x))$
are also equal.

Let us denote by $H(\frb)$ the $0$-th homology of the total complex $Tot(\frb)$ of $\frb$.
We have natural maps:
$$
\beta:\coker(d_{1,0}^{{\frb}})\row\coker(d_{1,0}^{{\fra}}),\hspace{3mm}\text{and}\hspace{3mm}
\hat{\beta}:H({\frb})\row H({\fra})
$$
sending the class of an element $(\cv,\sum_t (g_t)_*(x_t))\in\Db_{0,0}$ to the class of $\sum_t(v_t,x_t)\in\Da_{0,0}$,
where $g_t:V_t\row V$ are the inclusions of the irreducible components of $V$, and $v_t=v\circ g_t$, where
$v:V\row X$ is the natural projection
(the fact that $\beta$ and $\hat{\beta}$ are well-defined follows from
the existence of similar maps $b_{1,0}\row a_{1,0}$ and $b_{0,1}\row a_{0,1}$ and the observation that the choices of a
decomposition of $x$ into a sum $\sum_t (g_t)_*(x_t)$ are eliminated in $\coker(d^{{\fra}}_{1,0})$).

Let $\cv=(Z\stackrel{z}{\row}X\stackrel{\rho}{\low}\wt{X})\in Ob(\rc(X))$, and $V=\rho^{-1}(Z)$.
Denote by $im_Z$ (or $im_Z(\cv)$, but see Lemma \ref{p1} below) the image of the map
$$
A_*(V)\row \Db_{0,0}/(\,\text{part $I$ of}\, d^{{\frb}}_{1,0}).
$$
We have a well-defined map: $pr_Z: im_Z\row A_*(Z)$ induced by $(\rho_V)_*$ where $\rho_V:V\row Z$ is the restriction of $\rho$.
Below an element of the form $(\cv,x)\in\Db_{0,0}$ is said to be "defined over $Z$".

\begin{lemma}
\label{p1}
\phantom{a}\hspace{5mm}\\
\vspace{-5mm}
\begin{itemize}
\item[$(1)$ ] The subgroup $im_Z$ depends only on $Z$ (and not on the choice of $\rho$ in $\cv$).
\item[$(2)$ ] $im_Z=im_{Z_{red}}$.
\end{itemize}
\end{lemma}

\begin{proof}
Let $\cv_1=(Z\stackrel{z}{\row}X\stackrel{\rho_1}{\low}\wt{X}_1)$ and $\cv_2=(Z\stackrel{z}{\row}X\stackrel{\rho_2}{\low}\wt{X}_2)$
be two objects of $\rc(X)$.
We have a birational map $\rho_2^{-1}\circ\rho_1:\wt{X}_1\dashrightarrow \wt{X}_2$ which is an isomorphism
outside $V_1$ and $V_2$. By the Weak Factorization Theorem (see Theorem \ref{WF}(6)), there exists
a diagram
$$
\xymatrix @=3mm{
& Y_1 \ar @{->}[ld] \ar @{->}[rd] && Y_3 \ar @{->}[ld] \ar @{->}[rd] &&
 && Y_{n-2} \ar @{->}[ld] \ar @{->}[rd] && Y_n \ar @{->}[ld] \ar @{->}[rd]  &\\
\wt{X}_1 \ar @/_1.5pc/ @{-->}[rrrrrrrrrr]_(0.5){}&& Y_2 && Y_4 &\ldots  & Y_{n-3}  && Y_{n-1}&& \wt{X}_2
}
$$
of smooth projective varieties over $X$ where each map is a blowup of a smooth center over $Z$ permitted
with respect the inverse image of $Z$.
Thus, to show that $\cv_1$ and $\cv_2$ define the same subgroup $im_Z$, we may assume that
$\wt{X}_1$ and $\wt{X}_2$
are related by a blowup of a smooth center over $Z$ permitted with respect the inverse image of $Z$. Said differently,
we may assume that there exists a map $(id,\pi):\cv_2\row\cv_1$ of type $I$. This clearly implies that
$im_Z(\cv_2)\subset im_Z(\cv_1)$. The reverse inclusion follows from the fact that $A_*(V_2)\row A_*(V_1)$ is surjective.
This proves $(1)$.

To prove (2) observe that we can find
$\cv_1=(Z_{red}\stackrel{z_{red}}{\row}X\stackrel{\rho_1}{\low}\wt{X}_1)$ and
$\cv_2=(Z\stackrel{z}{\row}X\stackrel{\rho_2}{\low}\wt{X}_2)$ objects of $\rc(X)$,
where $\rho_2=\rho_1\circ\pi$, for some $\pi: \wt{X}_2\row\wt{X}_1$ which is a blowup with
smooth centers over $Z_{red}$ permitted with respect the inverse image of $Z_{red}$.
What we need now follows from the surjectivity of the map $A_{*}(V_2)\row A_{*}(V_1)$.
\Qed
\end{proof}

\begin{lemma}
\label{lTY}
Let $T\subset X$ be a closed subscheme, and $Y\subset X$ a divisor such that $Y\backslash T$ is smooth.
Let $\ov{x}\in im_T$. Then there exists $\ov{x}'\in im_{T\cup Y}$ such that $\ov{x}$ and $\ov{x}'$
have the same image in $\coker(d_{1,0}^{{\frb}})$, and $j_*(pr_T(\ov{x}))=pr_{T\cup Y}(\ov{x'})$, where
$j:T\hookrightarrow T\cup Y$ is an obvious inclusion.
\end{lemma}

\begin{proof}
Since $Y\backslash T$ is a smooth divisor of $X\backslash T$, we may find $\cv=(T\row X\stackrel{\rho}{\low}\wt{X})$
in $\rc(X)$ such that $\rho^{-1}(T\cup Y)$ is a divisor with strict normal crossings in $\wt{X}$.
Thus, we have also the object $\cv'=(T\cup Y\row X\stackrel{\rho}{\low}\wt{X})$ in $\rc(X)$ and a morphism $\cv'\row\cv$
of type II. As usual, set $V=\rho^{-1}(T)$ and $V'=\rho^{-1}(T\cup Y)$. By Lemma \ref{p1}, we may assume that
$\ov{x}$ is the class of $(\cv,x)$ for some $x\in A_*(V)$. Set $x'=(j_V)_*(x)\in A_*(V')$ where $j_V:V\row V'$
is the obvious inclusion. Clearly, the class $\ov{x}'$ of $(\cv',x')$ in $im_{T\cup Y}$ satisfies the claimed properties.
\Qed
\end{proof}

\begin{lemma}
\label{lSing}
Let $Z\subset X$ be a proper closed subscheme. Then there exist divisors $Y_i,\,i=1,\ldots,m$
such that $Z_{red}\subset \bigcup_i Y_i$, and $Y_j$ is smooth outside $\bigcup_{i=1}^{j-1}Y_i$.
\end{lemma}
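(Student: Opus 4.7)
The plan is to proceed by induction on $d = \dim Z_{red}$. The base case $d = -1$ (i.e., $Z_{red} = \emptyset$) is trivial: take $m = 0$.

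For the inductive step, assume the statement for all closed subschemes of $X$ of dimension strictly less than $d$. Write $Z_{red} = C \cup Z'$, where $C$ is the union of the $d$-dimensional irreducible components $C_1,\dots,C_r$ of $Z_{red}$, and $Z'$ is the union of the remaining components, so $\dim Z' < d$. The key step is to produce a single divisor $D$ of $X$ which contains $C$ and whose singular locus has dimension $< d$.

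When $d = \dim X - 1$, the subvariety $C$ is itself a divisor of $X$, and I take $D = C$: one has $\op{Sing}(D) = \bigcup_i \op{Sing}(C_i) \cup \bigcup_{i \neq j}(C_i \cap C_j)$, and each of these pieces has dimension $< d$. When $d < \dim X - 1$, I use a Bertini-type argument: fix a projective compactification $X \subset \ov{X}$, pick a sufficiently ample line bundle $\cl$ on $\ov{X}$, and take $D$ to be the restriction to $X$ of the vanishing locus of a general global section of $\ci_{\ov{C}} \otimes \cl$, where $\ov{C}$ is the closure of $C$ in $\ov{X}$. Since $C \subsetneq X$ and $X$ is integral, such a section does not vanish identically on $X$, so $D$ is a divisor of $X$ containing $C$. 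By Bertini in characteristic $0$, $D$ is smooth outside the base locus $V(\ci_C) = C$. Moreover, at any point $p$ of $C$ that is a smooth point of $C$ lying in only one component, the stalk of $\ci_C$ at $p$ is generated by a regular system of parameters of codimension $\geq 2$, and a local computation shows that a general linear combination of these generators has nonvanishing differential at $p$, so $D$ is smooth at $p$ as well. Hence $\op{Sing}(D) \subset \bigcup_i \op{Sing}(C_i) \cup \bigcup_{i \neq j}(C_i \cap C_j)$, which again has dimension $< d$.

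Having produced such a $D$, set $T := Z' \cup \op{Sing}(D)$, a closed subset of $X$ of dimension $< d$, viewed as a reduced closed subscheme. By the inductive hypothesis applied to $T$, there exist divisors $Y_1,\dots,Y_k$ with $T \subset \cup_{i=1}^k Y_i$ and each $Y_j$ smooth outside $\cup_{i<j}Y_i$. Set $Y_{k+1} := D$. Then $\op{Sing}(Y_{k+1}) \subset T \subset \cup_{i=1}^k Y_i$, so $Y_{k+1}$ is smooth outside $\cup_{i<k+1} Y_i$; and $Z_{red} = C \cup Z' \subset D \cup T \subset \cup_{i=1}^{k+1} Y_i$, completing the inductive step. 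The main obstacle is the Bertini step when $d < \dim X - 1$: one needs not merely that a general $D$ is smooth away from the base locus, but also that it is smooth at the generic (smooth, non-self-intersecting) points of $C$ itself. This relies on the local-complete-intersection structure of $C$ at such points together with the fact that a generic linear combination of local generators of $\ci_C$ has nonvanishing differential, a standard parameter-space argument.
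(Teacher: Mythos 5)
Your proof is correct and follows essentially the same strategy as the paper's: find one divisor $D$ containing the ``bulk'' of $Z_{red}$ whose singular locus is strictly smaller, then induct and append $D$ at the end of the list. The differences are organizational. The paper runs a Noetherian induction on $Z_{red}$ itself, invokes Proposition~\ref{Bd} as a black box to produce a divisor $Y\supset Z_{red}$ smooth outside $Z$ and at all generic points of $Z$, notes that $\operatorname{Sing}(Y)$ is then a \emph{proper} closed subscheme of $Z_{red}$, and applies the inductive hypothesis to that. You instead induct on $\dim Z_{red}$, peel off the top-dimensional components $C$ of $Z_{red}$, and re-derive the content of Proposition~\ref{Bd} for $C$ by a Bertini argument (with the $d=\dim X-1$ case handled separately by taking $D=C$). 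This makes your write-up more self-contained, at the cost of having to carry the lower-dimensional remainder $Z'$ along by hand; the paper's use of Proposition~\ref{Bd} lets it treat all of $Z$ at once.

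One small imprecision worth flagging: the pointwise statement ``for any fixed smooth, non-self-intersecting $p\in C$ a general section is smooth at $p$'' does not by itself give the containment $\operatorname{Sing}(D)\subset\bigcup_i\operatorname{Sing}(C_i)\cup\bigcup_{i\ne j}(C_i\cap C_j)$ for a \emph{single} general $D$; one needs the parameter-space (incidence-variety) argument you allude to, and in general it yields only that $\operatorname{Sing}(D)\cap C^{sm}$ has dimension at most $d-c$ where $c=\operatorname{codim}(C\subset X)\geq 2$, not that it is empty. This is all you actually use, since $d-c<d$, so the proof goes through; but the stated inclusion should be weakened to the dimension bound. With that adjustment the argument is sound.
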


\begin{proof}
Use Noetherian induction. The base ($Z=\emptyset$) is trivial. Suppose, we know the
statement for all proper closed subschemes of $Z_{red}$. By Proposition \ref{Bd}, there exist
a divisor $Y$ of $X$ which contains $Z_{red}$, and is smooth outside $Z$, and in the generic points
of the components of $Z$. Thus, the locus of singular points $S$ of $Y$ is a proper subscheme
of $Z_{red}$.
By induction, there exist divisors $Y_i,\,i=1,\ldots,m-1$, such that
$S_{red}\subset \bigcup_{i=1}^{m-1} Y_i$, and $Y_j$ is smooth outside $\bigcup_{i=1}^{j-1}Y_i$.
Taking $Y_m=Y$, we get the needed sequence of divisors for $Z$.
\Qed
\end{proof}

\begin{lemma}
\label{p2}
Let $\cv=(Z\row X\stackrel{\rho}{\low}\wt{X})\in Ob(\rc(X))$ with $V=\rho^{-1}(Z)$, and
$x\in A_*(V)$ be an element whose image in $\operatornamewithlimits{colim}_{T\subsetneq X}A_*(T)$ is zero.
Then there exists $Z'\supset Z$ and
$\cv'=(Z'\row X\stackrel{\rho'}{\low}\wt{X}')\in Ob(\rc(X))$ with $V'=\rho^{-1}(Z')$, and
$x'\in A_*(V')$ such that $(\cv',x')$ and $(\cv,x)$ have the same image in $\coker(d^{{\frb}}_{1,0})$ and
$pr_{Z'}(x')=0$.
\end{lemma}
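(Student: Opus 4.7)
The plan is to unpack the vanishing of $pr_Z(x)$ into a concrete statement about some larger closed subscheme, then enlarge $Z$ to contain that subscheme by adding divisors one at a time via Lemma \ref{lTY}, with the order of addition dictated by Lemma \ref{lSing}.

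First, since $pr_Z(x) \in A_*(Z)$ vanishes in the colimit $\operatornamewithlimits{lim}_{Z' \subset X} A_*(Z')$, I would pick a closed subscheme $Z_0 \subset X$ containing $Z$ such that the pushforward of $pr_Z(x)$ along $Z \hookrightarrow Z_0$ is already zero in $A_*(Z_0)$. Since $A_*$ depends only on the reduced structure, I may as well take $Z_0$ to be reduced.

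Next, I apply Lemma \ref{lSing} to $Z_0$ to obtain divisors $Y_1, \dots, Y_m$ of $X$ with $Z_0 \subset \bigcup_i Y_i$ and with $Y_j$ smooth outside $\bigcup_{i<j} Y_i$. Setting $(\cv_0, x_0) := (\cv, x)$ and $T_0 := Z$, I iteratively invoke Lemma \ref{lTY}: at step $j$ the divisor $Y_j$ is smooth outside $\bigcup_{i<j} Y_i$, which is contained in $T_{j-1} := Z \cup Y_1 \cup \dots \cup Y_{j-1}$, so Lemma \ref{lTY} produces an element $(\cv_j, x_j) \in \Db_{0,0}$ defined over $T_j := T_{j-1} \cup Y_j$, with $(\cv_j, x_j) \equiv (\cv_{j-1}, x_{j-1}) \bmod \op{image}(d_{1,0}^{\frb})$ and with $pr_{T_j}(x_j) = pr_{T_{j-1}}(x_{j-1})|_{T_j}$.

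Finally, I would set $Z' := T_m$ and $x' := x_m$. Summing the congruences yields $(\cv', x') \equiv (\cv, x) \bmod \op{image}(d_{1,0}^{\frb})$, and telescoping the identities for $pr_{T_j}$ yields $pr_{Z'}(x') = pr_Z(x)|_{Z'}$, i.e.\ the pushforward of $pr_Z(x)$ along $Z \hookrightarrow Z'$. Since $(Z_0)_{\op{red}} \subset \bigcup_i Y_i \subset Z'_{\op{red}}$, this pushforward factors through $A_*(Z_0)$, where $pr_Z(x)$ is zero by construction; hence $pr_{Z'}(x') = 0 \in A_*(Z')$.

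The only genuine obstacle is making the iteration of Lemma \ref{lTY} legitimate at every stage: a single chosen divisor $Y_j$ need not be smooth outside the original $Z$, but Lemma \ref{lSing} is exactly the combinatorial device that guarantees each $Y_j$ is smooth outside the accumulated union of its predecessors, which has by then been absorbed into the running "bad" locus $T_{j-1}$. Everything else is functoriality of pushforwards through the colimit $A_*(Z) \to A_*(Z_0) \to A_*(Z')$ and bookkeeping of the $\frb$-equivalences.
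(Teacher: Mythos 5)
Your argument is correct and is essentially the paper's own proof, just with the bookkeeping made explicit: the paper likewise picks $Z''\supset Z$ with $pr_Z(x)|_{Z''}=0$ from the colimit vanishing, uses Lemma \ref{lSing} to cover it by divisors, absorbs them one at a time via Lemma \ref{lTY}, and concludes from $pr_{Z'}(x')=pr_Z(x)|_{Z'}$. Your observation that each $Y_j$ is automatically smooth outside the accumulated locus $T_{j-1}\supset\cup_{i<j}Y_i$ is exactly the point that makes the iteration legitimate, so nothing is missing.
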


\begin{proof}
Let $T\subsetneq X$ be a closed subscheme, containing $Z$ and such that $(j_T)_*(pr_Z(x))=0$.
By Lemma \ref{lSing}, we may find divisors $Y_1,\ldots, Y_m$ such that $T_{red}\subset\bigcup_{i=1}^mY_i$
and $Y_j$ is smooth outside $\bigcup_{i=1}^{j-1}Y_i$ for all $j$. By Lemma \ref{lTY} we may find elements
$\ov{x}_j\in im_{T\cup\bigcup_{i=1}^{j}Y_i}$ having the same class as $\ov{x}=[(\cv,x)]$ in $\coker(d^{{\frb}}_{1,0})$
and such that $pr_{T\cup\bigcup_{i=1}^{j}Y_i}(\ov{x}_j)$ is the push-forward along
$T\cup\bigcup_{i=1}^{j-1}Y_i\hookrightarrow T\cup\bigcup_{i=1}^{j}Y_i$ of $pr_{T\cup\bigcup_{i=1}^{j-1}Y_i}(\ov{x}_{j-1})$.
Setting $Z'=\bigcup_{i=1}^{m}Y_i$ and choosing a representative $(\cv',x')$ of $\ov{x}_m$ give what we want.
Indeed, $pr_{Z'}(x')$ is the push-forward of $pr_Z(x)$ along $Z\hookrightarrow Z'$ which is zero since $T_{red}\subset Z'$.
\Qed
\end{proof}

\begin{lemma}
\label{p3}
Let $u\in \Db_{0,0}$ be an element whose image in
$\operatornamewithlimits{colim}_{T\subsetneq X}A_*(T)$ is zero.
Then $u\in image(d_{1,0}^{{\frb}})$.
\end{lemma}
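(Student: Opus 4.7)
The plan is to first reduce $u$ modulo $image(d_{1,0}^{{\frb}})$ to a single summand $(\cv,x)$ with $\cv=(Z\to X\llow\wt{X})$ for which $pr_Z(x)=0$ already in $A_*(Z)$, and then to show any such single element lies in $image(d_{1,0}^{{\frb}})$.

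For the reduction, write $u=\sum_i(\cv_i,x_i)$ with $\cv_i=(Z_i\to X\llow\wt{X}_i)$. By Lemma \ref{lSing} choose divisors $Y_1,\ldots,Y_m$ of $X$ covering $\bigcup_i Z_i$ with each $Y_j$ smooth outside $\bigcup_{l<j}Y_l$, and iterate Lemma \ref{lTY} (adding one $Y_j$ at a time) to replace each $(\cv_i,x_i)$ modulo $image(d_{1,0}^{{\frb}})$ by an equivalent element over the common subscheme $Z':=\bigcup_i Z_i\cup\bigcup_j Y_j$. By Lemma \ref{p1} these representatives can be taken with a single common resolution $\wt{X}'\to X$ of $Z'$, packaging $u$ into a single element $(\cv',y)=(Z'\to X\llow\wt{X}',y)$, with $pr_{Z'}(y)=\sum_i pr_{Z_i}(x_i)|_{Z'}$ by the pushforward identity in Lemma \ref{lTY}. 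By hypothesis this sum vanishes in the colimit $\operatornamewithlimits{lim}_{Z''\subset X}A_*(Z'')$, hence in $A_*(Z''')$ for some $Z'''\supset Z'$; a final application of Lemma \ref{p2} replaces $(\cv',y)$ by an equivalent $(\cv''',y''')$ with $pr_{Z'''}(y''')=0\in A_*(Z''')$.

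It remains to handle a single $(\cv,x)$ with $\rho_*(x)=0\in A_*(Z)$. Unpacking the definition $A_*(Z)=\operatornamewithlimits{lim}_{S\to Z}A_*(S)$ as a colimit over projective maps from smooth $S$, and noting that this colimit is filtered (any two resolutions of $Z$ are dominated by a resolution of their fibre product), the vanishing is witnessed by a projective morphism $\tau:V\to V^{\natural}$ of smooth varieties projective over $Z$ with $\tau_*(x)=0\in A_*(V^{\natural})$. The plan is then to realize this pushforward relation inside $\frb$ by enlarging $\wt{X}$: further permitted blow-ups along centers over $Z$ (yielding type I relations) are used to produce a new resolution in which a birational model of $V^{\natural}$ appears among the components of the new SNC divisor $(\rho_{\text{new}})^{-1}(Z_{\text{new}})$ for some enlargement $Z_{\text{new}}\supseteq Z$; the map $\tau$ is then encoded by closed embeddings of faces inside this new SNC divisor, whose pushforward identity becomes a sum of type II relations.

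The main obstacle is this last step: a morphism $V\to V^{\natural}$ over $Z$ is not itself of type I or type II, so realizing $\tau_*$ requires constructing the intermediate resolution by a careful sequence of blow-ups, using the Weak Factorization Theorem (Theorem \ref{WF}) and Hironaka's resolution of indeterminacy (Theorem \ref{Hif}), in the spirit of the injectivity argument in Lemma \ref{ab}. The bookkeeping of these blow-ups, and of the type I/II relations used to transport $x$ and to witness the vanishing $\tau_*(x)=0$, is the most technical part of the argument.
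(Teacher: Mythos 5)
Your first reduction step --- iterating Lemma \ref{lSing} and Lemma \ref{lTY} to package $u$ into a single element $(\cv,x)$ over one closed subvariety $Z$, then applying Lemma \ref{p2} to get $pr_Z(x)=0\in A_*(Z)$ on the nose --- matches the paper's argument and is fine.

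The second step has a genuine gap: it is a sketch of a strategy, not a proof, and the strategy itself has problems. The paper proceeds by Noetherian induction on $Z$. It uses Proposition \ref{Bd} to embed $Z$ in a divisor $Y$ that is smooth outside a proper closed $S\subset Z$, replaces $u$ by an equivalent element defined over $Y$, chooses a resolution with centers over $S$, and then invokes the concrete four-term exact sequence of Lemma \ref{Abm1},
$$
0\low A_*(Y)\llow A_*(\wt{Y})\oplus A_*(S)\llow\oplus_iA_*(E_i\cap\wt{Y}),
$$
to rewrite $x$, modulo $image(d_{1,0}^{\frb})$, as an element defined over the smaller variety $S$ that still vanishes in $A_*(S)$; Noetherian induction finishes. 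Your plan instead unwinds the colimit $A_*(Z)=\operatornamewithlimits{lim}_{V\row Z}A_*(V)$ and tries to realize a witnessing pushforward $\tau_*(x)=0$ inside $\frb$ via Weak Factorization. Several things go wrong here. First, the filteredness claim is not automatic: the indexing category has projective maps $V\row Z$ with $V$ smooth as objects, and while any two are dominated by a resolution of their fibre product, the coequalizer condition (for two parallel maps $V_1\rightrightarrows V_2$ over $Z$, a further map equalizing them) is not addressed. Second, and more seriously, the element $x$ lives on $V=\rho^{-1}(Z)$, a strict normal crossing divisor --- a singular scheme --- so $V\row Z$ is not an object of the indexing category to begin with, and $A_*(V)$ is itself a colimit; "the vanishing is witnessed by a single $\tau:V\row V^{\natural}$" does not typecheck. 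Third, even granting a correct finite witness for the vanishing, you acknowledge that encoding the pushforward relation via a sequence of type I/II moves in $\frb$ is "the most technical part" and leave it undone; the paper's use of Lemma \ref{Abm1} sidesteps exactly this by providing a \emph{finite} presentation of $A_*(Y)$ in terms of smooth varieties that lines up with the $\rc(X)$ formalism. In short, the missing idea is the combination of Proposition \ref{Bd} (find a divisor $Y\supset Z$ controlled outside a smaller $S$) with Lemma \ref{Abm1} (a finite blow-up presentation of $A_*(Y)$), and Noetherian descent to $S$.
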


\begin{proof}
Write $u=(\cv_1,x_1)+\ldots+(\cv_n,x_n)$ and denote by $Z_l$ the closed subvariety that appears in $\cv_l$ for $l=1,\ldots,n$.
By Lemma \ref{lSing}, applied to the subscheme $T=Z_1\cup\ldots\cup Z_n$, there exist divisors $Y_1,\ldots,Y_m$ such that
$T_{red}\subset Y=\bigcup_{i=1}^mY_i$ and $Y_j$ is smooth outside $\bigcup_{i=1}^{j-1}Y_i$.
Applying Lemma \ref{lTY} to each of the inclusions
$$
Z_l\subset Z_l\cup Y_1\subset\ldots\subset Z_l\cup Y_1\cup\ldots\cup Y_m=Z_l\cup Y,
$$
and using the fact that $(Z_l\cup Y)_{red}=Y_{red}$,
we obtain an element $\ov{x}_l\in im_{Y}$ having the same image as $(\cv_l,x_l)$ in
$\coker(d^{{\frb}}_{1,0})$. This shows that the class of $u$ in $\coker(d^{{\frb}}_{1,0})$ can be represented
by a single element defined over $Y$. In other words, it is enough to treat the case
when $u=(\cv,x)$ with $\cv=(Z\row X\stackrel{\rho}{\low}\wt{X})$. Furthermore, applying Lemma \ref{p2},
we may assume that $pr_Z(x)=0$.

We now treat the case of $u=(\cv,x)$ with $pr_Z(x)=0$, by noetherian induction on $Z$. Recall that we want to show that
$u\in\op{Image}(d^{{\frb}}_{1,0})$. (When $Z$ is empty, $x$ is zero and there is nothing to prove.)
By Proposition \ref{Bd}, there exists a divisor $Y$ containing
$Z$ and smooth outside some proper closed subset $S\subsetneq Z$.
Fix $\cv'=(Y\row X\stackrel{\rho'}{\low}\wt{X}')$ in $\rc(X)$ where $\rho'$ is
a sequence of blowups in smooth centers over $S$. By Lemmas \ref{p1} and \ref{lTY},
we may find $x'\in A_*(V')$ such that $(\cv,x)$ and $(\cv',x')$ have the same image in $\coker(d^{{\frb}}_{1,0})$,
and $pr_Y(x')=0$.

Denote by $\wt{Y}$ the strict transform of $Y$ along the blowup $\rho':\wt{X}'\row X$ and by $E$ the exceptional divisor of $\rho'$.
Since $x'$ is supported on $\wt{Y}\cup E$, we can write $x'=\ov{y}-\ov{e}$ where $\ov{y}$ is the push-forward of $y\in A_*(\wt{Y})$
and $\ov{e}$ the push-forward of $e\in A_*(E)$. Since $pr_Y(x')=0$, we see that $pr_Y(\ov{y})=pr_Y(\ov{e})$. Thus, if $s\in A_*(S)$
denotes the push-forward of $e\in A_*(E)$, we see that $s\in A_*(S)$ and $y\in A_*(\wt{Y})$ have the same push-forward in $A_*(Y)$.
Now by Lemma \ref{Abm1}, we have an exact sequence
$$
0\low A_*(Y)\low A_*(\wt{Y})\oplus A_*(S)\low A_*(\wt{Y}\cap E).
$$
Thus, we may find $c\in A_*(\wt{Y}\cap E)$ which maps to $s$ and $y$.

Now, by the previous discussion, $\ov{y}\in A_*(\wt{Y}\cup E)$ is the push-forward of $c$ along the inclusion
$\wt{Y}\cap E\hookrightarrow\wt{Y}\cup E$. Hence, letting $d\in A_*(E)$ be the push-forward of $c$ along the inclusion
$\wt{Y}\cap E\hookrightarrow E$, we see that $\ov{y}\in A_*(\wt{Y}\cup E)$ is the push-forward of $d\in A_*(E)$ by the inclusion
$E\hookrightarrow\wt{Y}\cup E$. This shows that $x'\in A_*(\wt{Y}\cup E)$ is the push-forward of $d-e\in A_*(E)$.
Moreover, both $d$ and $e$ map to $s\in A_*(S)$ showing that the push-forward of $d-e\in A_*(E)$ along the projection
$E\row S$ is zero. To conclude, consider the object $\cv''=(S\row X\stackrel{\rho'}{\low}\wt{X}')$ of $\rc(X)$.
Clearly, the classes of $(\cv'',x''=d-e)$, $(\cv',x')$ and $(\cv,x)$ are equal in $\coker(d^{{\frb}}_{1,0})$.
Moreover, $pr_S(x'')=0$. We may now use the inductive hypothesis to conclude.
\Qed
\end{proof}

\begin{corollary}
\label{c'bCoker}
Let $A^*$ be any theory in the sense of Definition \ref{goct}. Then
the natural map
$$
\coker(d_{1,0}^{{\frb}})\row\operatornamewithlimits{colim}_{T\subsetneq X}A_*(T)
$$
is an isomorphism.
\end{corollary}

\begin{proof}
By Lemma \ref{p3}, we know that the map is injective. To prove surjectivity, it is enough to show that for
$\cv=(Z\row X\stackrel{\rho}{\low}\wt{X})$ in $\rc(X)$, the map $A_*(V)\row A_*(Z)$ is surjective.
This follows easily from the fact that the projective morphism $V\row Z$ is surjective and its fibers
are unions of rational varieties.
\Qed
\end{proof}

\begin{proposition}
\label{NbOm}
Assume that $A^*=\Omega^*$. Then, the natural map $H({\frb})\row\ov{\Omega}^*(X)$ is an isomorphism.
\end{proposition}

\begin{proof}
In short, the result follows from the fact that the map $\op{div}$ in the diagram (\ref{BD}) factors not only through
$\Da_{0,1}$, but through
$\Db_{0,1}$ as well. In more details, we use Levine's exact sequence
$$
\zz[k(X)^{\times}]\otimes\laz_{*-d+1}\stackrel{\op{div}^{\Omega}}{\lrow}\Omega_*^{(1)}(X)\row\Omega^*(X)\row\laz\row 0.
$$
From Corollary \ref{c'bCoker}, we know that the natural map
$$
\coker(d_{1,0}^{{\frb}})\row\Omega_*^{(1)}(X)
$$
is an isomorphism. Now, consider the commutative diagram with exact rows
\begin{equation*}
\xymatrix{
b_{0,1} \ar @{->}[r]^(0.5){\hat{d}^{{\frb}}_{0,1}} & \coker(d_{1,0}^{{\frb}}) \ar @{->}[r] \ar @{->}[d]^{\cong} &
H(\frb) \ar @{->}[r] \ar @{->}[d] & 0\\
\zz[k(X)^{\times}]\otimes\laz \ar @{-->}[u] \ar @{->}[r]^(0.5){\op{div}^{\Omega}} & \Omega_*^{(1)}(X) \ar @{->}[r] &
\ov{\Omega}_*(X) \ar @{->}[r] & 0.
}
\end{equation*}
It is enough to show that there is a dashed arrow as above making the left square commutative.
The needed map $\zz[k(X)^{\times}]\otimes\laz\row b_{0,1}$ is $\laz$-linear and its value on a rational function
$f\in k(X)^{\times}$ is constructed as follows.

Let $\pi:\wt{X}\row X$ be a sequence of blowups in smooth centers such that $f$ extends to a morphism $\wt{f}:\wt{X}\row\pp^1$.
Let $Z\subsetneq X$ be a closed subset such that $\pi$ is an isomorphism outside $Z$. Blowing up further, we may assume that
$\pi^{-1}(Z)\cup\wt{f}^{-1}(0)\cup\wt{f}^{-1}(\infty)$ is a divisor with strict normal crossings. Set $E=\pi^{-1}(Z)$.

Now consider $\cw=(T\row X\times\pp^1\stackrel{\pi\times id}{\llow}\wt{X}\times\pp^1)$, where
$T=\ov{\Gamma_f}\cup (Z\times\pp^1)$. We claim that
$\cw$ is an object of $\rc^1(X)$. Clearly, the fibers at $0$ and $\infty$ of the projection $\wt{X}\times\pp^1\row\pp^1$
are smooth. Moreover, the inverse image of $T$ is exactly $W=\Gamma_{\wt{f}}\cup (E\times\pp^1)$ which is a divisor with strict
normal crossings. (Indeed, note that the $\Gamma_{\wt{f}}\cap(E\times\pp^1)$ is isomorphic to $E$.)
Finally, $W_0=\wt{f}^{-1}(0)\cup E$ and $W_{\infty}=\wt{f}^{-1}(\infty)\cup E$ are divisors with strict normal crossings on $\wt{X}$.

Now it is easy to conclude: the image of $f\in k(X)^{\times}$ by the map $\zz[k(X)^{\times}]\row b_{0,1}$ is given by
$(\cw,g_*(1))$ with $g:\Gamma_{\wt{f}}\hookrightarrow W$ the obvious inclusion.
\Qed
\end{proof}

\begin{corollary}
\label{NbFree}
If $A^*$ is a free theory in the sense of Levine-Morel, then the natural map $H({\frb})\row\ov{A}_*(X)$ is an isomorphism.
\end{corollary}

\begin{remark}
One could have introduced the theories of rational type using the short bi-complex ${\frb}$ instead of ${\fra}$.
This would have permitted to bi-pass the Subsection \ref{a}, substituting the crucial Proposition \ref{CobRT} by
the Proposition \ref{NbOm}. We though find the bi-complex ${\fra}$ much more natural
than the bi-complex ${\frb}$. Another point the author wanted to make is that all these bi-complexes represent useful tools for
studying cohomology theories, providing alternative descriptions of a theory.
\end{remark}

Beyond Corollary \ref{NbFree}, we have more generally:

\begin{proposition}
\label{c'aH}
For any theory $A^*$ in the sense of Definition \ref{goct} satisfying $(CONST)$,
the natural map
$$
\hat{\beta}:H({\frb})\row H({\fra})
$$
is an isomorphism.
\end{proposition}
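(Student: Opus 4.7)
The plan is to reduce the statement to a comparison of images of the $d_{0,1}$-differentials inside a common cokernel. By Lemma \ref{ab} together with Corollary \ref{c'bCoker}, both $\coker(d_{1,0}^{\fra})$ and $\coker(d_{1,0}^{\frb})$ are canonically isomorphic to $\coker(d_{1,0}^{\fraO})$, so $\beta$ is already an isomorphism. Consequently $\hat{\beta}$ is automatically surjective, and it will be injective precisely when $\beta$ identifies $\op{image}(\hat{d}_{0,1}^{\frb})$ with $\op{image}(\hat{d}_{0,1}^{\fra})$ inside this common cokernel. The inclusion $\beta(\op{image}(\hat{d}_{0,1}^{\frb})) \subseteq \op{image}(\hat{d}_{0,1}^{\fra})$ is immediate from naturality: every generator $({\cal V},\sum_S z_S) \in \Db_{0,1}$, where $V = \rho^{-1}(Z)$ is the DNC with smooth components $\{S\}$, has a natural image in $\Da_{0,1}$ obtained by viewing each $S$ as an object of $\smu^1(X)$ (the transversality of $S \cap \wt{X}_l$ is automatic from the DNC condition on $V \cup \wt{X}_0 \cup \wt{X}_1$), and the refined pullbacks $i_l^\star$ in the two bi-complexes$^*$ are compatible with this decomposition.

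For the reverse inclusion, given $({\cal U}, z) \in \Da_{0,1}$ with ${\cal U} = (W \stackrel{w}{\row} X \times \pp^1)$, the aim is to build a companion $({\cal V}, z') \in \Db_{0,1}$ realising the same value of $d_{0,1}$ modulo $\op{image}(d_{1,0}^{\fra})$. Set $T = \op{image}(w)$; by Theorems \ref{Hi} and \ref{Hip}, choose a projective birational $\rho: \wt{X \times \pp^1} \lrow X \times \pp^1$, iso outside $T$, such that $V = \rho^{-1}(T)$ together with $\wt{X}_l = \rho^{-1}(X \times l)$ forms a DNC in $\wt{X \times \pp^1}$, with no component of $V$ over $\{0\}$ or $\{1\}$ (enlarge $T$ to include $X \times \{0,1\}$ beforehand and discard the $V$-components contributed by these strata). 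This produces ${\cal V} \in \op{Ob}(\rc^1(X))$. Next, by Theorem \ref{Hif}, resolve the indeterminacy of $\rho^{-1} \circ w$ by a permitted blow up $\pi_W: \wt{W} \lrow W$ with centres compatible with the DNC $W_0 \cup W_1$, producing a morphism $\wt{w}: \wt{W} \lrow V$. Lift $z$ through the surjective $(\pi_W)_*$ to some $y \in A_{*+1}(\wt{W})$, and set $z' := \wt{w}_*(y) \in A_{*+1}(V) = \bigoplus_S A_{*+1}(S)$.

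The remaining task is to verify
$$
d_{0,1}^{\fra}({\cal U}, z) \equiv \beta\bigl(d_{0,1}^{\frb}({\cal V}, z')\bigr) \pmod{\op{image}(d_{1,0}^{\fra})}.
$$
This follows from two applications of the Multiple Points Excess Intersection Formula (Proposition \ref{MPEIF}): one to commute $i_l^\star$ past $(\pi_W)_*$, and one to commute $i_l^\star$ past $\wt{w}_*$. The error terms produced are push-forwards of classes supported on smooth subvarieties of strictly smaller dimension, and can be absorbed into $\op{image}(d_{1,0}^{\frb})$ via Lemma \ref{p2} and the type~I/II relations of $\frb$, mirroring the argument used in Lemma \ref{hatalpha}. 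The main obstacle I anticipate is the bookkeeping of these correction terms, in particular ensuring that the enlargements of $T$ needed to absorb them neither reintroduce components of $V$ lying over $\{0\}$ or $\{1\}$ nor disturb the strict normal crossing structure of $V \cup \wt{X}_0 \cup \wt{X}_1$; this can be arranged by carrying out such enlargements inside $X \times (\pp^1 \setminus \{0,1\})$ and re-applying Hironaka. Given this, injectivity of $\hat{\beta}$ then follows from the injectivity of $\beta$.
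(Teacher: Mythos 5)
Your proposal correctly identifies the overall skeleton: reduce to injectivity of $\hat{\beta}$ using the fact that $\beta$ is already an isomorphism (via Lemma \ref{ab} and Corollary \ref{c'bCoker}), then compare $im(\hat{d}_{0,1}^{\frb})$ and $im(\hat{d}_{0,1}^{\fra})$ inside the common cokernel. The easy inclusion $im(\hat{d}_{0,1}^{\frb}) \subseteq im(\hat{d}_{0,1}^{\fra})$ is also handled correctly. But the reverse inclusion is where your proposal has a genuine gap, and you underestimate the problem you yourself flag at the end.

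The objects of $\rc^1(X)$ are very constrained: the blow-up $\rho$ must have \emph{no centers over $X\times\{0\}$ or $X\times\{1\}$} (so $\wt{X}_0,\wt{X}_1$ remain smooth irreducible divisors) and $W=\rho^{-1}(Z)$ must have no components over $\{0\}$ or $\{1\}$. Your plan --- enlarge $T$ to include $X\times\{0,1\}$, resolve, then ``discard the $V$-components contributed by these strata'' and handle further corrections by enlargements inside $X\times(\pp^1\setminus\{0,1\})$ --- does not work. A typical divisor $Z\subset X\times\pp^1$ meets $X\times\{0\}$ along a closed subset which will usually be singular or fail to be in normal crossing with $X\times\{0\}$; resolving this forces blow-ups with centers over $\{0,1\}$, which immediately violates the definition of $\rc^1(X)$. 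You cannot fix this by moving centers away from $\{0,1\}$, because the singularity is genuinely located there. Discarding components is also not an option since the differential $d_{0,1}^{\frb}$ is computed from the actual object $\cv$, not from a quotient of it. Moreover, your proposal nowhere uses the hypothesis $(CONST)$, while the paper explicitly notes (in the proof of Lemma \ref{lNEWp1p1}) that this is the one place the constancy axiom enters, in contrast to Lemma \ref{hatalpha} which holds without it.

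What the paper actually does is far from bookkeeping. It introduces an intermediate term $\DbO_{0,1}$ indexed by a relaxed category $\rc^{1'}(X)$ (blow-ups allowed over $\{0,1\}$, requiring only that the union with the preimages of $\{0\}$,$\{1\}$ be normal crossing), proves $im(\hat{d}_{0,1}^{\fra})=im(\hat{d}_{0,1}^{\frbO})$ roughly as you suggest, and then spends the bulk of the argument on $im(\hat{d}_{0,1}^{\frbO})=im(\hat{d}_{0,1}^{\frb})$. That last equality needs $im_Z\subset im(\hat{d}_{0,1}^{\frb})$ for every divisor $Z\subset X\times\pp^1$, and is established by an induction on the degree of $Z$ over $X$: the case of projection of dimension $<\ddim(X)$ (Lemma \ref{lNEWdimsm}, using ``constant'' blow-ups $B\times\pp^1$), the degree-one case via the graph of a rational function (Lemma \ref{lNEWchow}), and a bootstrap through an auxiliary divisor $Y\subset X\times\pp^1\times\pp^1$ with controlled restrictions to all four faces (Lemma \ref{lNEWp1p1}, which is where $(CONST)$ is used to realize arbitrary classes in $A_{*+1}(k(Z))$). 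That geometric construction is the heart of the proof and has no analogue in your proposal.
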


We will not use this result below, so we omit the proof.

\subsection{The short bi-complex ${\frc}$.}
\label{c}

Now we are ready to give the description of $A^*$ in terms of pull-backs.
In this subsection we will assume that $A^*$ is a {\it theory of rational type}.
By Proposition \ref{KlassRT}, this means that $A^*$ is {\it free} in the sense
of M.Levine-F.Morel, that is $A^*=\Omega^*\otimes_\laz A^*(k)$.
In particular, we have at our disposal the refined pull-backs defined in \cite[Subsection 6.6]{LM}.
That is, given a cartesian square
$$
\begin{CD}
W & @>>> & Y\\
@VVV & & @VVV \\
Z & @>{f}>> & X
\end{CD}
$$
where $f$ is an l.c.i. morphism of relative codimension $d$, we have a morphism $f^!:A_*(Y)\row A_{*-d}(W)$
satisfying a number of properties (see \cite[Theorem 6.6.6]{LM}).

Consider the short bi-complex ${\frc}={\frc}(A^*)$:
\begin{itemize}
\item[$\cdot$ ]
$\Dc_{0,0}:=\bigoplus\limits_{\cv\in Ob(\rc(X))} \op{Image}(\rho^!:A_*(Z)\row A_*(V))$;
\item[$\cdot$ ]
$\Dc^I_{1,0}:=\bigoplus\limits_{\cv_2\row\cv_1\in\cmor_I}\op{Image}(\rho_1^!:A_*(Z_1)\row A_*(V_1))$ - see (\ref{cmor})
in Subsection \ref{b};
\item[$\cdot$ ]
$\Dc^{II}_{1,0}:=\bigoplus\limits_{\cv_2\row\cv_1\in\cmor_{II}}\op{Image}(\rho_2^!:A_*(Z_2)\row A_*(V_2))$, \hspace{1cm}
$\Dc_{1,0}=\Dc_{1,0}^I\oplus\Dc_{1,0}^{II}$;
\item[$\cdot$ ]
$\Dc_{0,1}:=\bigoplus\limits_{\cw\in Ob(\rc^1(X))}\op{Image}(\rho^!:A_{*+1}(Z)\row A_{*+1}(W))$.
\end{itemize}
and the differentials are defined as follows:
\begin{itemize}
\item[$\cdot$ ] $d^I_{1,0}((id,\pi):\cv_2\row\cv_1,x)=(\cv_1,x)-(\cv_2,\pi^{!}(x))$
where $\pi^!:A_*(V_1)\row A_*(V_2)$ is the refined pull-back relative to $\pi:\wt{X}_2\row\wt{X}_1$.
\item[$\cdot$ ]
$d^{II}_{1,0}((i,id):\cv_2\row\cv_1,y)=(\cv_1,(i_V)_*(y))-(\cv_2,y)$ where $i_V:V_2\row V_1$ is the obvious inclusion.
Thus, $d^{II}_{1,0}:\Dc^{II}_{0,1}\row\Dc_{0,0}$ is simply the restriction of $d^{II}_{1,0}:\Db^{II}_{0,1}\row\Db_{0,0}$.
\item[$\cdot$ ]
$d_{0,1}:\Dc_{0,1}\row \Dc_{0,0}$ is the restriction of $d_{0,1}:\Db_{0,1}\row \Db_{0,0}$.
\end{itemize}

\begin{lemma}
\label{c-well-deff}
The above differentials are well-defined.
\end{lemma}

\begin{proof}
The fact that $d^I_{1,0}$ takes $\op{Image}(\rho_1^!)$ to $\op{Image}(\rho_2^!)$ follows from the formula
$\rho_2^!=\pi^!\circ\rho_1^!$ (see \cite[Theorem 6.6.6(3)]{LM}). The fact that $d^{II}_{1,0}$ takes $\op{Image}(\rho_2^!)$ to
$\op{Image}(\rho_1^!)$ follows from the formula $(i_V)_*\rho_2^!=\rho_1^!(i_Z)_*$ where $i_Z:Z_1\hookrightarrow Z_2$ is an
obvious inclusion (see \cite[Theorem 6.6.6(2.a)]{LM}).

We now turn to the case of the differential $d_{0,1}$. By Lemma \ref{star!-e-only}, we have that
$d_{0,1}(\cw,y)=(\partial_0\cw,k_0^!(y))-(\partial_{\infty}\cw,k_{\infty}^!(y))$ where $k_0^!:A_{*+1}(W)\row A_*(W_0)$ and
$k_{\infty}^!:A_{*+1}(W)\row A_*(W_{\infty})$ are the refined pull-backs associated to the regular immersions
$k_l:X\times\{l\}\hookrightarrow X\times\pp^1$, for $l=0,\infty$.

For $l=0,\infty$ we have the
cartesian diagram (with $Z_l=Z\cap X\times\{l\}$):
\begin{equation}
\label{diag-c}
\xymatrix @-0.7pc{
Z_l \ar @{->}[d]_(0.5){z_l} & W_l\ar @{->}[r]^(0.5){j'_l} \ar @{->}[d]_(0.5){w_l}
\ar @{->}[l]_(0.4){\rho'_l} &W \ar @{->}[r]^(0.5){\rho'}
\ar @{->}[d]_(0.5){w}&Z \ar @{->}[d]_(0.5){z}\\
X\times\{l\}&\wt{X}_l\ar @{->}[l]^(0.4){\rho_l}\ar @{->}[r]_(0.4){j_l}&\wt{X\times\pp^1} \ar @{->}[r]_(0.5){\rho}&X\times\pp^1,
}
\hspace{5mm}
\xymatrix @-0.7pc{
\wt{X}_l\ar @{->}[d]_(0.4){\rho_l}\ar @{->}[r]^(0.4){j_l}&\wt{X\times\pp^1} \ar @{->}[d]^(0.4){\rho}\\
X\times\{l\}\ar @{->}[r]_(0.5){k_l} &X\times\pp^1,
}
\end{equation}
And since $\wt{X}_0$, $\wt{X}_{\infty}$ are smooth divisors on $\wt{X\times\pp^1}$, the map $\rho$
is transversal to the immersions $k_l$,
$l=0,\infty$. This implies that $(k_l)^{!}=(j_l)^{!}:A_{*+1}(W)\row A_*(W_l)$ (see \cite[Lemma 6.6.2]{LM}).
The result follows again from the functoriality of refined pull-backs
(see \cite[Theorem 6.6.6(3)]{LM}).
\Qed
\end{proof}

We define a morphism of bi-complexes
$$
\mu:{\frb}\row{\frc}.
$$
as follows
\begin{itemize}
\item[$\cdot$ ] $\mu_{0,0}:\Db_{0,0}\row\Dc_{0,0}$ is given by $\mu_{0,0}(\cv,x)=(\cv,\rho^!(\rho_V)_*(x))$ where
$\rho_V:V\row Z$ is the obvious projection.
\item[$\cdot$ ] $\mu^I_{1,0}:\Db^I_{1,0}\row \Dc^I_{1,0}$ is given by
$\mu^I_{1,0}(\cv_2\row\cv_1,y)=(\cv_2\row\cv_1,\rho_1^!(\rho_{V_2})_*(y))$.
\item[$\cdot$ ] $\mu^{II}_{1,0}:\Db^{II}_{1,0}\row\Dc^{II}_{1,0}$ is given by
$\mu^{II}_{1,0}(\cv_2\row\cv_1,y)=(\cv_2\row\cv_1,\rho_2^!(\rho_{V_2})_*(y))$.
\item[$\cdot$ ] $\mu_{0,1}:\Db_{0,1}\row\Dc_{0,1}$ is given by $\mu_{0,1}(\cw,y)=(\cw,\rho^!(\rho_W)_*(y))$.
\end{itemize}
Let's check that these morphisms commute with the differentials. The cases of $d^{II}_{1,0}$ and $d_{0,1}$ follow from
\cite[Theorem 6.6.6]{LM}, where in the latter one we use the fact that $\rho$ is transversal to the immersions $k_l$, $l=0,\infty$,
and so $(k_l)^{!}=(j_l)^{!}:A_{*+1}(W)\row A_*(W_l)$ (notations from (\ref{diag-c})).
For $d^I_{1,0}$, we compute:
\begin{equation*}
\begin{split}
\mu_{0,0}\circ d^I_{1,0}(\cv_2\row\cv_1,y)=&\mu_{0,0}((\cv_1,(\pi_V)_*(y))-(\cv_2,y))
=(\cv_1,\rho_1^!(\rho_{V_2})_*(y))-(\cv_2,\rho_2^!(\rho_{V_2})_*(y)).
\end{split}
\end{equation*}
On the other hand, we have
\begin{equation*}
\begin{split}
d^I_{1,0}\circ\mu_{0,0}(\cv_2\row\cv_1,y)=&d^I_{1,0}(\cv_2\row\cv_1,\rho_1^!(\rho_{V_2})_*(y))
=(\cv_1,\rho_1^!(\rho_{V_2})_*(y))-(\cv_2,\pi^!\rho_1^!(\rho_{V_2})_*(y)).
\end{split}
\end{equation*}
The result follows from the equality $\pi^!\rho_1^!=\rho_2^!$ (\cite[Theorem 6.6.6(3)]{LM}).

\begin{lemma}
\label{c-lN-odin}
The map $\mu_{0,0}:b_{0,0}\row c_{0,0}$ is surjective. Thus, the induced map $H({\frb})\row H({\frc})$ is also surjective.
\end{lemma}

\begin{proof}
Given $\cv$ in $\rc(X)$, we need to show that the map $\rho^!(\rho_V)_*:A_*(V)\row\op{Image}(\rho^!)$ is surjective.
It is enough to show that the map $(\rho_V)_*:A_*(V)\row A_*(Z)$ is surjective which follows from the fact that $V\row Z$
is surjective and has fibers which are unions of rational varieties.
\Qed
\end{proof}

We now consider the map $\gamma_{0,0}:c_{0,0}\row\ov{A}^*(X)$ given by
$$
\gamma_{0,0}(\cv,x)=\frac{\rho_*(i_V)_*(x)}{\rho_*(1)}
$$
where, as usual, $i_V:V\row\wt{X}$ is the obvious inclusion (the denominator is invertible by Proposition \ref{pi1invert}). 
This map descends to a map
$$
\gamma: H({\frc})\row\ov{A}^*(X).
$$
(We only treat the case of $d^I_{1,0}$ which is the most interesting one. The map $\gamma_{0,0}\circ d^I_{1,0}$ sends
$(\cv_2\row\cv_1,x)$ to
$$
\gamma_{0,0}(\cv_2,\pi^!(x))-\gamma_{0,0}(\cv_1,x)=\frac{(\rho_2)_*(i_{V_2})_*\pi^!(x)}{(\rho_2)_*(1)}-
\frac{(\rho_1)_*(i_{V_1})_*(x)}{(\rho_1)_*(1)}.
$$
Recall that $x=\rho_1^!(s)$ where $s\in A_*(Z_1)$. It follows that $(i_{V_1})_*(x)=\rho_1^*(i_{Z_1})_*(s)$. Thus, the second
fraction in the formula simplifies as follows:
$$
\frac{(\rho_1)_*(i_{V_1})_*(x)}{(\rho_1)_*(1)}=\frac{(\rho_1)_*\rho_1^*(i_{Z_1})_*(s)}{(\rho_1)_*(1)}=(i_{Z_1})_*(s)
$$
where the last equality follows from the projection formula.

Similarly, $(i_{V_2})_*\pi^!(x)=\rho_2^*(i_{Z_1})_*(s)$ and we have, for the same reasons,
$$
\frac{(\rho_2)_*(i_{V_2})_*\pi^!(x)}{(\rho_2)_*(1)}=(i_{Z_1})_*(s).
$$
This proves that $\gamma_{0,0}\circ d^I_{1,0}=0$ as needed.)

\begin{lemma}
\label{c-lN-dva}
The following triangle is commutative
$$
\xymatrix{
H({\frb}) \ar @{->}[r]^(0.5){\mu} \ar @{->}[rd] & H({\frc}) \ar @{->}[d]^(0.5){\gamma} \\
& \ov{A}^*(X).
}
$$
\end{lemma}

\begin{proof}
The composition $\gamma\circ\mu$ sends the class $(\cv,x)$ to
$$
\gamma_{0,0}(\cv,\rho^!(\rho_V)_*(x))=\frac{\rho_*\rho^!(\rho_V)_*(x)}{\rho_*(1)}=\frac{\rho_*(1)\cdot (i_Z)_*(\rho_V)_*(x)}{\rho_*(1)}=
(i_Z)_*(\rho_V)_*(x)
$$
as needed.
\Qed
\end{proof}

\begin{theorem}
\label{Ac}
Let $A^*$ be a theory of rational type. Then the map $\gamma: H({\frc})\row\ov{A}^*(X)$ is an isomorphism.
\end{theorem}

\begin{proof}
Use the Lemma \ref{c-lN-dva}, the fact that $\mu:H({\frb})\row H({\frc})$ is surjective and that the composition
$\gamma\circ\mu:H({\frb})\row\ov{A}^*(X)$ is an isomorphism by Corollary \ref{NbFree}.
\Qed
\end{proof}

\section{From products of projective spaces to $\smk$}

In this section, $A^*$ is a {\it theory of rational type}, and
$B^*$ is any theory in the sense of Definition \ref{goct}.
Our aim here is to prove the main result of the article:

\begin{theorem}
\label{MAIN}
Let $A^*$ be a {\it theory of rational type}, and
$B^*$ be any theory in the sense of Definition \ref{goct}.
Fix $n,m\in\zz$. Then any family of homomorphisms
$$
A^n((\pp^{\infty})^{\times l})\stackrel{G}{\row}B^m((\pp^{\infty})^{\times l}),\,\,\text{for}\,\,
l\in\zz_{\geq 0}
$$
commuting with the pull-backs for:
\begin{itemize}
\item[$(i)$ ] the action of ${\frak{S}}_l$;
\item[$(ii)$ ] the partial diagonals;
\item[$(iii)$ ] the partial Segre embeddings;
\item[$(iv)$ ] $(\op{Spec}(k)\hookrightarrow\pp^{\infty})\times
(\pp^{\infty})^{\times r},\,\,\forall r$;
\item[$(v)$ ] the partial projections
\end{itemize}
extends to a unique additive operation $A^n\stackrel{G}{\row}B^m$ on $\smk$.
\end{theorem}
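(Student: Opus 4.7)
Uniqueness reduces to earlier work: any two extensions $G_1,G_2$ agreeing on $(\pp^{\infty})^{\times l}$ for every $l$ have as their difference an additive operation vanishing on all products of projective spaces, and since a theory of rational type satisfies $(CONST)$ by Definition \ref{DRT}, Proposition \ref{vazhnoe} forces that difference to be zero. Thus the entire content of the theorem is the existence of the extension.

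For existence, the strategy is to use the presentation $\ov{A}^*(X)=H(\frc)$ provided by Theorem \ref{Ac}: every class in $\ov{A}^n(X)$ is represented by a pair $(\cv,x)$ with $\cv=(Z\row X\stackrel{\rho}{\low}\wt{X})\in\op{Ob}(\rc(X))$ and $x\in A_*(V)\cap\op{im}(\rho^{!})$, where $V=\rho^{-1}(Z)$ is a strict normal crossing divisor on the smooth projective $\wt{X}$. The plan is to construct an assignment $(\cv,x)\mapsto G((\cv,x))\in B^m(X)$ using only the data of $G$ on products of projective spaces, and then to verify that this assignment descends through the three differentials $d^I_{1,0}$, $d^{II}_{1,0}$, $d_{0,1}$ of $\frc$; adding back the split summand corresponding to the constants $A\subset A^*(X)$ then gives the operation on all of $A^n$.

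The definition itself will go by induction on $\ddim(\wt{X})$. In the inductive step, I would choose very ample line bundles $\cl_1,\ldots,\cl_r$ on $\wt{X}$ realizing the divisorial components of $V$ and producing a closed embedding $f\colon\wt{X}\hookrightarrow(\pp^N)^{\times r}$. Using $(PB)$ and the projective bundle formula, any class on a face of $V$ can be written, modulo classes supported on strict normal crossing divisors of strictly smaller dimension (to which the inductive hypothesis applies through the refined pull-back $\rho^{!}$), as a polynomial in the $c^A_1(\cl_i)$, i.e. as an $f^*$-image of a class on $(\pp^{\infty})^{\times r}$. The value of $G$ on such a pull-back is forced to be $f^*(G(\alpha))$, which one then pushes forward to $X$ via $\rho_*$. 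Axioms (i)-(v) are exactly what is needed to make this independent of the choice of line bundles: (i) and (ii) handle reordering and partial diagonals; (iii) controls how $c^A_1(\cl_i\otimes\cl_j)$ relates to the individual $c^A_1(\cl_i)$ via the formal group law; (iv) corresponds to replacing a factor by the basepoint when $\cl_i$ is trivial; (v) gives stability under adjoining or forgetting a factor.

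The real work, and the main obstacle, is showing that the assignment descends through the three relations of $\frc$. The type-II relation $d^{II}_{1,0}$ is the easiest: it comes from enlarging $Z$ with $\wt{X}$ unchanged, and is handled directly by the projection formula for $\rho_*$. The type-I relation $d^I_{1,0}$ asks invariance under a single permitted blow-up $\pi\colon\wt{X}_2\to\wt{X}_1$; combined with the weak factorization theorem this yields independence of the chosen resolution, with the compatibility reduced via the projective bundle formula on the exceptional divisor to the inductive hypothesis on a smooth center of lower dimension. The hardest step, which I expect to be the principal difficulty, is the homotopy differential $d_{0,1}$: one must show that for $W\to X\times\pp^1$ in $\rc^1(X)$ the specializations $i_0^{\star}$ and $i_1^{\star}$ become equal after applying $G$. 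This will require the deformation to the normal cone together with compatibility (v) of $G$ under partial projections, plus a careful face-by-face analysis on the strict normal crossing divisor $W$ of how $\rho^{!}$ commutes with the zero/one inclusions, reducing each such verification inductively to strictly lower-dimensional cases already covered. Once the assignment is well defined on $H(\frc)$, additivity and commutation with pull-backs follow from the constructions.
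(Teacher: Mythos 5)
Your skeleton is the paper's skeleton: uniqueness via Proposition \ref{vazhnoe}, existence by induction on dimension using the presentation $\ov{A}^*(X)=H({\frc})$ of Theorem \ref{Ac} and descent through the three differentials of ${\frc}$. The gap is in the definitional mechanism of the inductive step, in two related places. First, your inductive data is too small: you only propose to carry the value $G(x)\in B^m(Y)$ for lower-dimensional $Y$. The paper has to carry, for every $Y$ of dimension $<d$ and every $l\geq 0$, the whole family $\gl{l}\colon A^{n-l}(Y)\row B^*(Y)[[z_1,\ldots,z_l]]_{(m)}$ encoding the operation on $Y\times(\pp^{\infty})^{\times l}$ (Definitions \ref{a123} and \ref{b12}), precisely because the value on a class supported on a strict normal crossing divisor $V=\cup_iD_i$ is \emph{defined} by $\fl{l}(\gamma|V)(\ov{z})=\sum_i(d_i)_*\fl{l+1}(\gamma_i)(\lambda^B_i,\ov{z})$: one extra $\pp^{\infty}$-slot is consumed by $c^B_1(\co(D_i))$. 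With only the $l=0$ values available on smaller varieties the induction cannot close.

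Second, and more seriously, your step ``the value is forced to be $f^*(G(\alpha))$, which one then pushes forward to $X$ via $\rho_*$'' tacitly assumes that the operation commutes with push-forwards. It does not: an unstable additive operation satisfies only the Riemann--Roch-type identity $(b_{ii})$, $\fl{l}(j_*(\alpha))(\ov{z})=j_*\bigl(\fl{l+r}(\alpha)(\mu^B_1,\ldots,\mu^B_r,\ov{z})\bigr)$, where the $B$-roots of the normal bundle are inserted into the extra variables, and the actual definition $(**)$ also normalizes by $\rho_*(1^B)$. Without the normal-bundle insertion and this normalization the assignment is not even invariant under a single permitted blow-up (the type-I part of $d_{1,0}^{\frc}$; the paper's check there is exactly the identity $\rho_*\pi_*\fl{l}(\pi_V^{\star}\gamma|V')/\rho_*\pi_*(1^B)=\rho_*\fl{l}(\gamma|V)/\rho_*(1^B)$ via Propositions \ref{flmp} and \ref{flpistar}), let alone under $d_{0,1}$. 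Relatedly, your reduction ``any class on a face of $V$ is, modulo classes supported in smaller dimension, a pull-back from $(\pp^{\infty})^{\times r}$'' is circular: by $(CONST)$ every class is constant modulo supported classes, and the whole difficulty sits in the supported (push-forward) part. Finally, the $d_{0,1}$ descent, which you defer to deformation to the normal cone plus a face-by-face analysis, is where most of the paper's work lies (Lemmas \ref{lemd01}, \ref{lem2d01}, \ref{lem3d01}, \ref{movingd01}, Proposition \ref{d01osn}); it proceeds not by deformation to the normal cone but via the auxiliary classes $\wt{\fl{l}}(\beta|S)$ and the transversality of the fibres over $0$ and $1$ with the resolution, and it genuinely requires the enriched data $\{\gl{l}\}$ described above.
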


\begin{remark}
\begin{itemize}
\item[$1)$ ]
The condition on $A^*$ is necessary. For example, the identity maps
$$
\op{CH}^*_{alg}((\pp^{\infty})^{\times l})\stackrel{id}{\row}
\op{CH}^*((\pp^{\infty})^{\times l})
$$
can not be extended to a morphism of theories.
\item[$2)$ ] In Topology an analogous result was obtained by T.Kashiwabara -
see {\rm \cite[Theorem 4.2]{Kash}}.
\end{itemize}
\end{remark}

\begin{proof}

Let $X$ be a smooth quasi-projective variety. Our goal is to construct, by induction on the dimension of $X$, a family of
homomorphisms
$$
G: A^n(X\times(\pp^{\infty})^{\times l})\row B^m(X\times(\pp^{\infty})^{\times l}),\hspace{5mm}\text{for all}\,\,l\in\zz_{\geq 0}
$$
satisfying conditions $(i)$-$(v)$ as in Theorem \ref{MAIN}. Set $z^A_i=c^A_1(O(1)_i)\in A^1((\pp^{\infty})^{\times l})$ and similarly
for $B$. (As usual, $O(1)_i$ is the pull-back of $O_{\pp^{\infty}}(1)$ along the $i$-th projection.)
A family as above satisfying $(ii)$ and $(v)$, i.e. commuting with pull-backs for the partial diagonals and partial projections,
is uniquely determined by its action on the elements of the form $\alpha\cdot(\prod_{i=1}^lz^A_i)$ with $\alpha\in A^{n-l}(X)$.
Moreover, writing
$$
G(\alpha\cdot(\prod_{i=1}^lz^A_i))=\gl{l}(\alpha)(z^B_1,\ldots,z^B_l)\in B^*(X)[[z^B_1,\ldots,z^B_l]]=
B^*(X\times(\pp^{\infty})^{\times l}),
$$
we see that we need to construct power series $\gl{l}(\alpha)$ in $l$ variables with coefficients in $B^*(X)$.
The remaining conditions $(i)$, $(iii)$ and $(iv)$ impose conditions on the power series $\gl{l}(\alpha)$.
These are respectively the conditions $(a_i)$, $(a_{iii})$ and $(a_{ii})$ of Definition \ref{a123} below.

\begin{definition}
\label{a123}
Let $X$ be smooth quasi-projective variety.
A compatible family for $X$ is a set $\G(X)=\{\gl{l},\,l\in\zz_{\geq 0}\}$ of homomorphisms
$$
\gl{l}:A^{n-l}(X)\row B^*(X)[[z_1,\ldots,z_l]]_{(m)}
$$
satisfying the following conditions:
\begin{itemize}
\item[$(a_{i})$ ] $\gl{l}$ is symmetric with respect to ${\frak{S}}_l$;
\item[$(a_{ii})$ ] $\gl{l}(\alpha)=\prod_{i=1}^lz_i\cdot\fl{l}(\alpha)$, for some $\fl{l}(\alpha)\in B^*(X)[[z_1,\ldots,z_l]]_{(m-l)}$.
\item[$(a_{iii})$ ]
$
\gl{l}(\alpha)(x+_By,z_2,\ldots,z_l)=
\sum\limits_{i,j}\gl{l+i+j-1}(\alpha\cdot a_{i,j}^A)(x^{\times i},y^{\times j},z_2,\ldots,z_l),
$
\\
where $a_{i,j}^A$ and $a_{i,j}^B$ are the coefficients of the
formal group laws of $A^*$ and $B^*$.
\end{itemize}
\end{definition}

Let $\chi_A(x)=(-_Ax)=\sum_{i\geq 0}e^A_i\cdot x^{i+1}$, $x-_Ay=\sum_{i,j}b^A_{i,j}x^iy^j$, and
similarly for $B$. Then we also have:
\begin{equation}
\begin{split}
\label{aiii-obr}
&\gl{l}(\alpha)(-_Bx,z_2,\ldots,z_l)=\sum_{i\geq 0}\gl{l+i}(\alpha\cdot e^A_i)(x^{\times i+1},z_2,\ldots,z_l),
\hspace{5mm}\text{and}
\end{split}
\end{equation}
\begin{equation}
\begin{split}
\label{aiii-minus}
&\gl{l}(\alpha)(x-_By,z_2,\ldots,z_l)=\sum_{i,j}\gl{l+i+j-1}(\alpha\cdot b_{i,j}^A)(x^{\times i},y^{\times j},z_2,\ldots,z_l).
\end{split}
\end{equation}
Indeed, let us prove by (simultaneous for all $l$) induction on $N$ that the first identity holds modulo $x^N$.
The base $N=1$ is clear from $(a_{ii})$. Suppose the statement holds for $N$. Plugging $y=-_Bx$ into $(a_{iii})$
and using $(a_{ii})$ we obtain an identity:
\begin{equation*}
\begin{split}
0=\gl{l}(\alpha)(x,z_2,\ldots,z_l)+\gl{l}(\alpha)(-_Bx,z_2,\ldots,z_l)
+\sum_{i,j\geq 1}\gl{l+i+j-1}(\alpha\cdot a^A_{i,j})(x^{\times i},-_Bx^{\times j},z_2,\ldots,z_l).
\end{split}
\end{equation*}
But for $i>0$, by $(a_{ii})$, $(a_i)$ and the inductive assumption, modulo $x^{N+1}$,
\begin{equation*}
\begin{split}
\gl{l+i+j-1}(\beta)(x^{\times i},-_Bx^{\times j},z_2,\ldots,z_l)\equiv
\sum_{k_1,\ldots,k_j\geq 0}\gl{l+i+j-1+\sum_{r=1}^jk_r}(\beta\cdot\prod_{r=1}^je^A_{k_r})(x^{\times i+j+\sum_{r=1}^j k_r},z_2,\ldots,z_l).
\end{split}
\end{equation*}
Thus, modulo $x^{N+1}$,
\begin{equation*}
\begin{split}
\gl{l}(\alpha)(-_Bx,z_2,\ldots,z_l)\equiv &-\gl{l}(\alpha)(x,z_2,\ldots,z_l)\\
&-\sum_{i,j\geq 1}\sum_{k_1,\ldots,k_j\geq 0}
\gl{l+i+j-1+\sum_{r=1}^jk_r}(\alpha\cdot a^A_{i,j}\cdot\prod_{r=1}^je^A_{k_r})(x^{\times i+j+\sum_{r=1}^j k_r},z_2,\ldots,z_l).
\end{split}
\end{equation*}
At the same time, the identity $x+_A (-_Ax)=0$ can be rewritten as:
$$
\sum_{i,j}\sum_{k_1,\ldots,k_j\geq 0}a^A_{i,j}\prod_{r=1}^je^A_{k_r}u^{i+j+\sum_{r=1}^jk_r}=0.
$$
Hence,
\begin{equation*}
\begin{split}
0=&\gl{l}(\alpha)(x,z_2,\ldots,z_l)+\sum_{k\geq 0}\gl{l+k}(\alpha\cdot e^A_k)(x^{\times 1+k},z_2,\ldots,z_l)\\
&+\sum_{i,j\geq 1}\sum_{k_1,\ldots,k_j\geq 0}
\gl{l+i+j-1+\sum_{r=1}^jk_r}(\alpha\cdot a^A_{i,j}\cdot\prod_{r=1}^je^A_{k_r})(x^{\times i+j+\sum_{r=1}^j k_r},z_2,\ldots,z_l)
\end{split}
\end{equation*}
where the first term corresponds to $(i,j)=(1,0)$ and the second one to $(0,1)$.
Thus,
$$
\gl{l}(\alpha)(-_Bx,z_2,\ldots,z_l)\equiv \sum_{k\geq 0}\gl{l+k}(\alpha\cdot e^A_k)(x^{\times 1+k},z_2,\ldots,z_l)
$$
modulo $x^{N+1}$, and so, the induction step and the first formula are proven.

The second formula easily follows from the first one and $(a_{iii})$, since $(x-_By)=x+_B(-_By)$, while
$$
\sum_{s,t}b^A_{s,t}x^sy^t=\sum_{i,j}a^A_{i,j}x^i(\sum_{k\geq 0}e^A_ky^{k+1})^j.
$$

If $E$ is a vector bundle of rank $r$ on $X$, and $\lambda^B_1,\ldots,\lambda^B_r$ are $B$-roots of $E$ (see Subsection \ref{FGL}),
then it follows from $(a_i)$ that $\fl{l+r}(\alpha)(\lambda^B_1,\ldots,\lambda^B_r,z_1,\ldots,z_l)$ is a function
of $c^B_1(E),\ldots,c^B_r(E)$, and so, this expression is well-defined as an element of $B^*(X)[[z_1,\ldots,z_l]]$.

\begin{definition}
\label{b12}
Let $d$ be a natural number.
A compatible family in dimension $\leq d$ is the data of compatible family $\G(X)=\{\gl{l};\,l\in\zz_{\geq 0}\}$
for each $X\in\smk$ of dimension $\leq d$ satisfying the following conditions (with $X$ and $Y$ of dimension $\leq d$):
\begin{itemize}
\item[$(b_{i})$ ] For any $f:X\row Y$ and any $\alpha\in A^{n-l}(Y)$,
$$
\gl{l}(f_A^*(\alpha))=f_B^*\gl{l}(\alpha).
$$
\item[$(b_{ii})$ ] For any regular embedding $j:X\row Y$ of codimension $r$ with normal bundle $N_j$
with $B$-roots $\mu_1^B,\ldots,\mu_r^B$, for any $\alpha\in A^{n-l-r}(X)$, one has:
$$
\fl{l}(j_*(\alpha))(z_1,\ldots,z_l)=j_*(\fl{l+r}(\alpha)(\mu_1^B,\ldots,\mu_r^B,z_1,\ldots,z_l)).
$$
\end{itemize}
We will say that a compatible family is secured in dimension $\leq d$ if a compatible family in dimension $\leq d$ is constructed
extending the compatible family $\G(\op{Spec}(k))$ initially given (by the statement of Theorem 5.1).
\end{definition}

The condition $(b_{ii})$ can be rewritten as:
$$
\gl{l}(j_*(\alpha))(z_1,\ldots,z_l)=j_*\operatornamewithlimits{Res}_{t=0}
\frac{\gl{l+r}(\alpha)(t+_B\mu_1^B,\ldots,t+_B\mu_r^B,z_1,\ldots,z_l)\cdot\omega_t^B}
{(t+_B\mu_1^B)\cdot\ldots\cdot(t+_B\mu_r^B)\cdot t}
$$
where $\omega_t^B$ is the canonical invariant $1$-form - see Subsection \ref{pb-bu}.

In such a situation we have the following specialization result.
To shorten the notations, we will denote $z_1,\ldots,z_l$ by $\ov{z}$

\begin{lemma}
\label{specializ}
Assume that a compatible family in dimension $\leq d$ is secured.
Let $X$ be a smooth quasi-projective variety of dimension $\leq d$,
and $L$ be a line bundle on $X$ with $\lambda^A=c^A_1(L)$, $\lambda^B=c^B_1(L)$.
Then, for any $\alpha\in A^{n-l-1}(X)$,
\begin{equation}
\label{Xspec}
\gl{l}(\alpha\cdot\lambda^A)(\ov{z})=\gl{l+1}(\alpha)(\lambda^B,\ov{z}).
\end{equation}
\end{lemma}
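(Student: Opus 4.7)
The plan is to prove (\ref{Xspec}) in two steps: first when $\cl$ has a section cutting out a smooth divisor (so that $\lambda^A$ is a push-forward of $1$), and then in general by expressing $\cl$ as a difference of such bundles via the formal group law.

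\smallskip
\textbf{Step 1 (smooth divisor case).} Suppose $\cl\cong\co_X(Y)$ for a smooth divisor $j:Y\hookrightarrow X$, so $\lambda^A=j_*(1)$ and the normal bundle $\cn_j=\cl|_Y$ has $B$-Chern root $\mu^B=j^*\lambda^B$. The projection formula in $A^*$ gives $\alpha\cdot\lambda^A=j_*(j^*\alpha)$, and $(b_{ii})$ with $r=1$ yields
\[
\gl{l}(\alpha\cdot\lambda^A)(\ov{z})=\Bigl(\prod_{i=1}^l z_i\Bigr)\cdot j_*\!\bigl(\fl{l+1}(j^*\alpha)(\mu^B,\ov{z})\bigr).
\]
On the other hand $\lambda^B=j_*(1)$ in $B^*$, so combining the projection formula in $B^*$ with $(b_i)$ gives
\[
\gl{l+1}(\alpha)(\lambda^B,\ov{z})=\Bigl(\prod_i z_i\Bigr)\cdot\lambda^B\cdot\fl{l+1}(\alpha)(\lambda^B,\ov{z})=\Bigl(\prod_i z_i\Bigr)\cdot j_*\!\bigl(\fl{l+1}(j^*\alpha)(\mu^B,\ov{z})\bigr),
\]
and the two expressions agree. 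Iterating this identity across several smooth divisors $Y_1,Y_2\subset X$ with classes $\lambda_1,\lambda_2$ gives, for any $\beta$ and any $i,j\geq 0$,
\[
\gl{l+i+j}(\beta)\bigl((\lambda_1^B)^{\times i},(\lambda_2^B)^{\times j},\ov{z}\bigr)=\gl{l}\bigl(\beta\cdot(\lambda_1^A)^i(\lambda_2^A)^j\bigr)(\ov{z}).
\]

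\smallskip
\textbf{Step 2 (general case).} By Bertini (Proposition \ref{Bd}) choose an ample $\cm$ on $X$ such that both $\cm$ and $\cm\otimes\cl$ have sections cutting out smooth divisors, and write $\cl=\cl_1\otimes\cl_2^{-1}$ with $\cl_1=\cm\otimes\cl$, $\cl_2=\cm$. Then $\lambda^?=\lambda_1^?-_?\lambda_2^?$ in both theories. The ``minus'' variant of $(a_{iii})$ noted after Definition \ref{a123} gives
\[
\gl{l+1}(\alpha)(\lambda^B,\ov{z})=\sum_{i,j}\gl{l+i+j}(\alpha\cdot b^A_{i,j})\bigl((\lambda_1^B)^{\times i},(\lambda_2^B)^{\times j},\ov{z}\bigr),
\]
which is a finite sum by the nilpotence of $\lambda_1^A,\lambda_2^A$ supplied by $(DIM)$. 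Applying Step 1 termwise and using the $\zz$-linearity of $\gl{l}$ rewrites this as
\[
\gl{l}\!\Bigl(\alpha\cdot\sum_{i,j}b^A_{i,j}(\lambda_1^A)^i(\lambda_2^A)^j\Bigr)(\ov{z})=\gl{l}\bigl(\alpha\cdot(\lambda_1^A-_A\lambda_2^A)\bigr)(\ov{z})=\gl{l}(\alpha\cdot\lambda^A)(\ov{z}).
\]

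\smallskip
The essential content is Step 1, which identifies $(b_{ii})$ and the two-sided projection formula as two manifestations of the same specialisation; Step 2 is then routine formal-group-law bookkeeping. All auxiliary varieties $Y,Y_1,Y_2$ are smooth of dimension $\leq d$, so the axioms defining $\G(d)$ are available at every stage.
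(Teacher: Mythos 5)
Your proof is correct and follows essentially the same route as the paper: first the very ample case, where $\lambda^A=j_*(1)$ for a smooth divisor and the identity follows from the projection formula together with $(b_{ii})$ and $(b_i)$, and then the general case by writing $\cl=\cl_1\otimes\cl_2^{-1}$ with very ample factors and using the formal-difference analogue of $(a_{iii})$. The only differences are cosmetic — you run the formal-group-law bookkeeping from $\gl{l+1}(\alpha)(\lambda^B,\ov{z})$ rather than from $\gl{l}(\alpha\cdot\lambda^A)(\ov{z})$, and your appeal to Proposition \ref{Bd} should really be to the classical Bertini theorem (giving a smooth member of a very ample linear system), which is what the paper implicitly uses.
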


\begin{proof}
Assume that $L$ is very ample and let $j:Y\row X$ be the inclusion of a smooth divisor such that $L\cong O(Y)$. Then,
$\alpha\cdot\lambda^A=j_*j^*(\alpha)$. Also, note that the normal bundle $N_j$ is the pull-back of $L$.
Thus, if $\mu^B=c^B_1(N_j)$, we have $\mu^B=j^*(\lambda^B)$. This is said, (\ref{Xspec}) follows from the following chain of equalities:
\begin{equation*}
\begin{split}
\gl{l}(\alpha\cdot\lambda^A)(\ov{z})=&\fl{l}(j_*j^*(\alpha))(\ov{z})\cdot\prod_{i=1}^lz_i=
j_*\fl{l+1}(j^*(\alpha))(\mu^B,\ov{z})\cdot\prod_{i=1}^lz_i=
j_*\fl{l+1}(j^*(\alpha))(j^*\lambda^B,\ov{z})\cdot\prod_{i=1}^lz_i=\\
&j_*j^*\fl{l+1}(\alpha)(\lambda^B,\ov{z})\cdot\prod_{i=1}^lz_i=
\fl{l+1}(\alpha)(\lambda^B,\ov{z})\cdot\lambda^B\cdot\prod_{i=1}^lz_i=
\gl{l+1}(\alpha)(\lambda^B,\ov{z}).
\end{split}
\end{equation*}

In general, we can write $L=L_1\otimes L_2^{-1}$, where $L_1$ and $L_2$ are very ample line bundles. Using 1) and
(\ref{aiii-minus}), we get:
\begin{equation*}
\begin{split}
&\gl{l}(\alpha\cdot\lambda^A)(\ov{z})=
\sum_{i,j}\gl{l}(\alpha\cdot(\lambda_1^A)^i(\lambda_2^A)^j\cdot b^A_{i,j})(\ov{z})=\\
&\sum_{i,j}\gl{l+i+j}(\alpha\cdot b^A_{i,j})((\lambda^A_1)^{\times i},(\lambda^A_2)^{\times j},\ov{z})=
\gl{l+1}(\alpha)(\lambda_1^B-_B\lambda_2^B,\ov{z})=
\gl{l+1}(\alpha)(\lambda^B,\ov{z}).
\end{split}
\end{equation*}
\Qed
\end{proof}

Let $\G(X)$ be a compatible family for $X$. We define a compatible family $\G(X\times\pp^{\infty})=\{\gl{l};\,l\in\zz_{\geq 0}\}$
for $X\times\pp^{\infty}$ as follows. We have:
$A^*(X\times\pp^{\infty})=A^*(X)[[t]]$, where $t=c^A_1(O(1))$. For $\alpha(t)=\sum_{i=0}^{\infty}\alpha_i\cdot t^i$,
with $\alpha_i\in A^{n-l-i}(X)$, set:
$$
\gl{l}(\alpha(t))(\ov{z})=\sum_i\gl{l+i}(\alpha_i)(t^{\times i},\ov{z})\in B^*[[t]][[z_1,\ldots,z_l]],
$$
which converges by $(a_{ii})$.
It follows immediately from the definition that $(a_{i,ii,iii})$ are satisfied.

\begin{lemma}
\label{pinfspecializ}
Assume that $\G(X)$ satisfies $(\ref{Xspec})$. Then $\G(X\times\pp^{\infty})$, as defined above, satisfies also $(\ref{Xspec})$.
\end{lemma}

\begin{proof}
A line bundle $L$ on $X\times\pp^{\infty}$ has the form $M(r)$, for some
$r\in\zz$, and some line bundle $M$ on $X$. Let $\mu^A=c^A_1(M)$,
$\mu^A+_A[r]\cdot_At=\sum_{i,j}c^A_{i,j}(\mu^A)^it^j$, and $\gamma\in A^*(X)$. Then,
by the definition of $\G(X\times\pp^{\infty})$, the condition $(\ref{Xspec})$ for $X$, and $(a_{iii})$,
we get:
\begin{equation*}
\begin{split}
&\gl{l}(\gamma\cdot t^p\cdot (\mu^A+_A[r]\cdot_At))(\ov{z})=
\sum_{i,j}\gl{l+p+i+j}(\gamma\cdot c^A_{i,j})(t^{\times p+j},(\mu^B)^{\times i},\ov{z})=\\
&\gl{l+p+1}(\gamma)((\mu^B+_B[r]\cdot_Bt),t^{\times p},\ov{z})=
\gl{l+1}(\gamma\cdot t^p)((\mu^B+_B[r]\cdot_Bt),\ov{z}).
\end{split}
\end{equation*}

This extends to arbitrary element $\alpha(t)$ of $A^*(X\times\pp^{\infty})$ by linearity.
\Qed
\end{proof}

Suppose that a compatible family is secured in dimension $\leq d-1$. Let $X$ be smooth quasi-projective variety of dimension $d$
and let $D\subset X$ be a divisor with strict normal crossings. We denote by $D_i$ the irreducible components of $D$ and we denote
by $d:D\row X$, $\hat{d}_i:D_i\row D$ and $d_i=d\circ\hat{d}_i:D_i\row X$ the obvious inclusions.
Set $\lambda^B_i=c^B_1(O(D_i))$ and let $\gamma=\sum_i(\hat{d}_i)_*(\gamma_i)\in A^{n-l-1}(D)$. We define:
\begin{equation*}
\label{md}
\tag{$*$}
\fl{l}(\gamma|D)(\ov{z}):=
\sum_i(d_i)_*\fl{l+1}(\gamma_i)(\lambda^B_i,\ov{z})\in B^*(X)[[z_1,\ldots,z_l]]_{(m-l)}.
\end{equation*}
Notice, that $\ddim(D_i)\leq d-1$, so $\G(D_i)$ is defined.
Applying $(b_{ii})$ to $D_{\{i,j\}}\stackrel{d_{\{i,j\}/i}}{\lrow}D_{i}$, we get:
\begin{equation*}
\begin{split}
\fl{l+1}((d_{\{i,j\}/i})_*\delta)(\lambda^B_i,\ov{z})=(d_{\{i,j\}/i})_*
\fl{l+2}(\delta)(\lambda^B_j,\lambda^B_i,\ov{z}),
\end{split}
\end{equation*}
which implies that our definition does not depend on the presentation of $\gamma$ as a sum of $(\hat{d}_i)_*(\gamma_i)$.
Also it follows from $(b_{ii})$ that, in the case $\ddim(X)\leq (d-1)$ we have:
$$
\fl{l}(\gamma|D)(\ov{z})=\fl{l}(d_*(\gamma))(\ov{z}).
$$

\begin{proposition}
\label{flmp}
Consider a cartesian square
\begin{equation*}
%\label{divsquare}
\xymatrix @-0.7pc{
E \ar @{->}[r]^(0.5){e} \ar @{->}[d]_(0.5){\ov{f}}&
Y \ar @{->}[d]^(0.5){f}\\
D \ar @{->}[r]^(0.5){d} & X.
}
\end{equation*}
where $X$ and $Y$ are smooth quasi-projective of dimension $\leq d$, and $D$ and $E$ are divisors with strict normal crossings.
Then with the notations of the Subsection \ref{subsMPEIF}, we have:
$$
f^*\fl{l}(\gamma|D)(\ov{z})=\fl{l}(\ov{f}^{\,\star}(\gamma)|E)(\ov{z}).
$$
\end{proposition}

\begin{proof}
From the definition (\ref{md}) above and the definition of $\ov{f}^{\,\star}$
(Definition \ref{fstar}), it is clear that it is sufficient to treat the case of a smooth $D$.
Let $E=\sum_{j=1}^sm_j\cdot E_j$, where $E_i$ are irreducible components of $E$, $\lambda^A=c^A_1(O_X(D))$, $\mu^A_j=c^A_1(O_Y(E_j))$
(and similarly for $\lambda^B$, $\mu^B_j$).

We make a consistent choice of power series $(F^{m_1,\ldots,m_s}_J)^A$ and $(F^{m_1,\ldots,m_s}_J)^B$ as in Definition \ref{divclass}.
(These are power series in $s$ variables and with coefficients in $A^*(k)$ and $B^*(k)$ respectively.)
The integers $m_1,\ldots,m_s$ being fixed (until the end of the proof of Proposition \ref{flmp}), we will write below
$C^A_J$ for $(F^{m_1,\ldots,m_s}_J)^A(\mu^A_1,\ldots,\mu^A_s)$ and similarly for $B$.

\begin{lemma}
\label{lemflmp}
Keep the notation as in Proposition \ref{flmp} with $\ddim(Y)\leq d$ (while the dimension of $X$ can be arbitrary)
and assume that $D$ is smooth. We have
$$
\fl{l}(\ov{f}^{\,\star}(\gamma)|E)(\ov{z})=
\sum_{\emptyset\neq J\subset\{1,\ldots,s\}}(e_J)_*
\Big(C^B_J\cdot \fl{l+1}(\hat{f}_J^*(\gamma))(f_J^*(\lambda^B),\ov{z})\Big)
$$
where $e_J:E_J=\cap_{j\in J}E_j\row Y$ and $\hat{f}_J:E_J\row D$ are obvious maps, and $f_J=d\circ\hat{f}_J$.
\end{lemma}

\begin{proof}
We will denote the $1$-st Chern class of the bundle
$O(1)$ on $\pp^{\infty}$ (in both $A^*$ and $B^*$-theory) by $t$.
Let $\wt{\mu}^A_j=t+_A\mu^A_j$, and similarly for $B$.
Let us denote: $\mu^A_I=\sum_{j\in I}^A[m_j]\cdot_A\mu^A_j$,
$(\mu^A)^J=\prod_{j\in J}\mu^A_j$, $(\mu^A)^{\times J}=\times_{j\in J}\mu^A_j$,
and similarly for $\wt{\mu}^A$, $\mu^B$, $\wt{\mu}^B$.

From (\ref{div-coeff}) of Subsection \ref{subsMPEIF} together with $(b_i)$ and the projection formula
it follows that the RHS of our formula does not depend on the
choice of coefficients $C^{B}_J$ (recall, that these coefficients are selected by the property that
$\sum_JC^B_J\cdot(\mu^B)^J$ is a fixed expression).
Let us use the standard choice for $C^{A,B}_J$.
This choice satisfies: $C^A_J=\frac{\sum_{I\subset J}(-1)^{|J|-|I|}\mu^A_I}{(\mu^A)^J}$ - see (\ref{F-coeff}) of
Subsection \ref{subsMPEIF}.
Denote as $\wt{C}^A_J$ the analogous coefficients for $\wt{\mu}_j$.
We have:
\begin{equation*}
\begin{split}
\fl{l}(\ov{f}^{\,\star}(\gamma)|E)(\ov{z})=\sum_{\emptyset\neq J\subset\{1,\ldots,s\}}(e_J)_*
\fl{l+|J|}(\hat{f}_J^*(\gamma)\cdot C^A_J)((\mu^B)^{\times J},\ov{z}).
\end{split}
\end{equation*}
Indeed, it follows from $(b_{ii})$ and the projection formula that the RHS here does not depend on the choice of coefficients $C^A_J$
(since these are selected by the property that
$\sum_JC^A_J\cdot(\mu^A)^J$ is a fixed expression).
So, we can assume that
$C^A_J$ is zero for $|J|>1$. Then our formula is reduced to the definition $(*)$ of $\fl{l}(\ov{f}^{\,\star}(\gamma)|E)(\ov{z})$.

The latter expression can be rewritten as
\begin{equation*}
\begin{split}
\sum_{\emptyset\neq J\subset\{1,\ldots,s\}}(e_J)_*\operatornamewithlimits{Res}_{t=0}R_J\cdot\omega^B_t,
\hspace{5mm}\text{where}\hspace{5mm}
R_J=
\frac{\gl{l+|J|}(\hat{f}_J^*(\gamma)\cdot \wt{C}^A_J)((\wt{\mu}^B)^{\times J},\ov{z})}
{t\cdot(\wt{\mu}^B)^{J}\cdot\prod_{i=1}^lz_i}.
\end{split}
\end{equation*}

Applying Lemmas \ref{specializ}, \ref{pinfspecializ} and (\ref{F-coeff}) we get:
\begin{equation*}
\begin{split}
&R_J=
\frac{\gl{l+|J|}(\hat{f}_J^*(\gamma)\cdot \wt{C}^A_J)((\wt{\mu}^B)^{\times J},\ov{z})}
{t\cdot(\wt{\mu}^B)^{J}\cdot\prod_{i=1}^lz_i}=
\frac{\gl{l}(\hat{f}_J^*(\gamma)\cdot (\sum_{I\subset J}(-1)^{|J|-|I|}
\wt{\mu}^A_I))(\ov{z})}
{t\cdot(\wt{\mu}^B)^{J}\cdot\prod_{i=1}^lz_i}=\\
&\sum_{I\subset J}(-1)^{|J|-|I|}
\frac{\gl{l+1}(\hat{f}_J^*(\gamma))(\wt{\mu}^B_I,\ov{z})}
{t\cdot(\wt{\mu}^B)^{J}\cdot\prod_{i=1}^lz_i}=
\sum_{I\subset J}(-1)^{|J|-|I|}
\frac{\wt{\mu}^B_I}{(\wt{\mu}^B)^{J}}\cdot
\frac{\fl{l+1}(\hat{f}_J^*(\gamma))(\wt{\mu}^B_I,\ov{z})}{t}=\\
&\sum_{L\subset J}\frac{\wt{C}^B_L}{t\cdot(\wt{\mu}^B)^{J/L} }\sum_{L\subset I\subset J}
(-1)^{|J|-|I|}\fl{l+1}(\hat{f}_J^*(\gamma))(\wt{\mu}^B_I,\ov{z}),
\end{split}
\end{equation*}
where in the last equality we use the definition of $\wt{C}^B_L$, that is, the identity
$\wt{\mu}^B_I=\sum_{L\subset I}\wt{C}^B_L\cdot (\wt{\mu}^B)^L$ - see (\ref{div-coeff}) of Subsection \ref{subsMPEIF}.

Let us fix $L\subset J$. Then
$\sum_{L\subset I\subset J}F_{l+1}(\hat{f}^*_J(\gamma))(\wt{\mu}^B_I,\ov{z})\cdot (-1)^{|J|-|I|}$ is
divisible by $(\wt{\mu}^B)^{J/L}$.
Indeed, here $F_{l+1}(\hat{f}^*_J(\gamma))(x,\ov{z})$ can be considered as a power series $F(x)$ over the ring $R=B^*(E_J)[[t]][[\ov{z}]]$
with a formal group law on it.
We can plug $\wt{\mu}^B_I$ into this (or any other) power series, because it has nilpotent constant term.
Now, what we need follows from the fact that, for any collection of elements $v_j\in R,j\in J$ with nilpotent constant terms,
and for any power series $F(x)$
over $R$, we have: $\sum_{L\subset I\subset J}(-1)^{|J|-|I|}F(\sum^B_{i\in I}v_i)$ is divisible by $\prod_{i\in J\backslash L}v_i$.
We prove this by induction on $r=|J\backslash L|$. The base $r=0$ is obvious. For $r>0$, choose some $j\in J\backslash L$, and denote
$J'=J\backslash\{j\}$. Then we can write: $F(x+_Bv_j)-F(x)=v_j\cdot G(x)$, for some power series $G(x)\in R[[x]]$. And since
$$
\sum_{L\subset I\subset J}(-1)^{|J|-|I|}F(\sum\nolimits^B_{i\in I}v_i)=
v_j\cdot\sum_{L\subset I'\subset J'}(-1)^{|J'|-|I'|}G(\sum\nolimits^B_{i\in I'}v_i),
$$
the needed divisibility follows from the inductive assumption.

Now fix $L$. Then we obtain:
\begin{equation*}
\begin{split}
&\operatornamewithlimits{Res}_{t=0}\wt{C}^B_L\cdot\sum_{L\subset J}(e_{J/L})_*
\frac{1}{t\cdot(\wt{\mu}^B)^{J/L}}\cdot\hspace{-2mm}
\sum_{L\subset I\subset J}\hspace{-2mm}(-1)^{|J|-|I|}
F_{l+1}(\hat{f}^*_J(\gamma))(\wt{\mu}^B_I,\ov{z})\cdot \omega^B_t=\\
&\operatornamewithlimits{Res}_{t=0}\wt{C}^B_L\cdot\sum_{L\subset J}
\frac{(\mu^B)^{J/L}}{t\cdot(\wt{\mu}^B)^{J/L}}
\cdot\sum_{L\subset I\subset J}(-1)^{|J|-|I|}F_{l+1}(\hat{f}^*_L(\gamma))(\wt{\mu}^B_I,\ov{z})\cdot \omega^B_t=\\
&\operatornamewithlimits{Res}_{t=0}\wt{C}^B_L\cdot\sum_{L\subset J}
\frac{1}{t}
\cdot\sum_{L\subset I\subset J}(-1)^{|J|-|I|}F_{l+1}(\hat{f}^*_L(\gamma))(\wt{\mu}^B_I,\ov{z})\cdot  \omega^B_t,
\end{split}
\end{equation*}
where in the first equality we are using $(b_i)$ which guarantees that $F_{l+1}(\hat{f}^*_J(\gamma))(\wt{\mu}^B_I,\ov{z})$ belongs to
the image of $(e_{J/L})^*$, and in the second one we apply the above divisibility.

Hence,
$\sum_{J\subset\{1,\ldots,s\}}(e_J)_*\operatornamewithlimits{Res}_{t=0}R_J\cdot\omega^B_t=
\sum_{L\subset\{1,\ldots,s\}}(e_L)_*\operatornamewithlimits{Res}_{t=0}S_L\cdot\omega^B_t$, where
\begin{equation*}
\begin{split}
S_L=\sum_{L\subset K}\frac{\wt{C}^B_L}{t}\sum_{L\subset N\subset K}(-1)^{|K|-|N|}
\fl{l+1}(\hat{f}_L^*(\gamma))(\wt{\mu}^B_N,\ov{z})=
\frac{\wt{C}^B_L}{t}
\fl{l+1}(\hat{f}_L^*(\gamma))(\wt{\mu}^B_{\{1,\ldots,s\}},\ov{z}).\hspace{1cm}\text{Thus,}
\end{split}
\end{equation*}
\begin{equation*}
\begin{split}
&\fl{l}(\ov{f}^{\,\star}(\gamma)|E)(\ov{z})=\sum_{L\subset\{1,\ldots,s\}}(e_L)_*
\Big(C^B_L\cdot \fl{l+1}(\hat{f}_L^*(\gamma))(f_L^*(\lambda^B),\ov{z})\Big).
\end{split}
\end{equation*}
\Qed
\end{proof}
We now return to the Proposition \ref{flmp}.
It remains to observe that our expression is equal to
$f^*\fl{l}(\gamma|D)(\ov{z})$,
by Proposition \ref{MPEIF} and $(b_i)$.
\Qed
\end{proof}

Suppose that a compatible family is secured in dimension $\leq d-1$. Fix a smooth quasi-projective variety $X$ of dimension $\leq d$.
We will now explain how to construct a compatible family $\G(X)$ for $X$. We have a decomposition
$A^*(X)\cong A^*(k)\oplus\ov{A}^*(X)$, and by Theorem \ref{Ac}, we have an isomorphism
$\ov{A}^*(X)\cong H({\frc})$ where ${\frc}$ is the short bi-complex introduced in Subsection \ref{c}.
(Recall that we are assuming that the theory $A^*$ is of rational type.)

For the constant part, we set: $\gl{l}(p_X^*(\alpha))(z_1,\ldots,z_l)=p_X^*\gl{l}(\alpha)(z_1,\ldots,z_l)$ where $p_X:X\row\op{Spec}(k)$
is the structural map.

To define $\G(X)$ on $H({\frc})$, we first define maps $G_l:(c_{0,0})^{n-l}\row B^*(X)[[z_1,\ldots,z_l]]_{(m)}$ and then check that
these maps factor through $H({\frc})^{n-l}$. Recall that
$$
(c_{0,0})^{n-l}=\bigoplus_{\cv\in Ob(\rc(X))}\op{Im}(\rho^!:A_{\ddim(X)-n+l}(Z)\row A_{\ddim(X)-n+l}(V)=A^{n-l-1}(V)).
$$
Fix $\cv=(Z\row X\stackrel{\rho}{\low}\wt{X})$ in $\rc(X)$ and $\gamma\in A^{n-l-1}(V)$ which is in the image of $\rho^!$.
(Recall that the image of $\gamma$ in $A^*(X)$ is given by $\frac{\rho_*v_*(\gamma)}{\rho_*(1^A)}$ where $v:V\row\wt{X}$
is the obvious inclusion.) Define
\begin{equation*}
\label{nd}
\tag{$**$}
\fl{l}(\cv,\gamma)(z_1,\ldots,z_l)=
\frac{\rho_*\fl{l}(\gamma|V)(z_1,\ldots,z_l)}{\rho_*(1^B)}.
\end{equation*}
Since $\ddim(X)\leq d$, the power series $F(\gamma|V)$ makes sense (see ($*$)).

\begin{lemma}
\label{X-nmo}
If $\ddim(X)\leq d-1$, then $\fl{l}(\cv,\gamma)=\fl{l}(\alpha)$, where $\alpha\in A^{m-l}(X)$ is the image of $\gamma$, i.e.,
is given by $\frac{\rho_*v_*(\gamma)}{\rho_*(1^A)}$.
\end{lemma}

\begin{proof}
Since $\ddim(X)\leq d-1$, from $(b_{ii})$, we have $\fl{l}(\gamma|V)=\fl{l}(v_*(\gamma))$. Suppose that $\gamma=\rho^!(\delta)$
for $\delta\in A_{d-n+l}(Z)$. Then by \cite[Theorem 6.6.6]{LM}, we have $v_*(\gamma)=\rho^*(z_*(\delta))$ where $z:Z\row X$
is the obvious inclusion. It follows that $\fl{l}(\gamma|V)=\fl{l}(\rho^*(z_*(\delta)))=\rho^*\fl{l}(z_*(\delta))$, by $(b_i)$.
This gives
$$
\fl{l}(\cv,\gamma)=\frac{\rho_*\rho^*\fl{l}(z_*(\delta))}{\rho^*(1^B)}=\fl{l}(z_*(\delta)).
$$
The result now follows since $\alpha=z_*(\delta)$.
\Qed
\end{proof}

Before checking that the maps $\fl{l}$ of ($**$) factor through $H({\frc})$, we note the following fact.

\begin{proposition}
\label{flpistar}
In the above situation, $\fl{l}(\gamma|V)\in \op{Image}(\rho^*)$.
\end{proposition}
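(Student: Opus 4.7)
My plan is to show that $\fl{l}(\gamma|V)$ lies in $\operatorname{image}(\rho^*)$ by reducing, through weak factorization, to the case of a single permitted blow-up, and then carrying out an explicit projective-bundle computation that uses the hypothesis $\gamma \in \operatorname{image}(\rho^!)$ in an essential way. First I would observe that $\rho\colon\wt X\to X$ is a projective birational morphism between smooth varieties, so $\rho_*(1) = 1$ and $\rho^*$ is a split injection with left inverse $\rho_*$. This both identifies the candidate preimage as $\beta := \rho_*\fl{l}(\gamma|V)$ and reduces the claim to the equality $\rho^*\rho_*\fl{l}(\gamma|V) = \fl{l}(\gamma|V)$.

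Next I would exploit the freedom to refine the resolution $\rho$. By Proposition~\ref{flmp}, pulling $\fl{l}(\gamma|V)$ back along any further permitted blow-up $\pi\colon\wt X'\to\wt X$ (with $V' = \pi^{-1}(V)$) yields $\fl{l}(\ov{\pi}^{\,\star}\gamma|V')$; moreover, by functoriality of the refined pull-backs (\cite[Theorem~6.6.6(3)]{LM}) the class $\ov{\pi}^{\,\star}\gamma$ still lies in the image of the refined pull-back along $\rho\pi$. Since $\operatorname{image}(\rho^*)\subset\operatorname{image}((\rho\pi)^*)$, membership in the image of $\rho^*$ is detected after any such refinement. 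Combining the Weak Factorization Theorem (Theorem~\ref{WF}) with Hironaka (Theorems~\ref{Hi}, \ref{Hif}), I can therefore arrange that $\rho$ is a composition of single permitted blow-ups along smooth centers located over $Z$, and induct along this tower. In the base case of a single permitted blow-up $\rho\colon\wt X = \mathrm{Bl}_C X\to X$ with smooth center $C\subset X$ and exceptional divisor $E = \mathbb{P}(\mathcal N_{C/X})$, one writes $\gamma = \rho^{!}(\delta) = c^A_{c-1}(\mathcal Q)\cdot p^*(\delta)$, where $p\colon E\to C$ is the projection, $\mathcal Q$ is the excess bundle and $c = \operatorname{codim}(C)$. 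Substituting into $(*)$ and applying $(b_{ii})$ to $v\colon E\hookrightarrow\wt X$, the element $\fl{l}(\gamma|V)$ becomes $v_*$ of an explicit power series in $c_1^B(\mathcal O_E(1))$, pull-backs of Chern classes of $\mathcal N$, and $p^*(\delta)$. The projective bundle formula for $E\to C$, combined with the standard decomposition $B^*(\wt X)=\rho^*B^*(X)\oplus v_*(\ldots)$, then identifies this $v_*$-class with $\rho^*$ of a Landweber--Novikov--like class on $X$ built from $z_*(\delta)$ and the Chern classes of $\mathcal N_{C/X}$, via the very residue formula encoded in $(b_{ii})$.

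The main obstacle is the combinatorial bookkeeping in the inductive step, where $V$ has several components meeting transversally and the formula $(*)$ interacts with the multiple-point excess coefficients $C_J^B$ from Proposition~\ref{MPEIF} and Definition~\ref{fstar}. The delicate point is to show that the sum in Lemma~\ref{lemflmp}, applied to the new exceptional component introduced by the next permitted blow-up, combined with the induction hypothesis applied to the restriction of $\gamma$ to the remaining components, produces a class that is again of the form $\rho^*(\text{something in }B^*)$. This requires a residue manipulation parallel to the one in the proof of Lemma~\ref{lemflmp}, together with an invocation of functoriality of $\rho^!$ to ensure that the restricted classes on the proper transforms of the old components of $V$ remain in the image of the appropriate refined pull-backs, so that the induction applies.
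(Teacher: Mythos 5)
Your opening reduction --- refine $\rho$ by a further permitted blow-up $\pi$, rewrite $\pi^*\fl{l}(\gamma|V)$ as $\fl{l}(\pi_V^{\,\star}(\gamma)|V')$ via Proposition \ref{flmp}, and conclude by injectivity of $\pi^*$ --- is exactly the paper's reduction to the case of a permitted blow-up (though the input needed is Hironaka, Theorem \ref{Hip}, not weak factorization, and note that $\rho_*(1)$ is only invertible, not equal to $1$, for a general oriented theory --- cf.\ Proposition \ref{blowupformula} --- so your candidate preimage should be $\rho_*\fl{l}(\gamma|V)/\rho_*(1)$). The genuine gap is in the permitted blow-up case itself, which is where all the content lies. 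Your closed formula for $\rho^{!}(\delta)$ as the top Chern class of the excess bundle times $p^*(\delta)$ is valid only in the very special situation where $Z$ is the smooth center and $V$ is the single exceptional divisor; in the actual situation $Z$ is an arbitrary closed subscheme, $\rho$ is a multi-stage permitted blow-up, $V=\rho^{-1}(Z)$ is a strict normal crossing divisor with many components (exceptional divisors of all stages plus proper transforms), and $\rho^{!}$ admits no such closed expression. The proposed induction ``one blow-up at a time'' with residue manipulations parallel to Lemma \ref{lemflmp} is precisely the part you defer as ``the main obstacle'', and as sketched it does not go through: after the first stage the data is no longer of the form treated by your base case, and you give no mechanism controlling the interaction of all components of $V$ at each stage.

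What makes the paper's proof short, and what is missing from your plan, is the membership criterion supplied by the blow-up exact sequence of Proposition \ref{vvter}(2): for a permitted blow-up $\rho$ with exceptional components $E_j\stackrel{\eps_j}{\row}R_j$, a class lies in $im(\rho^*)$ if and only if its restriction to each $E_j$ lies in $im(\eps_j^*)$. Since each $E_j$ is a component of $V$ and all of $E_j$, $V_i$, $E_j\cap V_i$ have dimension $\leq d-1$, one computes $e_j^*\fl{l}(\gamma|V)$ purely formally from the transversal case of Proposition \ref{flmp} and $(b_i)$, $(b_{ii})$, getting $\fl{l}(e_j^*v_*(\gamma))$; the hypothesis $\gamma=\rho^{!}(\beta)$ then enters only through the identity $v_*\rho^{!}(\beta)=\rho^*z_*(\beta)$ (refined pull-backs commute with push-forwards, \cite[Theorem 6.6.6(2)(a)]{LM}), which gives $e_j^*\fl{l}(\gamma|V)=\eps_j^*\fl{l}(r_j^*z_*(\beta))$, so the criterion applies; no projective-bundle or residue computation is needed. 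Without this criterion (or an equivalent substitute) your argument does not establish the permitted blow-up case, and hence not the Proposition.
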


\begin{proof}
We first assume that $\rho$ is a sequence of blowups in smooth centers $R_j$. Let $E_j\subset\wt{X}$ be the exceptional divisor
over $R_j$ (i.e., the strict transform of the exceptional divisor of the blowup of $R_j$). Then $E_j$ is an irreducible component
of $V$ (but, in general, $V$ is larger than $\cup_jE_j$). Denote by $e_j:E_j\row\wt{X}$ and $\eps_j:E_j\row R_j$ the obvious maps.

Then, by Proposition \ref{vvter}, to prove that $\fl{l}(\gamma|V)\in \op{Image}(\rho^*)$ we need to
show that $e_j^*(\fl{l}(\gamma|V))\in \op{Image}(\eps_j^*)$, for each $j$.
Since $V$ is a divisor with strict normal crossings on $\wt{X}$, and $E_j$ is a component of it,
for any other component $V_i$ of $V$, the left cartesian diagram below is transversal:
$$
\xymatrix @-0.2pc{
H_{i,j} \ar @{->}[r]^{u_{i,j}} \ar @{->}[d]_(0.5){h_{i,j}} &E_j \ar @{->}[d]^(0.5){e_j}\\
V_i \ar @{->}[r]_{v_i} & \wt{X}.
}
\hspace{1cm}
\xymatrix @-0.2pc{
V \ar @{->}[r]^{v} \ar @{->}[d]_(0.5){\rho_V} & \wt{X} \ar @{->}[d]_(0.5){\rho}  &
E_j \ar @{->}[l]_(0.5){e_j}  \ar @{->}[d]^(0.5){\eps_j}   \\
Z \ar @{->}[r]_{z} & X    &  R_j \ar @{->}[l]^(0.5){r_j}
}
$$
Now, fix a presentation $\gamma=\sum_i(\hat{v}_i)_*(\gamma_i)$ where $\hat{v}_i:V_i\row V$ are the obvious inclusions.
Applying Proposition \ref{flmp} (in the trivial case where the divisors are smooth) we get
$e_j^*\fl{l}(\gamma_i|V_i)=\fl{l}(h^*_{i,j}(\gamma_i)|H_{i,j})$.
Since $\ddim(E_j)\leq d-1$, we have:
$$
\fl{l}(h^*_{i,j}(\gamma_i)|H_{i,j})=\fl{l}((u_{i,j})_*h^*_{i,j}(\gamma_i))=
\fl{l}(e_j^*(v_i)_*(\gamma_i)).
$$
And the
same is true for the $E_j$-component:
$e_j^*\fl{l}(\gamma_j|E_j)=\fl{l}(e_j^*(e_j)_*(\gamma_j))$.
Here we use the fact that the map $e_j^*(e_j)_*$ is given by the multiplication by the first Chern class of $O_{\wt{X}}(E_j)$
and Lemma \ref{specializ}.

Hence, using \cite[Theorem 6.6.6 (2)(a)]{LM},
\begin{equation*}
\begin{split}
&e_j^*\fl{l}(\gamma|V)=\sum_ie_j^*\fl{l}(\gamma_i|V_i)=\sum_i\fl{l}(e_j^*(v_i)_*(\gamma_i))=
\fl{l}(e_j^*v_*(\gamma))=\\
&\fl{l}(e_j^*v_*\rho^{!}(\beta))=\fl{l}(e_j^*\rho^*z_*(\beta))=
\fl{l}(\eps_j^*r_j^*z_*(\beta))=\eps_j^*\fl{l}(r_j^*z_*(\beta)).
\end{split}
\end{equation*}
So, $\fl{l}(\gamma|V)\in \op{Image}(\rho^*)$.

For general $\rho$, we may find $\cv'=(Z\row X\stackrel{\rho'}{\low}\wt{X}')$ and a morphism $(id,\pi):\cv'\row\cv$ such that
$\rho'$ is a composition of blowups in smooth centers. By Proposition \ref{flmp},
$\pi^*\fl{l}(\gamma|V)=\fl{l}(\pi_V^{\star}(\gamma)|V')$ which belongs to $\op{Image}(\pi^*\rho^*)$
by the previous discussion (recall that $\pi_V^{\star}(\gamma)=\pi^!(\gamma)$). Since the map $\pi^*$ is injective,
we deduce that $\fl{l}(\gamma|V)\in \op{Image}(\rho^*)$.
\Qed
\end{proof}

\begin{lemma}
\label{d-one-zero}
The maps $\fl{l}: (c_{0,0})^{n-l}\row B^*(X)[[z_1,\ldots,z_l]]_{(m-l)}$, defined above, are zero on $\op{Image}(d^{{\frc}}_{1,0})$.
\end{lemma}

\begin{proof}
The vanishing of $\fl{l}$ on the image of $d^{II}_{1,0}$ is a direct consequence of the definition. To show that $\fl{l}$ vanishes
on the image of $d^I_{1,0}$, let $(id,\pi):\cv_2\row\cv_1$ be in $Mor_I$ and
$\gamma\in\op{Image}(\rho^!_1:A_{\ddim(X)-n+l}(Z)\row A^{n-l-1}(V))$. We need to show that
$\fl{l}(\cv_1,\gamma)=\fl{l}(\cv_2,\pi_V^{\star}(\gamma))$. By Propositions \ref{flmp}, we have
$$
\frac{(\rho_2)_*\fl{l}(\pi_V^{\star}\gamma|V_2)}{(\rho_2)_*(1^B)}=
\frac{(\rho_1)_*\pi_*\pi^*\fl{l}(\gamma|V_1)}{(\rho_1)_*\pi_*(1^B)}.
$$
By Proposition \ref{flpistar}, we know that $\fl{l}(\gamma|V_1)\in\op{Image}(\rho_1^*)$. By the projection formula, the previous fraction
is equal to
$$
\frac{(\rho_1)_*\fl{l}(\gamma|V_1)}{(\rho_1)_*(1^B)}
$$
which is what we want to prove.
\Qed
\end{proof}

We now proceed to check that the maps
$\fl{l}: (c_{0,0})^{n-l}\row B^*(X)[[z_1,\ldots,z_l]]_{(m-l)}$ are zero on the image of $d^{{\frc}}_{0,1}$.
We fix an object $\cw=(Z\row X\times\pp^1\stackrel{\rho}{\low}\wt{X\times\pp^1})$ of $\rc^1(X)$ and an element
$\delta\in\op{Image}(\rho^!:A_{\ddim(X)-n+l+1}(Z)\row A^{n-l-1}(W))$.
We denote by $W_s$ the irreducible components of $W$ and we choose a presentation $\delta=\sum_s(\hat{w}_s)_*(\delta_s)$.
(As usual, $\hat{w}_s:W_s\row W$ are the obvious inclusions.) We need to show that $\fl{l}$ takes the same value on the pairs
$$
(\partial_0 W,\sum_s i^{\star}_{0,s}(\delta_s))\hspace{1cm}\text{and}
\hspace{1cm}(\partial_{\infty} W,\sum_s i^{\star}_{\infty,s}(\delta_s)),
$$
where $i_{0,s}:W_{0,s}=\wt{X}_0\cap W_s\row W_s$ and $i_{\infty,s}:W_{\infty,s}=\wt{X}_{\infty}\cap W_s\row W_s$
are the obvious inclusions.

To prove this, we need some preparation. Let $S$ we an irreducible component of $W$ (i.e., $S$ is one of $W_s$'s), and denote
by $i_0:S_0\row S$ and $i_{\infty}:S_{\infty}\row S$ the inclusions of the fibers over $0,\infty\in\pp^1$.
Assume we are given some object $\ch=(T\row S\stackrel{p}{\low}\wt{S})$ in $\rc(S)$ such that $T$ does not contain any component
of $S_0$ and $S_{\infty}$. As usual, we write $H=p^{-1}(T)$ and we denote by $h:H\row \wt{S}$ the inclusion.
Finally, assume we are given $\gamma=p^!(u)\in\op{Image}(p^!:A_{\ddim(X)-n+l+1}(T)\row A^{n-l-2}(H))$ and let
$\beta\in A^{n-l-1}(S)$ be the push-forward of $u$. (Thus, $p^*(\beta)=h_*(\gamma)$.) We set
$$
\wt{\fl{l}}(\beta|S)(\ov{z})=s_*(\fl{l+1}(\ch,\gamma)(p^*(\lambda^B),\ov{z}))=
s_*\biggl(\frac{p_*\fl{l+1}(\gamma|H)(p^*(\lambda^B),\ov{z})}{p_*(1)}\biggl)
$$
where $s:S\row\wt{X\times\pp^1}$ is the obvious inclusion and $\lambda^B=c_1^B(O_{\wt{X\times\pp^1}}(S))$.

\begin{lemma}
\label{lemd01}
Denote by $\wt{i}_0:\wt{X}_0\row\wt{X\times\pp^1}$ and $\wt{i}_{\infty}:\wt{X}_{\infty}\row\wt{X\times\pp^1}$
the obvious inclusions.
In the above situation,
$$
\wt{i}^*_0(\wt{\fl{l}}(\beta|S))=\fl{l}(i_0^{\star}(\beta)|S_0)
\hspace{5mm}\text{and}\hspace{5mm}
\wt{i}^*_{\infty}(\wt{\fl{l}}(\beta|S))=\fl{l}(i_{\infty}^{\star}(\beta)|S_{\infty}).
$$
\end{lemma}

\begin{proof}
It is enough to prove the first equality. Denote by $S_{0,k}$ the irreducible components of $S_0$.
Since $S_{0,k}$ is not contained in $T$, we may consider its strict transform in $\wt{S}$. Resolving the singularities of the latter,
we get a blowup $p_{0,k}:\wt{S}_{0,k}\row S_{0,k}$ fitting in a commutative diagram
$$
\xymatrix @-0.2pc{
H_{0,k}\ar @{->}[r]^{h_{0,k}} \ar @{->}[d]^(0.5){i_{0,H}}&
\wt{S}_{0,k} \ar @{->}[r]^{p_{0,k}} \ar @{->}[d]^(0.5){\wt{i}_{0,k}}&
S_{0,k} \ar @{->}[r]^{s_{0,k}} \ar @{->}[d]^(0.5){i_{0,k}} &  \wt{X}_0 \ar @{->}[d]^(0.4){\wt{i}_0} \\
H  \ar @{->}[r]^{h} & \wt{S} \ar @{->}[r]^{p}  &  S \ar @{->}[r]^(0.4){s}  &  \wt{X\times\pp^1}
}
$$
where the left square is cartesian. Refining the resolution $p_{0,k}$, we may assume that $H_{0,k}$ is a
divisor with strict normal crossings in $\wt{S}_{0,k}$.

\begin{lemma}
\label{lem2d01}
Consider a commutative square of smooth varieties
$$
\xymatrix @-0.2pc{
E \ar @{->}[r]^{b} \ar @{->}[d]_(0.5){g} &Q \ar @{->}[d]^(0.5){f}\\
D \ar @{->}[r]_{a} & P
}
$$
where $f$ and $g$ are projective and birational. Let $x\in \op{Image}(f^*)$.
Then:
$$
\frac{g_*(b^*(x))}{g_*(1)}=a^*\biggl(\frac{f_*(x)}{f_*(1)}\biggr).
$$
\end{lemma}

\begin{proof} Let $x=f^*(y)$. Then
$g_*(b^*f^*(y))\cdot a^*f_*(1)=g_*g^*a^*(y)\cdot a^*f_*(1)=g_*(1)\cdot a^*(y)\cdot a^*f_*(1)=
g_*(1)\cdot a^*(f_*f^*(y))$,
which implies what we need.
\Qed
\end{proof}
By Proposition \ref{flpistar}, $F_{l+1}(\gamma|H)(p^*(\lambda^B),\ov{z})\in \op{Image}(p^*)$.
Then, by Lemma \ref{lem2d01}, Proposition \ref{flmp} and $(b_i)$, we have:
\begin{equation*}
\begin{split}
&i_{0,k}^*\biggl(\frac{p_*\fl{l+1}(\gamma|H)(p^*(\lambda^B),\ov{z})}{p_*(1)}\biggr)=
\frac{(p_{0,k})_*(\wt{i}_{0,k}^*\fl{l+1}(\gamma|H)(p^*(\lambda^B),\ov{z}))}{(p_{0,k})_*(1)}=
\frac{(p_{0,k})_*(\fl{l+1}(\wt{i}_{0,k}^*h_*(\gamma))(\wt{i}_{0,k}^*p^*(\lambda^B),\ov{z}))}
{(p_{0,k})_*(1)}=\\
&\frac{(p_{0,k})_*(\fl{l+1}(p_{0,k}^*i_{0,k}^*(\beta))(p_{0,k}^*i_{0,k}^*(\lambda^B),\ov{z}))}
{(p_{0,k})_*(1)}=
\fl{l+1}(i_{0,k}^*(\beta))(i_{0,k}^*(\lambda^B),\ov{z}).
\end{split}
\end{equation*}
As above, denote by $C^B_J$ the coefficient $(F^{m_1,\ldots,m_r}_J)^B$ of the Definition \ref{divclass}, where $m_i$ is the multiplicity
of the component $S_{0,i}$ in $S_0$.
We can assume that coefficients $C_J^{B}$ are chosen to be zero, for $|J|>1$ - see the discussion after the Definition \ref{divclass}.
Then, by Proposition \ref{MPEIF}, and Lemma \ref{lemflmp}
\begin{equation*}
\begin{split}
\wt{i}^*_0(\wt{\fl{l}}(\beta|S)(\ov{z}))=&\wt{i}^*_0s_*
\biggl(\frac{p_*\fl{l+1}(\gamma|H)(p^*(\lambda^B),\ov{z})}{p_*(1)}\biggr)=
\sum_k(s_{0,k})_*\biggl(C^B_k\cdot i_{0,k}^*\biggl(\frac{p_*\fl{l+1}(\gamma|H)(p^*(\lambda^B),\ov{z})}{p_*(1)}\biggr)\biggr)=\\
&\sum_k(s_{0,k})_*(C^B_k\cdot \fl{l+1}(i_{0,k}^*(\beta))(i_{0,k}^*(\lambda^B),\ov{z}))=
\fl{l}(i_0^{\star}(\beta)|S_0)(\ov{z}).
\end{split}
\end{equation*}
\Qed
\end{proof}

Denote by $\pi_S:S\row\spec(k)$ the structural projection. Define:
$$
\wt{\fl{l}}(1|S)(\ov{z})=s_*(\pi_S^*\fl{l+1}(1))(\lambda^B,\ov{z}).
$$
More generally, one can define
$$
\wt{\fl{l}}(\pi_S^*(\alpha)|S)(\ov{z})=s_*(\pi_S^*\fl{l+1}(\alpha))(\lambda^B,\ov{z}).
$$

\begin{lemma}
\label{lem3d01}
In the above situation, we have:
$$
\wt{i}^*_0(\wt{\fl{l}}(\pi_S^*(\alpha)|S))=\fl{l}(i_0^{\star}\pi_S^*(\alpha)|S_0)
\hspace{5mm}\text{and}\hspace{5mm}
\wt{i}^*_{\infty}(\wt{\fl{l}}(\pi_S^*(\alpha)|S))=\fl{l}(i_{\infty}^{\star}\pi_S^*(\alpha)|S_{\infty})
$$
\end{lemma}

\begin{proof}
We treat the first equality only. By Proposition \ref{MPEIF} and Lemma \ref{lemflmp}, we have:
\begin{equation*}
\begin{split}
&\wt{i}_0^*s_*(\pi_S^*\fl{l+1}(\alpha))(\lambda^B,\ov{z})=
\sum_k(s_{0,k})_*(C^B_k\cdot (i_{0,k}^*\pi_S^*\fl{l+1}(\alpha))(i_{0,k}^*(\lambda^B),\ov{z}))=\\
&\sum_k(s_{0,k})_*(C^B_k\cdot \fl{l+1}(i_{0,k}^*\pi_S^*(\alpha))(i_{0,k}^*(\lambda^B),\ov{z}))=
\fl{l}(i_0^{\star}\pi_S^*(\alpha)|S_0)(\ov{z}).
\end{split}
\end{equation*}
\Qed
\end{proof}

\begin{proposition}
\label{d01osn}
In the above situation,
$$
\frac{(\rho_0)_*\fl{l}(\sum\nolimits_si_{0,s}^{\star}(\delta_s)|W_0)(\ov{z})}{(\rho_0)_*(1)}=
\frac{(\rho_{\infty})_*\fl{l}(\sum\nolimits_si_{\infty,s}^{\star}(\delta_s)|W_{\infty})(\ov{z})}{(\rho_{\infty})_*(1)}.
$$
\end{proposition}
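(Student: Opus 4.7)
The plan is to assemble a single class $\wt{\fl{l}}(\delta|W)\in B^*(\wt{X\times\pp^1})[[\ov{z}]]$ whose restrictions to $\wt{X}_0$ and $\wt{X}_1$ compute the two numerators in the statement, and then reduce everything to the identity $k_0^{\,*}=k_1^{\,*}\colon B^*(X\times\pp^1)\row B^*(X)$.

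First, for each component $S$ of $W$ I would decompose $\delta_S\in A^{n-l-1}(S)$ (smooth since $W$ has strict normal crossings) as $\delta_S=a_S\cdot 1_S+\beta_S$ with $a_S\in A$ and $\beta_S\in\ov{A}^{n-l-1}(S)$ supported on some proper closed subvariety of $S$. Choosing a projective birational resolution $p\colon\wt{S}\row S$ whose exceptional divisor $H$ is in general position with respect to both $S_0$ and $S_1$ and over which $\beta_S$ becomes nicely expressible, Lemmas~\ref{lemd01} and~\ref{lem3d01} applied to the two summands produce a class $\wt{\fl{l}}(\delta_S|S)(\ov{z})\in B^*(\wt{X\times\pp^1})[[\ov{z}]]$ with
$$
\wt{i}_0^{\,*}\wt{\fl{l}}(\delta_S|S)=\fl{l}(i_0^{\star}(\delta_S)|S_0),\qquad \wt{i}_1^{\,*}\wt{\fl{l}}(\delta_S|S)=\fl{l}(i_1^{\star}(\delta_S)|S_1).
$$
Summing over the components of $W$ and using the linearity of $\fl{l}(\cdot|D)$ in its first argument (built into the definition $(*)$ of Subsection~\ref{c}), set $\wt{\fl{l}}(\delta|W):=\sum_S\wt{\fl{l}}(\delta_S|S)$; then for $l=0,1$,
$$
\wt{i}_l^{\,*}\wt{\fl{l}}(\delta|W)=\fl{l}\Bigl(\sum\nolimits_Si_l^{\star}(\delta_S)\Bigm|W_l\Bigr).
$$

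Next, since $\rho$ has no centers over $0$ or $1$, the cartesian square formed by $\rho$, $k_l\colon X\times\{l\}\hookrightarrow X\times\pp^1$, $\rho_l$ and $\wt{i}_l$ is transversal, so the base change axiom $(A2)$ gives $k_l^{\,*}\rho_*=(\rho_l)_*\wt{i}_l^{\,*}$. Applying this both to $\wt{\fl{l}}(\delta|W)$ and to $1^B$ identifies the two sides of the desired equality with
$$
\frac{k_l^{\,*}\bigl(\rho_*\wt{\fl{l}}(\delta|W)\bigr)}{k_l^{\,*}\rho_*(1^B)}\in B^*(X)[[\ov{z}]],\qquad l=0,1.
$$

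Finally, the projective bundle axiom $(PB)$ applied to the trivial bundle $\co^2$ on $\op{Spec}(k)$ yields $B^*(\pp^1)=B\oplus B\cdot h$ with $h=c_1^B(\co(1))$ satisfying $h^2=0$; extending coefficients to $B^*(X)$ gives $B^*(X\times\pp^1)=B^*(X)\oplus B^*(X)\cdot h$, and both $k_l^{\,*}$ send $1\mapsto 1$ and $h\mapsto 0$ since $\co(1)$ restricts trivially to any point. Hence $k_0^{\,*}=k_1^{\,*}$ on $B^*(X\times\pp^1)$, the numerators and denominators above agree for $l=0,1$, and the equality follows. The main subtlety is that the intermediate class $\wt{\fl{l}}(\delta_S|S)$ itself depends on $p\colon\wt{S}\row S$; only its $\wt{i}_l^{\,*}$-restrictions are intrinsic, by Lemmas~\ref{lemd01} and~\ref{lem3d01}. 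One must therefore organize the argument so that those restrictions, composed with $(\rho_l)_*$, are the only quantities entering the identity---which is exactly what the transversal base change $k_l^{\,*}\rho_*=(\rho_l)_*\wt{i}_l^{\,*}$ accomplishes.
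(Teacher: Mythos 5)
Your proposal follows essentially the paper's route: decompose $\delta_S$ component-by-component into a constant part and a part in $\ov{A}^*(S)$, apply Lemma~\ref{lem3d01} to the former and Lemma~\ref{lemd01} to the latter to produce the class $\wt{\fl{l}}(\delta_S|S)$, sum over components, and then push the resulting comparison down to $X$ through the transversal cartesian squares and the equality $k_0^{\,*}=k_1^{\,*}$ on $B^*(X\times\pp^1)$ (which you justify cleanly via $(PB)$, while the paper leaves it implicit). So far this matches the paper.

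There is, however, a genuine gap at the step where you ``choose a projective birational resolution $p\colon\wt{S}\row S$ whose exceptional divisor $H$ is in general position with respect to both $S_0$ and $S_1$.'' Lemma~\ref{lemd01} requires that $p\circ h(H)$ contain no component of $S_0$ or $S_1$, and $p$ must be chosen to resolve the support of $\beta_S$. It is not automatic that a representative of $\beta_S\in\ov{A}^{n-l-1}(S)$ can be taken to have support avoiding all components of $S_0$ and $S_1$: a priori $\beta_S$ might only be visibly supported on a closed subset that contains such a component, and a general theory in the sense of Definition~\ref{goct} gives you no way to move it. The paper handles this with a moving lemma (Lemma~\ref{movingd01}), which uses the fact that $A^*$ is obtained from $\Omega^*$ by change of coefficients: a class $\pi_*(1_{\wt{T}_i})$ attached to a resolved divisor $T_i$ is rewritten, via a choice of irreducible divisors $R_k$ representing $1^{CH}_{T_i}$ and disjoint from the components of $T$, as $\sum_k(\rho_k)_*(\pm 1_{\wt{R}_k})$ plus a codimension $\geq 2$ error, after which Theorem~\ref{Hip} produces the required $p$. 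Without this step the existence of the needed $p$ for every $\beta_S$ is unjustified, and your argument does not close. Everything after that (assembling $\wt{\fl{l}}(\delta|W)$, transversal base change $k_l^{\,*}\rho_*=(\rho_l)_*\wt{i}_l^{\,*}$, and $k_0^{\,*}=k_1^{\,*}$) is correct and matches the paper.
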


\begin{proof}

\begin{lemma}
\label{movingd01}
Let $S$ be a smooth quasi-projective variety, and let $T\subset S$ be a divisor.
Let $\beta\in \ov{A}_*(S)$. Then there exists a closed subvariety $Y\subset S$, containing no components of $T$,
and such that $\beta$ is the push-forward of an element of $A_*(Y)$.
\end{lemma}

\begin{proof}
Since $A^*$ is obtained from the Levine-Morel algebraic cobordism by change of coefficients,
it is sufficient to treat the case of $A^*=\Omega^*$. Modulo classes supported in codimension $\geq 2$, our element $\beta$
may be considered as an element of $\op{Gr}_1\Omega^*(S)$.
By \cite[Corollary 4.5.8]{LM}, we have a surjective map
$CH^1(S)\otimes\laz^*\twoheadrightarrow\op{Gr}_1\Omega^*(S)$. 
Since we can always add subvarieties of codimension $\geq 2$ to $Y$, we may assume that $A^*=\op{CH}^*$.
Since $S$ is quasi-projective, we have $O(T)=L_1\otimes L_2^{-1}$, where $L_1,L_2$ are very ample line bundles.
By the Bertini Theorem, $1_{T}^{CH}$ can be
represented as $\sum_k\pm 1_{R_k}^{CH}$, where $R_k$ are irreducible divisors different
from components of $T$.
\Qed
\end{proof}

Let $S$ be an irreducible component of $W$ (i.e., one of $W_s$'s) and let $\delta_S\in A^{n-l-1}(S)$.
We can write in a unique way $\delta_S=\pi_S^*(\alpha)+\beta$ where $\alpha\in A^{n-l-1}(k)$ and $\beta\in\ov{A}^{n-l-1}(S)$.
Applying Lemma \ref{movingd01} and Corollary \ref{NbFree} to $S$, the element $\beta$, considered as an element of $H({\frb})=\ov{A}^*(S)$,
can be represented by a pair $(\ch,x)$ where $\ch=(Y\row S\stackrel{p}{\low}\wt{S})$ is an object of $\rc(S)$ satisfying the extra
property that $Y$ does not contain any irreducible component of $S_0$ and $S_{\infty}$.
Recall also that $x\in A^{n-l-2}(H)$ where $H=p^{-1}(Y)$ and that $\beta=p_*h_*(x)$.

Applying Theorem \ref{Ac} to $S$, the element $\beta$ can be represented by the element $(\ch,\gamma)$ of $c_{0,0}$, where
$\gamma=p^{!}(p_H)_*(x)$. We set $u=(p_H)_*(x)$ so that $\beta$ is the push-forward of $u$. In this way, we are in the situation
discussed previously. In particular, we have power series $\wt{\fl{l}}(\beta|S)\in B^*(\wt{X\times\pp^1})[[z_1,\ldots,z_l]]_{(m-l)}$
such that, by Lemma \ref{lemd01},
$$
\wt{i}^*_0(\wt{\fl{l}}(\beta|S))=\fl{l}(i_0^{\star}(\beta)|S_0)
\hspace{5mm}\text{and}\hspace{5mm}
\wt{i}^*_{\infty}(\wt{\fl{l}}(\beta|S))=\fl{l}(i_{\infty}^{\star}(\beta)|S_{\infty}).
$$
We also have power series $\wt{\fl{l}}(\pi_S^*(\alpha)|S)$ verifying analogous formula by Lemma \ref{lem3d01}.
Thus, setting $\wt{\fl{l}}(\delta_S|S)=\wt{\fl{l}}(\beta|S)+\wt{\fl{l}}(\pi_S^*(\alpha)|S)$, we obtain:
$$
\wt{i}^*_0(\wt{\fl{l}}(\delta_S|S))=\fl{l}(i_0^{\star}(\delta_S)|S_0)
\hspace{5mm}\text{and}\hspace{5mm}
\wt{i}^*_{\infty}(\wt{\fl{l}}(\delta_S|S))=\fl{l}(i_{\infty}^{\star}(\delta_S)|S_{\infty}).
$$

Now, we go back to the irreducible components $W_s$ and the classes $\delta_s$ such that $\delta=\sum_s(\hat{w}_s)_*(\delta_s)$.
We set $\wt{\fl{l}}(\delta|W)=\sum_s\wt{\fl{l}}(\delta_s|W_s)$. From what we just said, we again have
$$
\wt{i}^*_0\wt{\fl{l}}(\delta|W)=\fl{l}(i_0^{\star}(\delta)|W_0)
\hspace{5mm}\text{and}\hspace{5mm}
\wt{i}^*_{\infty}(\wt{\fl{l}}(\delta|W))=\fl{l}(i_{\infty}^{\star}(\delta)|W_{\infty}).
$$
With this in hands it is now easy to finish the proof of Proposition \ref{d01osn}.
Indeed, consider the commutative diagram with cartesian and transversal squares
$$
\xymatrix @-0.2pc{
\wt{X}_0 \ar @{->}[r]^(0.4){\wt{i}_0} \ar @{->}[d]_(0.5){\rho_0} & \wt{X\times\pp^1} \ar @{->}[d]^(0.5){\rho}  &
\wt{X}_{\infty} \ar @{->}[l]_(0.35){\wt{i}_{\infty}}  \ar @{->}[d]^(0.5){\rho_{\infty}}   \\
X \ar @{->}[r]^(0.4){i_0} & X\times\pp^1    &  X \ar @{->}[l]_(0.35){i_{\infty}}.
}
$$
We have
$$
i_0^*\left(\frac{\rho_*\wt{\fl{l}}(\delta|W)}{\rho_*(1)}\right)=
\frac{(\rho_0)_*\wt{i}_0^*\wt{\fl{l}}(\delta|W)}{(\rho_0)_*(1)}=
\frac{(\rho_0)_*\fl{l}(i_0^{\star}\delta|W_0)}{(\rho_0)_*(1)},
$$
and similarly for $\infty$. Since the maps $i_0^*,i_{\infty}^*:B^*(X\times\pp^1)\row B^*(X)$ are equal,
the result follows.
\Qed
\end{proof}

It follows from Proposition \ref{d01osn} that $\fl{l}$ is trivial on the image of $d^{{\frc}}_{0,1}$,
and so it is well-defined on $H({\frc})$, and hence on $A^*(X)$.
Thus, we obtain:

\begin{proposition}
\label{flX}
Suppose that a compatible family is secured in dimension $\leq d-1$. Let $X$ be a smooth quasi-projective variety of dimension $d$.
Then the maps $\gl{l}=\left(\prod_{i=1}^lz_i\right)\cdot\fl{l}:A^{n-l}(X)\row B^*(X)[[z_1,\ldots,z_l]]_{(m)}$
define a compatible family $\G(X)$ for $X$.
\end{proposition}

\begin{proof}
Properties $(a_i)$ and $(a_{ii})$ are clear. For $(a_{iii})$, it is enough to treat the case
$\alpha=v_*(\gamma)$ where $v:V\row X$ is the inclusion of a smooth divisor.
Set $\lambda^A=c_1^A(O_X(V))$ and similarly for $B$. Then:
\begin{equation*}
\begin{split}
&\gl{l}(v_*(\gamma))(x+_By,z_2,\ldots,z_l)=v_*\operatornamewithlimits{Res}_{t=0}
\frac{\gl{l+1}(\gamma)(t+_B\lambda^B,x+_By,z_2,\ldots,z_l)\omega^B_t}{(t+_B\lambda^B)\cdot t}=\\
&v_*\operatornamewithlimits{Res}_{t=0}\sum_{i,j}\frac{\gl{l+i+j}(\gamma\cdot a^A_{i,j})
(t+_B\lambda^B,x^{\times i},y^{\times j},z_2,\ldots,z_l)\omega^B_t}{(t+_B\lambda^B)\cdot t}=
\sum_{i,j}\gl{l+i+j-1}(v_*(\gamma)\cdot a^A_{i,j})
(x^{\times i},y^{\times j},z_2,\ldots,z_l).
\end{split}
\end{equation*}
\Qed
\end{proof}

\begin{proposition}
\label{dd-1}
Suppose that a compatible family is secured in dimension $\leq d-1$. Then, extending this family using the $\G(X)$
constructed above for $X$ of dimension $d$, one gets a compatible family in dimension $\leq d$.
\end{proposition}

\begin{proof}
Above, we have defined compatible families $\G(X)$ for $X$ of dimension $\leq d$, which, in case $X$ has dimension $\leq d-1$,
extend the given compatible families in dimension $\leq d-1$. It remains to check conditions $(b_i)$ and $(b_{ii})$.

We first check $(b_i)$. Let $f:Y\row X$ be a morphism between smooth quasi-projective varieties of dimension $\leq d$
and let $\alpha\in A^{n-l}(X)$. If $\alpha$ is in the image of $\pi_X^*:A^*(k)\row A^*(X)$, the property $(b_i)$ is obvious.
Thus, we may assume that $\alpha\in\ov{A}^{n-l}(X)$. We may find $\cv_X=(Z\row X\stackrel{\rho_X}{\low}\wt{X})$
in $\rc(X)$ and $\gamma\in\op{Image}(\rho_X^!:A_{\ddim(X)-n+l}(Z)\row A^{n-l-1}(V_X))$ such that
$\alpha=\frac{(\rho_X)_*(v_X)_*(\gamma)}{(\rho_X)_*(1)}$.
By Hironaka's resolution of singularities (Theorems \ref{Hif} and \ref{Hip}) we may find
$\cv_Y=(f^{-1}(Z)\row Y\stackrel{\rho_Y}{\low}\wt{Y})$ in $\rc(Y)$ fitting in a commutative diagram where the left
square is cartesian:
$$
\xymatrix @-0.2pc{
V_Y\ar @{->}[r]^{v_Y} \ar @{->}[d]_(0.5){f_V}&
\wt{Y} \ar @{->}[r]^{\rho_Y} \ar @{->}[d]_(0.5){\wt{f}}&
Y  \ar @{->}[d]^(0.5){f}  \\
V_X  \ar @{->}[r]_{v_X} & \wt{X} \ar @{->}[r]_{\rho_X}  &  X.
}
$$
By Proposition \ref{flpistar}, Lemma \ref{lem2d01}, Proposition \ref{flmp}, and Proposition \ref{MPEIF},
\begin{equation*}
\begin{split}
&f^*\fl{l}\biggl(\frac{(\rho_X)_*(v_X)_*(\gamma)}{(\rho_X)_*(1)}\biggr)=
f^*\biggl(\frac{(\rho_X)_*\fl{l}(\gamma|V_X)}{(\rho_X)_*(1)}\biggr)=
\frac{(\rho_Y)_*\wt{f}^*\fl{l}(\gamma|V_X)}{(\rho_Y)_*(1)}=
\frac{(\rho_Y)_*\fl{l}(f_V^{\star}(\gamma)|V_Y)}{(\rho_Y)_*(1)}=\\
&\fl{l}\biggl(\frac{(\rho_Y)_*(v_Y)_*f_V^{\star}(\gamma)}{(\rho_Y)_*(1)}\biggr)=
\fl{l}\biggl(\frac{(\rho_Y)_*\wt{f}^*(v_X)_*(\gamma)}{(\rho_Y)_*(1)}\biggr)=
\fl{l}\biggl(f^*\bigg(\frac{(\rho_X)_*(v_X)_*(\gamma)}{(\rho_X)_*(1)}\bigg)\biggr).
\hspace{1cm}\text{This proves}\,\,(b_{i}).
\end{split}
\end{equation*}
Let now $X\stackrel{j}{\row}Y$ be a regular embedding of codimension $r$ with normal bundle $N_j$,
with $\ddim(Y)\leq d$. Consider the blow-up diagram:
$$
\xymatrix @-0.2pc{
E\ar @{->}[r]^{\wt{j}} \ar @{->}[d]_(0.5){\eps} &\wt{Y} \ar @{->}[d]^(0.5){\pi}\\
X  \ar @{->}[r]_{j} & Y,
}
$$
where $E=\pp_X(N_j)$, and $N_{\wt{j}}=O(-1)$.
Let $M=\eps^*N_j/O(-1)$, $\nu^{A,B}_1,\ldots,\nu^{A,B}_{r-1}$ - be roots of $M$,
$\zeta^{A,B}$ - root of $O(-1)$, and $\alpha\in A^{n-l-r}(X)$. Then, by the already proven  $(b_i)$,
the Excess Intersection Formula (Proposition \ref{excess}), the definition of $\G(\wt{Y})$,
Lemma \ref{specializ}, again $(b_i)$, and Proposition \ref{excess} again, we get:
\begin{equation*}
\begin{split}
&\pi^*\fl{l}(j_*(\alpha))(\ov{z})=\fl{l}(\pi^*j_*(\alpha))(\ov{z})=
\fl{l}(\wt{j}_*(c^A_{r-1}(M)\cdot\eps^*(\alpha)))(\ov{z})=\\
&\wt{j}_*\fl{l+1}(c^A_{r-1}(M)\cdot\eps^*(\alpha))(\zeta^B,\ov{z})=
\wt{j}_*\Big(\prod_{i=1}^{r-1}\nu^B_i\cdot\fl{l+r}(\eps^*(\alpha))
(\zeta^B,\nu^B_1,\ldots,\nu^B_{r-1},\ov{z})\Big)=\\
&\wt{j}_*\Big(c^B_{r-1}(M)\cdot\eps^*(\fl{l+r}(\alpha)(\mu^B_1,\ldots,\mu^B_r,\ov{z}))\Big)=
\pi^*j_*\fl{l+r}(\alpha)(\mu^B_1,\ldots,\mu^B_r,\ov{z}).
\end{split}
\end{equation*}
And since $\pi^*$ is injective, we obtain $(b_{ii})$.
\Qed
\end{proof}

Now we can finish the proof of Theorem $\ref{MAIN}$.

From the hypothesis of Theorem \ref{MAIN}, we have a compatible family in dimension $\leq 0$.
By Proposition \ref{dd-1} and induction, we obtain a compatible family in all dimensions, i.e., for every $X\in\smk$,
we have a compatible family $\G(X)$ and the conditions $(b_i)$ and $(b_{ii})$ are satisfied. In particular,
the map $G_0: A^n(X)\row B^m(X)$, for $X\in\smk$, form an additive operation. It remains to see that these maps
coincide with the original ones for $(\pp^{\infty})^{\times l}$.
From commutativity with the pull-backs for partial diagonals and partial projections, it is sufficient to
compare the values on $\alpha\cdot\prod_{i=1}^lz^A_i\in A^n((\pp^{\infty})^{\times l})$, where
$\alpha\in A^{n-l}(k)$, and $z^A_i=c^A_1(O(1)_i)$. Let
$j: (\pp^{\infty})^{\times l}\row (\pp^{\infty})^{\times l}$ be the product of hyperplane section
embeddings. Then
$\gl{0}(j_*(\alpha))=\gl{l}(\alpha)(z^B_1,\ldots,z^B_l)=G(\alpha\cdot\prod_{i=1}^lz^A_i)$, by $(b_{ii})$
and the definition of $\G(\spec(k))$.
Thus, $\gl{0}$ extends the original homomorphisms on products of projective spaces.
The uniqueness follows from Proposition \ref{vazhnoe}.
\Qed
\end{proof}

As a byproduct of our construction, we have proved that unstable additive operations (from a free theory) satisfy $(b_{ii})$.
This can be considered as a Riemann-Roch type result for unstable additive operations.
A similar result in the case of multiplicative operations was obtained previously in \cite{P}.

\begin{theorem}
\label{adRR}
Let $G:A^n\row B^m$ be an additive operation, where $A^*$ is free. As usual, denote by
$z^A_i\in A^1(X\times (\pp^{\infty})^{\times l})$ the first Chern class of $O(1)_i$ in the sense of the theory $A^*$
(and similarly for $B^*$). For $\alpha\in A^{n-l}(X)$,
denote $G_l(\alpha)(z_1^B,\ldots,z_l^B)=G(\alpha\cdot\prod_{i=1}^lz^A_i)\in
B^m(X\times(\pp^{\infty})^{\times l})=B^*(X)[[z^B_1,\ldots,z^B_l]]_{(m)}$. Let $j:X\row Y$ be a regular embedding of codimension $r$ with
normal bundle $N_j$ with $B$-roots $\mu^B_1,\ldots,\mu^B_r$. Then
$$
\gl{l}(j_*(\alpha))(z^B_1,\ldots,z^B_l)=j_*\operatornamewithlimits{Res}_{t=0}
\frac{\gl{l+r}(\alpha)(t+_B\mu_1^B,\ldots,t+_B\mu_r^B,z^B_1,\ldots,z^B_l)\cdot\omega_t^B}
{(t+_B\mu_1^B)\cdot\ldots\cdot(t+_B\mu_r^B)\cdot t}
$$
where $\omega_t^B$ is the canonical invariant $1$-form - see Subsection \ref{pb-bu}.
\end{theorem}

We will also need the following multiplicative version of Theorem \ref{MAIN}:

\begin{proposition}
\label{mainMULT}
Let $G:A^*((\pp^{\infty})^{\times l})\row B^*((\pp^{\infty})^{\times l})$, for $l\in\zz_{\geq 0}$, be a family of
homomorphisms satisfying the conditions $(i)$-$(v)$ of Theorem $\ref{MAIN}$. Assume also that this family is compatible with
external product. Then, the resulting additive operation $G:A^*\row B^*$ is multiplicative.
\end{proposition}

\begin{proof}
Let $X$ and $Y$ be smooth and quasi-projective varieties. We will show that
$$
\gl{l+m}(\alpha\times \beta)(x_1,\ldots,x_l,y_1,\ldots,y_m)=
\gl{l}(\alpha)(x_1,\ldots,x_l)\times\gl{m}(\beta)(y_1,\ldots,y_m)
$$
for $\alpha\in A^*(X)$ and $\beta\in B^*(Y)$.
We first prove this when $Y=\spec(k)$ by induction on the dimension of $X$.
The base and the case where $\alpha$ is constant follow from our condition.
In the case $\alpha\in\ov{A}^*(X)$, we can find a projective bi-rational morphism
$\wt{X}\stackrel{\rho}{\row}X$ such that $\rho^*(\alpha)$ is supported on some divisor
with strict normal crossings. Since $\rho^*$ is injective,
without loss of generality, we can assume that $\alpha=v_*(\alpha')$, where
$V\stackrel{v}{\row}X$ is a smooth divisor. Let $\lambda^{B}=c^{B}_1(O_X(V))$.
Then
\begin{equation*}
\begin{split}
&\gl{l+m}(\alpha\times\beta)(\ov{x},\ov{y})=
(v\times id)_*\gl{l+m+1}(\alpha'\times\beta)(\lambda^B,\ov{x},\ov{y})=\\
&(v\times id)_*(\gl{l+1}(\alpha')(\lambda^B,\ov{x})\times\gl{m}(\beta)(\ov{y}))=
\gl{l}(\alpha)(\ov{x})\times\gl{m}(\beta)(\ov{y}),
\end{split}
\end{equation*}
which proves the induction step. Now, by the induction on the $\ddim(Y)$, using similar arguments,
we prove the general case.
\Qed
\end{proof}

\section{Applications}
\label{applications}

\subsection{Unstable operations in Algebraic Cobordism}
\label{uoac}

As a first application of our main result (Theorem \ref{MAIN}), let us finish the
description of unstable operations in Algebraic Cobordism:

\begin{theorem}
\label{UOACpoln}
Let $\psi\in\op{Hom}_{\laz}(\laz[\barbi{b}],\laz\otimes_{\zz}\qq)_{(m-n)}$ be a homomorphism of $\laz$-modules.
Denote by $S_{\psi}:\Omega^n\row\Omega^m\otimes_{\zz}\qq$ the respective $\laz\otimes_{\zz}\qq$-linear combination
of the Landweber-Novikov operations, i.e., the composition of
$$
\Omega^*\stackrel{\slnT}{\lrow}\Omega^*[\barbi{b}]\cong\Omega^*\otimes_{\laz}\laz[\barbi{b}]\stackrel{\otimes\psi}{\lrow}
\Omega^{*-n+m}\otimes_{\zz}\qq
$$
in degree $n$. Assume that $S_{\psi}$ satisfies the following integrality condition:
$S_{\psi}(\Omega^n((\pp^{\infty})^{\times r}))\subset \Omega^m((\pp^{\infty})^{\times r})$,
for all $r\geq 0$. Then there exists a unique additive operation $G_{\psi}:\Omega^n\row\Omega^m$ such that
$S_{\psi}=G_{\psi}\otimes\qq$. Moreover, every additive operation arises in this way, for a unique $\psi$.
Thus, $\psi\leftrightarrow G_{\psi}$ is
a 1-to-1 correspondence between linear combinations of Landweber-Novikov operations satisfying integrality conditions
and integral additive operations.
\end{theorem}

\begin{proof}
It follows immediately from Theorems \ref{unstUNIQ}  and \ref{MAIN}.
\Qed
\end{proof}

If $A^*$ is a theory of rational type, and $B^*$ is any theory in the sense of Definition \ref{goct},
then (unstable) additive operations $A^n\row B^m$ can be described as follows.

\begin{theorem}
\label{unstAB}
Let $A^*$ be a theory of rational type, and $B^*$ be any theory. Then
there is $1$-to-$1$ correspondence between the set of (unstable) additive operations $A^n\stackrel{G}{\row} B^m$ and the
set consisting of the following data $\{\gl{l},\,l\in\zz_{\geq 0}\}$:
$$
\gl{l}\in \op{Hom}_{\zz-lin}(A^{n-l},B[[z_1,\ldots,z_l]]_{(m)})\hspace{3mm}\text{satisfying:}
$$
\begin{itemize}
\item[$(a_{i})$ ] $\gl{l}$ is symmetric with respect to ${\frak{S}}_l$;
\item[$(a_{ii})$ ] $\gl{l}(\alpha)=\prod_{i=1}^lz_i\cdot\fl{l}(\alpha)$, for some $\fl{l}(\alpha)\in B[[z_1,\ldots,z_l]]_{(m-l)}$.
\item[$(a_{iii})$ ]
$
\gl{l}(\alpha)(x+_By,z_2,\ldots,z_l)=
\sum_{i,j}\gl{i+j+l-1}(\alpha\cdot a_{i,j}^A)(x^{\times i},y^{\times j},z_2,\ldots,z_l),
$
\\
where $a_{i,j}^A$ are the coefficients of the
formal group law of $A^*$.
\end{itemize}
\end{theorem}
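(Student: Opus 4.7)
The plan is to deduce this from the Main Theorem \ref{MAIN} by interpreting the data $\{G_l\}$ as exactly the information of a transformation on $(\pp^{\infty})^{\times l}$ commuting with the five kinds of pull-backs listed there. In the forward direction, given an additive operation $G:A^n\row B^m$, I would set
$$
G_l(\alpha)(z_1,\ldots,z_l) := G(\alpha\cdot z^A_1\cdots z^A_l)\in B^m((\pp^{\infty})^{\times l}),
$$
where $z^A_i=c^A_1(\co(1)_i)$, and identify $B^*((\pp^{\infty})^{\times l})=B[[z^B_1,\ldots,z^B_l]]$. Additivity in $\alpha$ is obvious, $(a_i)$ is the $\mathfrak{S}_l$-equivariance, and the factor $\prod_i z_i$ in $(a_{ii})$ comes from the fact that $G$ commutes with the pull-back $(\op{Spec}(k)\hookrightarrow\pp^{\infty})\times(\pp^{\infty})^{\times r}$, which kills any summand not divisible by all $z^B_i$. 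For $(a_{iii})$, I would compute the pull-back along a partial Segre embedding: it sends $z^A_1$ to $x+_A y$, so the compatibility with $F_A$ expands exactly as in the formula, using $G(\alpha\cdot x^i y^j\cdot P)=G_{l+i+j-1}(\alpha\cdot a^A_{i,j})(x^{\times i},y^{\times j},\ldots)$ applied term by term.

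In the reverse direction, given $\{G_l\}$ satisfying $(a_{i,ii,iii})$, I would first extend them to a transformation
$$
A^n((\pp^{\infty})^{\times l})\stackrel{G}{\row}B^m((\pp^{\infty})^{\times l})
$$
by $\zz$-linearity: every element of $A^n((\pp^{\infty})^{\times l})$ is a (possibly infinite, convergent in the $(z^A_i)$-adic topology by $(a_{ii})$) $A$-linear combination of monomials $\alpha\cdot\prod_{i\in I}(z^A_i)^{n_i}$, and using partial diagonals these reduce to expressions of the form $\beta\cdot\prod_i z^A_i$, which are sent to $G_{|I|}(\beta)$. Condition $(a_{ii})$ ensures the extension commutes with partial projections (inserting a factor $1\in A^*(\pp^{\infty})$ corresponds to dropping a variable that must appear to make $G_l$ nonzero) and with the point inclusion of $(iv)$. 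Condition $(a_i)$ gives commutation with $\mathfrak{S}_l$, and $(a_{iii})$ gives commutation with partial Segre embeddings, by reversing the computation from the first paragraph.

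Once the transformation is defined on $(\pp^{\infty})^{\times l}$ for all $l$ and commutes with the five kinds of pull-backs of Theorem \ref{MAIN}, that theorem yields a unique additive operation $A^n\row B^m$ on $\smk$ extending it. Finally, Proposition \ref{vazhnoe} ensures that the two directions are mutually inverse: the data $\{G_l\}$ recovered from the constructed operation agrees with the original data on $(\pp^{\infty})^{\times l}$, and any two operations agreeing there coincide.

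The main obstacle, as usual in such reconstructions, is book-keeping rather than conceptual. The subtle point is the well-definedness of the extension to $A^n((\pp^{\infty})^{\times l})$ as a $\zz$-linear transformation: different ways of writing an element as a linear combination of monomials $\alpha\cdot z^A_{i_1}\cdots z^A_{i_k}$ (via partial diagonals vs.\ direct expansion in $A[[z_1,\ldots,z_l]]$) must yield the same value, and this compatibility is exactly the content of $(a_{ii})$ and $(a_{iii})$ together. One has to check carefully that the formal-group-law reparametrization implicit in $(a_{iii})$ is compatible with the $A$-module structure, i.e.\ that $G_l(\alpha\cdot\beta)$ computed by expanding $\beta$ in powers of $z^A_i$ and using $(a_{iii})$ gives the same result as a direct substitution; this reduces to iterated application of $(a_{iii})$ and convergence in the $(z^A_i)$-adic topology guaranteed by $(a_{ii})$.
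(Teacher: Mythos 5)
Your proposal is correct and takes essentially the same route as the paper: Theorem \ref{unstAB} is deduced by combining Theorem \ref{MAIN} with Proposition \ref{vazhnoe}, and the dictionary you spell out between the data $\{\gl{l}\}$ and transformations on $(\pp^{\infty})^{\times l}$ satisfying the five pull-back conditions is precisely what the paper identifies at the start of the proof of Theorem \ref{MAIN} (the observation that after reducing via partial diagonals and projections to the classes $\alpha\cdot\prod_i z_i^A$, conditions $(i)$, $(iii)$, $(iv)$ translate into $(a_i)$, $(a_{iii})$, $(a_{ii})$ for $X=\op{Spec}(k)$). The well-definedness worry in your final paragraph is not a genuine gap: $A[[z_1^A,\ldots,z_l^A]]$ is a free topological $A$-module on the monomial basis, so the extension is forced and its compatibility with the five pull-backs is exactly what $(a_i)$--$(a_{iii})$ encode.
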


\begin{proof}
It follows immediately from Proposition \ref{vazhnoe}, Theorem \ref{MAIN}, and the discussion
right after it.
\Qed
\end{proof}

With the data $\{\gl{l},\,l\in\zz_{\geq 0}\}$ one can associate the data
$\{\wgl{l},\,l\in\zz_{\geq 0}\}$,
where $\wgl{l}:A^{n-l}(k)\row B^{m-l}(k)$ is the constant term of the
$\fl{l}:A^{n-l}(k)\row B^*(k)[[z_1,\ldots,z_l]]_{(m-l)}$. We have:

\begin{proposition}
\label{wglGL}
If $B$ has no torsion, then $\{\wgl{l},\,l\in\zz_{\geq 0}\}$ carries the same information
as $\{\gl{l},\,l\in\zz_{\geq 0}\}$.
\end{proposition}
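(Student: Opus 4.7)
The forward map $\{\gl{l}\} \mapsto \{\wgl{l}\}$ is immediate from the definitions: $\wgl{l}(\alpha) := \fl{l}(\alpha)(0, \ldots, 0)$. The substance of the proposition is that, when $B$ is torsion-free, this map is injective on the collection of valid $\{\gl{l}\}$'s. My plan is to prove uniqueness by strong induction on the total degree $d = |K|$ of the coefficients $c^{(l)}_K(\alpha)$ in the expansion $\fl{l}(\alpha)(z_1, \ldots, z_l) = \sum_K c^{(l)}_K(\alpha) z_1^{k_1} \cdots z_l^{k_l}$. The base $d = 0$ gives $c^{(l)}_0(\alpha) = \wgl{l}(\alpha)$ directly, and the inductive step will show that each higher-degree coefficient is determined by the $\wgl{m}$'s together with lower-degree coefficients (which are themselves inductively already known).

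For the inductive step, fix $K = (k_1, \ldots, k_l)$ of total degree $d \geq 1$. By symmetry $(a_i)$ we may assume $k_1 \geq 1$. I apply $(a_{iii})$ with $y = z_{l+1}$ and extract the coefficient of the monomial $z_1^{k_1} z_{l+1}\, z_2^{k_2+1} \cdots z_l^{k_l+1}$ from both sides. Using $(a_{ii})$, the LHS becomes
\begin{equation*}
\sum_{k_1' \geq 0} c^{(l)}_{(k_1', k_2, \ldots, k_l)}(\alpha) \cdot \bigl[z_1^{k_1} z_{l+1}\bigr]\, F_B(z_1, z_{l+1})^{k_1' + 1}.
\end{equation*}
Writing $F_B = (z_1 + z_{l+1}) + H$ with $H$ of total degree $\geq 2$, a direct degree count shows that $[z_1^{k_1} z_{l+1}] F_B^{k_1'+1}$ vanishes for $k_1' > k_1$, equals $\binom{k_1+1}{1} = k_1 + 1$ for $k_1' = k_1$ (from the leading $(z_1+z_{l+1})^{k_1+1}$), and for $k_1' < k_1$ multiplies $c^{(l)}_{(k_1', k_2, \ldots, k_l)}$ (of strictly smaller total degree, inductively known) by a polynomial in the coefficients of $F_B$.

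On the RHS $\sum_{i,j \geq 0} \gl{l+i+j-1}(\alpha\cdot a^A_{i,j})(z_1^{\times i}, z_{l+1}^{\times j}, z_2, \ldots, z_l)$, the $z_{l+1}^j$-divisibility forced by $(a_{ii})$ rules out $j = 0$ (no $z_{l+1}$) and $j \geq 2$ (too much $z_{l+1}$), leaving only $j = 1$. The $i = 0, j = 1$ summand $\gl{l}(\alpha)(z_{l+1}, z_2, \ldots, z_l)$ contains no $z_1$ and so does not contribute since $k_1 \geq 1$. The remaining terms $i \geq 1, j = 1$ produce a linear combination of coefficients $c^{(l+i)}_M(\alpha \cdot a^A_{i, 1})$ with $|M| = |K| - i < |K|$, again inductively known. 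Equating both sides yields
\begin{equation*}
(k_1 + 1)\, c^{(l)}_K(\alpha) = \bigl(\text{quantity determined by inductively known data and by the } \wgl{m}\bigr),
\end{equation*}
and since $k_1 + 1 \in \zz_{>0}$ is not a zero-divisor in the torsion-free $B$, this uniquely determines $c^{(l)}_K(\alpha)$, closing the induction.

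The main obstacle will be the bookkeeping: one must verify that no RHS contribution involves the unknown $c^{(l)}_K(\alpha)$ itself, and that the LHS splits cleanly into the single leading term of integer multiplicity $k_1 + 1$ plus strictly-lower-degree corrections. Both points rely on the tight degree control provided by $(a_{ii})$, combined with the specific choice of monomial $z_1^{k_1} z_{l+1} \prod_{i=2}^l z_i^{k_i+1}$, tailored so that the Segre-style addition $z_1 +_B z_{l+1}$ in $(a_{iii})$ produces a leading binomial factor $k_1 + 1$ which is invertible modulo torsion.
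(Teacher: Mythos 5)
Your proposal is correct and is essentially the paper's own argument: induction on the total degree of the coefficients of $\fl{l}$ (simultaneously for all $l$), extracting from $(a_{iii})$ the coefficient of a monomial linear in the new variable so that the leading term carries an integer multiplicity while all other contributions involve only inductively known lower-degree coefficients, and then using torsion-freeness of $B$ to divide. The only cosmetic difference is bookkeeping: you compute the leading multiplicity as $k_1+1$ (which is the correct binomial count), whereas the paper records it as $i_1$; in either case it is a nonzero integer, which is all the argument needs.
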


\begin{proof}
We can write $\fl{l}(\alpha)=\sum_{\ov{i}}h_{l,\ov{i}}(\alpha)\ov{z}^{\ov{i}}$, where $h_{l,\ov{i}}(\alpha)\in B^*(k)$.
Let us prove by induction on the degree of $\ov{i}$ (simultaneously for all $l$) that $h_{l,\ov{i}}$ is determined by
$h_{r,\ov{0}}$, for all $r$. The base is evident.
Let $\ov{z}^{\ov{i}}=z_1^{i_1}\cdot\ldots\cdot z_l^{i_l}$. Consider the equation $(a_{iii})$.
Compare the coefficients at $x^{i_1}yz_2^{i_2+1}\cdot\ldots\cdot z_l^{i_l+1}$. We may ignore the terms involving $h_{s,\ov{j}}$
corresponding to monomials of smaller degree since these are determined by $h_{r,\ov{0}}$ (for all $r$) by the inductive assumption.
Then, using the fact that $x+_By=x+y+\,\text{higher terms}$, we see that the only contributing term on the left is
$h_{l,\ov{i}}(\alpha)(x+y)^{i_1+1}z_2^{i_2+1}\cdot\ldots\cdot z_l^{i_l+1}$, while on the right no terms contribute.
(Note, that the terms of
$G_{l+i+j-1}(\alpha\cdot a^A_{i,j})$ for  $i,j\geq 1$ don't contribute, because the degree of each term of $G_r$ is greater than that
of the respective term of $F_r$ by $r$.) Thus, $(i_1+1)\cdot h_{l,\ov{i}}(\alpha)$ is
expressible in terms of $h_{r,\ov{j}}(\,something\,)$, for $|\ov{j}|<|\ov{i}|$, and all $r$.
Since $B$ has no torsion, and $i_1+1\neq 0$, $h_{l,\ov{i}}(\alpha)$ is determined by these smaller terms.
\Qed
\end{proof}

\begin{corollary}
\label{corBnontor}
Let $A^*$ be a theory satisfying $(CONST)$, and $B^*$ be any theory in the sense of Definition \ref{goct}
with $B$ torsion-free. Then an additive (unstable) operation $A^n\row B^m$ is determined by it's action
on the image of $(j_l)_*$, for all $l$, where $j_l:\spec(k)\row(\pp^1)^{\times l}$ is an embedding of a rational point.
\end{corollary}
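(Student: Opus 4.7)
The plan is to reduce the statement to a chain of three previously-established facts. First I would invoke Proposition \ref{vazhnoe} to reduce the problem to showing that the transformation $\gl{0} = G|_{(\pp^\infty)^{\times r}}$ is determined by the claimed data; this uses only the $(CONST)$ hypothesis on $A^*$. Next, any additive operation $G$ produces canonical data $\{\gl{l}\}_{l\geq 0}$ on $\spec(k)$ via $\gl{l}(\alpha)(z_1^B,\ldots,z_l^B) = G(\alpha\cdot\prod_{i=1}^l z_i^A)$, which automatically satisfies $(a_{i})$, $(a_{ii})$, $(a_{iii})$ of Definition \ref{a123} by commutativity of $G$ with pullback along the symmetric group action, the embedding $\spec(k)\hookrightarrow\pp^\infty$, and the Segre embedding respectively. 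Then Proposition \ref{wglGL} — and this is the step where torsion-freeness of $B$ is essential — tells us that the system $\{\wgl{l}\}_{l\geq 0}$ of constant terms already determines the entire $\{\gl{l}\}_{l\geq 0}$.

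The remaining task is to identify $\wgl{l}$ with the restriction of $G$ to $\operatorname{image}((j_l)_*)$. The key computation is on $(\pp^1)^l$: here the classes $y_i = c_1(\co(1)_i)$ satisfy $y_i^2 = 0$, so pulling back $\gl{l}(\alpha)(z_1^B,\ldots,z_l^B) = \prod_i z_i^B\cdot\fl{l}(\alpha)$ along $(\pp^1)^l\hookrightarrow(\pp^\infty)^l$ collapses the power series $\fl{l}(\alpha)$ to its constant term $\wgl{l}(\alpha)$. Using $(j_l)_*(\alpha) = \alpha\cdot y_1^A\cdots y_l^A$ together with $G$ commuting with the hyperplane pullback $(\pp^1)^l\hookrightarrow(\pp^\infty)^l$, one obtains
$$
G((j_l)_*(\alpha)) = \wgl{l}(\alpha)\cdot y_1^B\cdots y_l^B = (j_l)_*(\wgl{l}(\alpha)).
$$
Since $B^*((\pp^1)^l)$ is a free $B$-module with $y_1^B\cdots y_l^B$ as a basis element, $(j_l)_*$ is injective, and $\wgl{l}$ is read off from $G|_{\operatorname{image}((j_l)_*)}$.

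There is no genuine obstacle here; the statement is essentially a packaging of Propositions \ref{vazhnoe} and \ref{wglGL} together with the elementary observation about $c_1(\co(1))^2 = 0$ on $\pp^1$. The only subtle point worth flagging is the necessity of torsion-freeness of $B$: without it the reconstruction $\{\wgl{l}\}\rightsquigarrow\{\gl{l}\}$ of Proposition \ref{wglGL} fails, since in its proof one divides by the exponents arising when comparing coefficients in the identity $(a_{iii})$.
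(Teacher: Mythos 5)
Your argument is correct and is exactly the paper's route: the paper's proof is the one-line reduction to Propositions \ref{vazhnoe} and \ref{wglGL}, and your write-up just makes explicit the intermediate bookkeeping (that the restriction of $G$ to $(\pp^{\infty})^{\times l}$ yields data $\{\gl{l}\}$ satisfying $(a_i)$--$(a_{iii})$, and that $y_i^2=0$ on $(\pp^1)^{\times l}$ identifies $G|_{\op{image}((j_l)_*)}$ with the constant terms $\wgl{l}$). Your closing remark on where torsion-freeness of $B$ enters also matches the paper's discussion following Proposition \ref{wglGL}.
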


\begin{proof}
This follows from Propositions \ref{vazhnoe} and \ref{wglGL}.
\Qed
\end{proof}

But if $B$ has torsion, then $\{\wgl{l},\,l\in\zz_{\geq 0}\}$ does not determine
$\{\gl{l},\,l\in\zz_{\geq 0}\}$.

\begin{example}
\label{wglCH}
Consider $A^*=B^*=\op{CH}^*/p$, $p$-prime. Then $F_B(x,y)=x+y$ is additive, and $A^{n-l}(k)=0$, for $l\neq n$.
Thus, $\gl{l}=0$, for $l\neq n$, and the only conditions on
$\gl{n}$ are: symmetry and additivity. Thus, $\gl{n}(z_1,\ldots,z_n)$ is an arbitrary symmetric
polynomial with $\zz/p$-coefficients of degree $m$ containing monomials where each $z_i$ enters in degree $p^{r_i}$, where $r_i\geq 0$. And $\wgl{n}$ is the coefficient at $z_1\cdot\ldots\cdot z_n$
(so, it is zero if $n\neq m$, and an element of $\zz/p$, if $n=m$). Of course, it does not determine
$\gl{n}$.
\end{example}

In the case of Chow groups modulo $p$ we can describe all the operations explicitly.
These appear to be essentially stable, and so expressible in term of Steenrod operations
(defined by V.Voevodsky \cite{VoOP} and P.Brosnan \cite{Br}).

\begin{theorem}
\label{unstCH}
Any additive operation $\op{CH}^n/p\row\op{CH}^m/p$ extends to a stable operation.
The $\ff_p$-vector space of such operations has a basis consisting of Steenrod operations
$S^{\ov{k}}$, where $\ov{k}=(k_1,\ldots,k_s)$ is a partition with $k_i=p^{r_i}-1$, $r_i\geq 0$,
$|\ov{k}|=(m-n)$, $s\leq n$.
\end{theorem}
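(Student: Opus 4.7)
The strategy is to specialize Theorem~\ref{unstAB} to $A^* = B^* = \op{CH}^*/p$, compute the dimension of the space of unstable additive operations $\op{CH}^n/p \to \op{CH}^m/p$ explicitly, and match it with a basis of stable Steenrod operations. Since $\op{CH}^j(\spec(k))/p = 0$ for $j > 0$, among the data $\{\gl{l}\}_{l \geq 0}$ only $\gl{n}$ can be nonzero, and it reduces to a single polynomial $P := \gl{n}(1) \in \ff_p[z_1,\ldots,z_n]_{(m)}$. By $(a_{i})$ it is symmetric, by $(a_{ii})$ it is divisible by $z_1 \cdots z_n$, and because the FGL of $\op{CH}^*/p$ is additive (so $a^A_{i,j} = 0$ outside $(1,0)$ and $(0,1)$), condition $(a_{iii})$ collapses to $\ff_p$-linearity of $P$ separately in each variable.

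A one-variable polynomial over $\ff_p$ that is additive is a sum of $p$-th power monomials, so every monomial of $P$ has the form $z_1^{p^{r_1}} \cdots z_n^{p^{r_n}}$ with $r_i \geq 0$; combined with symmetry, $P$ is an $\ff_p$-linear combination of monomial symmetric functions $m_\lambda$, where $\lambda = (\lambda_1 \geq \ldots \geq \lambda_n)$ ranges over partitions of $m$ into $n$ parts, each a power of $p$. Writing $\lambda_i = p^{r_i}$ and setting $k_i = p^{r_i} - 1$ (discarding the zero entries) produces a bijection with tuples $\ov{k} = (k_1,\ldots,k_s)$ satisfying $k_i = p^{r_i} - 1$, $s \leq n$, and $|\ov{k}| = m - n$, matching exactly the indexing set stated in the theorem.

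The final step is to identify, for each such $\ov{k}$, a stable additive Steenrod operation $S^{\ov{k}} : \op{CH}^*/p \to \op{CH}^{* + |\ov{k}|}/p$ (from Voevodsky~\cite{VoOP} and Brosnan~\cite{Br}) whose restriction to $\op{CH}^n/p \to \op{CH}^m/p$ corresponds under the classification above to the monomial symmetric function $m_\lambda$ attached to $\ov{k}$. Granting such a match, the $\{S^{\ov{k}}\}$ inject into the space of unstable additive operations by Proposition~\ref{vazhnoe}, and the dimension count of the second paragraph forces them to be a basis; in particular, every unstable additive operation extends (uniquely) to a stable one. The main obstacle is producing the explicit $S^{\ov{k}}$ and showing that the transition matrix $\{S^{\ov{k}}\} \to \{m_\lambda\}$ is invertible: the total Steenrod operation $\stT$, characterized by $\gamma_{\stT} = x + x^p$, acts on $z_1 \cdots z_n$ as $\prod_i(z_i + z_i^p)$ and thus realizes only monomial symmetric functions with parts in $\{1, p\}$, so higher $p$-power parts must be produced from iterated (Milnor-style) compositions, and one needs to verify that the resulting matrix is unitriangular with respect to the dominance order on partitions.
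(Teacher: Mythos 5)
Your reduction is sound and is in fact the paper's own argument: specializing Theorem \ref{unstAB} (this is Example \ref{wglCH}) to $A^*=B^*=\op{CH}^*/p$ kills all $\gl{l}$ with $l\neq n$, and the surviving datum is a symmetric polynomial of degree $m$, divisible by $z_1\cdots z_n$, additive in each variable separately; additive one-variable polynomials over $\ff_p$ are $p$-polynomials, so the space is spanned by the monomial symmetric functions $m_{\lambda}$ whose parts are $p$-powers, and your bookkeeping $\lambda_i=p^{r_i}$, $k_i=p^{r_i}-1$, $s\leq n$, $|\ov{k}|=m-n$ reproduces exactly the indexing set of the theorem.

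The problem is the step you defer with ``granting such a match'': that is not a side issue but precisely the content of the first assertion of the theorem (``extends to a stable operation''), and your proposal leaves it open. The paper closes it in one line by invoking the known stable Steenrod operations of Voevodsky and Brosnan: for each admissible symmetric polynomial $\fl{n}$ there is a stable operation $S^{\fl{n}}$ whose value on $x_n=\prod_{i=1}^n h_i\in\op{CH}^n((\pp^{\infty})^{\times n})/p$ is $x_n\cdot \fl{n}(h_1,\ldots,h_n)$; equivalently, the operations $S^{\ov{k}}$ (indexed by partitions exactly as the Landweber--Novikov operations $\sln{\ov{r}}$ are, i.e.\ the Milnor-type basis rather than iterated reduced powers) have characteristic equal to the symmetrized monomial attached to $\ov{k}$. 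With that choice the ``transition matrix'' you worry about is the identity, so no dominance-order unitriangularity argument is needed; then, since $\op{CH}^{n-l}(\spec(k))/p=0$ for $l\neq n$, the given unstable operation and $S^{\fl{n}}$ agree on all $(\pp^{\infty})^{\times r}$ and hence on all of $\smk$ by Proposition \ref{vazhnoe}, which also gives the uniqueness needed for the basis statement. Your alternative plan --- generating higher $p$-power parts by composing $\stT$ (whose $\gamma$ is $x+x^p$) and checking triangularity --- is the admissible-monomial versus Milnor-basis change of basis and would work, but as written it is a sketch, not a proof; either carry that computation out or simply quote the standard action of the $S^{\ov{k}}$ on products of projective spaces, as the paper does.
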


\begin{proof}
In the example \ref{wglCH} we saw that any additive operation $\op{CH}^n/p\stackrel{G}{\row}\op{CH}^m/p$
is determined by some symmetric polynomial $\fl{n}(z_1,\ldots,z_n)$ of degree $(m-n)$, where
each variable $z_i$ enters in degree $p^{r_i}-1$, for some $r_i\geq 0$.
The value of $G$ on the class $x_n=\prod_{i=1}^nh_i\in\op{CH}^n((\pp^{\infty})^{\times n})$ is
equal to $x_n\cdot\fl{n}(h_1,\ldots,h_n)$, which coincides with the value of the (stable!) Steenrod operation
$S^{\fl{n}}$.
Since $\op{CH}^{n-l}(\spec(k))/p=0$, for $l\neq n$, these two operations $\op{CH}^n/p\row\op{CH}^m/p$ coincide on $\smk$, by Proposition \ref{vazhnoe}.
Clearly, the $\ff_p$-vector space of mentioned polynomials $\fl{n}$ has a basis consisting  of the symmetrizations of monomials corresponding to partitions as above.
\Qed
\end{proof}

\begin{remark}
In particular, Theorem \ref{unstCH} provides another construction of Steenrod operations
in Chow groups.
\end{remark}

Consider now $A^*=K_0$ which is a free theory with the formal group law $(\zz,x+y-xy)$. Then an additive operation
$G:K_0\row K_0$ is given by the collection of $\{\gl{l},\,l\in\zz_{\geq 0}\}$
satisfying $(a_i)-(a_{iii})$. In our case, the condition $(a_{iii})$ is:
$$
\gl{l}(u)(x+y-xy,\ov{z})=\gl{l}(u)(x,\ov{z})+\gl{l}(u)(y,\ov{z})-
\gl{l+1}(u)(x,y,\ov{z}).
$$
Hence, $\gl{l+1}$ can be expressed in terms of $\gl{l}$.
Thus, all $\gl{l},\,l\geq 1$ can be expressed in terms of $\gl{1}$.
Since $\zz$ is additively generated by $1$, everything
is determined by $g(z)=\gl{1}(1)(z)$. Moreover, if $g(z)\in\zz[[z]]\cdot z$ is any power series, then
we can define
$$
\gl{l}(1)(z_1,\ldots,z_l)=\sum_{I\subset\{1,\ldots,l\}}(-1)^{|I|-1}g({\sum_{i\in I}}^{K_0}z_i).
$$
These power series are clearly symmetric and satisfy $(a_{iii})$, and it is not difficult to see
that they satisfy $(a_{ii})$ (cf. the proof of Lemma \ref{lemflmp}).
How to describe the operation corresponding to $g(z)$?
Recall from Theorem \ref{Adams} that Adams operations $\Psi_k:K_0\row K_0$ are multiplicative (and so, additive) operations
with $\gamma_{\Psi_k}(z)=1-(1-z)^k$. And $\gamma_{\Psi_k}(z)$ is exactly the respective $\gl{1}(1)(z)$.
Consider additive operations $\Upsilon_k=\sum_{i=0}^k (-1)^{i-1}\binom{k}{i}\Psi_k$. The respective
$\gl{1}(1)(z)$ is just $z^k$. Thus, we obtain:

\begin{theorem}
\label{unstK}
Additive operations in $K_0$ are exactly all possible linear combinations
$\sum_{k\geq 0}\lambda_k\cdot\Upsilon_k$, where $\lambda_k\in\zz$.
\end{theorem}

\subsection{Multiplicative operations between theories of rational type}

The following result reduces the study of multiplicative operations on
theories of rational type to the study of morphisms of formal group laws (recall, that
such theories are in $1$-to-$1$ correspondence with the formal group laws).

\begin{theorem}
\label{multFGL}
Let $A^*$ be a free theory, and $B^*$ be any oriented cohomology theory.
The map sending the multiplicative operation $A^*\row B^*$ to the induced homomorphism of
formal group laws $(A^*(k),F_A)\row (B^*(k),F_B)$ is a bijection.
\end{theorem}

\begin{proof}
Any multiplicative operation $G$ defines the homomorphism $(\ffi_G,\gamma_G):(A^*(k),F_A)\row(B^*(k),F_B)$
of formal group laws. On the other hand, any homomorphism
$(\ffi,\gamma)$ of formal group laws defines the homomorphisms
$H:A^*((\pp^{\infty})^{\times r})\row B^*((\pp^{\infty})^{\times r})$ by the rule:
$$
H(f(z^A_1,\ldots,z^A_r)):=\ffi(f)(\gamma(z^B_1),\ldots,\gamma(z^B_r)),
$$
where
$f\in A[[z^A_1,\ldots,z^A_r]]=A^*((\pp^{\infty})^{\times r})$. Clearly, this homomorphisms
commute with the pull-backs for the action of ${\frak{S}}_r$, and for partial diagonals.
As for partial Segre embeddings, let $Seg=(Segre\times id^{\times (r-1)})$. Then we have:
\begin{equation*}
\begin{split}
&Seg^*f(z^A_1,\ldots,z^A_r)=f(F_A(x^A,y^A),z^A_2,\ldots,z^A_r),\hspace{2mm}\text{while}\\
&Seg^*\ffi(f)(\gamma(z^B_1),\ldots,\gamma(z^B_r))=
\ffi(f)(\gamma(F_B(x^B,y^B)),\gamma(z^B_2),\ldots,\gamma(z^B_r)).
\end{split}
\end{equation*}
Since $\ffi(F_A)(\gamma(x^B),\gamma(y^B))=\gamma(F_B(x^B,y^B))$, we get that our
homomorphisms commute with the pull-backs for Segre embeddings as well.
Commutativity with the morphisms
$(\op{Spec{k}}\hookrightarrow\pp^{\infty})\times(\pp^{\infty})^{\times r}$
follows from the fact that $\gamma$ has no constant term.
Thus, it extends to a unique operation $H:A^*\row B^*$.
Since our homomorphisms on $(\pp^{\infty})^{\times r}$ commute with the external
products of projective spaces, it follows from Proposition \ref{mainMULT} that the resulting operation
will be multiplicative.
Finally, Proposition \ref{vazhnoe} implies that the above two assignments are inverse to each other.
\Qed
\end{proof}

The situation here is simpler than in Topology. It is the reflection of the same phenomenon as the
fact that our {\it theories of rational type}
are in $1$-to-$1$ correspondence with the {\it formal group laws}.
In Topology, some special cases of the above result are known - see, for example,
\cite[Theorem 3.7]{BT}.

Consider now the case where $A^*=\Omega^*$.
We can extend the Theorem \ref{PSLM}.

\begin{theorem}
\label{neobrB0}
Let $B^*$ be an oriented cohomology theory. Let $\gamma=b_0x+b_1x^2+b_2x^3+\ldots\in B^*(k)[[x]]$ be a power series such that
$b_0\in B^*(k)$ is a non zero-divisor. Then there exists a multiplicative operation $G:\Omega^*\row B^*$
with $\gamma_G=\gamma$ if and only if the twisted formal group law $F_B^{\gamma}\in B^*(k)[b_0^{-1}][[x,y]]$
has coefficients in $B^*(k)$.
In this case, such an operation is unique.
\end{theorem}

\begin{proof}
Since $\ffi_G(F_{\Omega})(\gamma(x^B),\gamma(y^B))=\gamma(F_B(x^B,y^B))$, and
$\ffi_G(F_{\Omega})$ has coefficients in $B^*(k)$, the above condition is necessary.
On the other hand, if $F_B^{\gamma}$ has coefficients in $B^*(k)$,
by universality of the formal group law $(\laz,F_{\Omega})$, we get a ring homomorphism
$\ffi:\laz\row B^*(k)$ such that $\ffi(F_{\Omega})=F_B^{\gamma}$, and hence,
a morphism of formal group laws
$(\ffi,\gamma):(\laz,F_{\Omega})\row (B^*(k),F_B)$
which provides the needed operation by Theorem \ref{multFGL}.
\phantom{a}\hspace{5mm}
\Qed
\end{proof}

The above two results provide an effective tool in constructing multiplicative operations.
We will use them below to construct {\it Integral Adams Operations} and T.tom Dieck - style
Steenrod operations in Algebraic Cobordism.

Let us describe the morphisms of formal group laws (and so, the multiplicative operations between the
respective theories) in some situations.

For $r>1$, denote: $d(r):=G.C.D.(\binom{r}{i},0<i<r)$. Then
\begin{equation*}
d(r)=
\begin{cases}
p,\hspace{2mm}\text{if}\hspace{1mm}r=p^k,\hspace{1mm}\text{for some}\hspace{1mm}k;\\
1,\hspace{2mm}\text{otherwise}.
\end{cases}
\end{equation*}

\begin{lemma}
\label{d(r)}
Let $(\ffi,\gamma):(A,F_A)\row (B,F_B)$ be a morphism of formal group laws, where $\gamma=b_0x+b_1x^2+\ldots$.
Then either $b_0\neq 0$, or the first non-zero coefficient $b_{r-1}$ of $\gamma$ satisfies: $d(r)\cdot b_{r-1}=0$.
\end{lemma}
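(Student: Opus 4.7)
The plan is to compare low-degree terms in the defining identity of a morphism of formal group laws
$$
\ffi_G(F_A)(\gamma_G(u), \gamma_G(v)) = \gamma_G(F_B(u,v)),
$$
under the assumption $b_0 = 0$. Writing $\gamma_G(x) = b_{r-1} x^r + b_r x^{r+1} + \cdots$ with $b_{r-1}$ the first non-zero coefficient, and $\ffi_G(F_A)(X,Y) = X + Y + \sum_{i,j\geq 1} \ffi_G(a^A_{i,j}) X^i Y^j$, the idea is that every nonlinear term on the left-hand side vanishes modulo $(u,v)^{2r}$ because each factor $\gamma_G(u)$ or $\gamma_G(v)$ already has valuation at least $r$.

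More precisely, I would first observe that modulo terms of total degree $\geq 2r$, the left-hand side reduces to $\gamma_G(u) + \gamma_G(v)$, whose degree-$r$ part is $b_{r-1}(u^r + v^r)$. Next, I would compute the degree-$r$ part of the right-hand side: since $F_B(u,v) = u + v + \sum_{i,j\geq 1} a^B_{i,j} u^i v^j$ begins in degree one, only the summand $b_{r-1} F_B(u,v)^r$ contributes in degree $r$, and its degree-$r$ component is simply $b_{r-1}(u+v)^r$. Equating the degree-$r$ parts yields
$$
b_{r-1} \bigl[(u+v)^r - u^r - v^r\bigr] = b_{r-1} \sum_{i=1}^{r-1} \binom{r}{i} u^i v^{r-i} = 0
$$
in $B[[u,v]]$. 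Since the monomials $u^i v^{r-i}$ are $B$-linearly independent, this gives $b_{r-1} \binom{r}{i} = 0$ for every $0 < i < r$, and hence $d(r) \cdot b_{r-1} = 0$ by taking the $\zz$-linear combination of these relations realising the g.c.d.

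There is no real obstacle here; this is a direct valuation-and-degree comparison, and the only thing to be careful about is isolating the correct degree in which the non-triviality of $F_B$ does not yet interfere (degree $r$, not higher), so that the test reduces to the additive-group failure $(u+v)^r - u^r - v^r$.
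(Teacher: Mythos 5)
Your proof is correct and is essentially the same as the one in the paper: both isolate the degree-$r$ part of the FGL-morphism identity $\ffi_G(F_A)(\gamma_G(u),\gamma_G(v))=\gamma_G(F_B(u,v))$, obtain $b_{r-1}(u^r+v^r)=b_{r-1}(u+v)^r$, and conclude $d(r)\cdot b_{r-1}=0$ from the binomial coefficients. You simply spell out the valuation bookkeeping (why the nonlinear terms on the left and the higher terms of $F_B$ on the right do not interfere in degree $r$), which the paper leaves implicit.
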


\begin{proof}
Suppose, $b_0=0$, and $b_{r-1}$ is the first non-zero coefficient of $\gamma$.
From the equality:
$$
\ffi(F_A)(\gamma(x),\gamma(y))=\gamma(F_B(x,y)),
$$
we get:
$b_{r-1}x^r+b_{r-1}y^r+\,\text{higher terms}\,=b_{r-1}(x+y)^r+\,\text{higher terms}$, which implies that $d(r)\cdot b_{r-1}=0$.\\
\phantom{a}
\Qed
\end{proof}

Suppose now that $B$ is an integral domain.
Then the characteristic $char(B)$ is either a prime $p$, or $0$. Consider these two cases separately.

1) $char(B)=0$:
\begin{corollary}
\label{b0zero}
Let $A^*$ and $B^*$ be any theories in the sense of Definition \ref{goct} with torsion-free $B^*(k)$, and $G:A^*\row B^*$ be a multiplicative operation. Then either
$\gamma_G=0$, or $b_0\neq 0$.
\Qed
\end{corollary}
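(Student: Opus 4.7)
The plan is to obtain Corollary \ref{b0zero} as an immediate consequence of Lemma \ref{d(r)}, which has just been proven. Indeed, Lemma \ref{d(r)} already carries the entire content; the corollary is just the observation that when $B$ has no additive torsion, the relation $d(r)\cdot b_{r-1}=0$ collapses to $b_{r-1}=0$.

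I would argue by contradiction. Suppose that $\gamma_G\neq 0$ but $b_0=0$. Since $\gamma_G$ is non-zero, there is a smallest index $r\geq 2$ for which $b_{r-1}\neq 0$. Lemma \ref{d(r)} then yields the identity $d(r)\cdot b_{r-1}=0$ in $B$. Recall that, by definition of $d(r)$, we have $d(r)=p$ when $r=p^k$ for some prime $p$, and $d(r)=1$ otherwise; in either case $d(r)$ is a strictly positive integer. The hypothesis that $B$ is torsion-free as an abelian group means that multiplication by any non-zero integer is injective on $B$, so $d(r)\cdot b_{r-1}=0$ forces $b_{r-1}=0$, contradicting the choice of $r$. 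Hence either $\gamma_G=0$ or $b_0\neq 0$, as required.

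There is no substantive obstacle: Lemma \ref{d(r)} does all the work, and the sole role of the hypothesis on $B$ is to promote the divisibility conclusion into a vanishing one. I expect the entire argument to take at most a few lines in final form.
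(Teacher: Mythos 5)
Your argument is correct and is exactly the paper's intended route: the corollary is stated with no separate proof precisely because it is the immediate consequence of Lemma \ref{d(r)} that you spell out, with torsion-freeness of $B$ turning $d(r)\cdot b_{r-1}=0$ into $b_{r-1}=0$.
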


We say that an operation is {\it of the main type}, if $b_0\neq 0$.
The respective power series $\gamma_G$ will be also called {\it of the main type}.

2) $char(B)=p$:

Let $B^*$ be a theory, where $B^*(k)$ is a ring of characteristic $p$. We can obtain a new theory
$\op{Fr}(B)^*$ from $B^*$ by the change of coefficients: $B^*(k)\stackrel{Fr}{\row}B^*(k)$, where $Fr$ is the Frobenius homomorphism.
In particular, $F_{\op{Fr}(B)}=Fr(F_B)$. We have natural multiplicative operation:
$\wt{\op{Fr}}:\op{Fr}(B)^*\row B^*$ defined by: $\wt{\op{Fr}}(u\otimes b)=u^p\cdot b$.
The respective morphism of formal group laws will be: $(id,x^p)$.

Let $G:A^*\row B^*$ be a multiplicative operation, and
$(\ffi_G,\gamma_G):(A^*(k),F_A)\row (B^*(k),F_B)$ be the respective morphism of formal group laws, such that
$\gamma_G(x)=b_{r-1}x^r+\ldots$, and $b_{r-1}\neq 0$. Then it follows from
Lemma \ref{d(r)}, that $r=p^k$, for some $k\geq 0$.

\begin{lemma}
\label{gammaFr}
In the above situation, $\gamma_G(x)=\delta(x^{p^k})$, for some $\delta\in B^*(k)[[y]]$
with $\delta_0\neq 0$.
\end{lemma}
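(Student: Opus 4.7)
The plan is to differentiate the morphism-of-FGL identity $\ffi(F_A)(\gamma(x),\gamma(y)) = \gamma(F_B(x,y))$ and exploit the fact that $\gamma'(0)=b_0=0$ in characteristic $p$ to conclude $\gamma \in B[[x^p]]$. Then I would iterate this $k$ times, replacing $F_B$ by its successive Frobenius twists, to obtain $\gamma \in B[[x^{p^k}]]$.

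First I would differentiate the identity with respect to $y$ and set $y=0$, using $F_B(x,0)=x$ and $\gamma(0)=0$, to arrive at
$$
(F_A)_2(\gamma(x),0)\cdot\gamma'(0) \;=\; \gamma'(x)\cdot(F_B)_2(x,0),
$$
where $(F_B)_2(x,0)=1+\sum_i a^B_{i,1}x^i$ is a unit in $B[[x]]$. Since $r=p^k\geq p\geq 2$, the coefficient $b_0$ of $x$ in $\gamma$ vanishes, so $\gamma'(0)=0$; hence $\gamma'(x)=0$. In characteristic $p$ this means exactly that $\gamma\in B[[x^p]]$, so one may write $\gamma(x)=\gamma_1(x^p)$ for a unique $\gamma_1\in B[[y]]$ with $\gamma_1(y)=b_{p^k-1}y^{p^{k-1}}+\ldots$.

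Next, using that in characteristic $p$ one has
$$
F_B(x,y)^p \;=\; x^p+y^p+\sum_{i,j\geq 1}(a^B_{i,j})^p x^{ip}y^{jp} \;=\; H(x^p,y^p),
$$
where $H(u,v):=u+v+\sum(a^B_{i,j})^p u^i v^j$ is again a formal group law (the Frobenius twist of $F_B$), I would substitute into the original identity to deduce that $(\ffi,\gamma_1)$ is a morphism of FGLs $(A,F_A)\row (B,H)$. Since $\gamma_1$ starts in degree $p^{k-1}$, the same differentiation argument applies (the analogous factor $(H)_2(u,0)$ is still a unit and $\gamma_1'(0)=0$ whenever $k-1\geq 1$), giving $\gamma_1\in B[[y^p]]$ and so $\gamma\in B[[x^{p^2}]]$. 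Iterating this procedure a total of $k$ times produces $\delta\in B[[y]]$ with $\gamma(x)=\delta(x^{p^k})$ and $\delta(y)=b_{p^k-1}y+\ldots$, so $\delta_0=b_{r-1}\neq 0$.

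The only point requiring care is ensuring that each Frobenius-twisted law $H$, $H^{(2)}$, \dots is again a genuine formal group law on $B$ (with $(H^{(j)})_2(u,0)$ invertible) so that the differentiation trick keeps applying; this is a direct check from the explicit twist formula, so the induction goes through cleanly.
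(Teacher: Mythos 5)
Your argument is correct, but it takes a genuinely different route from the paper. The paper proves the lemma by a minimal-degree coefficient comparison: assuming $b_{s-1}$ is the first nonzero coefficient in a degree $s$ not divisible by $p^k$, the degree-$s$ homogeneous component of $\ffi(F_A)(\gamma(x),\gamma(y))=\gamma(F_B(x,y))$ collapses (all lower terms live in degrees divisible by $p^k$, and in characteristic $p$ the powers $F_B(x,y)^t$ with $p^k\mid t$ contribute only in degrees divisible by $p^k$) to $b_{s-1}(x^s+y^s)=b_{s-1}(x+y)^s$, whence $d(s)\cdot b_{s-1}=0$ and $b_{s-1}=0$ exactly as in Lemma \ref{d(r)}. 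You instead differentiate the morphism-of-FGL identity in $y$ at $y=0$, use that $(F_B)_2(x,0)$ is a unit and $\gamma'(0)=b_0=0$ to get $\gamma'=0$, hence $\gamma\in B[[x^p]]$ in characteristic $p$, and then iterate through the Frobenius twists $\op{Fr}^j(F_B)$. Both arguments are sound; the formal chain rule and evaluation at $y=0$ are legitimate over any ring, the invertibility of integers prime to $p$ in a characteristic-$p$ ring makes "$\gamma'=0\Rightarrow\gamma\in B[[x^p]]$" valid, and the injectivity of $B[[u,v]]\row B[[x,y]]$, $u\mapsto x^p$, $v\mapsto y^p$, justifies descending the identity to $(\ffi,\gamma_1):(A,F_A)\row(B,\op{Fr}(F_B))$. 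What your route buys is that it simultaneously produces the factorization of $(\ffi,\gamma)$ as $(A,F_A)\row(B,F_{\op{Fr}^k(B)})\xrightarrow{(id,x^{p^k})}(B,F_B)$, which the paper only extracts from the lemma afterwards; what the paper's route buys is that it is more elementary (a single graded component, no differentiation or twisted laws) and runs exactly parallel to the proof of Lemma \ref{d(r)}.
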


\begin{proof}
We need to show that the degrees of all non-zero terms of $\gamma_G$ are divisible by
$p^k$. From the contrary, let $b_{s-1}$ be the smallest non-zero coefficient with $p^k\nmid s$.
Then, looking at the degree $s$ component of the equality:
$\ffi_G(F_A)(\gamma_G(x),\gamma_G(y))=\gamma_G(F_B(x,y))$,
we get: $b_{s-1}x^s+b_{s-1}y^s=b_{s-1}(x+y)^s$, which implies that $d(s)\cdot b_{s-1}=0$.
Here either $d(s)=1$, or $s$ is a prime power. Since $p^k\nmid s>p^k$, this must be a power
of some other prime.
But since $B^*(k)$ has characteristic $p$, this implies that $b_{s-1}=0$, in any case.
\Qed
\end{proof}

Thus, any such morphism $(\ffi_G,\gamma_G)$ of formal group laws can be presented as the composition
$$
(A^*(k),F_A)\xrightarrow{(\ffi_G,\delta)}(B^*(k),F_{\op{Fr}^k(B)})\xrightarrow{(id,x^{p^k})}(B^*(k),F_B).
$$
Return now to the situation where $A^*$ is a theory of rational type. Then the morphism
$(\ffi_G,\delta)$ of formal group laws defines a multiplicative operation
$H:A^*\row\op{Fr}^k(B)^*$, and we get that $G=\wt{\op{Fr}}^k\circ H$,
where $H$ is an {\it operation of the main type}.

Combining Theorem \ref{neobrB0} with the above considerations, we get:

\begin{theorem}
\label{OBmult}
Let $B^*$ be any theory in the sense of Definition \ref{goct} with
$B^*(k)$ - an integral domain. Then:
\begin{itemize}
\item[$1)$ ] If $char(B^*(k))=0$, then the assignment $G\mapsto\gamma_G$ provides a
$1$-to-$1$ correspondence between multiplicative operations $G:\Omega^*\row B^*$ and
such $\gamma=b_0x+\ldots\in B^*(k)[[x]]$ that either $\gamma=0$, or $b_0\neq 0$ and
$F_B^{\gamma}\in B^*(k)[b_0^{-1}][[x,y]]$ has coefficients in $B^*(k)$.
\item[$2)$ ] If $char(B^*(k))=p$, then the assignment $G\mapsto (k,\gamma_H)$, where
$G=\wt{\op{Fr}}^k\circ H$, with $H$ of the main type, provides a $1$-to-$1$ correspondence between multiplicative operations $G:\Omega^*\row B^*$ and pairs $(k,\gamma)$, where
either $(k,\gamma)=(\infty,0)$, or $k\in\zz_{\geq 0}$, and $\gamma=b_0x+\ldots\in B^*(k)[[x]]$
has $b_0\neq 0$, and $(\op{Fr}^k(F_B))^{\gamma}\in B^*(k)[b_0^{-1}][[x,y]]$ has coefficients
in $B^*(k)$.
\end{itemize}
\end{theorem}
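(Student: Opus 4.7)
The plan is to combine Theorem \ref{multFGL}, which reduces the classification of multiplicative operations $\Omega^* \to B^*$ to that of morphisms of formal group laws $(\laz, F_\Omega) \to (B, F_B)$, with Theorem \ref{neobrB0}, which characterizes when such a morphism exists for a prescribed $\gamma$ (namely, when $F_B^\gamma$ has coefficients in $B$ rather than just in $B[b_0^{-1}]$). The two parts of the theorem then correspond to the two cases based on $\mathrm{char}(B)$, with the discussion surrounding Lemma \ref{d(r)} providing the dichotomy.

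For Part 1, suppose $\mathrm{char}(B) = 0$. Since $B$ is an integral domain, it is torsion-free, so by Corollary \ref{b0zero}, for any morphism of FGLs $(\ffi, \gamma):(\laz, F_\Omega) \to (B, F_B)$ we have either $\gamma = 0$ or $b_0 \neq 0$. In the non-degenerate case $b_0 \neq 0$, the hypothesis that $B$ is an integral domain ensures $b_0$ is not a zero-divisor, and Theorem \ref{neobrB0} asserts existence and uniqueness of the operation $G$ with $\gamma_G = \gamma$ precisely under the condition that $F_B^\gamma \in B[b_0^{-1}][[x,y]]$ actually lies in $B[[x,y]]$.

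For Part 2, assume $\mathrm{char}(B) = p$. If $\gamma_G \neq 0$, Lemma \ref{d(r)} forces the smallest power of $x$ appearing in $\gamma_G$ to be $x^{p^k}$ for some $k \geq 0$ (as $d(r) \neq 0$ in $B$ only when $r = p^k$), and Lemma \ref{gammaFr} then writes $\gamma_G(x) = \delta(x^{p^k})$ with $\delta$ having nonzero linear term. The key computational step is to verify that the FGL morphism identity $\ffi(F_\Omega)(\gamma_G(x), \gamma_G(y)) = \gamma_G(F_B(x,y))$ is equivalent, via the substitution $u = x^{p^k}$, $v = y^{p^k}$, to the identity $\ffi(F_\Omega)(\delta(u), \delta(v)) = \delta(\mathrm{Fr}^k(F_B)(u,v))$; this uses the crucial characteristic-$p$ identity $F_B(x,y)^{p^k} = \mathrm{Fr}^k(F_B)(x^{p^k}, y^{p^k})$, coming from the Frobenius behavior $(\sum a_{i,j} x^i y^j)^{p^k} = \sum a_{i,j}^{p^k} x^{i p^k} y^{j p^k}$ in characteristic $p$. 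This exhibits a bijection between FGL morphisms $(\ffi, \gamma_G):(\laz, F_\Omega) \to (B, F_B)$ and pairs $(k, (\ffi, \delta))$ where the latter is a morphism to $(B, \mathrm{Fr}^k(F_B))$ of main type. Applying Theorem \ref{neobrB0} now to the theory $\mathrm{Fr}^k(B)^*$ yields the stated conditions, and the factorization $G = \widetilde{\mathrm{Fr}}^k \circ H$ follows from the composition law for morphisms of FGLs recorded just before Theorem \ref{PSLM}.

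The main technical point will be the clean verification, for Part 2, that the morphism-of-FGL condition for $G$ really does descend to one for the would-be $H$ after we formally substitute $x \mapsto x^{1/p^k}$; this substitution is only defensible because both sides of the original identity actually lie in the subring $B[[x^{p^k}, y^{p^k}]] \subset B[[x,y]]$, which must be checked explicitly. Beyond that, everything is a matter of assembling the existing theorems, and the degenerate case $\gamma_G = 0$ (handled separately by a trivial matching) poses no real difficulty.
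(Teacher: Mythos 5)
Your proposal is correct and follows essentially the same route as the paper, whose proof of Theorem \ref{OBmult} is precisely the assembly of Theorem \ref{multFGL}, Theorem \ref{neobrB0}, Corollary \ref{b0zero}, and Lemmas \ref{d(r)} and \ref{gammaFr} together with the factorization of $(\ffi,\gamma)$ through $(\ffi,\delta)$ and $(id,x^{p^k})$. Your explicit verification that both sides of the FGL-morphism identity lie in $B[[x^{p^k},y^{p^k}]]$, via $F_B(x,y)^{p^k}=\op{Fr}^k(F_B)(x^{p^k},y^{p^k})$ in characteristic $p$, is exactly the point the paper leaves implicit in asserting that decomposition.
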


One can compose the morphisms of formal group laws. Moreover, if $(\ffi,\gamma)$ and $(\ffi,\beta)$ have common
homomorphism of coefficient rings, we can also "add" such morphisms (just as one can add
morphisms into an abelian group). Namely, we can set:
$(\ffi,\beta)+(\ffi,\gamma)=(\ffi,\delta)$, where
$\delta(x)=\ffi(F_A)(\beta(x),\gamma(x))$.

In particular, if $A^*$ is a theory of rational type, and there exists only one ring endomorphism
$\ffi:A^*(k)\row A^*(k)$, then the set of multiplicative operations $G:A^*\row A^*$
has a natural ring structure with multiplication = the composition, and addition as above.
This happens for Chow groups, and for $K_0$.
In the case of $\op{CH}^*/p$, we get:

\begin{theorem}
\label{CHmultRING}
The ring of multiplicative operations $\op{CH}^*/p\row\op{CH}^*/p$ is
$\zz/p[[\wt{\op{Fr}}]]$. In particular, the composition is commutative.
\end{theorem}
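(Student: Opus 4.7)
\smallskip
\noindent\emph{Proof proposal.} The plan is to reduce the computation to an algebraic classification of morphisms of the additive formal group law over $\ff_p$, and then to track how composition behaves.

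First, since $\op{CH}^*/p$ is a theory of rational type (as a specialization of $\Omega^*$), Theorem \ref{multFGL} identifies multiplicative operations $\op{CH}^*/p\row\op{CH}^*/p$ with morphisms of FGLs $(\ff_p,F_a)\row(\ff_p,F_a)$, where $F_a(x,y)=x+y$. The coefficient homomorphism $\ffi:\ff_p\row\ff_p$ is unique, so such a morphism is determined by a power series $\gamma\in\ff_p[[x]]$ with $\gamma(0)=0$ satisfying $\gamma(x+y)=\gamma(x)+\gamma(y)$.

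Next, I would classify the additive power series over $\ff_p$: they are exactly
\[
\gamma(x)=\sum_{k\geq 0}c_k\,x^{p^k},\qquad c_k\in\ff_p.
\]
This is standard and can be proved by comparing, for each $s$ with $p\nmid s$ or more generally with $s$ not a power of $p$, the coefficients of $x^iy^{s-i}$ in $\gamma(x+y)=\gamma(x)+\gamma(y)$, which forces the coefficient $c_{s-1}$ at $x^s$ to satisfy $d(s)\cdot c_{s-1}=0$ (cf.\ Lemma \ref{d(r)}); in characteristic $p$ this kills all coefficients except those at $p$-power degrees. Under the bijection of Theorem \ref{multFGL}, the monomial $\gamma(x)=x^{p^k}$ corresponds to $\wt{\op{Fr}}^{\,k}$ (by the composition formula $(\ffi_{H\circ G},\gamma_{H\circ G})=(\ffi_H\circ\ffi_G,\,\ffi_H(\gamma_G)(\gamma_H(x)))$ applied iteratively to $\wt{\op{Fr}}$, whose FGL morphism is $(id,x^p)$).

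Now I would check the ring operations. Addition of multiplicative operations corresponds, for the additive FGL, to pointwise addition of the $\gamma$'s: $\delta(x)=F_a(\beta(x),\gamma(x))=\beta(x)+\gamma(x)$. For multiplication (composition), using the formula above with $\ffi_H=\ffi_G=id$, if $\gamma_H(x)=\sum_j b_j x^{p^j}$ and $\gamma_G(x)=\sum_k c_k x^{p^k}$, then
\[
\gamma_{H\circ G}(x)=\gamma_G(\gamma_H(x))=\sum_{k}c_k\Bigl(\sum_j b_jx^{p^j}\Bigr)^{\!p^k}
=\sum_{j,k}c_k b_j^{p^k}\,x^{p^{j+k}}=\sum_{j,k}c_k b_j\,x^{p^{j+k}},
\]
where I used the Frobenius identity $(a+b)^{p^k}=a^{p^k}+b^{p^k}$ in characteristic $p$ and the crucial fact that $b_j^{p^k}=b_j$ for $b_j\in\ff_p$. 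Thus under the bijection $\gamma\leftrightarrow\sum c_k\wt{\op{Fr}}^{\,k}$, composition becomes the usual (commutative) multiplication of formal power series in the variable $\wt{\op{Fr}}$.

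Putting this together, the ring of multiplicative operations is isomorphic to $\ff_p[[\wt{\op{Fr}}]]=\zz/p[[\wt{\op{Fr}}]]$, and is in particular commutative. The only subtle step is the identification of the composition with formal power series multiplication; the key input making this work (and the reason commutativity holds) is the triviality of Frobenius on $\ff_p$, i.e.\ $b_j^{p^k}=b_j$. Otherwise the argument is a direct translation through Theorem \ref{multFGL}.
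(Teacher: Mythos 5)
Your proof is correct and follows essentially the same route as the paper: both reduce via Theorem \ref{multFGL} to additive power series $\gamma(x)=\sum_k c_k x^{p^k}$ over $\ff_p$, identify $\wt{\op{Fr}}$ with $x^p$, and observe that addition and composition of such $\gamma$'s match the ring structure of $\zz/p[[\wt{\op{Fr}}]]$. Your explicit computation $\gamma_G(\gamma_H(x))=\sum_{j,k}c_kb_j^{p^k}x^{p^{j+k}}=\sum_{j,k}c_kb_jx^{p^{j+k}}$ (using $b_j^{p^k}=b_j$ in $\ff_p$) makes precise the step that the paper states only tersely, namely why composition of $\gamma$'s corresponds to (commutative) power-series multiplication in $\wt{\op{Fr}}$.
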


\begin{proof}
Since there is only one ring homomorphism $\zz/p\row\zz/p$,
the multiplicative operations $\op{CH}^*/p\row\op{CH}^*/p$ are in $1$-to-$1$
correspondence with the additive power series $\gamma(x)=\sum_{r}b_{p^r-1}x^{p^r}$.
Moreover, $\op{Fr}(\op{CH}/p)^*=\op{CH}^*/p$, and $\wt{\op{Fr}}:\op{CH}^*/p\row\op{CH}^*/p$
is given by the power series $x^p$. The composition of operations corresponds to the composition
of $\gamma$'s, and addition is the usual addition of $\gamma$'s. Thus, our ring can be
naturally identified with $\zz/p[[\wt{\op{Fr}}]]$.
\Qed
\end{proof}

Under the identification above, the total Steenrod operation $\stT=id+S^1+S^2+\ldots$
corresponds to $1+\wt{\op{Fr}}$, and the Integral Adams Operation $\Psi_k$ (see below)
corresponds to $k$. In particular, $\Psi_0$ which is the identity on $\op{CH}^0$ and zero
on $\op{CH}^i,\,i>0$ corresponds to $0$.

\subsection{Integral Adams Operations}

Adams operations $\Psi_k$ provide an important tool in studying $K$-groups.
In topology, analogous operations were constructed by S.P.Novikov for complex-oriented
cobordisms $MU$ in \cite{N}. This construction required inverting $k$, since $\Psi_k$
were basically expressed in terms of Landweber-Novikov operations, and the respective
formulas do have $k$-denominators. Only much later it was shown by W.S.Wilson that
these operations can be defined integrally and are naturally multiplicative unstable
operations - see \cite[Theorem 11.53]{Wi}.
Using our main results we can construct similar operations in Algebraic Cobordism
and all other theories of rational type
(it is worth noting, that although we produce a similar object, our methods are completely different as we are working with the theories themselves, not with spectra).

\begin{theorem}
\label{Adams}
For any free theory $A^*$, there are multiplicative $A^*(k)$-linear operations
$\Psi_k:A^*\row A^*$, $k\in\zz$, such that $\gamma_{{\Psi_k}}=[k]\cdot_{A}x$.
These operations do not depend on the choice of orientation of $A^*$.
In the case of $K_0$ these are the usual Adams operations.
\end{theorem}

\begin{proof}
Consider $\gamma_k=[k]\cdot_{A}x$. Since $(id,\gamma_k)$,
is an endomorphism of the formal group law $(A^*(k),F_A)$, by Theorem \ref{multFGL},
we get a unique multiplicative operation $\Psi_k:A^*\row A^*$ with such $\gamma$.
Since $\ffi_{\Psi_k}=id$, this operation is $A^*(k)$-linear.

Finally, by \cite{PS}, the reorientation (change of push-forward structure) of the theory $A^*$ corresponds to
the choice of a generator $\beta(x)$ of the power series ring $A^*[[x]]$, so that for the new twisted theory $\wt{A}^*$
one has $c^{\wt{A}}_1(M)=\beta(c^A_1(M))$, for any line bundle $M$. And our operation $\Psi_k$ is characterized by
the property that $\Psi_k(c^A_1(L))=c^A_1(L^{\otimes k})$ which is obviously stable under reorientation, since $\Psi_k(\beta)=\beta$.

The fact that for $K_0$ these are the classical Adams operations follows from the definition of the latter.
\phantom{a}\hspace{5mm}
\Qed
\end{proof}

As the above operations are $A^*(k)$-linear they can be obtained from the ones in
Algebraic Cobordism by change of coefficients.

All Adams operations define the same endomorphism of the coefficient ring equal to the
identity, and so form a ring $R_{\Psi,A}$. Clearly, $\Psi_k$ is just the image of $k$
under the canonical surjective ring homomorphism $\zz\twoheadrightarrow R_{\Psi,A}$.
The operation $\Psi_0$ can be described as follows: it acts as $id$ on constant elements,
and as zero on $\ov{A}^*$. Thus, it is responsible for the decomposition which we used throughout
the paper.

Adams operations can be used in the study of the graded Algebraic Cobordism
(see \cite[Subsection 4.5.2]{LM} for the definition of the latter).
Being operations, they respect the codimension of support of an element:
$\Psi_k(F^{(n)}\Omega^*(X))\subset F^{(n)}\Omega^*(X)$, and so act on the
graded ring $Gr^*\Omega^*(X)$.
We have the natural surjection:
$$
\op{CH}^*\otimes_{\zz}\laz^*\twoheadrightarrow Gr^*\Omega^*,
$$
which commutes with the action of $\Psi_k$ (recall, that these operations
are $\laz$-linear). Thus, $\Psi_k|_{Gr^n\Omega^*}$ is the multiplication by $k^n$.
Suppose now, $X\stackrel{f}{\row}Y$ is a morphism of smooth varieties.
Then we get the morphism of the respective filtrations:
$f^*:F^{(n)}\Omega^*(Y)\row F^{(n)}\Omega^*(X)$.
This provides the spectral sequence computing $\kker$ and $\coker$ of $f^*$:
$$
E^{p,q,n}_r\Rightarrow \op{H}^p(f^*:\Omega^{q}(Y)\row\Omega^{q}(X)),
$$
where $E^{p,q,n}_2=\op{H}^p(Gr(f)^*:Gr^n\Omega^q(Y)\row Gr^n\Omega^q(X))$,
$p=0,1$, and $d_r:E^{0,q,n}_r\row E^{1,q,n+r-1}_r$.
Adams operations permit to estimate the exponent of $d_r$.
Denote: $e(n,r)=\op{G.C.D.}(k^n(k^{r-1}-1),\,k\in\zz)$.

\begin{proposition}
\label{AdamsDr}
$e(n,r)\cdot d_r|_{E^{0,q,n}_r}=0$.
\end{proposition}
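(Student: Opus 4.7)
The approach I would take is the standard ``Adams operations bound differentials'' argument: exploit that the operations $\Psi_k$ preserve the codimension-of-support filtration $F^{(\bullet)}\Omega^*$, commute with $f^*$, and act on $Gr^n\Omega^*$ as multiplication by $k^n$. The first property is automatic for any operation combined with $(EXCI)$: if $\alpha\in F^{(n)}\Omega^q(Y)$ is supported on a closed $Z\subset Y$ of codimension $\geq n$, then $\alpha|_{Y\setminus Z}=0$, hence $\Psi_k(\alpha)|_{Y\setminus Z}=\Psi_k(\alpha|_{Y\setminus Z})=0$, so $\Psi_k(\alpha)\in F^{(n)}\Omega^q(Y)$. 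The second property is automatic since $\Psi_k$ is an operation. The third property is the observation made just before the proposition, using the surjection $\op{CH}^*\otimes_\zz\laz^*\twoheadrightarrow Gr^*\Omega^*$, the $\laz$-linearity of $\Psi_k$, and the fact that $\gamma_{\Psi_k}=[k]\cdot x$ specializes to $k\cdot x$ under the additive formal group law of Chow groups, so $\Psi_k|_{\op{CH}^n}=k^n\cdot\op{id}$.

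Now I would pick a representative $\alpha\in F^{(n)}\Omega^q(Y)$ of a class $[\alpha]\in E^{0,q,n}_r$. By definition this means $f^*(\alpha)\in F^{(n+r-1)}\Omega^q(X)$, and $d_r[\alpha]$ is the class of $f^*(\alpha)$ in the appropriate subquotient of $Gr^{n+r-1}\Omega^q(X)$ computing $E^{1,q,n+r-1}_r$. Applying $\Psi_k$: since the operation preserves $F^{(\bullet)}$ and commutes with $f^*$, the element $\Psi_k(\alpha)$ is also a representative of a class in $E^{0,q,n}_r$. Because $\Psi_k(\alpha)-k^n\alpha\in F^{(n+1)}\Omega^q(Y)$, we have $[\Psi_k(\alpha)]=k^n[\alpha]$ in $E^{0,q,n}_r$. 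Computing $d_r$ in two ways: on one hand $d_r(k^n[\alpha])=k^n\cdot d_r[\alpha]$; on the other hand $d_r[\Psi_k(\alpha)]=[f^*\Psi_k(\alpha)]=[\Psi_k f^*(\alpha)]$, and since $f^*(\alpha)\in F^{(n+r-1)}$ and $\Psi_k$ acts as $k^{n+r-1}$ on $Gr^{n+r-1}$, this equals $k^{n+r-1}\cdot d_r[\alpha]$. Equating the two expressions yields $k^n(k^{r-1}-1)\cdot d_r[\alpha]=0$, which holds for every $k\in\zz$, and taking the greatest common divisor over $k$ gives $e(n,r)\cdot d_r[\alpha]=0$.

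The step I expect to be most delicate is not the computation itself but the bookkeeping on the $E_r$-page rather than on $E_2$. One must verify that the discrepancy $\Psi_k(\alpha)-k^n\alpha\in F^{(n+1)}$ really contributes trivially to $d_r[\alpha]$ in the $E^{1,q,n+r-1}_r$ term, i.e. that $f^*$ of an element of $F^{(n+1)}\Omega^q(Y)$ lies in the indeterminacy we quotient out by when forming $E^{1,q,n+r-1}_r$. In other words, the identification $[\Psi_k(\alpha)]=k^n[\alpha]$ must be valid not just at $E_2$ but at $E_r$, which amounts to checking that $\Psi_k$ induces a well-defined endomorphism of the spectral sequence compatible with the one it induces on $Gr$, a routine but technical exercise in filtered-complex spectral sequences.
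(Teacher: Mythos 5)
Your argument is correct and is essentially the paper's proof: the paper likewise observes that $\Psi_k$ respects the filtration, hence acts on the spectral sequence as multiplication by $k^n$ on $E^{p,q,n}_r$, and compatibility with $d_r:E^{0,q,n}_r\row E^{1,q,n+r-1}_r$ forces $k^n(k^{r-1}-1)\cdot d_r=0$ for all $k$, whence the $e(n,r)$ bound. Your more careful bookkeeping (the $(EXCI)$ argument for filtration-preservation and the check that the identification $[\Psi_k(\alpha)]=k^n[\alpha]$ persists on the $E_r$-page) simply fills in details the paper leaves implicit.
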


\begin{proof}
Since Adams operations respect the filtration, they act on the spectral sequence.
Then $\Psi_k$ must act as multiplication by $k^n$ on $E^{p,q,n}_r$. Since
$d_r: E^{0,q,n}\row E^{1,q,n+r-1}$, we get that, for any $k$,
$k^n(k^{r-1}-1)$ multiplied by such $d_r$ is zero.
\Qed
\end{proof}

It is easy to see that $e(0,r)=1$, and $e(n,2s)=2$, for all $n,s\geq 1$.
And prime factors of $e(n,r)$ are exactly those $p$ for which $(p-1)|(r-1)$.
In particular, these do not depend on $n$. But the powers of these primes do.
Thus, the "unstable information" is concentrated in these powers.

In particular, the above considerations apply to the extension of fields morphism.

\subsection{Symmetric Operations for all primes, and T.tom Dieck - style Steenrod operations}
\label{SymSt}
These topics represent the main content of the paper \cite{SPso}.
Here we just present briefly the main results and ideas.
The construction of Symmetric Operations for all primes was the main motivation behind
the current paper. For about 5 years the author tried to construct them, until he realized
that it is about as simple as constructing all unstable operations in
Algebraic Cobordism.
But let me start with the Steenrod operations.

Steenrod operations provide an important structure on $\op{CH}^*/p$ which permits to
do more elaborate tricks with algebraic cycles than the usual addition and multiplication.
Individual Steenrod operations can be organized into "larger" multiplicative operations.
One of the possible approaches is to consider the multiplicative operation:
$St:\op{CH}^*/p\row\op{CH}^*/p[[t]]$ given by the morphism of the respective formal group laws
(see Theorem \ref{multFGL}):
$(\ffi,\gamma)$, where $\ffi:\zz/p\row\zz/p[[t]]$ is the natural embedding
(the unique morphism of rings), and $\gamma=-t^{p-1}x+x^p$ (notice, that our $\gamma$
is additive in $x$).
Then the individual Steenrod operation $S^r|_{\op{CH}^m/p}$
will be the coefficient of $S|_{\op{CH}^m/p}$ at $t^{(m-r)(p-1)}$.
At the first glance it looks like we complicate things by making our operation
unstable (the coefficient at $x$ is not $1$), but it appears to be convenient
in various respects.

The original approach to Steenrod operations in Chow groups due to P.Brosnan
(see \cite{Br})
is through $\zz/p$-equivariant Chow groups. In this construction, one
produces the multiplicative operation  $Sq:\op{CH}^*(X)/p\row\op{CH}^*(X)/p\otimes_{\zz/p}\op{CH}^*(\op{B}\zz/p)/p$.
We have $\op{CH}^*(\op{B}\zz/p)/p=\zz/p[[t]]$, and one can show (see \cite{Br})
that the only non-trivial coefficients of $Sq$ will be at $t^{r(p-1)}, \, r\geq 0$.
The fact that the two constructions agree follows from
Theorem \ref{multFGL} (the morphism of formal group laws for $Sq$ is easy to compute).

All of the above was known in topology for quite a while. And both mentioned constructions
were extended to complex-oriented cobordism $MU$.
The equivariant version is due to T. tom Dieck (\cite{tD}), and it goes completely parallel
to the $H^*/p$ (and $\op{CH}^*/p$) case. Here
$MU^*(X\times\op{B}\zz/p)=MU^*(X)[[t]]/([p]\cdot_{MU}(t))$,
and one gets a multiplicative operation
$$
Sq:MU^*(X)\row MU^*(X\times\op{B}\zz/p)\row
MU^*(X)[[t]]/(\textstyle\frac{[p]\cdot_{MU}(t)}{t}).
$$

The other version is due to D.Quillen
(\cite{Qu71}). One observes that \\
$-t^{p-1}x+x^p\equiv\prod_{i=0}^{p-1}(x+it)\hspace{3mm}(\,mod\,p).$
Now we can produce an $MU$-analogue of this power series:
$\gamma=\prod_{i=0}^{p-1}(x+_{MU}[i]\cdot_{MU}t)\in\laz[[t]][[x]]$,
which by universality of $MU^*$ defines the multiplicative operation:
$$
St: MU^*\row MU^*[(p-1)!^{-1}][[t]][t^{-1}].
$$
Notice, that this time, we have to invert $t$
and $(p-1)!$,
since the shifted formal group law  $F_{MU[[t]]}^{\gamma}$ has denominators.
Also, $St$ has non-trivial coefficients at $t^j$, for $j$ not divisible
by $(p-1)$. It was shown by D.Quillen that his approach agrees with the one of T. Tom Dieck.
More precisely, one has the following commutative diagram:
$$
\xymatrix @-0.2pc{
MU^* \ar @{->}[r]^(0.3){St} \ar @{->}[d]_(0.5){Sq}&
MU^*[(p-1)!^{-1}][[t]][t^{-1}] \ar @{->}[d]^(0.5){}\\
MU^*[[t]]/(\frac{[p]\cdot_{MU}(t)}{t}) \ar @{->}[r]_(0.5){} &
MU^*[[t]][t^{-1}]/([p]\cdot_{MU}(t)).
}
$$

Let us try to extend these constructions to the case of Algebraic Cobordism $\Omega^*$.
The Quillen's version is completely straightforward. Here one needs only the universality
of $\Omega^*$ supplied by M.Levine-F.Morel (\cite[Theorem 1.2.6]{LM})
and the change of orientation of I.Panin-A.Smirnov (\cite{PS}).
Let us do a more general case
(suggested by D.Quillen). Namely, chose representatives $\{i_j,\,0<j<p\}$ of
all non-zero cosets modulo $p$, and denote $\iis:=\prod_{j=1}^{p-1}i_j$.
Then we can consider the power series
$\gamma=\prod_{j=0}^{p-1}(x+_{\Omega}[i_j]\cdot_{\Omega}t)\in\laz[[t]][[x]]$,
which, by Theorem \ref{PSLM}, defines the multiplicative operation
$$
St(\ov{i}):\Omega^*\row\Omega^*[\iis^{-1}][[t]][t^{-1}].
$$

The situation with the version of T.tom Dieck is rather different. Although one can easily
define the $\zz/p$-equivariant Algebraic Cobordism $\Omega_{\zz/p}^*(X)$, one encounters
problems trying to prove that the natural map $\Omega^n(X)\row\Omega^{np}_{\zz/p}(X^{\times p})$
is well-defined. It is easy to show that the standard cobordism relations are respected,
but the author was unable to handle the double-points relations.
The only case where the author succeeded was $p=2$, where he had to employ the Symmetric
Operations (modulo $2$) constructed in \cite{so1}, \cite{so2}. These operations, which are more
subtle than the Steenrod ones, until now were unavailable for $p>2$.

Fortunately, our Theorem \ref{multFGL} permits to construct what we need.

\begin{theorem} {\rm (\cite[Theorem 6.4]{SPso})}
\label{TTD}
There is the multiplicative operation $Sq$ which fits into the commutative diagram:
$$
\xymatrix @-0.2pc{
\Omega^* \ar @{->}[r]^(0.4){St(\ov{i})} \ar @{->}[d]_(0.5){Sq}&
\Omega^*[\iis^{-1}][[t]][t^{-1}] \ar @{->}[d]^(0.5){}\\
\Omega^*[[t]]/(\frac{[p]\cdot_{\Omega}(t)}{t}) \ar @{->}[r]_(0.5){} & \Omega^*[[t]][t^{-1}]/([p]\cdot_{\Omega}(t)).
}
$$
\end{theorem}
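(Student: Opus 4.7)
The plan is to invoke Theorem \ref{multFGL} to reduce the construction of $Sq$ to that of a morphism of formal group laws. Set $B:=\laz[[t]]/([p]_\Omega(t)/t)$ and $B^*(X):=\Omega^*(X)\otimes_\laz B$; by Proposition \ref{KlassRT} this is a theory of rational type, in particular a theory in the sense of Definition \ref{goct}, whose formal group law is the image of $F_\Omega$ in $B$. Constructing $Sq$ is then equivalent to exhibiting a morphism $(\ffi_{Sq},\gamma_{Sq}):(\laz,F_\Omega)\row (B,F_\Omega)$. The natural candidate is $\ffi_{Sq}=$ the canonical inclusion $\laz\hookrightarrow B$, together with
$$
\gamma_{Sq}(x):=\prod_{j=0}^{p-1}(x+_\Omega[i_j]\cdot_\Omega t)\in B[[x]],
$$
where $i_0:=0$ so that the $j=0$ factor is just $x$. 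This is the very same power series that defines $St(\ov{i})$; only the target ring has changed.

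The technical heart of the proof is to verify the formal group law identity
$$
F_\Omega(\gamma_{Sq}(u),\gamma_{Sq}(v))=\gamma_{Sq}(F_\Omega(u,v))\quad\text{in }B[[u,v]].
$$
Conceptually this is transparent: in $B$ one has $[p]_\Omega(t)=t\cdot([p]_\Omega(t)/t)=0$, so each $[i_j]\cdot_\Omega t$ is a $p$-torsion point of the formal group $(F_\Omega,B)$, the set $\{[i_j]\cdot_\Omega t\}_{j=0}^{p-1}$ is a cyclic order-$p$ subgroup, and $\gamma_{Sq}$ is the associated quotient isogeny, which is automatically a group homomorphism. Converting this conceptual argument into a rigorous power-series identity over the non-standard ring $B$ is the main obstacle. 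One viable strategy is to first establish the identity in $\Omega^*[\iis^{-1}][[t]][t^{-1}]$, where Theorem \ref{PSLM} already produces $St(\ov{i})$ from precisely this $\gamma$ and hence furnishes the identity for free, and then descend to $B$ by observing that the composite $B\row \Omega^*[[t]][t^{-1}]/([p]_\Omega(t))$ through which we want to compare is injective on the subring of $B[[u,v]]$ where the relevant coefficients live.

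With the FGL morphism in hand, Theorem \ref{multFGL} delivers the required multiplicative operation $Sq:\Omega^*\row B^*$. Commutativity of the diagram is then a uniqueness check: both compositions $\Omega^*\row\Omega^*[[t]][t^{-1}]/([p]_\Omega(t))$ are multiplicative operations whose associated morphisms of FGLs share the same $\ffi$ (the canonical inclusion) and the same $\gamma_{Sq}$. The right vertical arrow is well-defined because $\iis$ becomes a unit in the bottom-right corner: the Bezout relation $\iis k+p\ell=1$ together with $p=-a_1 t-a_2 t^2-\ldots$ (which holds after inverting $t$ and killing $[p]_\Omega(t)$) shows that $\iis k$ has constant term $1$ and is therefore a unit in $\Omega^*[[t]][t^{-1}]/([p]_\Omega(t))$. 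The $1$-to-$1$ correspondence of Theorem \ref{multFGL} applied to this target then forces the two compositions to coincide, completing the proof.
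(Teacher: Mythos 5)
Your overall skeleton (reduce to a morphism of formal group laws and invoke Theorem \ref{multFGL}, then get the commutativity of the square from uniqueness) is indeed the intended route — the present paper defers the actual proof to \cite[Theorem 6.4]{SPso} — but the central step of your argument is false as stated. You take $\ffi_{Sq}$ to be the canonical inclusion $\laz\hookrightarrow B$, $B=\laz[[t]]/([p]\cdot_\Omega(t)/t)$, and assert the identity $F_\Omega(\gamma_{Sq}(u),\gamma_{Sq}(v))=\gamma_{Sq}(F_\Omega(u,v))$ in $B[[u,v]]$. The series $\gamma_{Sq}(x)=\prod_j(x+_\Omega[i_j]\cdot_\Omega t)$ is an isogeny from $F_\Omega$ onto the \emph{quotient} (shifted) formal group law $F_\Omega^{\gamma_{Sq}}$, not onto $F_\Omega$ itself; the pair $(\ffi,\gamma_{Sq})$ is a morphism of FGLs only when $\ffi$ classifies that quotient law, and this $\ffi$ is not the inclusion. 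Concretely, reduce modulo $t$: since $[p]\cdot_\Omega(t)/t\equiv p \pmod t$, one has $B/(t)\cong\laz/p$ and $\gamma_{Sq}(x)\equiv x^p$, so your identity would force $F_\Omega(u^p,v^p)\equiv F_\Omega(u,v)^p\pmod p$, i.e. $a^\Omega_{i,j}=(a^\Omega_{i,j})^p$ in $\laz/p$ — false, as $\laz/p$ is a polynomial $\ff_p$-algebra. The same computation (the coefficient of $uv$ in $t$-degree $0$ is $p$ on one side and $0$ on the other) shows the identity already fails in $\Omega^*[\iis^{-1}][[t]][t^{-1}]$, so the proposed shortcut "Theorem \ref{PSLM} furnishes the identity for free upstairs, then descend" collapses: Theorem \ref{PSLM} produces $St(\ov{i})$ with the prescribed $\gamma$, but its coefficient map $\ffi_{St}$ is the twisted homomorphism reading off the coefficients of $F^{\gamma}$, not the inclusion. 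For the same reason your final uniqueness argument, which compares the two composites on the grounds that both have "the same $\ffi$ (the canonical inclusion)", does not apply.

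What the actual proof must supply — and what you have skipped — is precisely the hard point: with $\gamma_{Sq}$ as above, one must show that the shifted law $F_\Omega^{\gamma_{Sq}}$, a priori defined only after inverting the leading coefficient $b_0=\prod_{j=1}^{p-1}[i_j]\cdot_\Omega t$ (which is \emph{not} invertible in $B$, at best a non-zero-divisor), in fact has all its coefficients in $B$; this is an integrality statement in the spirit of Theorem \ref{neobrB0}, not of Theorem \ref{PSLM}, and it is where the analogue of tom Dieck's construction lives. Granting it, universality of $\laz$ gives $\ffi_{Sq}$ with $\ffi_{Sq}(F_\Omega)=F_\Omega^{\gamma_{Sq}}$, Theorem \ref{multFGL} gives $Sq$, and the square commutes because after passing to $\Omega^*[[t]][t^{-1}]/([p]\cdot_\Omega(t))$ the two composites are multiplicative operations with the same pair $(\ffi,\gamma)$ (modulo $[p]\cdot_\Omega(t)$ the classes $[i_j]\cdot_\Omega t$ depend only on $i_j\bmod p$), so they agree by the uniqueness part of Theorem \ref{multFGL} (or Proposition \ref{vazhnoe}). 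A secondary caution: identifying the target $\Omega^*(X)[[t]]/([p]\cdot_\Omega(t)/t)$ with the free theory $\Omega^*(X)\otimes_\laz B$ is not automatic and should be addressed if you set things up via Proposition \ref{KlassRT}.
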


Notice, that $Sq$ is a bit more "canonical" than $St$ - it does not depend on $\ov{i}$.

Now, since the target of $Sq$ has no negative powers of $t$,
the commutativity of the above diagram shows that the {\it negative part}
of $St(\ov{i})$ is divisible by $\frac{[p]\cdot_{\Omega}t}{t}$.
I should point out that this fact itself does not require the above Theorem,
or the methods of the current paper. But what is much deeper,
it appears that one can divide "canonically", and the quotient is what we call
{\it Symmetric operation}.

\begin{theorem} {\rm (\cite[Theorem 7.1]{SPso})}
\label{SOp}
There is a unique additive operation $\Phi(\ov{i}):\Omega^*\row\Omega^*[\iis^{-1}][t^{-1}]t^{-1}$ such that
$$
(St(\ov{i})+\textstyle\frac{[p]\cdot_{\Omega}t}{t}\cdot\Phi(\ov{i})):
\Omega^*\row\Omega^*[\iis^{-1}][[t]].
$$
\end{theorem}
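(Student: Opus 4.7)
The plan is to deduce Theorem \ref{SOp} from the commutative square of Theorem \ref{TTD} together with an algebraic ``long-division'' argument inside $\Omega^*(X)[\iis^{-1}][[t]][t^{-1}]$. That square encodes precisely the fact that the negative part of $St(\ov{i})(\alpha)$ is divisible by $\frac{[p]\cdot_\Omega t}{t}$; the task is then to extract a canonical additive operation $\Phi(\ov{i})$ from this pointwise divisibility.

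Concretely, for $\alpha\in\Omega^*(X)$ choose any lift $\widetilde{Sq}(\alpha)\in\Omega^*[[t]]$ of $Sq(\alpha)\in\Omega^*[[t]]/(\frac{[p]\cdot_\Omega t}{t})$. Since $([p]\cdot_\Omega t)=(\frac{[p]\cdot_\Omega t}{t})$ in $\Omega^*[[t]][t^{-1}]$ (as $t$ is invertible), commutativity of the square gives a relation
\[
St(\ov{i})(\alpha)=\widetilde{Sq}(\alpha)+\frac{[p]\cdot_\Omega t}{t}\cdot H(\alpha),
\]
with $H(\alpha)\in\Omega^*[\iis^{-1}][[t]][t^{-1}]$ uniquely determined, as $\frac{[p]\cdot_\Omega t}{t}$ is a nonzerodivisor. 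Split $H(\alpha)=H_+(\alpha)+H_-(\alpha)$ into non-negative and strictly-negative powers of $t$ (this makes sense since $H(\alpha)$ has only finitely many negative-power terms) and define $\Phi(\ov{i})(\alpha):=H_-(\alpha)\in t^{-1}\Omega^*[\iis^{-1}][t^{-1}]$. Then
\[
St(\ov{i})(\alpha)-\frac{[p]\cdot_\Omega t}{t}\Phi(\ov{i})(\alpha)=\widetilde{Sq}(\alpha)+\frac{[p]\cdot_\Omega t}{t}H_+(\alpha)\in\Omega^*[\iis^{-1}][[t]],
\]
as required. Both independence of $\Phi(\ov{i})(\alpha)$ from the choice of lift and uniqueness of $\Phi(\ov{i})$ reduce to the following algebraic lemma: if $\Psi=\sum_{k=1}^M\psi_kt^{-k}$ with $\psi_k\in\Omega^*(X)[\iis^{-1}]$ satisfies $\frac{[p]\cdot_\Omega t}{t}\cdot\Psi\in\Omega^*[\iis^{-1}][[t]]$, then $\Psi=0$. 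Indeed, the coefficient of $t^{-M}$ on the left equals $p\psi_M$, which must vanish; since $\iis$ is coprime to $p$, $p$-torsion-freeness of $\Omega^*(X)[\iis^{-1}]$ then gives $\psi_M=0$, and descending induction on $M$ finishes.

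With canonical, unique $\Phi(\ov{i})(\alpha)$ in hand, the operation-theoretic properties follow immediately. Additivity: $\Phi(\ov{i})(\alpha)+\Phi(\ov{i})(\beta)$ is a valid correction for $St(\ov{i})(\alpha+\beta)=St(\ov{i})(\alpha)+St(\ov{i})(\beta)$, hence equals $\Phi(\ov{i})(\alpha+\beta)$ by uniqueness. Naturality in $X$: for $f\colon X\to Y$, $f^*$ commutes with $St(\ov{i})$ and preserves both $\Omega^*[\iis^{-1}][[t]]$ and the positive/negative decomposition in $t$, so $f^*\Phi(\ov{i})(\alpha)$ is a valid correction for $St(\ov{i})(f^*\alpha)$ and hence equals $\Phi(\ov{i})(f^*\alpha)$. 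The main obstacle is verifying the required $p$-torsion-freeness together with the compatibility of the decomposition as $X$ varies; a cleaner alternative route, which sidesteps both issues, is to construct $\Phi(\ov{i})$ first on $(\pp^\infty)^{\times l}$, where $St(\ov{i})$ is fully explicit from the formal group law of $\Omega^*$, and then invoke Theorem \ref{MAIN} to extend to $\smk$ -- the required compatibilities $(i)$--$(v)$ for $\Phi(\ov{i})$ descend from those enjoyed by $St(\ov{i})$, since the canonical division preserves all of them.
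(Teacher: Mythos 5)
Note first that the paper does not actually prove Theorem \ref{SOp} in this text: it is quoted from \cite[Theorem 7.1]{SPso}, so there is no in-paper proof to compare with; but the paragraph preceding the theorem already flags exactly the point where your argument breaks. Your primary route rests on the claim that the pointwise division is canonical because $\frac{[p]\cdot_{\Omega}t}{t}$ is a non-zero-divisor, i.e.\ on your ``key lemma'' that $\frac{[p]\cdot_{\Omega}t}{t}\cdot\Psi\in\Omega^*(X)[\iis^{-1}][[t]]$ forces $\Psi=0$. That lemma needs $\Omega^*(X)[\iis^{-1}]$ to be $p$-torsion-free, which you assume but which fails in general: inverting $\iis$ only inverts integers prime to $p$, and $\Omega^*(X)$ does have $p$-torsion for many $X$ --- indeed the situations with torsion are precisely the ones these operations are built to handle (cf.\ the applications to $2$-torsion in \cite{so2}, \cite{GPQCG}, \cite{RIC}). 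So the induction ``coefficient of $t^{-M}$ is $p\psi_M=0$, hence $\psi_M=0$'' is unjustified, the element $H(\alpha)$ and its negative part are not canonically determined, and the additivity and naturality arguments, which both invoke that uniqueness, collapse with it. This is exactly what the paper stresses right before the statement: the divisibility of the negative part of $St(\ov{i})$ by $\frac{[p]\cdot_{\Omega}t}{t}$ is the easy step and ``does not require \ldots the methods of the current paper'', while the content of the theorem is that one can divide \emph{canonically}, which is ``much deeper''.

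Your closing sentence does name the viable strategy, and it is essentially the one used in \cite{SPso} (as the Introduction says, the construction uses Theorem \ref{MAIN} itself): perform the division on $(\pp^{\infty})^{\times l}$, where $\Omega^*((\pp^{\infty})^{\times l})[\iis^{-1}]=\laz[\iis^{-1}][[z_1,\ldots,z_l]]$ is torsion-free so the division \emph{is} unique there, and then extend by Theorem \ref{MAIN}. But as written this is a one-sentence gesture rather than a proof: you would still have to check additivity and the compatibilities $(i)$--$(v)$ for the negative-part transformation on products of projective spaces (uniqueness of the division there does yield them, but this must be argued), apply Theorem \ref{MAIN} to each coefficient of $t^{-k}$ separately (it is stated for operations $A^n\row B^m$ with fixed degrees), and --- a step you omit entirely --- verify that the operation so extended satisfies the defining divisibility property, and is the unique such operation, on all of $\smk$ and not merely on $(\pp^{\infty})^{\times l}$. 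The latter does not follow from the construction alone; it follows by applying Proposition \ref{vazhnoe} to the negative-power coefficients of $St(\ov{i})-\frac{[p]\cdot_{\Omega}t}{t}\cdot\Phi(\ov{i})$, respectively to the difference of two candidate operations, and this appeal needs to be made explicitly.
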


Some traces of the $MU$-analogue of this operation were used by D.Quillen in \cite{Qu71},
and they provide the main tool of the mentioned article.

In Algebraic Cobordism the described operation appeared originally in the works \cite{so1}
and \cite{so2} of the author in the case $p=2$ in a different form. Namely, in the form of "slices",
which were constructed geometrically.
Only substantially later the author had realized that these slices can be combined
into the "formal half" of the "negative part" of some multiplicative operation, which
had a power series $\gamma=x\cdot(x-_{\Omega}t)$ reminiscent of a Steenrod operation
in Chow groups mod $2$.
How to view the operation $\Phi(\ov{i})$? The natural approach would be to consider the
coefficients of it at particular monomials $t^{-n}$, or, equivalently,
$\operatornamewithlimits{Res}_{t=0}\frac{t^n\cdot\Phi(\ov{i})\omega_t}{t}$, for all $n$.
And, if one thinks about it, there is no point restricting oneself to monomials, so
one can consider
$$
\Phi(\ov{i})^{q(t)}:=
\operatornamewithlimits{Res}_{t=0}\frac{q(t)\cdot\Phi(\ov{i})\omega_t}{t},
$$
where $q(t)=q_1t+q_2t^2+\ldots\in\laz[[t]]$ is any power series without the constant term.
Of course, there are various relations among these slices which bind them together
into something "larger" - the operation $\Phi(\ov{i})$.
For $p=2$, these are exactly the Symmetric operations $\Phi^{q(t)}$ of \cite{so2}:

\begin{proposition} {\rm (\cite[Proposition 7.2]{SPso})}
\label{SOoldnew}
In the case $p=2$, with $\ov{i}=\{-1\}$,
for any power series as above, we have:
$$
\Phi(\ov{i})^{q(t)}=\Phi^{q(t)}.
$$
\end{proposition}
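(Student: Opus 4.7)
The plan is to invoke the uniqueness clause of Theorem \ref{SOp}. For $p=2$ and $\ov{i}=\{-1\}$ we have $\iis=-1$, so $\Omega^*[\iis^{-1}]=\Omega^*$, and the power series defining $St(\ov{i})$ is $\gamma = x\cdot(x-_\Omega t)$ --- exactly the series used in \cite{so2} to set up the total Steenrod-type operation whose ``formal half'' of the negative-in-$t$ part furnishes the original total Symmetric operation with slices $\{\Phi^{q(t)}\}$.

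First I would assemble the collection $\{-\Phi^{q(t)}\}$ of \cite{so2} into a single additive operation $\Psi:\Omega^*\row\Omega^*[t^{-1}]t^{-1}$ characterized by
$$
\operatornamewithlimits{Res}_{t=0}\frac{q(t)\cdot\Psi\cdot\omega_t}{t}=-\Phi^{q(t)}\hspace{3mm}\text{for every}\hspace{3mm}q(t)=q_1t+q_2t^2+\ldots\in\laz[[t]].
$$
That this produces a well-defined additive operation follows from the corresponding properties of the individual $\Phi^{q(t)}$ established in \cite{so2} together with the dimensional finiteness of their supports on any fixed class (so only finitely many slices contribute to any given element). By the uniqueness clause of Theorem \ref{SOp}, it then suffices to verify the single identity
$$
St(\ov{i})-\textstyle\frac{[2]\cdot_\Omega t}{t}\cdot\Psi\in\Omega^*[[t]].
$$

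This last identity is essentially the defining property of the total Symmetric operation in \cite{so2}: the operation assembled there out of the $\Phi^{q(t)}$ is constructed geometrically via $\op{Hilb}_2$ precisely so that $\frac{[2]\cdot_\Omega t}{t}$ times it agrees, modulo $\Omega^*[[t]]$, with the negative-in-$t$ part of $St(\ov{i})$, up to an overall sign that is a bookkeeping artifact between the ``formal half'' convention of \cite{so2} and the purely formal division performed in Theorem \ref{SOp}. By Proposition \ref{vazhnoe}, the identity need only be tested on $(\pp^{\infty})^{\times r}$ for all $r$, where both sides act by explicit residues in the Chern roots of $\co(1)$, which reduces the verification to an elementary (if notationally dense) power-series manipulation. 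The main obstacle is thus the sign/convention reconciliation between the two papers --- most cleanly carried out by evaluating both operations on the universal class $(c^\Omega_1(\co(1)))^{\times r}\in\Omega^r((\pp^{\infty})^{\times r})$ and comparing the resulting series in $t$; once this is done, the uniqueness in Theorem \ref{SOp} yields $\Psi=\Phi(\ov{i})$, and extracting the $q(t)$-slice gives $\Phi(\ov{i})^{q(t)}=-\Phi^{q(t)}$.
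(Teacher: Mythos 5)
This proposition is only cited in the present paper from the companion work \cite[Proposition 7.2]{SPso} and is not proven here, so there is no in-paper proof to compare your argument against; what follows is an assessment of your proposal on its own terms.

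Your overall strategy --- assemble the slices $\{-\Phi^{q(t)}\}$ of \cite{so2} into a single additive operation $\Psi$, verify that $\Psi$ satisfies the defining relation of Theorem \ref{SOp}, and then invoke the uniqueness clause of that theorem --- is structurally sound, and it is the natural way to relate the two constructions. However, as written, the proposal has two genuine gaps, both located precisely in the parts you describe as routine.

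First, the assembly step is not as automatic as you suggest. The slicing pairing $\operatornamewithlimits{Res}_{t=0}\frac{q(t)\cdot\Psi\cdot\omega_t}{t}$ couples the coefficients of $\Psi$ to those of $q(t)$ through the coefficients of $\omega_t=(1+[\pp^1]t+[\pp^2]t^2+\ldots)dt$, producing an infinite upper-triangular system whose solvability you need to justify: on a fixed class $\alpha$ the relevant sums are finite for dimension reasons, and the diagonal coefficient is $1$, so the system does invert, but this should be stated explicitly rather than attributed vaguely to ``dimensional finiteness of supports.'' More importantly, you must check that the $\omega_t$-normalization baked into the slicing here matches the normalization used for $\Phi^{q(t)}$ in \cite{so2}; a mismatch here would not be corrected by any later sign-flip.

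Second, the crucial identity $St(\ov{i})-\frac{[2]\cdot_\Omega t}{t}\cdot\Psi\in\Omega^*[[t]]$ is asserted to be ``essentially the defining property of the total Symmetric operation in \cite{so2},'' but the operations $\Phi^{q(t)}$ of \cite{so2} are constructed geometrically via $\op{Hilb}_2$, not by a formal division. That they are a formal half of the negative-in-$t$ part of $St(\ov{i})$ is a substantive theorem of that paper, not a definition, and you need to cite the specific statement and verify that its normalizations (in particular the choice of $\gamma=x\cdot(x-_\Omega t)$ and the sign) agree with the setup of Theorems \ref{TTD} and \ref{SOp}. Waving off the sign as ``a bookkeeping artifact'' is exactly where the content of the proposition sits: the minus sign is what the statement asserts, and it must be tracked explicitly, most likely by evaluating both sides on $(\pp^{\infty})^{\times r}$ and using Proposition \ref{vazhnoe} applied slice-by-slice, as you propose, but that calculation needs to actually be done, since it is the proof rather than a cleanup step.
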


Notice, that for $p=2$, there is, in addition, a non-additive operation $\Phi^1$
(see \cite{so2}).
The methods of the given article don't permit to produce it's analogues for $p>2$ as here we are restricted to additive operations only.\footnote{The non-additive case was done in the next paper of the author - see \cite{PO}, and the operation was constructed.}
Fortunately, additive Symmetric Operations
are sufficient for most applications.

The cases $p=2$ and $3$ are special, since we can choose our representatives $\ov{i}$ to
be invertible in $\zz$. For $p=2$, we have two such choices: $\{1\}$, or $\{-1\}$ (in \cite{so2},
$\{-1\}$ was "chosen"). For $p=3$, the choice is canonical: $\{1,-1\}$.
Thus, we get integral operations $\Phi(\ov{i}):\Omega^*\row\Omega^*[t^{-1}]$.
And, for arbitrary $p$, we can choose our remainders to be the powers of some fixed prime $l$
(generating $(\zz/p)^*$), so that only one prime would be inverted. Moreover, this prime
can be chosen in infinitely many ways, so, in a sense, the picture is as good as integral.

For $p=2$ the Symmetric operations were applied to the study of $2$-torsion effects
in Chow groups - they provide the only known method to get "clean results" on rationality -
see \cite{GPQCG} and \cite{RIC}. And similar applications are expected for other primes.
Other applications involve the study of the structure of the $\laz$-module $Gr\Omega^*(X)$.
Here the construction of Symmetric Operations for all primes changes the statements $\otimes\zz_{(2)}$
into integral ones.

\section{Basic tools}
\label{basictools}

Here we present various results which permit to
work effectively with cohomology theories.

\subsection{Projective bundle and blow-up results}
\label{pb-bu}

We start with the {\it excess intersection formula} - see \cite[Theorem 5.19]{so2} and
\cite[Theorem 6.6.9]{LM}.
Consider cartesian square
$$
\begin{CD}
W&@>{f'}>>&Z\\
@V{g'}VV&&@VV{g}V\\
Y&@>>{f}>&X
\end{CD}
$$
with $f,f'$ - regular embeddings, and
$(g')^*(N_{Y\subset X})/N_{W\subset Z}=M$ the vector bundle
of dimension $d$.

\begin{proposition}
\label{excess}
Let $A^*$ be a theory in the sense of Definition \ref{goct}.
In the above situation,
$$
g^*f_*(v)=f'_*(c^A_d(M)\cdot (g')^*(v));
$$
If $g$ is projective, then also:
$$
f^*g_*(u)=g'_*(c^A_d(M)\cdot (f')^*(u)).
$$
\end{proposition}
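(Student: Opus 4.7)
The plan is to prove both identities in Proposition \ref{excess} by a deformation-to-the-normal-cone argument that reduces the general case to a ``split'' situation in which $f$ is the zero section of a vector bundle and $g$ is the inclusion of a subbundle, and then to handle the split case by direct computation using the projective bundle axiom $(PB)$.

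First, I would apply the deformation to the normal cone (DNC) simultaneously to the regular embeddings $f:Y\to X$ and $f':W\to Z$. This produces flat families $M_X\to\aaa^1$ and $M_Z\to\aaa^1$ with generic fibres $X$ and $Z$ and special fibres the total spaces $\cn:=\cn_{Y\subset X}$ and $\cn':=\cn_{W\subset Z}$, with $Y$ (resp.\ $W$) sitting as the zero section at $t=0$ and via the original embedding at $t\neq 0$. The morphism $g$ extends to a morphism $\tilde g:M_Z\to M_X$, projective precisely when $g$ is, and on the special fibre the cartesian square specialises to
$$
\begin{CD}
W @>{0}>> \cn'\\
@V{g'}VV @VV{\bar g}V\\
Y @>{0}>> \cn,
\end{CD}
$$
with $\bar g$ factoring as the subbundle inclusion $\cn'\hookrightarrow (g')^*\cn$ followed by the natural bundle map, the cokernel of the first arrow being the excess bundle $\cm$. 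Using axiom $(A2)$ applied to the transversal intersections of the fibre divisors $\{t=0\}$ and $\{t=1\}$ with $\tilde g$, together with the projection formula built into $(D2)$, each of the asserted identities for the original square is equivalent to the corresponding identity for the special-fibre square. Thus it is enough to treat the split case.

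In the split case, I would invoke the key self-intersection identity $s^*s_*(u)=c^A_d(\cv)\cdot u$ for the zero section $s$ of a rank-$d$ bundle $\cv$. This is standard: by the splitting principle (iterated application of $(PB)$) one reduces to $\cv$ being a direct sum of line bundles; the Cartan formula then reduces to the case of a single line bundle, where it is the defining identity $s^*s_*(1)=c^A_1(\cl)$ of the axiom $(PB)$. Combining this self-intersection identity with the projection formula and the factorisation of $\bar g$ through the subbundle inclusion $\cn'\hookrightarrow (g')^*\cn$ produces exactly the factor $c^A_d(\cm)$ that appears on the right-hand side of both formulas; the push--pull version (second identity) then follows from the pull--push version by projection-formula duality, using that $g$ is projective.

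The main obstacle will be making the DNC reduction rigorous in the generality of Definition \ref{goct}. The families $M_X$ and $M_Z$ must be constructed and checked to lie in $\schk$ so that the Borel--Moore extension of $A_*$ applies, the fibre inclusions $i_0,i_1$ must be verified transversal to $\tilde g$ and to all relevant maps, and the localisation axiom $(EXCI)$ has to be invoked to ensure that equality on the special fibre propagates to the generic one. Once these technicalities are settled, the split-case computation via $(PB)$ is essentially a routine Chern-class manipulation.
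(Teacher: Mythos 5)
Your overall strategy (deformation to the normal cone, then a split case handled by the self-intersection formula for zero sections) is the standard one and is of the same kind as the argument this paper relies on: note that the paper does not reprove the proposition at all, it cites \cite[Theorem 5.19]{so2} and merely observes that that proof uses no specifics of $\Omega^*$. Your split-case computation is correct and cheap: with $(EH)$ one writes $f_*(v)=\pi^*(c^A_{e}(\cn)\cdot v)$ for the zero section of $\cn=\cn_{Y\subset X}$ of rank $e$, pulls back along the bundle map $\cn'\row\cn$, and the Whitney formula for $0\row\cn'\row(g')^*\cn\row\cm\row 0$ together with the self-intersection formula for the zero section of $\cn'$ identifies both sides with $\pi^*(c^A_{e-d}(\cn')\,c^A_d(\cm)\,(g')^*(v))$.

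The genuine gap is the reduction step. It is not true that $(A2)$ plus the projection formula make the original identity ``equivalent'' to the special-fibre identity. What those axioms give is only this: the two sides of the desired formula are the $t=1$ restrictions of two natural classes on the deformation space over $Z$ (push-forward of $v$ along the family of embeddings, pulled back along $\wt{g}$, versus push-forward of $c^A_d(\cm)\cdot(g')^*(v)$ along the constant family $W\times\pp^1$), and by the split case these two classes have equal restrictions at $t=0$. Since the deformation space is not a product with $\aaa^1$, the restrictions to the two fibres are different homomorphisms, and no axiom transfers an equality from one fibre to the other; this is exactly where all the difficulty sits, and saying ``$(EXCI)$ has to be invoked'' does not yet supply the idea. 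The mechanism that works is the one the paper itself uses in the analogous transfer of Lemma \ref{lem1stabO}: the difference of the two classes restricts to zero on the complement of $W\times\pp^1$ (because $\wt{g}^{-1}(Y\times\pp^1)=W\times\pp^1$), hence by $(EXCI)$ it is a push-forward from the smooth closed constant subfamily $W\times\pp^1$; homotopy invariance along this trivial family identifies its two end restrictions, and the split injectivity of the push-forward along the $t=0$ embedding of $W$ into the projective completion $\pp_W(\cn'\oplus\co)$ then forces the class to vanish, after which the identity at $t=1$ follows. (This also dictates using the projective completion, as in Lemma \ref{lem1stabO}, or the open deformation space, since the full special fibre $\pp(\cn\oplus\co)\cup Bl_YX$ is not smooth; a zero section of a vector bundle alone would not give the injectivity you need.) Finally, the second identity does not follow from the first ``by projection-formula duality'' -- there is no adjunction between $g_*$ and $g^*$ here. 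It can be deduced from the first, but by a different route: factor the projective $g$ as a closed embedding into $X\times\pp^N$ followed by the projection, treat the projection leg by $(A2)$, and apply the first identity to the transposed square for the embedding leg, using the canonical isomorphism between the excess bundles of a cartesian square of embeddings of smooth varieties and of its transpose; as written, this step is unsupported.
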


\begin{proof}
Both of the above references are dealing with the $\Omega^*$-case.
Although, the statement of \cite[Theorem 6.6.9]{LM} is more general, it
requires the development of the whole theory of refined pull-backs. For Algebraic
Cobordism such a theory is constructed in \cite{LM}, but it requires some
work to extend it to a more general context. In contrast, the proof of \cite[Theorem 5.19]{so2}
does not use any specifics of $\Omega^*$ and works in general.
\Qed
\end{proof}

Another important tool is the formula of Quillen - see \cite[Theorem 1]{Qu69}, \cite[Formula (24)]{PS}, and
\cite[Theorem 5.35]{so2}. It describes push-forwards for projective bundles.

Recall that, for an $n$-dimensional vector bundle $W$, the {\it roots} are elements $\lambda_i\in A^1(Flag_X(W)), i=1,\ldots n$ such that $\prod_{i=1}^n(t+\lambda_i)=\sum_{i=0}^nc^A_i(W)t^{n-i}$,
where $Flag_X(W)$ is a variety of complete flags of $W$, and $c^A_i(W)$ are Chern classes
in the theory $A^*$. The important point here is that the pull-back map $A^*(X)\row A^*(Flag_X(W))$
is split injective.

Recall also that $\omega_A\in A^*(k)[[x]]dx$ is the canonical invariant 1-form satisfying: $w_A(0)=dx$.
Such a form can be obtained from the formal group law $F_A(x,y)$ of $A^*$ by the formula:
$\omega_A=\left(\frac{\partial F_A}{\partial y}|_{y=0}\right)^{-1}dx$.
By the formula of Mistchenko it can be expressed as:
$$
\left([\pp^0]_A+[\pp^1]_A\cdot x+[\pp^2]_A\cdot x^2+\ldots \right)dx,
$$
where $[\pp^r]_A$ is the class of $\pp^r$ in $A^*(k)$.

\begin{proposition}
\label{Quillen}
Let $A^*$ be a theory in the sense of Definition \ref{goct}. Let $X$ be some smooth quasi-projective
variety, $W$ be some $n$-dimensional vector bundle on it, and $\pi:\pp_X(W)\row X$ be the corresponding
projective bundle. Let $f(t)\in A^*(X)[[t]]$, and $\xi=c_1^A(O(1))$. Then
$$
\pi_*(f(\xi))=\operatornamewithlimits{Res}_{t=0}
\frac{f(t)\cdot\omega_A}{\prod_i(t+_A\lambda_i)},
$$
where $\lambda_i$ are roots of $W$, and $+_A$ is the formal addition in the sense of $F_A$.
\end{proposition}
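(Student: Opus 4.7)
The plan is to reduce to the split case via the splitting principle, and then to verify the formula by an explicit partial-fractions computation using the sections associated to the summands of the split bundle.

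First, I would pull back the whole situation along the iterated projective bundle $p: F := Flag_X(\cv) \to X$. Iterating axiom $(PB)$ shows that $p^* : A^*(X) \to A^*(F)$ is split injective, and the analogous statement holds for $\pp_X(\cv)$ and $\pp_F(p^*\cv)$. The transversal base change axiom $(A2)$ ensures that $\pi_*$ commutes with $p^*$, while the right-hand side of the formula is intrinsic (the $\lambda_i$ and the FGL operations are pulled back in the obvious way). Since on $F$ the pullback $p^*\cv$ splits as a sum of line bundles $\cl_1 \oplus \ldots \oplus \cl_n$ with $c_1^A(\cl_i) = \lambda_i$, it suffices to prove the formula under the assumption $\cv = \oplus_{i=1}^n \cl_i$.

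In the split case, each summand $\cl_i$ gives a section $s_i : X \to \pp_X(\cv)$ satisfying $s_i^*(\co(1)) = \cl_i^{-1}$, hence $s_i^*(\xi) = -_A \lambda_i$, with normal bundle $\cn_i = \oplus_{j \neq i}(\cl_i^{-1} \otimes \cl_j)$ having $A$-roots $\lambda_j -_A \lambda_i$. A direct description of $s_i$ as the vanishing locus of the natural maps $\co(-1) \to \pi^*\cl_j$, combined with multiplicativity of Chern classes, gives
$$
(s_i)_*(1) = \prod_{j \neq i} (\xi +_A \lambda_j) \in A^{n-1}(\pp_X(\cv)),
$$
and Proposition \ref{excess} provides the consistency check $s_i^*(s_i)_*(1) = c_{n-1}^A(\cn_i) = \prod_{j \neq i}(\lambda_j -_A \lambda_i)$, matching the substitution $\xi \mapsto -_A\lambda_i$. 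Next, a formal Lagrange interpolation (performed first in the universal situation of the flag bundle of the tautological bundle on $BGL_n$, where the differences $\lambda_j -_A \lambda_i$ are not zero divisors, and then specialized) expresses $f(\xi) \in A^*(\pp_X(\cv))$ as an $A^*(X)$-linear combination of the classes $(s_i)_*(1)$; applying $\pi_*$ and using functoriality $\pi_* \circ (s_i)_* = \op{id}$ yields
$$
\pi_*(f(\xi)) = \sum_i \frac{f(-_A\lambda_i)}{\prod_{j \neq i}(\lambda_j -_A \lambda_i) \cdot F'_A(\lambda_i)},
$$
where the factor involving $\omega_A$ arises from comparing the "naive" Lagrange denominator with the one imposed by the FGL.

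The final step is to identify the previous sum with the residue. Since the differential $\omega_A$ is FGL-invariant, the change of variable $t \mapsto t +_A \lambda_i$ computes $\operatornamewithlimits{Res}_{t = -_A \lambda_i}\frac{f(t)\omega_A}{\prod_j(t +_A \lambda_j)}$ as precisely the $i$-th summand above. The identity $\operatornamewithlimits{Res}_{t = 0} + \sum_i \operatornamewithlimits{Res}_{t = -_A \lambda_i} = 0$ for the Laurent expansion of a rational form with all finite poles accounted for then converts the sum to $\operatornamewithlimits{Res}_{t = 0}$ of the same integrand, giving the claimed formula.

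The main obstacle is the formal partial-fractions identity in the middle paragraph, since the differences $\lambda_j -_A \lambda_i$ are generally not invertible in $A^*(X)$. The standard resolution is to establish the universal identity in the FGL-polynomial ring over the classifying space (where such differences are non-zero-divisors by algebraic independence of the $\lambda_i$), and then transfer it to $A^*(X)$ via the natural map induced by $\cv$; alternatively, one can interpret the residue through Weierstrass-type division of $f(t)\omega_A$ by the monic polynomial $\prod_i(t +_A \lambda_i)$, so that $\pi_*(f(\xi))$ is read off directly from the remainder of degree $< n$, with no inversion needed.
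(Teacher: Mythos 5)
Your proof is correct in outline but takes a genuinely different route from the paper's. You reconstruct the formula from first principles: pull back to the flag bundle to reduce to the split case $\cv=\oplus_i\cl_i$, identify the section classes $(s_i)_*(1)=\prod_{j\neq i}(\xi+_A\lambda_j)$ using the excess intersection formula, and compute $\pi_*(f(\xi))$ by Lagrange interpolation / Weierstrass division, matching the result against the residue via the invariance of $\omega_A$. This is essentially Quillen's original argument, and presumably close to what is done in the cited $\Omega^*$-version \cite[Theorem 5.35]{so2}. The paper's own proof, by contrast, is a two-step reduction: both sides of the identity are $A^*(X)$-linear in $f$, so it suffices to take $f(t)=t^r$; and then, since the formula is a universal identity in Chern classes and FGL coefficients, it transfers from $\Omega^*$ to any theory $A^*$ along the canonical morphism of theories supplied by the universality of $\Omega^*$ (\cite[Theorem 1.2.6]{LM}). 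So the trade-off is: your version is self-contained and does not rely on the external citation, at the cost of redoing the splitting-principle and residue machinery; the paper's version gets the result formally for free.

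Two bookkeeping points in your writeup. The displayed intermediate formula carries an extra factor $F'_A(\lambda_i)$ in the denominator; with $\omega_A=g_A(t)\,dt$ normalized by $g_A(0)=1$, the local residue of $f(t)\omega_A/\prod_j(t+_A\lambda_j)$ at $t=-_A\lambda_i$ is exactly $f(-_A\lambda_i)/\prod_{j\neq i}(\lambda_j-_A\lambda_i)$, with no extra factor: invariance of $\omega_A$ gives $\partial_t F_A(t,\lambda_i)|_{t=-_A\lambda_i}=g_A(-_A\lambda_i)$, which cancels against $g_A(-_A\lambda_i)$ coming from $\omega_A$ in the numerator. That cancellation is precisely why the interpolation coefficient and the residue agree on the nose, so the phrase ``the factor involving $\omega_A$ arises from comparing the naive Lagrange denominator with the one imposed by the FGL'' should be replaced by the observation that the two $g_A$'s cancel. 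Second, $\operatorname{Res}_{t=0}$ in this context is shorthand for the coefficient of $t^{-1}\,dt$ in the Laurent expansion where the $\lambda_i$ are treated as formal (nilpotent) parameters; $t=0$ is not a pole of the integrand in the naive sense, so the ``sum of residues is zero'' step needs to be phrased in that formal-series convention rather than as a statement about a rational $1$-form on $\pp^1$.
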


\begin{proof}
Clearly, both parts of the formula are $A^*(X)$-linear, so it is sufficient to
prove the result in the case: $f(t)=t^r$ - a monomial. Then it formally
follows from the $\Omega^*$-case proven in \cite[Theorem 5.35]{so2}
(using the universality of $\Omega^*$ - \cite[Theorem 1.2.6]{LM}).
\Qed
\end{proof}

We will need various results concerning the blow up morphism.

Let $X$ be a smooth variety, $R$ be a smooth closed subvariety, $\wt{X}=Bl_{R}X$ - the blow up of
$X$ at $R$, and $E$ - the exceptional divisor on $\wt{X}$. These fit into the blow-up diagram:
$$
\xymatrix @-0.7pc{
E \ar @{->}[r]^(0.5){j} \ar @{->}[d]_(0.5){\eps}&
\wt{X} \ar @{->}[d]^(0.5){\pi}\\
R \ar @{->}[r]_(0.5){i} & X.
}
$$
Let $N$ be the normal bundle of $R$ in $X$, then $E\cong\pp_R(N)$. Let $d=\ddim(N)=\op{codim}(R\subset X)$.
Denote $\wt{N}=N\oplus O(1)$, and
$\wt{E}=\pp_R(\wt{N})\stackrel{\wt{\eps}}{\row}R$.

For projective bi-rational morphisms we have:

\begin{proposition}
\label{pi1invert}
Let $A^*$ be a theory in the sense of Definition \ref{goct}, and $\pi:\wt{X}\row X$
be projective bi-rational morphism of smooth varieties. Then
\begin{itemize}
\item[$(1)$ ] $\pi_*(1)$ is invertible in $A^*(X)$.
\item[$(2)$ ] $\pi_*:A_*(\wt{X})\row A_*(X)$ is surjective.
\end{itemize}
\end{proposition}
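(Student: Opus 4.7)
The plan is to establish (1) first for single smooth blow-ups, reduce the general case via Hironaka, and then derive (2) from (1) by the projection formula.

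For a single blow-up $\pi\colon Bl_RX\to X$ at a smooth centre $R$ of codimension $d$, Proposition~\ref{blowupformula} gives $\pi_*(1)=1+\alpha$ with $\alpha=i_*(\beta)$, $\beta\in A^*(R)$, $i\colon R\hookrightarrow X$ the closed immersion. Iterating the self-intersection formula (a special case of Proposition~\ref{excess}) yields
$$
\alpha^n=i_*\bigl(\beta^n\cdot c_d^A(\cn)^{n-1}\bigr).
$$
Pulling $c_d^A(\cn)$ back to the flag variety of $\cn$ — an injection by $(PB)$ — splits it as the product $\mu_1\cdots\mu_d$ of first Chern classes of line bundles; the $(DIM)$ axiom then forces each $\mu_i$, and hence $c_d^A(\cn)$, to be nilpotent. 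Therefore $\alpha$ is nilpotent in $A^*(X)$ and $\pi_*(1)$ is invertible, with inverse given by a finite geometric series.

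For a general projective birational $\pi\colon\wt{X}\to X$, Hironaka (Theorems~\ref{Hi} and~\ref{Hif}) furnishes a projective birational $g\colon\wt{\wt{X}}\to\wt{X}$ with centres over the exceptional locus of $\pi$ such that both $g$ and $f:=\pi\circ g$ are compositions of smooth blow-ups. Iterating the single blow-up case, and using that a sum of commuting nilpotents is nilpotent, gives $g_*(1)=1+\gamma\in A^*(\wt{X})$ and $f_*(1)=1+\eta\in A^*(X)$ with $\gamma,\eta$ nilpotent. The identity $f_*(1)=\pi_*(g_*(1))=\pi_*(1)+\pi_*(\gamma)$ yields $\pi_*(1)=1+\eta-\pi_*(\gamma)$, so it suffices to show that $\pi_*(\gamma)$ is again nilpotent. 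By the universality of $\Omega^*$ (\cite[Theorem 1.2.6]{LM}) this reduces to showing that classes in $\Omega^*(X)$ supported on a proper closed subvariety $Z\subsetneq X$ are nilpotent — a fact established via the refined pull-backs for $\Omega^*$ (Subsection~\ref{c}): writing such a class as $j_*(a)$ with $j\colon W\to X$ projective and $\dim W<\dim X$, the $n$-fold product $(j_*(a))^n$ is computed using the iterated fibre product $W\times_X\cdots\times_X W$ whose expected dimension is $\dim W-(n-1)(\dim X-\dim W)$, which becomes negative for $n$ sufficiently large, forcing the product to vanish.

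For (2): given (1), for any $v\in A_*(X)$ set $w:=\pi^*\bigl(v\cdot\pi_*(1)^{-1}\bigr)\in A_*(\wt{X})$; the projection formula yields
$$
\pi_*(w)=v\cdot\pi_*(1)^{-1}\cdot\pi_*(1)=v,
$$
proving surjectivity.

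The main obstacle is transferring nilpotency across the pushforward $\pi_*$, which is projective but not a closed immersion, so the direct self-intersection argument is unavailable. My plan side-steps this by reducing the nilpotency of $\pi_*(\gamma)$ to the universal theory $\Omega^*$, where refined pullbacks and dimension counting for iterated fibre products supply the needed bound; a more elementary direct argument would likely require controlling excess intersection contributions along the entire Hironaka tower.
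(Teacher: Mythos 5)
Your endgame is where all the real content lies, and it is exactly the paper's proof: reduce to $A^*=\Omega^*$ by universality (the canonical morphism $\Omega^*\row A^*$ commutes with push-forwards, so $\pi_*(1)\in A^*(X)$ is the image of $\pi_*(1)\in\Omega^*(X)$), write $\pi_*(1)=1+u$ with $u$ supported on the proper closed subset over which $\pi$ fails to be an isomorphism, and invoke nilpotence of classes with positive codimension of support, which the paper simply cites as \cite[Statement 5.2]{so2}; part (2) is then the projection formula, as you say. Once you allow yourself that fact, your single blow-up computation (which is correct, and a nice theory-independent argument via Proposition~\ref{blowupformula}, Proposition~\ref{excess} and $(DIM)$) and the whole Hironaka domination become superfluous.

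As written, though, the general case has two genuine gaps. First, Theorems~\ref{Hi} and~\ref{Hif} only give a permitted blow up $f\colon X'\row X$ resolving the indeterminacy of $\pi^{-1}$, so that $f$ is a composition of smooth blow-ups and factors as $\pi\circ g$; they give no control on $g\colon X'\row\wt{X}$, and requiring \emph{both} $f$ and $g$ to be compositions of smooth blow-ups is essentially the strong factorization problem, which is open and is not supplied by Hironaka or by the Weak Factorization Theorem. Second, even for an honest tower of smooth blow-ups, ``iterating the single blow-up case'' already hits the obstacle you only acknowledge at the end: for $f=\pi_1\circ h$ one has $f_*(1)=(\pi_1)_*(1)+(\pi_1)_*(\alpha)$ with $\alpha=h_*(1)-1$ nilpotent upstairs, and the nilpotence of the push-forward $(\pi_1)_*(\alpha)$ is not formal (there is no ``sum of commuting nilpotents'' here), so $\eta$ and $\gamma$ are not known to be nilpotent by that route. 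Thus the transfer-of-nilpotency problem is the whole problem, and the only tool you produce for it — nilpotence of supported classes in $\Omega^*$ — settles $\pi_*(1)$ directly. Finally, your sketch of that fact should be stated with more care: the iterated fibre products need not have the expected dimension; what one actually uses is that the refined pull-backs (available for $\Omega^*$, which is why the reduction to $\Omega^*$ must come first) shift degrees by the codimension, so that a product of $n$ classes supported in positive codimension is supported in codimension $\geq n$ and eventually lands in $\Omega_m$ with $m<0$, i.e.\ the multiplicativity of the codimension-of-support filtration — precisely the content of \cite[Statement 5.2]{so2}.
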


\begin{proof}
By universality of $\Omega^*$ (\cite[Theorem 1.2.6]{LM}), we have the canonical map
of theories $\Omega^*\row A^*$, and $\pi_*(1)$ is in the image of this map. So, it is
sufficient to treat the case $A^*=\Omega^*$. Since $\pi$ is bi-rational, we have a
closed subscheme $Z\subset X$ of positive codimension, such that $\pi$ is an
isomorphism outside $Z$. Then $\pi_*(1)=1+u$, where $u$ is supported on $Z$.
That means that $u$ has positive codimension of support, and so is nilpotent by
\cite[Statement 5.2]{so2}. Hence, $\pi_*(1)$ is invertible. It remains to apply the
projection formula.
\Qed
\end{proof}

In a more specific situation, the following result of M.Levine and F.Morel describes the class of the blow up in the 
$A^*$ of the base explicitly.

\begin{proposition} {\rm (\cite[Proposition 2.5.2]{LM})}
\label{blowupformula}
Let $A^*$ be a theory in the sense of Definition \ref{goct}. Then
$$
\pi_*(1)=1+i_*\wt{\eps}_*\left(\frac{c_1^A(O(1))}{c_1^A(O(-1))}\right).
$$
\end{proposition}

Here under $\frac{c_1^A(O(1))}{c_1^A(O(-1))}$ we mean $g(c_1^A(O(1)))$, where $g(t)=\frac{t}{[-1]\cdot_At}\in A^*(k)(t)$.

The following result describes what happens to the whole $A^*$ when you blow up some smooth variety at a smooth center.

\begin{proposition} {\rm (cf.\cite[Proposition 5.24]{so2})}
\label{razd}
Let $A^*$ be a theory in the sense of Definition \ref{goct}. Then we have split exact sequences:
\begin{equation*}
(1)\hspace{1.5cm}
0\llow A_*(X)\stackrel{\pi_*,-i_*}{\llow}A_*(\wt{X})\oplus A_*(R)
\stackrel{j_*,\eps_*}{\llow}A_*(E)\llow 0;\hspace{3cm}\phantom{a}
\end{equation*}
\begin{equation*}
(2)\hspace{1.5cm}
0\lrow A^*(X)\stackrel{\pi^*,-i^*}{\lrow}A^*(\wt{X})\oplus A^*(R)
\stackrel{j^*,\eps^*}{\lrow}A^*(E)\lrow 0.\hspace{3cm}\phantom{a}
\end{equation*}
\end{proposition}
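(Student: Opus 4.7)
The plan is to prove the two sequences separately, following the pattern of \cite[Proposition 5.24]{so2} where the case $A^* = \Omega^*$ is treated. That proof uses only general properties available in any theory satisfying Definition \ref{goct}: the projective bundle formula $(PB)$, excision $(EXCI)$, the excess intersection formula (Proposition \ref{excess}), the blow-up formula (Proposition \ref{blowupformula}), and the invertibility of $\pi_*(1)$ (Proposition \ref{pi1invert}).

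For $(2)$, the composition $(j^*, \eps^*)\circ(\pi^*, -i^*)$ vanishes since $j^*\pi^* = \eps^* i^*$, and injectivity of $(\pi^*, -i^*)$ reduces to that of $\pi^*$: the projection formula together with Proposition \ref{pi1invert}(1) makes $\alpha \mapsto \pi_*(1)^{-1}\pi_*(\alpha)$ a retraction. The heart of the argument is the direct sum decomposition
\[
A^*(\wt{X}) = \pi^* A^*(X) \,\oplus\, \bigoplus_{k=1}^{d-1} j_*\bigl(\xi^{k-1}\cdot\eps^* A^*(R)\bigr),
\]
where $\xi = c_1^A(\co_E(1))$. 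Combined with the projective bundle formula $A^*(E) = \bigoplus_{k=0}^{d-1} \xi^k\cdot\eps^* A^*(R)$ and the identity $j^*j_*(\alpha) = c_1^A(\co_E(-1))\cdot\alpha = (-_A\xi)\cdot\alpha$ (a case of Proposition \ref{excess}), one computes $(j^*, \eps^*)$ on each summand and verifies both exactness in the middle and surjectivity.

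For $(1)$, the composition vanishes since $\pi_*j_* = i_*\eps_*$. Surjectivity of $(\pi_*, -i_*)$ follows from $(EXCI)$: given $\alpha \in A_*(X)$, choose $\beta \in A_*(\wt{X})$ restricting to the same class in $A_*(\wt{X}\setminus E) = A_*(X\setminus R)$; then $\pi_*\beta - \alpha$ vanishes in $A_*(X\setminus R)$ and so lies in $i_* A_*(R)$. For exactness in the middle, suppose $\pi_*\beta = i_*\gamma$. By $(EXCI)$, the restriction of $\beta$ to $\wt{X}\setminus E$ vanishes, so $\beta = j_*\beta'$ for some $\beta' \in A_*(E)$, giving $i_*\eps_*\beta' = i_*\gamma$. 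One then corrects $\beta'$ by an element $\delta \in \ker(j_*)$ with $\eps_*\delta = \eps_*\beta' - \gamma$; this is possible because $\eps_*$ is surjective (from $(PB)$ applied to $E=\pp_R(\cn)$, since $\eps_*(\xi^{d-1})=1$ by Proposition \ref{Quillen}) and a parallel excess-intersection analysis on $E$ controls $\ker j_*$.

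The main obstacle is establishing the direct sum decomposition of $A^*(\wt{X})$ used in $(2)$, together with the analogous "correction" step in $(1)$. Both amount to careful book-keeping on $E = \pp_R(\cn)$ using Chern classes and the formal group law of $A^*$; the blow-up formula of Proposition \ref{blowupformula}, which gives $\pi_*(1) = 1 + i_*\wt{\eps}_*\bigl(c_1^A(\co(1))/c_1^A(\co(-1))\bigr)$, encodes the first non-trivial contribution of $E$ to $A^*(X)$ and drives the induction on $k$ in the decomposition. Since no feature specific to $\Omega^*$ enters these calculations, the proof from \cite[Proposition 5.24]{so2} transfers verbatim.
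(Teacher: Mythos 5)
Your general strategy---reduce both sequences to $(PB)$, $(EXCI)$, the excess intersection formula, and the blow-up class---is the right one, and the paper's own proof indeed begins by recalling the argument from \cite[Proposition 5.24]{so2} and noting that it uses only this machinery. But your sketch has a genuine gap at the decisive step of exactness-in-the-middle for sequence $(1)$: you must produce $\delta\in\kker(j_*)$ with $\eps_*\delta=\eps_*\beta'-\gamma$, knowing only that $i_*(\eps_*\beta'-\gamma)=0$, and you offer no construction---only the remarks that $\eps_*$ is surjective and that ``a parallel excess-intersection analysis on $E$ controls $\kker j_*$.'' Surjectivity of $\eps_*$ is far from sufficient, and the relevant excess-intersection statement is not one internal to $E$. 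The element that actually works is $\delta=\alpha\cdot\eps^*(\eps_*\beta'-\gamma)$ with $\alpha=c^A_{d-1}(\ck)$, $\ck=\eps^*\cn/\co(-1)$ the excess bundle: then $\eps_*\delta=\eps_*\beta'-\gamma$ because $\eps_*(\alpha)=1$, while $j_*\delta=\pi^*i_*(\eps_*\beta'-\gamma)=0$ by Proposition \ref{excess} applied to the blow-up square $E\subset\wt{X}$ over $R\subset X$. This $\alpha$ is precisely the key ingredient you are missing.

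The paper organises all of this differently: rather than proving exactness directly, it uses $\alpha$, the class $\beta=\wt{\eps}_*\bigl(c^A_1(\co(1))/c^A_1(\co(-1))\bigr)$, and the decomposition of the diagonal class on $E\times_R E$ to build explicit contracting homotopies $\lambda$, $\mu$ for both sequences at once. This establishes split exactness directly, whereas your proposal never addresses the splitting claimed in the statement. Likewise, for $(2)$ you reduce to a direct-sum decomposition of $A^*(\wt{X})$ that is logically equivalent to the proposition itself; labelling it ``the main obstacle'' to be settled by ``careful book-keeping'' understates matters, since that book-keeping is exactly the content of the paper's homotopy construction. So the two approaches are of the same kind, but the paper's is complete and also delivers the splitting, while your sketch requires rediscovering the excess-bundle class $\alpha$ to close the argument for $(1)$ and has not yet provided a proof of the decomposition needed for $(2)$.
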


\begin{proof}
In the case $A^*=\Omega^*$, (1) was proven in \cite[Proposition 5.24]{so2}, and the same proof
works for arbitrary $A^*$. Let us recall some details.
Let $K=\eps^*N/O(-1)$ be the excess bundle on $E$.
It is easy to see
(see \cite[Proposition 5.22]{so2}) that the class of the diagonal on $E\times_R E$ is given by
$c^A_{d-1}(K_1\otimes O(1)_2)$, where $V_l$ denotes the bundle $V$ lifted from the $l$-th component.
This class can be written as $c^A_{d-1}(K)\times 1 +\sum_{i\geq 1}\gamma_{d-1-i}\times\zeta^i$,
where $\gamma_j\in A^j(E)$ are some elements, and $\zeta=c^A_1(O(-1))$.
Let us introduce the elements $\alpha:=c^A_{d-1}(K)\in A^*(E)$, and
$\beta:=\wt{\eps}_*\left(\frac{c^A_1(O(1))}{c^A_1(O(-1))}\right)\in A^*(R)$.
Then for any $u\in A^*(E)$, we have:
\begin{equation*}
\begin{split}
u=\alpha\cdot\eps^*\eps_*(u)+\sum_{j\geq 1}
\gamma_{d-1-j}\cdot\eps^*\eps_*(u\cdot\zeta^j);\\
u=\eps^*\eps_*(u\cdot\alpha)+\sum_{j\geq 1}
\zeta^j\cdot\eps^*\eps_*(u\cdot\gamma_{d-1-j}),
\end{split}
\end{equation*}
Consider the maps $F:A_*(E)\row A_*(E)$ and $G:A^*(E)\row A^*(E)$ given by:
$$
F(u)=\sum_{j\geq 0}\gamma_{d-2-j}\cdot\eps^*\eps_*(u\cdot\zeta^j);\hspace{1cm}
G(u)=\sum_{j\geq 0}\zeta^j\cdot\eps^*\eps_*(u\cdot\gamma_{d-2-j}).
$$
Consider the diagram:
$$
\xymatrix @-0.7pc{
E\times_R E \ar @{->}[r]^(0.5){id\times e} \ar @{->}[dr]_(0.5){p_1}& E\times_R\wt{E} \ar @{->}[r]^(0.7){\rho} \ar @{->}[d]_(0.5){\wt{\rho}}&
\wt{E} \ar @{->}[d]^(0.5){\wt{\eps}}\\
& E \ar @{->}[r]_(0.5){\eps} & R.
}
$$
Let $E\stackrel{e}{\row}\wt{E}$ be the natural embedding. Then $e_*(1)=c^A_1(O(1))$.
We get:
\begin{equation*}
\begin{split}
F(1)&=(p_1)_*\left(\frac{c^A_{d-1}(K_1\otimes O(1)_2)-c^A_{d-1}(K_1)}{c^A_1(O(-1)_2)}\right)\\
&=\wt{\rho}_*\left((c^A_{d-1}(K_1\otimes O(1)_2)-c^A_{d-1}(K_1))\cdot
\frac{c_1^A(O(1)_2)}{c_1^A(O(-1)_2)}\right)\\
&=\wt{\rho}_*\left(c^A_{d-1}(K_1\otimes O(1)_2)\frac{c_1^A( O(1)_2)}{c_1^A( O(-1)_2)}\right)-
c^A_{d-1}(K)\cdot\wt{\rho}_*\rho^*\left(\frac{c_1^A(O(1))}{c_1^A(O(-1))}\right)\\
&=\operatornamewithlimits{Res}_{t=0}
\frac{c^A_{\bullet}(K)(t)\cdot t\cdot\omega_A}{c^A_{\bullet}(\wt{N})(t)\cdot(-_At)}-
c^A_{d-1}(K)\cdot\eps^*\wt{\eps}_*\left(\frac{c_1^A(O(1))}{c_1^A(O(-1))}\right)\\
&=\operatornamewithlimits{Res}_{t=0}
\frac{\omega_A}{(t+_A\xi)(-_At)}-\alpha\cdot\eps^*(\beta)=-\alpha\cdot\eps^*(\beta).
\end{split}
\end{equation*}

Now we can construct contracting homotopies $\lambda$ and $\mu$ for $(1)$ and $(2)$:
$$
\xymatrix @-0.3pc{
A_*(E) \ar @/^0.5pc/ @{->}[r]^(0.5){d_2} \ar @/^0.5pc/ @{->}[d]^(0.5){d_1}&
A_*(\wt{X}) \ar @/^0.5pc/ @{->}[d]^(0.5){d_4} \ar @/^0.5pc/ @{->}[l]^(0.5){\lambda_2} \\
A_*(R) \ar @/^0.5pc/ @{->}[r]^(0.5){d_3} \ar @/^0.5pc/ @{->}[u]^(0.5){\lambda_1}&
A_*(X) \ar @/^0.5pc/ @{->}[l]^(0.5){\lambda_3}
\ar @/^0.5pc/ @{->}[u]^(0.5){\lambda_4},
}
\hspace{15mm}
\xymatrix @-0.3pc{
A^*(E) \ar @/^0.5pc/ @{->}[r]^(0.5){\mu_2} \ar @/^0.5pc/ @{->}[d]^(0.5){\mu_1}&
A^*(\wt{X}) \ar @/^0.5pc/ @{->}[d]^(0.5){\mu_4} \ar @/^0.5pc/ @{->}[l]^(0.5){d_2} \\
A^*(R) \ar @/^0.5pc/ @{->}[r]^(0.5){\mu_3} \ar @/^0.5pc/ @{->}[u]^(0.5){d_1}&
A^*(X) \ar @/^0.5pc/ @{->}[l]^(0.5){d_3}
\ar @/^0.5pc/ @{->}[u]^(0.5){d_4},
}
$$
in the following way: $\lambda_4=\pi^*$; $\lambda_3=\beta\cdot i^*$,
$\lambda_1=\alpha\cdot\eps^*$; and $\lambda_2=F\circ j^*$, while
$\mu_4=\pi_*$; $\mu_3=i_*(\beta\cdot\phantom{u})$,
$\mu_1=\eps_*(\alpha\cdot\phantom{u})$; and $\mu_2=j_*\circ G$.

From the equality $F(1)=-\alpha\cdot\eps^*(\beta)$ (using several times the projection formula) one easily obtains the left ones of the following identities:
\begin{equation}
\label{A}
\lambda_2\circ\lambda_4=-\lambda_1\circ\lambda_3;\hspace{5mm}
d_2\circ\lambda_1=-\lambda_4\circ d_3;
\end{equation}
\begin{equation}
\label{B}
\mu_4\circ\mu_2=-\mu_3\circ\mu_1;\hspace{5mm}
\mu_1\circ d_2=-d_3\circ \mu_4,
\end{equation}
while the right ones are the Excess Intersection Formula (Proposition \ref{excess}).
The identity: $d_3\circ\lambda_3+d_4\circ\lambda_4=id_{A_*(X)}$ is just the Proposition \ref{blowupformula}
(plus the projection formula).
The identity: $d_1\circ\lambda_1+\lambda_3\circ d_3=id_{A_*(R)}$ follows from the Excess Intersection Formula
and Proposition \ref{blowupformula}.
The identity: $\lambda_1\circ d_1+\lambda_2\circ d_2=id_{A_*(E)}$ follows from the definition of $F$.
Finally, the identity: $d_2\circ\lambda_2+\lambda_4\circ d_4=id_{A_*(\wt{X})}$ follows from the ones already
proven, plus (\ref{A}), plus the fact that the map
$(j_*,\pi^*):A_*(E)\oplus A_*(X)\row A_*(\wt{X})$ is surjective, which follows from the $(EXCI)$ axiom
(see the proof of \cite[Proposition 5.24]{so2}).

The identity: $\mu_3\circ d_3+\mu_4\circ d_4=id_{A^*(X)}$ follows from Proposition \ref{blowupformula},
and the projection formula.
The identity: $\mu_1\circ d_1+d_2\circ\mu_2=id_{A^*(R)}$ follows from
the Excess Intersection Formula and Proposition \ref{blowupformula}.
The identity: $d_1\circ\mu_1+ d_2\circ\mu_2=id_{A^*(E)}$ follows from the definition of $G$.
Finally, the identity: $\mu_2\circ d_2+d_4\circ\mu_4=id_{A^*(\wt{X})}$ follows the ones already
proven, plus (\ref{B}), plus the fact that the map
$(j^*,\pi_*):A^*(\wt{X})\row A^*(E)\oplus A^*(X)$ is injective, which follows from the fact
that $\lambda$ is a contracting homotopy for the complex $(1)$.
\Qed
\end{proof}

In the case of multiple blow-ups we get:

\begin{proposition}
\label{vvter}
Let $A^*$ be a theory in the sense of Definition \ref{goct},
and $\pi:\widetilde{V}\row V$ be a sequence of blowups of a smooth variety in smooth centers
$R_i$.  Let $\eps_i:E_i\row R_i$ be the respective components of the exceptional divisor
(that is, $E_i$ is the strict transform of the exceptional divisor of the blowup of $R_i$).
Then one has exact sequences:
$$
(1)\hspace{1.5cm}
0\low A_*(V)\stackrel{\pi_*}{\llow}A_*(\widetilde{V})\llow
\oplus_i\kker(A_*(E_i)\stackrel{(\eps_i)_*}{\row}A_*(R_i)).\hspace{3cm}\phantom{a}
$$
$$
(2)\hspace{1.5cm}
0\row A^*(V)\stackrel{\pi^*}{\lrow}A^*(\widetilde{V})\lrow
\oplus_i\coker(A^*(R_i)\stackrel{(\eps_i)^*}{\row}A^*(E_i))\hspace{3cm}\phantom{a}
$$
\end{proposition}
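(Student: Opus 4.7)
The plan is to prove both exact sequences simultaneously by induction on the number $n$ of elementary blow-ups in a permitted factorization
\[
\widetilde V = V_n \xrightarrow{\pi_n} V_{n-1} \xrightarrow{\pi_{n-1}} \cdots \xrightarrow{\pi_1} V_0 = V,
\]
where each $\pi_k$ blows up a smooth center $Z_{k-1} \subset V_{k-1}$ meeting the previously created exceptional divisors in strict normal crossing. The components $E_i$ of the exceptional divisor of $\pi$ are then the proper transforms in $\widetilde V$ of the exceptional divisors of the individual $\pi_i$, each remaining a fibration $\eps_i : E_i \to R_i := Z_{i-1}$. The base case $n=1$ is immediate from Proposition~\ref{razd}: the split short exact sequence stated there yields simultaneously the surjectivity of $\pi_*$ with $\ker(\pi_*) = (j_1)_*\ker((\eps_1)_*)$ and, dually, the injectivity of $\pi^*$ with image of codimension exactly $\coker(\eps_1^*)$.

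For the inductive step I would factor $\pi = \pi' \circ \pi_n$ with $\pi' : V_{n-1} \to V$ a permitted blow-up of length $n-1$. Surjectivity of $\pi_*$ (resp.\ injectivity of $\pi^*$) is automatic by composition. For the middle exactness in (1), suppose $\pi_*(x) = 0$; then $(\pi_n)_*(x) \in \ker(\pi'_*)$ is by the inductive hypothesis a sum $\sum_{i<n}(j'_i)_*(u'_i)$ with $u'_i \in \ker((\eps'_i)_*)$. The key maneuver is to lift each $u'_i$ to $\tilde u_i \in \ker((\eps_i)_*) \subset A_*(E_i)$ so that $(\pi_n)_*(j_i)_*(\tilde u_i) = (j'_i)_*(u'_i)$. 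Subtracting $\sum_{i<n}(j_i)_*(\tilde u_i)$ from $x$ reduces the problem to $(\pi_n)_*(x') = 0$, which by Proposition~\ref{razd} applied to the single blow-up $\pi_n$ places $x'$ in $(j_n)_*\ker((\eps_n)_*)$. The cohomological statement (2) is proved in a completely dual fashion, using the cohomological splitting of Proposition~\ref{razd} for $\pi_n$ to push an $x$ with trivial restriction to each $E_i$ down to a class on $V_{n-1}$ whose restrictions to the earlier $E'_i$ still come from $R_i$, and then invoking induction on $\pi'$.

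The main obstacle is the lifting step: I need that the restriction $E_i \to E'_i$ of $\pi_n$ is itself a permitted blow-up along the smooth subvariety $E'_i \cap Z_{n-1}$ (a consequence of the permitted-divisor hypothesis on $Z_{n-1}$) and that the structure maps $\eps_i, \eps'_i$ to $R_i$ satisfy $\eps'_i \circ (\pi_n|_{E_i}) = \eps_i$. Granted this geometric input, surjectivity of $(\pi_n|_{E_i})_* : A_*(E_i) \to A_*(E'_i)$ follows from a smaller instance of the Proposition itself (or directly from Proposition~\ref{razd}, in the case where $E'_i \cap Z_{n-1}$ is irreducible), supplying the required $\tilde u_i$; the condition $(\eps_i)_*(\tilde u_i)=0$ is then enforced by the projection formula together with the identity $(\eps_i)_* = (\eps'_i)_* \circ (\pi_n|_{E_i})_*$ and the fact that $(\eps'_i)_*(u'_i) = 0$. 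Once this compatibility is in place, the bookkeeping in both (1) and (2) is formal.
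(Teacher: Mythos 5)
Your proposal is correct and takes essentially the same route as the paper: induction on the number of blow-ups, peeling off the last one $\pi_n$, applying Proposition \ref{razd} to it, and reducing the inductive step to the surjectivity of $\kker((\eps_i)_*)$ on the proper transform $E_i$ onto the corresponding kernel on the old exceptional component of $V_{n-1}$ — precisely the lifting the paper invokes. Your added justification of that lifting (the restriction $E_i\row E'_i$ of $\pi_n$ is again a blow-up with smooth center by permittedness, so its push-forward is surjective, and $(\eps_i)_*=(\eps'_i)_*\circ(\pi_n|_{E_i})_*$ kills the obstruction) merely fills in a step the paper states without proof.
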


\begin{proof}
The Proposition \ref{razd} settles the case where $\pi$ is a single blow up.
Let us use induction on the number of blowings.
Suppose, $\wt{V}$ is the result of $n$ blowings, and $Y$ is the result of $(n-1)$ (first)
of them. Then $\rho:\wt{V}\row Y$ is a single blow up with the center $R$.
Let $F_i,\,i=1,\ldots,n-1$ be the components of the exceptional divisor of $Y$,
and $E_i,\,i=1,\ldots,n-1$ be their strict transforms under $\rho$, and $E$ be the
exceptional divisor of $\rho$. By inductive assumption and Proposition \ref{razd},
we have exact sequences:
\begin{equation*}
\begin{split}
&0\low A_*(V)\stackrel{\pi_*}{\llow}A_*(Y)\llow
\oplus_{i=1}^{n-1}\kker(A_*(F_i)\stackrel{(\eps_i)_*}{\row}A_*(R_i));\\
&0\low A_*(Y)\stackrel{\rho_*}{\llow} A_*(\wt{V})\llow\kker(A_*(E)\stackrel{\eps_*}{\row}A_*(R)).
\end{split}
\end{equation*}
Taking into account that the map:
\begin{equation*}
\begin{split}
&\kker(A_*(F_i)\row A_*(R_i))\twoheadleftarrow\kker(A_*(E_i)\row A_*(R_i))
\end{split}
\end{equation*}
is surjective, we get the first exact sequence. The second one can be proven in a similar fashion.
\Qed
\end{proof}

The following "singular" variant of the above result is an important tool in our calculations, and it permits to present $A_*(Z)$ in terms of $A_*$ of finitely many smooth varieties.

\begin{proposition}
\label{AbmZ}
Let $Z$ be a variety, and $\wt{Z}\stackrel{\pi}{\row} Z$ be the sequence of blowups with
smooth centers $R_i$ and the respective components $E_i$ of the exceptional divisor. Then
we have an exact sequence:
$$
0\low A_*(Z)\llow\left(A_*(\widetilde{Z})\oplus\left(\oplus_iA_*(R_i)\right)\right)\llow
\oplus_iA_*(E_i).
$$
\end{proposition}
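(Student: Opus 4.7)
The plan is to deduce this as the singular analogue of Proposition \ref{vvter}(1), combining the Borel--Moore excision sequence with the colimit definition $A_*(Z) = \operatornamewithlimits{colim}_{V \to Z} A_*(V)$. First, the composition is zero: writing $r_i: R_i \hookrightarrow Z$ for the inclusion of the $i$-th centre and $j_i: E_i \hookrightarrow \wt{Z}$ for the inclusion of its exceptional preimage, the square $\pi \circ j_i = r_i \circ \eps_i$ commutes, so push-forward functoriality yields $\pi_*(j_i)_* = (r_i)_*(\eps_i)_*$ and the two summands of the composition cancel.

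For surjectivity onto $A_*(Z)$, take a class $[V \stackrel{v}{\to} Z]$ with $V$ smooth projective. If $v(V) \subseteq R_i$ for some $i$, the class lies in the image of $A_*(R_i)$. Otherwise the rational map $\pi^{-1} \circ v$ is defined in codimension zero, and Theorem \ref{Hif} yields a permitted blow-up $\pi_V: \wt{V} \to V$ with a lift $\wt{v}: \wt{V} \to \wt{Z}$ satisfying $\pi \circ \wt{v} = v \circ \pi_V$. Since $\pi_V$ is birational, $(\pi_V)_*(1) = 1 + u$ in $A_*(V)$ with $u$ supported over $\cup_i R_i$; by the Borel--Moore excision sequence the correction $u$ comes from $\oplus_i A_*(R_i)$ (applied recursively, shrinking the support at each stage), so $[V \to Z]$ is realised as $\pi_*(\wt{v}_*(1)) - \sum_i (r_i)_*(\beta_i)$ for suitable $\beta_i$.

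The hardest step is exactness in the middle. Assume $\pi_*(\alpha) - \sum_i (r_i)_*(\beta_i) = 0$ in $A_*(Z)$. By definition of $A_*(Z)$ as a colimit, this vanishing is realised on some common smooth projective $V \to Z$ dominating the smooth models representing $\alpha$ and each $\beta_i$. The Weak Factorization Theorem (\ref{WF}) applied to the birational map $V \dashrightarrow \wt{Z}$ allows, after further permitted blow-ups of both sides, to replace $V$ by a smooth $V'$ which is a permitted blow-up of $\wt{Z}$ with all new centres lying over $\cup_i R_i$. Applying Proposition \ref{vvter}(1) to the composite $V' \to \wt{Z} \to Z$ identifies the kernel of $A_*(V') \to A_*(Z)$ with classes supported on exceptional divisors, whose decomposition along the components lying over $\wt{Z}$ and over the centres recovers the required $\gamma_i \in A_*(E_i)$ mapping to $(\alpha, (\beta_i))$. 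The principal obstacle is the bookkeeping across the tower $V' \to \wt{Z} \to Z$: one must verify that the extra blow-ups introduced by weak factorization do not disturb the identification of the $E_i$-components, which requires a careful induction on the number of interposed blow-ups, using that each such blow-up has rational fibres and hence its push-forward is surjective.
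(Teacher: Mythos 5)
Your complex-is-a-complex step and surjectivity sketch are essentially fine (surjectivity is even simpler than you make it: by Proposition \ref{pi1invert}(2), $(\pi_V)_*$ is onto, so you can take $x\in A_*(\wt V)$ with $(\pi_V)_*(x)=1$ and push $\wt v_*(x)$ forward, avoiding the recursive correction over the centres). The problem is the middle exactness, which is where all the work is and where your argument has a real gap.

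You want to say: if $\pi_*(\alpha)-\sum_i(r_i)_*(\beta_i)=0$ in $A_*(Z)$, then ``by definition of $A_*(Z)$ as a colimit, this vanishing is realised on some common smooth projective $V\to Z$.'' But $A_*(Z)=\operatorname{colim}_{V\to Z}A_*(V)$ is a colimit over a category whose morphisms induce \emph{push-forwards}, and it is not clear that this category is filtered in a way that makes ``vanishing is realised at a single object'' automatic. Moreover, $\alpha$ lives on $\wt Z$ and $\beta_i$ on $R_i$; for an irreducible $V$ to dominate both $\wt Z$ and the lower-dimensional $R_i$ compatibly over $Z$ is already problematic, so the ``common $V$'' must be reducible, and then $V\dashrightarrow\wt Z$ is not a birational map of smooth proper varieties, which is what Theorem \ref{WF} requires. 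Even when $V'\dashrightarrow\wt Z$ is birational, WF gives a zigzag of blow-ups and blow-downs, not a single permitted tower over $\wt Z$, so ``replace $V$ by a permitted blow-up of $\wt Z$'' is not a direct consequence. Finally, you invoke ``Proposition \ref{vvter}(1) applied to the composite $V'\to\wt Z\to Z$,'' but that proposition requires the base to be smooth, and here the base $Z$ is singular; you cannot apply it to the composite.

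The paper avoids all of this by not trying to analyse when an element of the colimit vanishes. Instead it proves the single-blow-up case (Lemma \ref{Abm1}) by \emph{constructing an explicit inverse} $\ffi$ to the natural surjection $\psi:\operatorname{coker}(A_*(E)\to A_*(\wt Z)\oplus A_*(R))\to A_*(Z)$: for each smooth projective $v:V\to Z$ one defines $\ffi_v$ case-by-case (through $A_*(R)$ if $v(V)\subset R$; otherwise via a resolution $\wt V\to V$ and Lemma \ref{lVT}), then checks independence of the resolution via WF and compatibility with the push-forward relations, and only then verifies $\ffi\circ\psi=\operatorname{id}$ and $\psi\circ\ffi=\operatorname{id}$. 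The crucial intermediate tool is Lemma \ref{lVT} (the case of a smooth total space over a not-necessarily-smooth $T$), which your proposal does not isolate. The multi-centre case then follows by induction on the number of blow-ups exactly as in Proposition \ref{vvter}. Your outline gestures at the right ingredients (WF, Proposition \ref{vvter}, excision), but the strategy of proving middle exactness directly from the colimit description does not go through as written.
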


\begin{proof}

\begin{lemma}
\label{lVT}
Let $\pi:\wt{V}\row V$ be a projective birational map of smooth varieties, which
is an isomorphism outside the closed subvariety $T\row V$, and such that $W=\pi^{-1}(T)$
is a divisor with strict normal crossings with components $E_i$. Then we have an exact sequence:
$$
0\low A_*(V)\stackrel{\pi_*}{\llow}A_*(\wt{V})\llow
\oplus_i\kker(A_*(E_i)\row A_*(T)).
$$
\end{lemma}

\begin{proof}
Let $\pi':\wt{V}'\row V$ be the permitted blow up with centers over $T$ resolving $T$
to a divisor $W'$ with strict normal crossings (Theorem \ref{Hip}). Let $E'_j$ be the components
of $W'$, and $R'_j$ be the respective smooth centers. Then, by Proposition \ref{vvter},
we have an exact sequence:
$$
0\low A_*(V)\stackrel{\pi'_*}{\llow}A_*(\wt{V}')\llow
\oplus_i\kker(A_*(E'_i)\row A_*(R'_i)).
$$
Since the map $A_*(\wt{V}')/(\oplus_i\kker(A_*(E'_i)\row A_*(R'_i)))\row A_*(V)$
factors through $A_*(\wt{V}')/(\oplus_i\kker(A_*(E'_i)\row A_*(T)))$,
we have the statement for $\wt{V}'$.
Let us denote $B(\wt{V}):=\coker\left(\oplus_i\kker(A_*(E_i)\row A_*(T))\row A_*(\wt{V})\right)$.
We have a natural surjective map $B(\wt{V})\twoheadrightarrow A_*(V)$.
Since $\wt{V}$ and $\wt{V}'$ are isomorphic outside $W$ and $W'$, by the Weak Factorization Theorem
(Theorem \ref{WF}(6)), we have a diagram:
$$
\xymatrix @=3mm{
& Y_1 \ar @{->}[ld] \ar @{->}[rd] && Y_3 \ar @{->}[ld] \ar @{->}[rd] &&
 && Y_{n-2} \ar @{->}[ld] \ar @{->}[rd] && Y_n \ar @{->}[ld] \ar @{->}[rd]  &\\
\wt{V}' && Y_2 && Y_4 &\ldots  & Y_{n-3}  && Y_{n-1}&& \wt{V},
}
$$
where all $Y_i$'s are projective either over $\wt{V}'$, or $\wt{V}$, and all the maps are blowings
up/down w.r.to smooth centers which belong to exceptional divisor, and meet all of it's
components properly.
In particular, each $Y_i$ has a natural map to $V$, which is an isomorphism outside $T$, and the
preimage of $T$ is the exceptional divisor (with strict normal crossings) on $Y_i$. Since the maps
$Y_{2n-1}\lrow Y_{2n}\llow Y_{2n+1}$ are blowings up/down with centers belonging to an exceptional
divisor, we see (using Proposition \ref{vvter}) that the maps $B(Y_{2n-1})\row B(Y_{2n})\low B(Y_{2n+1})$
are isomorphisms. Clearly, these identifications are compatible with the maps $B(Y_i)\row A_*(V)$.
Since the map $B(\wt{V}')\row A_*(V)$ is an isomorphism, so is the map $B(\wt{V})\row A_*(V)$.
\phantom{a}\hspace{5mm}
\Qed
\end{proof}

\begin{lemma}
\label{Abm1}
Let $\pi:\wt{Z}\row Z$ be a projective map of varieties, which is an
isomorphism outside the closed subvariety $R\row Z$ with the preimage
$E=\pi^{-1}(R)$. Then one has an exact sequence:
$$
0\low A_*(Z)\llow\left(A_*(\widetilde{Z})\oplus A_*(R)\right)\llow
A_*(E).
$$
\end{lemma}

\begin{proof}
The fact that it is a complex is evident.
Let us construct the map
$$
\ffi: A_*(Z)\lrow\coker\left(A_*(E)\row A_*(\wt{Z})\oplus A_*(R)\right)
$$
inverse to our projection.
Let $v:V\row Z$ be some projective map with $V$ smooth irreducible.
If the image of $v$ is contained in $R$, then we get a natural map
$A_*(V)\row A_*(R)\row\coker(A_*(E)\row A_*(\wt{Z})\oplus A_*(R))$.
Otherwise, we have a birational map $V\dashrightarrow\wt{Z}$, which
can be resolved by blowing up smooth centers over $v^{-1}(R)$.
Then the exceptional set $W$ of this blowup $\rho:\wt{V}\row V$ is a divisor with strict normal crossings on $\wt{V}$, and
the natural map $\wt{v}:\wt{V}\row\wt{Z}$ maps $W$ to $E$. Moreover, we can assume that $W=(v\circ\rho)^{-1}(R)$.
If $F_j$ are components of $W$, and $S_j$ are the respective
smooth centers, then by Lemma \ref{lVT},
$$
0\low A_*(V)\stackrel{\rho_*}{\llow}A_*(\widetilde{V})\llow
\oplus_j\kker(A_*(F_j)\row A_*(v^{-1}(R))).
$$
Since $\wt{v}$ maps $F_j$ to $E$, and $v$ maps $v^{-1}(R)$ to $R$, the map $(\wt{v})_*:A_*(\wt{V})\row A_*(\wt{Z})$
provides a well-defined map
$\ffi_v:A_*(V)\lrow \coker\left(A_*(E)\row A_*(\wt{Z})\oplus A_*(R)\right)$.

Let $\wt{V}_1,\wt{V}_2$ be two resolutions as above, with the exceptional divisors $W_1$ and $W_2$.
Then $\wt{V}_1\backslash W_1\cong V\backslash v^{-1}(R)\cong\wt{V}_2\backslash W_2$. Hence, by the
Weak Factorization Theorem (Theorem \ref{WF}(6)), there exists a diagram:
$$
\xymatrix @=3mm{
& Y_1 \ar @{->}[ld] \ar @{->}[rd] && Y_3 \ar @{->}[ld] \ar @{->}[rd] &&
 && Y_{n-2} \ar @{->}[ld] \ar @{->}[rd] && Y_n \ar @{->}[ld] \ar @{->}[rd]  &\\
\wt{V}_1 && Y_2 && Y_4 &\ldots  & Y_{n-3}  && Y_{n-1}&& \wt{V}_2,
}
$$
where all $Y_i$'s are projective either over $\wt{V}_1$, or $\wt{V}_2$, and all the maps are blowings
up/down w.r.to smooth centers which belong to exceptional divisor, and meet all of it's
components properly.
In particular, each $Y_i$ has a natural map to $\wt{Z}$, so that the preimage of $E$ is the exceptional
divisor.

Using notations from the proof of Lemma \ref{lVT}, let us
define
$$
B(Y_i):=\coker(\left(\oplus_j\kker(A_*(G_j)\row A_*(v^{-1}(R)))\right)\row A_*(Y_i)),
$$ where
$G_j$ are components of the exceptional divisor of $Y_i$.
Then we have a natural map $B(Y_i)\row \coker(A_*(E)\row A_*(\wt{Z})\oplus A_*(R))$, which is
compatible with the identifications: $B(Y_{2n-1}) = B(Y_{2n}) = B(Y_{2n+1})$ (as in the proof of
Lemma \ref{lVT}).
This shows that the map
$$
\ffi_v:A_*(V)\row\coker\left(A_*(E)\row A_*(\wt{Z})\oplus A_*(R)\right)
$$
does not depend on the choice of the resolution $\wt{V}\row V$.

Let $V_1\stackrel{f}{\lrow}V_2\stackrel{v_2}{\row} Z$ be some projective maps with $V_1$ and $V_2$ smooth,
and $v_1=v_2\circ f$. We can assume $V_1$ and $V_2$ irreducible.
If $image(v_2)\subset R$, then both maps $\ffi_{v_1}$ and $\ffi_{v_2}$
are passing through $A_*(R)$ and are clearly
compatible with $f_*$. So, we can assume that $image(v_2)\not\subset R$.
Let $\wt{V}_2\row V_2$ be the permitted blow up resolving indeterminacy of $\pi^{-1}\circ v_2$, and resolving $v_2^{-1}(R)$ to a divisor
$W_2$ with strict normal crossings.

If $image(v_1)\subset R$, then since the fibers of the projection $\wt{V}_2\row V_2$ are unions of rational varieties, we get
a rational map $V_1\dashrightarrow W_2$. Resolve the indeterminacies of this map:
$V_1\stackrel{\rho}{\llow}\wt{V}_1\stackrel{f'}{\lrow} W_2$, which gives $\wt{f}:\wt{V}_1\row\wt{V}_2$.
Since the map $\rho_*:A_*(\wt{V}_1)\row
A_*(V_1)$ is surjective, and the compatibility of this map with $\ffi_{\wt{v}_1}, \ffi_{v_1}$
is already known (the image is in $R$),
we can substitute $V_1$ by $\wt{V}_1$. Since $W_2$ is mapped to $E$, we get that
$\ffi_{v_2}\circ\wt{f}_*=\ffi_{\wt{v}_1}:A_*(\wt{V}_1)\row
\coker\left(A_*(E)\row A_*(\wt{Z})\oplus A_*(R)\right)$.

Finally, if $image(v_1)\not\subset R$, then we get a rational map $V_1\dashrightarrow\wt{V}_2$ with
indeterminacies only over $v_1^{-1}(R)$ which can be resolved by $\wt{V}_1\row V_1$ making the premiage
$W_1$ of $R$ a divisor with strict normal crossings. We get a map $\wt{f}:\wt{V}_1\row\wt{V_2}$. Then
we can take $\wt{v}_1=\wt{v}_2\circ\wt{f}$, and so
$\ffi_{v_1}=\ffi_{v_2}\circ f_*$.

Since $A_*(Z)=\op{colim}_{v:V\row Z}A_*(V)$, where $v$ runs over all projective maps from smooth quasi-projective varieties,
we obtain a well-defined map:
$$
A_*(Z)\stackrel{\ffi}{\lrow}\coker\left(A_*(E)\row A_*(\wt{Z})\oplus A_*(R)\right)
$$
It is easy to see that it is inverse to the natural projection:
$$
A_*(Z)\stackrel{\psi}{\llow}\coker\left(A_*(E)\row A_*(\wt{Z})\oplus A_*(R)\right)
$$
On the left:
let $v:V\row Z$ be some projective map with $V$ smooth irreducible.
There are two cases: 1) $image(v)\subset R$; 2) $image(v)\not\subset R$.
In both cases, the fact that $\psi\circ\ffi$ is the identity on the image of $v_*:A_*(V)\row A_*(Z)$
is evident from the very definition. \\
On the right: the fact that $\ffi\circ\psi$ is the identity on the
$A_*(R)$-component is evident. As for the $A_*(\wt{Z})$-component, if we have some
projective map $v: V\row\wt{Z}$ then in the definition of $\ffi_{\pi\circ v}$ we can choose $\wt{v}=v\circ\rho$,
where $\rho:\wt{V}\row V$ is the smooth blowup such that $(v\circ\rho)^{-1}(E)$ is a divisor with strict normal crossings.
Then the respective map $\wt{v}:\wt{V}\row\wt{Z}$ factors through $v$, and hence, the composition $\ffi\circ\psi$ is the identity
on the image of $A_*(V)\stackrel{v_*}{\lrow}A_*(\wt{Z})\row\coker\left(A_*(E)\row A_*(\wt{Z})\oplus A_*(R)\right)$.
Thus, we get the identity map on the $A_*(\wt{Z})$-component as well.
Hence, our complex is exact.
\Qed
\end{proof}

\begin{remark}
Of course, if $A^*$ can be extended to a "large" theory, the above Lemma follows automatically
from the respective localization (excision) axiom. The point is that it is true for any theory
in the sense of Definition \ref{goct}.
\end{remark}

Lemma \ref{Abm1} settles the case where $\pi$ is a single blow up.
The rest is done by the induction on the number of blowings in the same way as the proof of
Proposition \ref{vvter}.
\Qed
\end{proof}

\begin{remark}
1) In particular, this applies when $\wt{Z}\row Z$ is the resolution of $Z$ as in Theorems \ref{Hi},\ref{Hif},
that is the permitted blow up with smooth centers which meet the components of the exceptional divisor
properly, and resolves the singularities of $Z$, and then makes the special divisor the one with the strict normal crossings. In this case, all the varieties aside from $Z$ participating in the formula are smooth, and
we get the "finite" presentation of $A_*(Z)$ in terms of smooth varieties.

2) The map $A_*(\wt{Z})\row A_*(Z)$ is not surjective, in general, if $Z$ is not smooth.
Take, for example, $Z$ - the cone over an anisotropic conic, and $R$ - it's vertex.
Then $\wt{Z}$ has no zero cycles of odd degree, while $Z$ has a rational point.
\end{remark}

We will also need the following Bertini-type result.

\begin{proposition}
\label{Bd}
Let $X$ be smooth quasi-projective variety, and $Z\subset X$ be a proper closed subvariety of it.
Then there exists a divisor $Y$ of $X$ which contains $Z$, and is smooth outside $Z$,
as well as in the generic points of the components of $Z$.
\end{proposition}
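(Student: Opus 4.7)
The plan is to realize $Y$ as the vanishing locus of a generic section of a sufficiently twisted ideal sheaf, via Bertini. I will first fix an embedding $X\hookrightarrow\pp^N$ and let $\bar{X}$ be its closure, with $\bar{Z}$ the closure of $Z$ in $\bar{X}$; then $Z=\bar{Z}\cap X$, and the irreducible components of $\bar{Z}$ correspond bijectively to those of $Z$, with the respective generic points $\eta_i$ lying in the smooth open $X$. For a very ample $\co_{\bar{X}}(1)$ and $n\gg 0$, the twisted ideal sheaf $\ci_{\bar{Z}}(n)$ is globally generated by Serre's theorem; set $V:=H^0(\bar{X},\ci_{\bar{Z}}(n))$. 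The divisor $Y$ will be cut out on $X$ by a generic $s\in V$.

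Two dense open conditions on $s$ are required. Condition $(a)$: the divisor $V(s)$ is smooth on $X\setminus Z$. On $\bar{X}\setminus\bar{Z}$ the sheaf $\ci_{\bar{Z}}(n)$ agrees with $\co_{\bar{X}}(n)$, so $V$ restricts to a base-point-free system there, and classical Bertini in characteristic zero applied on the smooth open $X\setminus Z$ gives smoothness of $V(s)$ there for generic $s$. Condition $(b)$: for each component $W_i$ of $Z$, the local equation of $s$ at $\eta_i$ lies in $\mathfrak{m}_{\eta_i}\setminus\mathfrak{m}_{\eta_i}^2$. Since $Z$ is reduced and only the component $W_i$ passes through $\eta_i$, one has
\[
\ci_{\bar{Z},\eta_i}=\ci_{W_i,\eta_i}=\mathfrak{m}_{X,\eta_i}
\]
inside the regular local ring $\co_{X,\eta_i}$. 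Global generation of $\ci_{\bar{Z}}(n)$ at $\eta_i$ then forces the composition
\[
V\lrow \ci_{\bar{Z}}(n)_{\eta_i}\twoheadrightarrow\mathfrak{m}_{\eta_i}/\mathfrak{m}_{\eta_i}^{2}\otimes_{\co} k(\eta_i)
\]
to be surjective; its kernel is a proper linear subspace, and avoiding it is a nonempty open condition on $V$. A section satisfying $(b)$ at $\eta_i$ has its local equation part of a regular system of parameters, so $V(s)$ is regular at $\eta_i$.

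Taking $s$ in the intersection of $(a)$ and the finitely many instances of $(b)$ (a dense open subset of $V$ over the infinite field $k$), and setting $Y:=V(s)\cap X$, produces a divisor on $X$ that contains $Z=\bar{Z}\cap X$, is smooth on $X\setminus Z$, and is regular at each $\eta_i$. The main technical point is the surjectivity in $(b)$, which rests on the identification $\ci_{\bar{Z},\eta_i}=\mathfrak{m}_{X,\eta_i}$ combined with the global generation of $\ci_{\bar{Z}}(n)$; the rest is a standard Bertini argument, and possible singularities of $\bar{X}$ at infinity cause no harm because all smoothness assertions concern only the smooth open $X\subset\bar{X}$.
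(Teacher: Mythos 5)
The paper states Proposition \ref{Bd} without proof, introducing it only as ``the following Bertini-type result,'' so there is no argument in the text to compare against. Your proof is a correct implementation of exactly the Bertini argument that label gestures at: compactify, take a generic section of a sufficiently twisted ideal sheaf, use global generation to kill the base locus away from $\bar{Z}$, and use the identification $\ci_{\bar{Z},\eta_i}=\mathfrak{m}_{\eta_i}$ together with Nakayama to show that a generic section maps to a nonzero class in $\mathfrak{m}_{\eta_i}/\mathfrak{m}_{\eta_i}^{2}$, giving regularity of the divisor at each $\eta_i$; the openness and nonemptiness of both conditions, and the fact that $k$ is infinite in characteristic zero, finish the job. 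The one point worth verifying carefully is the surjectivity of $V\to\mathfrak{m}_{\eta_i}/\mathfrak{m}_{\eta_i}^{2}$, and you do it correctly: global generation means the image of $V$ generates $\mathfrak{m}_{\eta_i}$ as an $\co_{X,\eta_i}$-module, hence spans $\mathfrak{m}_{\eta_i}/\mathfrak{m}_{\eta_i}^{2}$ over the residue field, so the $k$-linear map is nonzero and its kernel is a proper $k$-subspace of $V$.
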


\subsection{Multiple points excess intersection formula}
\label{subsMPEIF}

In this subsection, $A^*$ is any theory in the sense of Definition \ref{goct}.
Our main aim here is Proposition \ref{MPEIF}.
This analogue of the usual Excess Intersection Formula, where regular
embeddings (of smooth varieties) are substituted by strict normal crossings divisors,
is a very useful computational tool. To state it, one needs to define
the pull-back maps for such divisors.
In the case of Algebraic Cobordism, or any theory obtained from it by
change of coefficients, this is just a (small) piece of the theory of refined
pull-backs developed by M.Levine-F.Morel (following W.Fulton \cite{Fu}).
But this piece is much more explicit than the general one and
is sufficient for almost all applications we need.
The exception is Subsection \ref{c}, where the refined pull-backs of more general
type appear, and where we have to restrict to {\it theories of rational type} (= {\it free theories}
of M.Levine-F.Morel) as a result.
The formula is valid for arbitrary theory in the sense of Definition \ref{goct}, but since
our main statements are valid only for
{\it theories of rational type}, we formulate it only for constant theories
and refer to the case of Algebraic Cobordism done by M.Levine and F.Morel
(see \cite[Theorem 6.6.6(2)(a)]{LM}).

We recall:

\begin{definition} {\rm (\cite[Definition 3.1.4]{LM})}
\label{ncd}
Let $X$ be a smooth variety, and $D=\sum_{i=1}^r l_iD_i$ be an effective Weil divisor on $X$. We call $D$ a divisor with
strict normal crossing, if for any $J\subset\{1,\ldots,r\}$, the intersection scheme $\cap_{i\in J}D_i$ is
a smooth subvariety of $X$ of codimension $=|J|$.
\end{definition}

Denote as $|D|\stackrel{d}{\row}X$ the support $(\cup_{i=1}^r D_i)_{red}$. By $A_*(D)$ we will always mean $A_*(|D|)$. In particular, it
does not depend on the multiplicity of the components as long as one is positive.
Recall, that we have an exact sequence:
$$
0\low A_*(D)\low\oplus_i A_*(D_i)\low\oplus_{i\neq j}A_*(D_i\cap D_j).
$$
Thus, an element of $A_*(D)$ can be thought of as a collection of elements of $A_*(D_i)$
modulo some identifications.

The strict normal crossings divisor has a {\it divisor class} $[D]\in A^0(D)$ such that
$d_*([D])=c^A_1(O(D))\in A^1(X)$.
Having $\lambda_i=c^A_1(O(D_i))$, the idea is to write
\begin{equation}
\label{div-coeff}
[l_1]\cdot_{F_A}\lambda_1+_{F_A}[l_2]\cdot_{F_A}\lambda_2+_{F_A}+\ldots+_{F_A}[l_r]\cdot_{F_A}\lambda_r=
\sum_{\emptyset\neq I\subset\{1,\ldots,r\}}(\prod_{i\in I}\lambda_i)\cdot F^{l_1,\ldots,l_r}_I(\lambda_1,\ldots,\lambda_r),
\end{equation}
where $F^{l_1,\ldots,l_r}_I$ are some power series in $r$ variables with $A$-coefficients, and then
define:
\begin{definition} {\rm (\cite[Definition 3.1.5]{LM})}
\label{divclass}
$$
[D]:=\sum_{\emptyset\neq I\subset\{1,\ldots,r\}}(\hat{d}_I)_*(1)\cdot F^{l_1,\ldots,l_r}_I(\lambda_1,\ldots,\lambda_r),
$$
where $\hat{d}_I:D_I=\cap_{i\in I}D_i\row |D|$ is the closed embedding.
\end{definition}
The result does not depend on how you subdivide
the above formal sum into pieces, but there is some standard way. The convention is
(see \cite[Subsection 3.1]{LM}) to define $F^{l_1,\ldots,l_r}_I$ as the sum of those monomials which are
made exactly of $\lambda_i,\,i\in I$ divided by the $(\prod_{i\in I}\lambda_i)$.
Denoting $\lambda_I=\sum_{i\in I}^{F_A}[l_i]\cdot_{F_A}\lambda_i$ and noticing that $\lambda_I$ is exactly the sum of those monomials
in $\lambda_{\{1,\ldots,r\}}$ which are made of some subset of $\lambda_i,\,i\in I$, we obtain that this standard choice of coefficients
is given by the formula:
\begin{equation}
\label{F-coeff}
F^{l_1,\ldots,l_r}_J=\frac{\sum_{I\subset J}(-1)^{|J|-|I|}\lambda_I}{\prod_{j\in J}\lambda_j},
\end{equation}
where we treat $\lambda_i$'s as formal variables.
 For our purposes, though, it will be convenient to be flexible in choosing $F^{l_1,\ldots,l_r}_I$, so below it will be any collection of
power series satisfying the above equation. In applications we will be often using the choice where $F^{l_1,\ldots,l_r}_I=0$ for $|I|>1$.
Such a choice is, clearly, possible, since every term of the expression $\lambda_{\{1,\ldots,r\}}$ is divisible by some $\lambda_i$.

For a divisor $D$ with strict normal crossings as above we denote as $\komp{D}$ the disjoint union
$\coprod_{\emptyset\neq I\subset \{1,\ldots,r\}}D_I$ of all it's faces, with the natural map $\hat{d}:\komp{D}\row D$.

\begin{definition}
\label{starpullback}
Having a divisor $D=\sum_{i=1}^r l_iD_i$ with strict normal crossings on $X$, we can define
the pull-back:
$$
d^{\star}: A_*(X)\row A_{*-1}(D)
$$
as $\hat{d}_* d^{\sstar}(x)$ where
$$
d^{\sstar}(x)=\sum_{\emptyset\neq I\subset\{1,\ldots,r\}}
d_I^*(x)\cdot F^{l_1,\ldots,l_r}_I(\lambda_1,\ldots,\lambda_r),
$$
and $d_I:D_I\row X$ is the regular embedding of the $I$-th face of $D$.
\end{definition}
Notice, that such a pull-back clearly depends on the multiplicity of the components
(in our notations it is manifested only by the target).
Also, since for $I\subset J$, for $d_{J/I}:D_J\row D_I$, we have:
$(d_{J/I})_*(1)=\prod_{i\in J\backslash I}\lambda_i$, the projection formula shows that it does not matter, how
one chooses the $F^{l_1,\ldots,l_r}_I$ (in particular, one can choose these to be zero for $|I|>1$).

Immediately from the definition, we obtain:

\begin{lemma}
\label{ddstar}
The composition $d_*\circ d^{\,\star}:A^*(X)\row A^{*+1}(X)$ is the multiplication by $c^A_1(O(D))$.
\end{lemma}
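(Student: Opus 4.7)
The plan is to show that $d^{\,\star}$ is essentially ``cap product with $[D]$'' composed with $d^*$, and then reduce the claim to the identity $d_*([D]) = c^A_1(\co(D))$, which is built into the definition of the divisor class.

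First I would rewrite $d^{\,\star}(x)$ in a more compact form. Since each $\lambda_i = c^A_1(\co(D_i))$ is pulled back from $X$, the classes $F^{l_1,\ldots,l_r}_I(\lambda_1,\ldots,\lambda_r)$ on $D_I$ are restrictions of the same classes on $X$; writing $d_I = d\circ\hat{d}_I$ and applying the projection formula to $\hat{d}_I$ one gets
\[
(\hat{d}_I)_*\bigl(d_I^*(x)\cdot F^{l_1,\ldots,l_r}_I(\lambda_1,\ldots,\lambda_r)\bigr)
= d^*(x)\cdot (\hat{d}_I)_*(1)\cdot F^{l_1,\ldots,l_r}_I(\lambda_1,\ldots,\lambda_r).
\]
Summing over $I$ and comparing with Definition \ref{divclass} yields the clean identity $d^{\,\star}(x) = d^*(x)\cdot [D]$ in $A_*(|D|)$.

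Next I would apply $d_*$ and use the projection formula once more on $d$:
\[
d_*d^{\,\star}(x) = d_*\bigl(d^*(x)\cdot [D]\bigr) = x\cdot d_*([D]).
\]
So it suffices to verify $d_*([D]) = c^A_1(\co(D))$. Pushing forward Definition \ref{divclass} and using functoriality $(d\circ\hat{d}_I)_* = (d_I)_*$ together with the identity $(d_I)_*(1) = \prod_{i\in I}\lambda_i$ (a consequence of the transversality built into the strict normal crossing condition, applied inductively to the regular embeddings $D_I\hookrightarrow X$), gives
\[
d_*([D]) = \sum_{I\subset\{1,\ldots,r\}}\Bigl(\prod_{i\in I}\lambda_i\Bigr)\cdot F^{l_1,\ldots,l_r}_I(\lambda_1,\ldots,\lambda_r).
\]
By the very choice of the power series $F^{l_1,\ldots,l_r}_I$, the right-hand side equals $[l_1]\cdot_{F_A}\lambda_1 +_{F_A}\cdots +_{F_A} [l_r]\cdot_{F_A}\lambda_r$, which is $c^A_1(\co(l_1D_1)\otimes\cdots\otimes\co(l_rD_r)) = c^A_1(\co(D))$ by the formal group law.

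There is no serious obstacle here; this is essentially an unwinding of the definitions together with two applications of the projection formula. The only point that requires a moment of care is the equality $(d_I)_*(1) = \prod_{i\in I}\lambda_i$, which depends on iterating the transversality of the embeddings $D_i\hookrightarrow X$ guaranteed by the strict normal crossing hypothesis, and which is independent of the choice of coefficients $F^{l_1,\ldots,l_r}_I$ (just as the construction of $[D]$ and $d^{\,\star}$ themselves are, via the same projection formula argument noted after Definition \ref{starpullback}).
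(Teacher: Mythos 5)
Your argument is correct and is precisely the unwinding that the paper treats as "immediate from the definition" (the paper supplies no proof beyond $\square$). The key moves — rewriting $d^{\,\star}(x)=d^*(x)\cdot[D]$ via the projection formula, then reducing to $d_*([D])=c^A_1(\co(D))$, which follows from $(d_I)_*(1)=\prod_{i\in I}\lambda_i$ and the defining equation of the coefficients $F^{l_1,\ldots,l_r}_I$ — are exactly the facts the paper has already set up in the discussion around Definitions \ref{divclass} and \ref{starpullback}.
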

\Qed

Let $w:W\row X\times\pp^1$ be a projective map, with $W$ smooth, such that
$W_0=w^{-1}(X\times 0)\stackrel{i_0}{\hookrightarrow}W$ and
$W_1=w^{-1}(X\times 1)\stackrel{i_0}{\hookrightarrow}W$ are divisors with strict normal crossings.
Let $W_0\stackrel{w_0}{\row}X$, $W_1\stackrel{w_1}{\row}X$ be natural maps.
As a corollary of Lemma \ref{ddstar} we get:

\begin{proposition}
\label{MPcor}
In the above situation, $(i_0)_*\circ i_0^{\star}=(i_1)_*\circ i_1^{\star}$ in $A_*(W)$.
In particular, $(w_0)_*\circ i_0^{\star}=(w_1)_*\circ i_1^{\star}$
\end{proposition}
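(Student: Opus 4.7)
The plan is to reduce both sides of the identity to the action of a single first Chern class on $A_*(W)$ and then invoke the fact that the divisors $X\times\{0\}$ and $X\times\{1\}$ on $X\times\pp^1$ are linearly equivalent. Specifically, Lemma \ref{ddstar} applied to the strict normal crossing divisor $W_l\stackrel{i_l}{\hookrightarrow} W$ says that the composition $(i_l)_*\circ i_l^{\star}:A_*(W)\row A_{*-1}(W)$ is multiplication by $c^A_1(\co_W(W_l))$, for $l=0,1$. The first equality therefore reduces to the identity $c^A_1(\co_W(W_0))=c^A_1(\co_W(W_1))$ in $A^1(W)$.

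To establish this, I would observe that, by construction, $W_l=w^{-1}(X\times\{l\})$ is the scheme-theoretic pull-back of the Cartier divisor $X\times\{l\}$, so $\co_W(W_l)\cong w^*\co_{X\times\pp^1}(X\times\{l\})$. The two divisors $X\times\{0\}$ and $X\times\{1\}$ on $X\times\pp^1$ are both pull-backs via the projection $p_{\pp^1}:X\times\pp^1\row\pp^1$ of the rational points $\{0\}$ and $\{1\}$; since any two rational points on $\pp^1$ differ by the divisor of a rational function (e.g.\ $t_1/t_0$), the line bundles $\co_{\pp^1}(\{0\})$ and $\co_{\pp^1}(\{1\})$ are isomorphic. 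Pulling back via $p_{\pp^1}\circ w$ yields $\co_W(W_0)\cong \co_W(W_1)$, so their first Chern classes coincide in $A^1(W)$. This proves the main identity.

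For the ``in particular'' clause, I would factor $w_l$ as the composition $W_l\stackrel{i_l}{\row}W\stackrel{w}{\row}X\times\pp^1\stackrel{p_X}{\row}X$, so $(w_l)_*\circ i_l^{\star}=(p_X)_*\circ w_*\circ (i_l)_*\circ i_l^{\star}$ as maps $A_*(W)\row A_{*-1}(X)$, by functoriality of push-forwards. Applying the functor $(p_X)_*\circ w_*$ to both sides of the already-established identity $(i_0)_*\circ i_0^{\star}=(i_1)_*\circ i_1^{\star}$ yields the desired equality. There is no serious obstacle in this argument; the only point requiring attention is the clean identification of $\co_W(W_l)$ with a pull-back of a single line bundle on $\pp^1$, which bypasses the need to compare the elaborate divisor classes $[W_l]$ of Definition \ref{divclass} component by component.
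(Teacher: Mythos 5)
Your proof is correct and follows essentially the same route as the paper: the paper's one-line argument is precisely to invoke Lemma \ref{ddstar} and observe that $\co_W(W_0)$ and $\co_W(W_1)$ are both isomorphic to the pull-back of $\co_{\pp^1}(1)$, which is what your identification $\co_W(W_l)\cong w^*\co_{X\times\pp^1}(X\times\{l\})$ spells out. Your derivation of the ``in particular'' clause by applying $(p_X)_*\circ w_*$ is the intended (implicit) step and is fine.
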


\begin{proof}
Observe that $O_W(W_0)\cong\pi^*(O_{\pp^1}(1))\cong O_W(W_1)$.
\Qed
\end{proof}

Let
\begin{equation}
\label{divsquare}
\xymatrix @-0.7pc{
E \ar @{->}[r]^(0.5){e} \ar @{->}[d]_(0.5){\ov{f}}&
Y \ar @{->}[d]^(0.5){f}\\
D \ar @{->}[r]_(0.5){d} & X.
}
\end{equation}
be a Cartesian square, where $X$ and $Y$ are smooth and $D\stackrel{d}{\lrow}X$ and
$E\stackrel{e}{\lrow}Y$ are divisors with strict normal crossings (closed codimension 1 subschemes given by principal ideals whose $div$ is a strict normal crossings divisor).
Then we can define the {\it combinatorial pull-back}:
$$
\ov{f}^{\,\star}:A^*(D)\row A^*(E)
$$
as follows.
Suppose, $D=\sum_{i=1}^rl_iD_i$, $E=\sum_{j=1}^sm_jE_j$, where $D_i$ and $E_j$ are irreducible components;
$\lambda_i=c^A_1(O(D_i))$, $\mu_j=c^A_1(O(E_j))$, and $f^*(D_i)=\sum_jp_{i,j}E_j$. If $P,L,M$ are
matrices $(p_{i,j})$, $(l_i)$, $(m_j)$, then we have: $L\cdot P=M$.
Notice, that if $p_{i,j}\neq 0$, for some $i$ and $j$, then we have the natural map $f_{j,i}:E_j\row D_i$,
and so the map $f_{J,i}:E_J\row D_i$, for any $J\ni j$.
Assume that $F^{p_{i,1},\ldots,p_{i,s}}_J=0$, if $p_{i,j}=0$, for some $j\in J$
(notice, that there are no monomials divisible by $\prod_{j\in J}\mu_j$ in the $\sum_j^{F_A}[p_{i,j}]\cdot_{F_A}\mu_j$,
so any "reasonable" choice will do).

\begin{definition}
\label{fstar}
Let $x=\sum_i(\hat{d}_i)_*(x_i)$, for some $x_i\in A^*(D_i)$. Define:
$$
\ov{f}^{\,\star}(x):=\sum_{i=1}^r\sum_{\emptyset\neq J\subset\{1,\ldots,s\}}
(\hat{e}_J)_*f_{J,i}^*(x_i)\cdot F^{p_{i,1},\ldots,p_{i,s}}_J(\mu_1,\ldots,\mu_s)\in A^*(E),
$$
where we ignore the terms with the zero $F^{p_{i,1},\ldots,p_{i,s}}_J$.
\end{definition}
Again, , since for $I\subset J$, for $e_{J/I}:E_J\row E_I$, we have:
$(e_{J/I})_*(1)=\prod_{j\in J\backslash I}\mu_j$, the projection formula shows that it does not matter, how
we choose the $F^{p_{i,1},\ldots,p_{i,s}}_J$. Also, it is clear that we get a well-defined map in the case
$D$ - smooth irreducible (of multiplicity 1).
One can show that this map is well-defined in general, but we will
spare the reader from that.

Our {\it combinatorial pull-backs} are functorial.
Suppose,
$$
\xymatrix @-0.7pc{
D  \ar @{->}[d]_(0.5){d}&
E \ar @{->}[l]_(0.5){\ov{u}} \ar @{->}[d]^(0.5){e} &  F \ar @{->}[l]_(0.5){\ov{v}} \ar @{->}[d]^(0.5){f} \\
X  & Y \ar @{->}[l]^(0.5){u} & Z \ar @{->}[l]^(0.5){v}.
}
$$
be the cartesian diagram, where $X,Y,Z$ are smooth, and $D$, $E$ and $F$ are divisors
with strict normal crossing.

\begin{proposition}
\label{starfunctoriality}
Let $A^*$ be a theory in the sense of Definition \ref{goct}. Then, in the above situation,
$$
(\ov{u}\circ \ov{v})^{\,\star}=\ov{v}^{\,\star}\circ\ov{u}^{\,\star}.
$$
\end{proposition}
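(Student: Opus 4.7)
The plan is to imitate the strategy used for Proposition \ref{MPEIF}: first reduce to the universal case of a theory of rational type via Proposition \ref{A0}, and then invoke functoriality of the refined pull-backs established by M.Levine-F.Morel.

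First I would verify that the $\star$-pullback of Definition \ref{fstar} is compatible with morphisms of theories. Inspection of the defining formula shows that $\ov{f}^{\,\star}$ is constructed purely from (i) ordinary pull-backs $f_{J,i}^*$ along the regular embeddings $E_J \to D_i$, (ii) push-forwards $(\hat{e}_J)_*$ along the regular embeddings of faces, and (iii) the first Chern classes $\mu_j$ of the line bundles $\co(E_j)$ substituted into the universal power series $F^{p_{i,1},\ldots,p_{i,s}}_J$. All three ingredients commute with any morphism of theories in the sense of Definition \ref{goct}. In particular, for the canonical surjection $g_A:(A^{(0)})^* \twoheadrightarrow A^*$ of Proposition \ref{A0}(1) (with $(A^{(0)})^* = \Omega^* \otimes_\laz A$), one has $g_A \circ \ov{f}^{\,\star} = \ov{f}^{\,\star} \circ g_A$, and similarly for $\ov{u}^{\,\star}, \ov{v}^{\,\star}$ and $(\ov{u}\circ\ov{v})^{\,\star}$. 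Since $g_A$ is surjective on each of the groups $A^*(D), A^*(E), A^*(F)$ (being surjective for any $X$ by Proposition \ref{A0}(1) applied on each stratum), it will suffice to establish the functoriality identity on $(A^{(0)})^*$.

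Next I would dispatch the case of a theory of rational type. By Lemma \ref{star!}, for $A^* = \Omega^* \otimes_\laz A$ the $\star$-pullback associated to the cartesian square for $u: Y \to X$ coincides with the refined pull-back $u^{!}$ of M.Levine-F.Morel, and similarly for $v$ and $u \circ v$. Functoriality of the refined pull-backs $(u\circ v)^{!} = v^{!} \circ u^{!}$ is the content of \cite[Theorem 6.6.6(3)]{LM} for Algebraic Cobordism, and the identity transports verbatim to the change-of-coefficients theory $\Omega^* \otimes_\laz A$. Combining this with the reduction of the previous paragraph yields $(\ov{u}\circ\ov{v})^{\,\star} = \ov{v}^{\,\star} \circ \ov{u}^{\,\star}$ for any $A^*$ satisfying $(CONST)$.

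The only delicate point is the compatibility check in the first paragraph: one must be sure that the ambiguity in the choice of the power series $F^{l_1,\ldots,l_r}_I$ does not spoil the intertwining with $g_A$. This is however automatic from the discussion following Definition \ref{fstar}, where independence of the choice of $F^{l_1,\ldots,l_r}_I$ was observed via the projection formula applied to $(d_{J/I})_*(1) = \prod_{i\in J\setminus I}\lambda_i$, and from the fact that the same formal identity $\sum^{F_A}_{i}[l_i]\cdot_{F_A}\lambda_i = \sum_I (\prod_{i\in I}\lambda_i)\cdot F^{l_1,\ldots,l_r}_I$ holds over $\laz$ itself, so a single choice made upstairs over $(A^{(0)})^*$ specializes to a valid choice on $A^*$. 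Once this bookkeeping is in place, the reduction is entirely mechanical and the result follows.
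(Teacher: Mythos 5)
Your proposal is correct and follows essentially the same route as the paper: identify $\ov{f}^{\,\star}$ with the refined pull-back via Lemma \ref{star!} and invoke \cite[Theorem 6.6.6(3)]{LM} for theories of the form $\Omega^*\otimes_{\laz}A$, then deduce the general case from the surjection $g_A:(A^{(0)})^*\twoheadrightarrow A^*$ of Proposition \ref{A0}(1). The paper leaves the compatibility of the $\star$-pullbacks with $g_A$ and the surjectivity on the (non-smooth) supports $A^*(D)$ implicit, so your spelled-out bookkeeping is merely a more detailed version of the same argument.
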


\begin{proof}
Let $D=\sum_{i=1}^rl_i\cdot D_i$, $E=\sum_{j=1}^sm_j\cdot E_j$, and $F=\sum_{k=1}^tn_k\cdot F_k$.
Let $u^*(D_i)=\sum_{j=1}^sp_{i,j}\cdot E_j$, and $v^*(E_j)=\sum_{k=1}^t q_{j,k}\cdot F_k$.
If $L,M,N,P,Q$ are the respective matrices, then $L\cdot P=M$ and $M\cdot Q=N$. The matrix of
$(u\circ v)^*$ is then given by $R=P\cdot Q$.
Let $\lambda_i=c^A_1(O_X(D_i))$, $\mu_j=c^A_1(O_Y(E_j))$, $\nu_k=c^A_1(O_Z(F_k))$.
Now, we can assume that $F^{p_{i,1},\ldots,p_{i,s}}_J=0$,
for $|J|>1$. Then, for $x=\sum_i(\hat{d}_i)_*(x_i)$, we have:
\begin{equation*}
\begin{split}
&\ov{v}^{\,\star}\ov{u}^{\,\star}(x)=\ov{v}^{\,\star}\sum_{i=1}^r\sum_{j=1}^s(\hat{e}_j)_*u_{j,i}^*(x_i)\cdot F^{p_{i,1},\ldots,p_{i,s}}_j(\mu_1,\ldots,\mu_s)=\\
&\sum_{i=1}^r\sum_{j=1}^s\sum_{K\subset\{1,\ldots,t\}}v_{K,j}^*u_{j,i}^*(x_i)\cdot
v_{K,j}^*(F^{p_{i,1},\ldots,p_{i,s}}_j(\mu_1,\ldots,\mu_s))\cdot F^{q_{j,1},\ldots,q_{j,t}}_K
(\nu_1,\ldots,\nu_t)=\\
&\sum_{i=1}^r\sum_{K\subset\{1,\ldots,t\}}(v\circ u)_{K,i}^*(x_i)\cdot
F^{r_{i,1},\ldots,r_{i,t}}_K(\nu_1,\ldots,\nu_t)=(\ov{u}\circ \ov{v})^{\,\star}(x).
\end{split}
\end{equation*}
\Qed
\end{proof}

Finally, in the case of a {\it free theory} in the sense of Levine-Morel - see \cite[Remark 2.4.14]{LM}
(by Proposition \ref{KlassRT}, these theories are exactly our {\it theories of rational type}), $e^{\star}$ appears to be the same as a {\it refined pull-back} morphism.

\begin{lemma}
\label{star!-e-only}
Let $A^*=\Omega^*\otimes_{\laz}A$ be a theory obtained from Algebraic Cobordism by change of
coefficients. Then for any square {\rm (\ref{divsquare})}, we have:
$$
e^{\star}=d^{\,!}.
$$
\end{lemma}

\begin{proof}
This identity follows from Lemma 6.6.2, Lemma 6.5.6, Definition 6.5.1, and definitions of Subsection 6.2.1 of \cite{LM}.
\Qed
\end{proof}

\begin{proposition} {\rm (Multiple points excess intersection formula)}\\
\label{MPEIF}
Let $A^*$ be a theory satisfying $(CONST)$. Then,
in the above situation, we have:
\begin{itemize}
\item[$(1)$ ]
$$
e_*\circ \ov{f}^{\,\star} = f^*\circ d_*.
$$
\item[$(2)$ ] Suppose, $f$ is projective. Then
$$
\ov{f}_*\circ e^{\star}= d^{\,\star}\circ f_*.
$$
\end{itemize}
\end{proposition}

\begin{proof}
Part (2): If $A^*=\Omega^*\otimes_{\laz}A$ is obtained from Algebraic Cobordism by change
of coefficients, by Lemma \ref{star!-e-only}, this is a particular case of \cite[Theorem 6.6.6(2)(a)]{LM}.
The general case follows from Proposition \ref{A0}.

Part (1):

\begin{lemma}
\label{star!}
Let $A^*$ be a free theory in the sense of Levine-Morel. Then for any square {\rm (\ref{divsquare})}, we have:
$$
\ov{f}^{\,\star}=f^{\,!}.
$$
\end{lemma}

\begin{proof}
This needs to be checked only for the case where $D$ is a smooth divisor and $f$
is a regular embedding, where, in the case of codimension $1$, it follows from Lemma \ref{star!-e-only}.
By blowing up $Y$ inside $X$ and resolving the preimage of $D$, and using functoriality of $\ov{f}^{\,\star}$ and $f^{\,!}$
(Proposition \ref{starfunctoriality} and \cite[Theorem 6.6.6(3)]{LM}), we may reduce the general case to the case of a regular embedding
of codimension $1$ and to the case of a projective bi-rational $f$. Any such projective bi-rational map is dominated by a sequence
of smooth blowups permitted with respect to the preimage of $D$. Now it follows from the (already proven) part (2) of the Proposition
that, for any $f$, the composition $\ov{f}_*\circ\ov{f}^{\,\star}$ coincides with the multiplication by $f_*(1)$.
By the item 2 of the Subsection 6.6.7 of \cite{LM}, the same is true about the composition $\ov{f}_*\circ f^{\,!}$.
Since, for a projective birational map $g$, the element
$g_*(1)$ is invertible (Proposition \ref{pi1invert}), in view of the above domination,
we may reduce our problem to the case of a single blowup $f:\wt{X}\row X$ at a smooth center
$R$ which is in good position with respect to a smooth divisor $D$. If $R$ is not contained in $D$, then $E=\wt{D}$ is smooth and
both maps $\ov{f}^{\,\star}$ and $f^{\,!}$ are clearly equal to $\ov{f}^*$.
If $R\subset D$, then $E=\wt{D}\cup F$, where $\wt{D}$ is the strict
transform of $D$ and $F$ is the exceptional divisor of the blowup. We use the fact that the map
$(\ov{f}_*,j^{\,!}):A_*(\wt{D}\cup F)\row A_*(D)\oplus A_{*-1}(F)$ is injective, where $j:F\row\wt{X}$ is the natural embedding.
This, in turn, follows from Proposition \ref{razd}(1)
and the injectivity of $(\eps_*,\cdot c^A_1(O(-1))):A_*(P)\row A_*(R)\oplus A_{*-1}(P)$,
for any projective bundle $P=\pp_R(V)\stackrel{\eps}{\row} R$. The latter one may be easily seen, for example, from Quillen's formula.
It remains to check that $\ov{f}_*\circ\ov{f}^{\,\star}=\ov{f}_*\circ f^{\,!}$ and $j^{\,!}\circ\ov{f}^{\,\star}=j^{\,!}\circ f^{\,!}$.
The first equality was already established for arbitrary $f$, while the second one may be checked by a direct calculation
(using functoriality
of the refined pull-backs and the Excess Intersection Formula). This shows that $\ov{f}^{\,\star}=f^{\,!}$.
\Qed
\end{proof}

Now, we can prove (1).
Again, if $A^*$ is a free theory in the sense of Levine-Morel, then, by Lemma \ref{star!},
this is a particular case of \cite[Theorem 6.6.6(2)(a)]{LM}.
The general case follows from Proposition \ref{A0}.
\phantom{a}\hspace{5mm}
\Qed
\end{proof}

\begin{remark}
\label{nonCONST}
Proposition \ref{MPEIF} is valid for any theory in the sense of Definition \ref{goct}.
\end{remark}

Finally, we will need the following fact about divisors with strict normal crossings.

\begin{proposition}
\label{DEFi}
Let $D,F_i,i=1,\ldots,k$ be smooth irreducible divisors on some smooth variety $A$.
Suppose that:
\begin{itemize}
\item[$(1)$] $\cup_{i=1}^kF_i$ is a d.w.s.n.c. on $A$;
\item[$(2)$] $D\cap(\cup_{i=1}^kF_i)$ is a d.w.s.n.c. on $D$.
\end{itemize}
Then $(3)$ $D\cap(\cap_{i=1}^kF_i)$ is a d.w.s.n.c. on $\cap_{i=1}^kF_i$.
\end{proposition}

\begin{proof}
The condition (3) can be checked locally. Let $e\in D\cap(\cap_{i=1}^kF_i)$ be some point.
Consider the following cases:

(i) $T_e(D)\neq T_e(F_i)$, for all $i$. Then (2) $\Leftrightarrow$
$T_e(F_i),i=1,\ldots,k$ are linearly independent modulo $T_e(D)$ $\Leftrightarrow$
$T_e(D),T_e(F_i),i=1,\ldots,k$ are linearly independent $\Rightarrow$ $D\cap(\cap_{i=1}^kF_i)$ is smooth divisor on
$\cap_{i=1}^kF_i$ at $e$.

(ii) $T_e(D)=T_e(F_i)$ for some (unique) $i$.
Then $T_e(\cap_{j=1}^kF_j)$ may be identified with $T_e(D)\cap(\cap_{j\neq i}T_e(F_j))$.
But we know that the faces of $D\cap F_i$ are transversal to $D\cap(\cap_{j\neq i}F_j)$ on $D$.
This implies that $D\cap(\cap_{j=1}^kF_j)$ is a d.w.s.n.c. on $D\cap(\cap_{j\neq i}F_j)$ at $e$.
Hence, $D\cap(\cap_{j=1}^kF_j)$ is a d.w.s.n.c. on $\cap_{j=1}^kF_j$ at $e$.
\Qed
\end{proof}

\section{Resolution of singularities}
\label{RoS}

In this section we list the results related to Resolution of Singularities
and the Weak Factorization Theorem which are widely used throughout the text.

\begin{definition}
\label{perm-bu}
Let $X$ be a smooth variety and $D$ - a divisor with strict normal crossings on it.
By a {\it permitted blow-up w.r.to} $D$ we will understand such a sequence of
blow-ups with smooth centers $R_i\subset X_i$:
$$
\wt{X}=X_n\stackrel{\pi_n}{\row}X_{n-1}\stackrel{\pi_{n-1}}{\row}\ldots\stackrel{\pi_2}{\row}
X_1\stackrel{\pi_1}{\row}X
$$
such that, for the exceptional divisor $E_i$ of $\pi^i=\pi_1\circ\ldots\circ\pi_i: X_i\row X$,
and the total transform $(\pi^i)^*(D)$, the divisor $E_i+(\pi^i)^*(D)$ has
strict normal crossings, and $R_i$ has normal crossings with it.

If $D$ is empty, we will call it just a {\it permitted blow-up}.
\end{definition}

\begin{theorem} {\rm (Hironaka, \cite{Hi})}
\label{Hi}
Let $Z$ be a subvariety of a smooth variety $X$. Then there exists a permitted
blow-up $\pi:\wt{X}\row X$ such that:
\begin{itemize}
\item[$(1)$ ] All the centers $R_i$ are lying over the singular locus of $Z$.
\item[$(2)$ ] The strict transform $\wt{Z}\subset\wt{X}$ of $Z$ is smooth and has normal
crossings with $E_n$.
\end{itemize}
\end{theorem}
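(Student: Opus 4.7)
The plan is to prove this by the standard algorithmic strategy that goes back to Hironaka and has been considerably streamlined by Villamayor, Bierstone--Milman, and W{\l}odarczyk. The central idea is to attach to each point $x$ of the singular locus of $Z$ a well-ordered invariant $\iota(Z,x)$ whose largest building block is the order $\operatorname{ord}_x(\ci_Z)$, i.e.\ the largest $d$ with $\ci_Z\subset\frak{m}_x^d$. This basic order is refined into a lexicographic tuple that also remembers (a) the ambient dimension, (b) coefficient-ideal data coming from a maximal-contact hypersurface through $x$, and (c) a bookkeeping entry measuring how the center meets the exceptional divisor accumulated so far. The whole invariant is constructed so that it is upper semicontinuous, so that its top stratum is smooth, and so that this top stratum automatically has normal crossing with the accumulated $E_i$.

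First I would check that the construction produces, at each stage $i$, a well-defined maximal locus $R_i:=Z_{\max,\iota}\subset X_i$ that is smooth and has normal crossing with $E_i+(\pi^i)^*(D)$, so that the blow-up $\pi_{i+1}\colon X_{i+1}\row X_i$ at $R_i$ is permitted w.r.to $D$ in the sense of the definition preceding the theorem. Second, I would show the descent step: if $Z_{i+1}$ denotes the strict transform of $Z$ under $\pi_{i+1}$, then the maximum of $\iota$ on $Z_{i+1}$ is strictly smaller (in the lexicographic well-order) than the maximum on $Z_i$. Here one reduces the problem on $X_i$ to a resolution problem for a coefficient ideal on a maximal-contact hypersurface $H$, i.e.\ to an ambient space of strictly smaller dimension, so a strong induction on $\ddim(X)$ coupled with Noetherian induction on the invariant closes the loop and guarantees termination after finitely many steps $\wt{X}=X_n\row\ldots\row X$.

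The two asserted conditions then fall out of the construction: point $(1)$ holds because at a smooth point of $Z$ the order $\operatorname{ord}_x(\ci_Z)$ equals $1$, which is the minimum possible value, so such a point is never in the top stratum of $\iota$ and hence is never a center; point $(2)$ is built into $\iota$ via its normal-crossing bookkeeping entry, which forces each subsequent $R_i$ to continue to meet the exceptional divisor transversally, and the termination of the algorithm precisely means that the strict transform $\wt{Z}$ is smooth and in normal crossing with $E_n$.

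The main obstacle, and really the entire substance of Hironaka's theorem, is the construction of the invariant $\iota$ and the verification that it strictly drops after every permitted blow-up along its top stratum. Making this work requires the full theory of maximal contact, the passage to coefficient ideals, and a careful argument showing that none of these auxiliary choices affect the resulting invariant. For the purposes of the present paper the theorem is legitimately used as a black box; a complete proof occupies hundreds of pages in the literature and is inappropriate to reproduce here.
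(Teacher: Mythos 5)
The paper does not prove this theorem; it is imported verbatim as Theorem 7.2 of Section 7 (``Resolution of Singularities''), cited to Hironaka's original paper and used as a black box throughout. There is therefore no ``paper's own proof'' to compare against. Your sketch of the modern algorithmic approach (order invariant, maximal contact, coefficient ideals, lexicographic bookkeeping of exceptional crossings, double induction on ambient dimension and on the invariant) is a faithful summary of the Villamayor/Bierstone--Milman/W{\l}odarczyk streamlining of Hironaka, and your closing remark --- that the theorem is legitimately treated as an external black box --- matches exactly what the paper does. You correctly observe that condition $(1)$ follows because the order at a smooth point of $Z$ is $1$ and hence never maximal, and that condition $(2)$ is forced by the normal-crossing component of the invariant together with the definition of \emph{permitted blow-up} given just before the statement. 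The only caveat worth flagging is that the outline is an outline: none of the hard verifications (upper semicontinuity of the full invariant, independence of the choice of maximal-contact hypersurface, strict drop of the invariant under permitted blow-up) are carried out, but you say this explicitly and it is appropriate given the paper's own treatment.
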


\begin{theorem} {\rm (Hironaka, \cite{Hi})}
\label{Hif}
Let $f:X\dashrightarrow Y$ be a rational map of reduced varieties.
Then there is a permitted blow-up $\pi:\wt{X}\row X$ such that:
\begin{itemize}
\item[$(1)$ ] All the centers $R_i$ are lying over the locus of $X$ where it is not smooth, or
$f$ is not a morphism.
\item[$(2)$ ] The rational map $f\circ\pi:\wt{X}\row Y$ is a morphism.
\end{itemize}
\end{theorem}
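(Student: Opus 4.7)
The plan is to deduce Theorem \ref{Hif} from Theorem \ref{Hi} by combining the standard graph-closure construction with Hironaka's principalization of ideal sheaves (itself a strengthening of Theorem \ref{Hi}). The first reduction is to the case where $X$ is smooth: letting $S \subset X$ denote the non-smooth locus, Theorem \ref{Hi} (applied after embedding $X$ into some smooth ambient variety) resolves $S$ by a permitted blow-up, and the rational map $f$ lifts along the resulting birational morphism. All centers introduced at this stage lie over $S$, and hence over the locus where $X$ is not smooth.

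Assuming now that $X$ is smooth, let $U \subset X$ be the maximal open on which $f$ is a morphism, and form the graph closure $\overline{\Gamma_f} \subset X \times Y$ of $f|_U$. The first projection $p:\overline{\Gamma_f} \to X$ is projective birational and is an isomorphism over $U$, while the second projection gives a morphism $\overline{\Gamma_f} \to Y$ extending $f$. A standard fact is that $p$ realises $\overline{\Gamma_f}$ as the blow-up of $X$ along a coherent ideal sheaf $\mathcal{I} \subset \mathcal{O}_X$ whose support is contained in $X \setminus U$. Applying Hironaka's principalization (a consequence of Theorem \ref{Hi}) produces a permitted blow-up $\pi:\widetilde{X} \to X$, with all smooth centers $R_i$ lying over $\mathrm{supp}(\mathcal{I}) \subset X \setminus U$, such that $\pi^{-1}\mathcal{I} \cdot \mathcal{O}_{\widetilde{X}}$ is invertible. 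The universal property of the blow-up $p$ then yields a unique factorisation $\pi = p \circ \widetilde{\pi}$ for some $\widetilde{\pi}: \widetilde{X} \to \overline{\Gamma_f}$, and composing $\widetilde{\pi}$ with the projection to $Y$ gives the required morphism $f \circ \pi : \widetilde{X} \to Y$. Property $(1)$ is automatic from the location of the centers.

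The genuinely hard input is principalization: upgrading Theorem \ref{Hi} from a statement about resolving a given subvariety to one that makes an arbitrary coherent ideal sheaf locally principal while keeping the blow-up sequence permitted. This requires an induction on a resolution invariant (such as the maximum order of vanishing of $\mathcal{I}$ along strata) and the existence of hypersurfaces of maximal contact, both of which rely on characteristic zero. Once principalization is available, the graph-closure argument above is entirely formal: the only nontrivial verification is that the smooth centers produced by the principalization algorithm remain in good position with respect to the accumulated exceptional divisor and project into $\mathrm{supp}(\mathcal{I})$, which is built into the strong resolution statement. The main obstacle, then, is not in the extension-of-$f$ formalism but in the resolution-theoretic induction that underlies Theorem \ref{Hi} in its principalization form, which is why in the text the result is attributed directly to \cite{Hi} rather than proved from scratch.
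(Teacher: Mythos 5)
The paper does not prove this statement; it is stated as a theorem of Hironaka and simply cited to \cite{Hi}. Your sketch — resolve the singular locus of $X$ via Theorem \ref{Hi}, pass to the graph closure $\overline{\Gamma_f}$, realise the projection $p:\overline{\Gamma_f}\to X$ as the blow-up of a coherent ideal sheaf supported on the indeterminacy locus, and then invoke principalization to dominate $p$ by a permitted sequence of smooth blow-ups — is the correct and standard derivation of exactly what the paper is citing, and you correctly locate the nontrivial input in the principalization theorem rather than in the graph-closure formalism.
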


\begin{theorem} {\rm (Hironaka, \cite{Hi}, see also \cite[1.2.3]{AKMW} and \cite{BM})}
\label{Hip}
Let $\ci$ be a sheaf of ideals on a smooth variety $X$, and $U\subset X$ be an
open subvariety such that $\ci|_U$ is an ideal sheaf of a divisor with strict normal
crossings. Then
there is a permitted blow-up $\pi:\wt{X}\row X$ with centers outside $U$ such that the total
transform $\pi^*(\ci)$ is an ideal of a strict normal crossing
divisor $\wt{E}$.
\end{theorem}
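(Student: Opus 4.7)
The plan is to deduce Theorem \ref{Hip} from the principalization of ideals (a form of Hironaka's embedded resolution), in the canonical/functorial formulation given by Bierstone--Milman or W\l odarczyk, where the admissibility of the centers w.r.to the accumulated exceptional divisor is built into the algorithm. The key input is that, on a smooth variety in characteristic zero, one can attach to any ideal sheaf $\ci$ an invariant (e.g.\ the Hironaka--Bierstone--Milman invariant $\op{inv}_{\ci}$, or W\l odarczyk's marked order function) that is upper-semicontinuous, whose maximum locus is smooth and has normal crossing with any normal-crossing divisor the algorithm has already produced, and whose blow-up strictly decreases the invariant; iterating terminates with $\pi^*(\ci)$ principal and supported on a strict normal crossing divisor.

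First, I would apply this canonical principalization to the ideal $\ci$ on $X$, producing a sequence of blow-ups
\[
\wt{X}=X_n\stackrel{\pi_n}{\row}X_{n-1}\stackrel{\pi_{n-1}}{\row}\ldots\stackrel{\pi_1}{\row}X
\]
with smooth centers $R_i\subset X_i$ such that $\pi^*(\ci)$ is the ideal sheaf of a strict normal crossing divisor $\wt{E}$ on $\wt{X}$. Next I would check that this sequence is \emph{permitted} in the sense of Subsection \ref{RoS}: by the admissibility step of the algorithm, at each stage $R_i$ is smooth and has simple normal crossing with the union $E_i$ of all exceptional divisors created so far; since blowing up such a center keeps the accumulated exceptional divisor a strict normal crossing divisor, the required conditions $E_i+(\pi^i)^*(D)$ SNC (with $D=\emptyset$ here) and $R_i$ normal crossing with $E_i$ are satisfied inductively.

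Then I would verify the condition that the centers lie outside $U$. By hypothesis $\ci|_U$ is already the ideal of a strict normal crossing divisor, so the Hironaka/BM invariant of $\ci$ attains its minimal ``already resolved'' value on all of $U$; equivalently, $U$ lies in the locus where the principalization algorithm does nothing. By the \emph{equivariance/functoriality} of the canonical algorithm with respect to open immersions (which is a standard feature of the BM and W\l odarczyk constructions), the centers $R_i$ are disjoint from the preimage of $U$, so we may replace the sequence by one with centers lying over $X\setminus U$ without loss of generality.

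The main obstacle is the combination of three simultaneous constraints: principalization of $\ci$, admissibility w.r.to the accumulated exceptional divisor, and centers confined to $X\setminus U$. The first is classical Hironaka; the second and third are precisely the content of the later functorial refinements (Bierstone--Milman, W\l odarczyk, Koll\'ar), which therefore cannot be bypassed and provide the essential input. Once these ingredients are quoted, the verification that the resulting $\pi$ fits the definition of a permitted blow-up and that $\pi^*(\ci)$ is the ideal of a strict normal crossing divisor $\wt{E}$ is immediate from the output of the algorithm; the statement then follows.
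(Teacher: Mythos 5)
The paper does not actually prove Theorem \ref{Hip}: it is quoted from the literature (Hironaka, with \cite[1.2.3]{AKMW} and \cite{BM} given as modern references), so the only issue is whether your proposed derivation from canonical principalization is itself sound. It is not, at the step where you dispose of the clause ``centers outside $U$''. You argue that, since $\ci|_U$ is the ideal of a strict normal crossing divisor, the Hironaka--Bierstone--Milman invariant takes its minimal, ``already resolved'' value on $U$, hence the canonical algorithm does nothing over $U$, and functoriality for open immersions finishes the job. This is false for the standard algorithm: its leading invariant is the order of the (marked) ideal, and the order does not recognize snc ideals as resolved. For $\ci=(xy)$ on $X=\aaa^2$ and $U=X$ the order equals $2$ at the origin, so the first admissible center of the canonical principalization is the origin, and the algorithm continues until the total transform is a monomial in the exceptional divisors; its centers therefore lie inside $U$, even though the theorem is trivially true in this case (take $\pi=\op{id}$). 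Functoriality with respect to open immersions only tells you that the sequence restricted to $U$ is the canonical sequence for $\ci|_U$ --- which, as the example shows, is nontrivial, so it gives no disjointness from $U$.

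Thus the disjointness-from-$U$ clause is precisely the nontrivial refinement and cannot be extracted from bare canonical principalization plus functoriality; it is the content of the cited \cite[1.2.3]{AKMW} (and of the corresponding refinements of Bierstone--Milman), whose proofs use the snc hypothesis on $\ci|_U$ in an essential way, via an invariant or a modified algorithm that is minimal exactly on the locus where the ideal is already locally the ideal of an snc divisor. If you simply quote that refined statement you are done, but then your argument reduces to the same citation the paper already makes, and the intermediate claim that ``$U$ lies in the locus where the algorithm does nothing'' should be deleted. The remaining points of your proposal --- that the centers produced by the canonical algorithms are snc with the accumulated exceptional divisor, so the resulting sequence is a permitted blow-up in the sense of Section \ref{RoS}, and that the output ideal is the ideal of a strict normal crossing divisor --- are correctly identified as built into those algorithms.
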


There is also a relative to divisor $D$ version (see \cite[1.2.2]{AKMW} and \cite{BM}).

\begin{proposition}
\label{HipD}
Let $X$ be smooth quasi-projective variety, $Z\subset X$ - a closed
subvariety, and $D$ - a divisor with strict normal crossings on $X$. Then
there exists a permitted w.r.to $D$ blow up $\wt{X}\stackrel{\pi}{\lrow}X$ with centers over $Z$
such that $\pi^{-1}(Z)\cup\pi^{-1}(D)$ is a divisor with strict normal crossings.
\end{proposition}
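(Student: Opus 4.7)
The plan is to deduce Proposition \ref{HipD} directly from the relative version of Hironaka's principalization theorem that is stated (in essentially the form we need) in \cite[Section 1.2]{AKMW} and worked out in detail in \cite{BM}. The given Theorem \ref{Hip} is the ``absolute'' version — it principalizes a single ideal sheaf $\ci$ on a smooth $X$ without reference to an auxiliary divisor. The relative version takes as input both an ideal sheaf $\ci$ \emph{and} a strict normal crossing divisor $D$ on $X$, and furnishes a sequence of blow-ups whose centers, at every step, have normal crossings with the accumulated exceptional divisor plus the total transform of $D$, and whose end result principalizes $\ci$ to an SNC divisor that is moreover SNC together with $\pi^{*}(D)$.

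Concretely, I would set $\ci := \ci_{Z}$, the ideal sheaf of $Z$ inside $X$, and take the ``already good'' open set to be $U := X\setminus Z$, on which $\ci$ is the unit ideal (so defines the empty divisor, trivially strict normal crossing together with $D|_{U}$). The relative principalization theorem, applied to the pair $(\ci_{Z}, D)$ on $X$, produces a sequence of smooth blow-ups
$$
\wt{X}=X_{n}\stackrel{\pi_{n}}{\row}X_{n-1}\stackrel{\pi_{n-1}}{\row}\ldots\stackrel{\pi_{1}}{\row}X_{0}=X
$$
such that $(i)$ every center $R_{i}\subset X_{i}$ lies over $Z$, $(ii)$ for each $i$, the divisor $E_{i}+(\pi^{i})^{*}(D)$ is strict normal crossing and $R_{i}$ meets it transversally, and $(iii)$ the total transform $\pi^{*}(\ci_{Z})$ is the ideal of a strict normal crossing divisor $\wt{E}$, whose reduced support together with the reduced support of $\pi^{*}(D)$ is again strict normal crossing. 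Properties $(i)$ and $(ii)$ say exactly that $\pi$ is a permitted blow-up with respect to $D$ with centers over $Z$ in the sense of the definition opening Section \ref{RoS}. Property $(iii)$ says that $\pi^{-1}(Z)_{\mathrm{red}}=|\wt{E}|$ and that $|\wt{E}|\cup|\pi^{*}(D)|=\pi^{-1}(Z)\cup\pi^{-1}(D)$ is strict normal crossing, which is exactly the conclusion of the proposition.

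The main (and really only) obstacle is to pin down the precise form of the relative Hironaka statement one needs, since the original \cite{Hi} is stated only in the absolute case. Bierstone–Milman \cite{BM} provide a canonical principalization algorithm in which the ``boundary'' divisor $D$ can be carried through as part of the data, so that centers are taken to have normal crossings with $D$ plus the accumulated exceptional locus at every stage; Abramovich–Karu–Matsuki–W\l odarczyk \cite[Sec. 1.2]{AKMW} summarise the same result in the form we are using here. Once this reference is in place the proposition follows immediately, with no further calculation required.
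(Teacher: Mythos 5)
Your proposal is correct and coincides with what the paper itself does: the paper offers no argument beyond the remark ``there is also a relative to divisor $D$ version (see \cite[1.2.2]{AKMW} and \cite{BM})'', i.e.\ it invokes exactly the relative (boundary-compatible) principalization of Bierstone--Milman/AKMW that you use. Your spelling out of the deduction — taking $\ci=\ci_{Z}$ so that centers automatically lie over $Z$, and reading off the permitted-w.r.t.-$D$ and SNC conditions from the relative statement — is a faithful, slightly more detailed version of the paper's citation.
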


The following result is the Weak Factorization Theorem - \cite[Theorem 0.3.1]{AKMW}, see also \cite{Wwf}.

\begin{theorem} {\rm (Abramovich-Karu-Matsuki-Wlodarczyk)}
\label{WF}
Let $\theta:X_1\dashrightarrow X_2$ be birational map of smooth proper varieties over $k$,
which is an isomorphism on the open set $U\subset X_1$. Then $\theta$ can be factored into
a sequence of blowings up and blowings down with nonsingular centers disjoint from $U$.
Namely, to any such $\theta$ we can associate a diagram:
$$
\xymatrix @-0.5pc{
X_1=Y_0 \ar @{-->}[r]^(0.7){\ffi_1}&
Y_1 \ar @{-->}[r]^(0.5){\ffi_2}&
\ldots \ar @{-->}[r]^(0.5){\ffi_{i-1}}&
Y_{i-1} \ar @{-->}[r]^(0.5){\ffi_i}&
Y_i \ar @{-->}[r]^(0.5){\ffi_{i+1}}&
\ldots \ar @{-->}[r]^(0.5){\ffi_{l-1}}&
Y_{l-1} \ar @{-->}[r]^(0.3){\ffi_l}&
Y_l=X_2
}
$$
where
\begin{itemize}
\item[$(1)$ ] $\theta=\ffi_l\circ\ffi_{l-1}\circ\ldots\circ\ffi_2\circ\ffi_1$,
\item[$(2)$ ] $\ffi_i$ are isomorphisms on $U$, and
\item[$(3)$ ] either $\ffi_i$, or $\ffi_i^{-1}$ is a blow up morphism with smooth
center disjoint from $U$.
\item[$(4)$ ] Functoriality: if $g:\theta\row\theta'$ is an absolute isomorphism carrying
$U$ to $U'$, and $\ffi_i':Y_{i-1}'\dashrightarrow Y_i$ is the factorization of $\ffi'$, then
the resulting rational maps $g_i:Y_i\dashrightarrow Y_i'$ is an absolute isomorphism.
\item[$(5)$ ] There is an index $i_0$ such that, for $i\leq i_0$, the map $Y_i\dashrightarrow X_1$
is projective map, while for $i\geq i_0$, $Y_i\dashrightarrow X_2$ is projective map.
\item[$(6)$ ] Let $E_i\subset Y_i$ be the exceptional divisor of $Y_i\row X_1$ (respectively, of
$Y_i\row X_2$) in case $i\leq i_0$ (respectively, $i\geq i_0$). Then the above centers of blow up have normal crossing with $E_i$. If, moreover, $X_1\backslash U$ (respectively, $X_2\backslash U$)
is a normal crossing divisor, then the centers of blow up have normal crossing with the inverse
images of this divisor.
\end{itemize}

\end{theorem}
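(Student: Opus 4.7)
The plan is to follow the Abramovich--Karu--Matsuki--Wlodarczyk strategy, which reduces the factorization problem to a combinatorial one on fans via toroidal geometry. First I would use Theorems \ref{Hi} and \ref{Hif} (Hironaka) to replace $\theta:X_1\dashrightarrow X_2$ by an equivalent situation in which there is a smooth projective variety $W$ with projective birational morphisms $p_1:W\row X_1$ and $p_2:W\row X_2$, both isomorphisms over $U$, and whose exceptional loci are strict normal crossing divisors. This reduces the problem to factoring a single projective birational morphism $p:W\row X$ of smooth projective varieties, isomorphic over $U$, into a sequence of smooth blow-ups and blow-downs disjoint from $U$: indeed, if the statement is known for $p_1$ and $p_2$ separately, one concatenates the two factorizations (the middle index $i_0$ in condition $(5)$ being the position of $W$).

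Next I would introduce a torus action to upgrade the problem to a toroidal one. Using the deformation to the normal cone of $X\hookrightarrow W\times\aaa^1$ (or an equivariant version of Hironaka applied to the graph closure in a projective bundle) one constructs a birational cobordism: a smooth variety $B$ of dimension $\ddim(W)+1$ equipped with a $\G_m$-action such that the geometric quotients of the appropriate open loci recover $W$ and $X$, and such that $B$ is toroidal with respect to the fixed locus. The key input here is equivariant resolution of singularities, which allows one to make $B$ smooth while keeping the action and preserving the open set $U$ (which sits inside both fixed-point quotients). On this cobordism, moving the linearization/GIT parameter produces a sequence of elementary birational modifications between successive quotients.

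The main technical step -- and the hardest part -- is then the local analysis of these elementary modifications. At each wall-crossing in the GIT picture, the birational transformation is \'etale-locally modelled on a toric flip, which by the combinatorial theory of fans (a relative version of Morelli's $\pi$-desingularization / strong factorization lemma for bistellar operations on simplicial fans) can be refined into a sequence of toric smooth blow-ups and blow-downs along invariant smooth centers. This is where all the real difficulty lies: one must verify that Morelli's cobordism algorithm on fans can be carried out in families and in the non-strictly-toric but toroidal setting, controlling the centers to remain smooth at every stage. Once this local combinatorial factorization is in hand, one globalizes using the toroidal structure, and descends from $B$ to a sequence of smooth blow-ups and blow-downs $Y_0=X_1,Y_1,\ldots,Y_l=X_2$.

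Finally I would verify the remaining bookkeeping items. Disjointness from $U$ in $(2),(3)$ is automatic since the entire construction (Hironaka steps, deformation, equivariant resolution, toroidal modifications) is performed away from $U$ by design. The functoriality $(4)$ follows because each step -- Hironaka's canonical resolution, the deformation to the normal cone, equivariant resolution, and Morelli's algorithm -- is canonical and commutes with absolute isomorphisms. The projectivity statement $(5)$ comes from the two-sided structure: for $i\le i_0$ one is factoring $p_1$, for $i\ge i_0$ one is factoring $p_2$. The normal crossing compatibility $(6)$ is obtained by applying Proposition \ref{HipD} relative to the accumulating exceptional divisor before each blow-up, and by running the toroidal algorithm relative to the boundary divisor $X_i\backslash U$ when the latter has strict normal crossings.
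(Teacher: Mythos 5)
This statement is not proved in the paper at all: it is imported verbatim as an external input, attributed to Abramovich--Karu--Matsuki--Wlodarczyk \cite[Theorem 0.3.1]{AKMW} (see also \cite{Wwf}), and Section \ref{RoS} merely lists it among the resolution-type results used as black boxes throughout the text. So there is no internal argument of the paper to compare yours with; the honest option here is simply to cite the theorem, as the author does.

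Judged on its own terms, your proposal is a roadmap of the published AKMW argument rather than a proof. The reduction via Theorems \ref{Hi} and \ref{Hif} to factoring a single projective birational morphism $W\row X$ is fine, but everything after that --- the construction of a smooth birational cobordism with $\G_m$-action (which is Wlodarczyk's graph-closure construction, not a deformation to the normal cone), the identification of each wall-crossing with a toroidal flip, Morelli's $\pi$-desingularization carried out in the toroidal rather than strictly toric setting, and the torification needed to globalize --- is exactly the content of \cite{AKMW} and \cite{Wwf}, and in your text these steps are named, not established; you acknowledge yourself that this is ``where all the real difficulty lies.'' Two of the bookkeeping items are also not as automatic as you claim. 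Functoriality $(4)$ requires every choice in the algorithm (the cobordism, the equivariant resolution, the combinatorial desingularization) to be made canonically so as to commute with absolute isomorphisms; this is a genuine point of care in \cite{AKMW}, not a formal consequence of ``canonicity of Hironaka.'' And the normal-crossings condition $(6)$ cannot be secured a posteriori by inserting extra blow-ups via Proposition \ref{HipD}: any such insertion changes the sequence $Y_i$ and threatens properties $(3)$ and $(5)$; in \cite{AKMW} the condition is obtained by running the entire algorithm relative to the accumulating exceptional divisor (and to $X_i\backslash U$ when it has normal crossings) from the start.
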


\end{document}